\def\GrMod{\operatorname{\mathsf{GrMod}}}
\def\turn!{\textup{!`}}
\def\op{\textup{op}}
\def\pd{\mathop{\mathrm{pd}}\nolimits}
\def\grpd{\mathop{\mathrm{gr.pd}}\nolimits}
\def\injdim{\mathop{\mathrm{id}}\nolimits}
\def\grinjdim{\mathop{\mathrm{gr.id}}\nolimits}
\def\ac{\operatorname{ac}}
\def\tot{\operatorname{\mathsf{tot}}}
\def\mrb{\mathrm{b}}
\def\Spec{\operatorname{Spec}}
\def\thick{\mathop{\mathsf{thick}}\nolimits}
\def\kk{{\mathbf k}}
\def\NN{{\mathbb N}}
\def\QQ{{\mathbb Q}}
\def\ZZ{{\mathbb Z}}
\def\cD{{\mathcal D}}
\def\cE{{\mathcal E}}
\def\cI{{\mathcal I}}
\def\cP{{\mathcal P}}
\def\sfC{{\mathsf{C}}}
\def\sfD{{\mathsf{D}}}
\def\sfH{{\mathsf{H}}}
\def\sfK{{\mathsf{K}}}
\def\tuH{{\textup{H}}}
\def\frkp{{\mathfrak{p}}}
\def\frkq{{\mathfrak{q}}}
\def\id{\operatorname{id}}
\def\op{\operatorname{op}}
\def\mod{\operatorname{mod}}
\def\Mod{\operatorname{Mod}}
\def\GrMod{\operatorname{Mod}^{\mathbb{Z}}}
\def\Ker{\mathop{\mathrm{Ker}}\nolimits}
\def\GrProj{\operatorname{Proj}^{\mathbb{Z}}}
\def\Proj{\operatorname{Proj}} 
\def\Inj{\operatorname{Inj}}
\def\Add{\operatorname{Add}}
\def\Hom{\operatorname{Hom}}
\def\Ext{\operatorname{Ext}}
\newcommand{\RHom}{\operatorname{\Bbb{R}Hom}}
\newcommand{\lotimes}{\otimes^{\mathbb{L}}}
\newcommand{\cone}{\operatorname{\mathsf{cn}}}
\def\RHom{\operatorname{\mathbb{R}Hom}}
\newtheorem{lemma}{Lemma}[section]
\newtheorem{proposition}[lemma]{Proposition}
\newtheorem{theorem}[lemma]{Theorem}
\newtheorem{corollary}[lemma]{Corollary}
\theoremstyle{definition}
\newtheorem{remark}[lemma]{Remark}
\newtheorem{example}[lemma]{Example}
\newtheorem{definition}[lemma]{Definition}
\newtheorem{conjecture}[lemma]{Conjecture}
\theoremstyle{remark}
\def\frkm{{\mathfrak{m}}}
\def\champ{{\mathop{\mathrm{amp}}}}
\def\depth{{\mathop{\mathrm{depth}}}}
\def\chdepth{{\mathop{\mathrm{chdepth}}}}
\def\inf{{\mathop{\mathrm{inf}}}}
\def\sup{{\mathop{\mathrm{sup}}}}
\def\Supp{\mathop{\mathrm{Supp}}\limits}
\def\supp{\mathop{\mathrm{supp}}\limits}
\def\wslash {{/\hspace{-4pt}/}}
\def\ulfrkp{{\underline{\frkp}}}
\def\ulfrkq{{\underline{\frkq}}}
\def\ulfrkm{{\underline{\frkm}}}
\def\DGproj{{\mathsf{DGproj}}}
\def\DGinj{{\mathsf{DGinj}}}
\title[On CDGAs]
{
On commutative differential graded algebras 
}
\date{}
\author{Hiroyuki Minamoto%\footnote{Department of Mathematics Osaka Prefecture University, Sakai City, Japan} 
%\thanks{The author was partly supported by JSPS KAKENHI Grant Number 26610009.}
}
\keywords{Commutative differential graded algebras}
\subjclass[2010]{13D05 (primary), 16E45, 16E05 (secondary)}
\begin{document}

\address{Department of Mathematics Osaka Prefecture University,\\ Sakai City, Japan}
\email{minamoto@mi.s.osakafu-u.ac.jp}

\begin{abstract}
In this paper we undertake a basic study on  connective commutative differential graded algebras (CDGA), 
more precisely, piecewise Noetherian CDGA, which is a DG-counter part of commutative Noetherian algebra. 
We establish basic results for example, Auslaner-Buchsbaum formula and Bass formula without any unnecessary assumptions. 

The key notion is the sup-projective (sppj) and inf-injective (ifij) resolutions introduced by the author,  
which are DG-versions of the projective and injective resolution for ordinary modules. 
These are different from DG-projective and DG-injective resolutions which is known DG-version of the projective and injective resolution. 
In the paper, we show that sppj and ifij resolutions are powerful tools to study DG-modules. 
Many classical result about the projective and injective resolutions can be generalized to DG-setting by using sppj and ifij resolutions. . 
Among other things we prove a DG-version of Bass's structure  theorem of a minimal injective resolution holds for a minimal ifij resolution 
and a DG-version of the Bass numbers introduced by the same formula with the classical case. 
We also prove  a structure theorem of a minimal ifij resolution of a dualizing complex $D$, 
which is completely analogues to the structure theorem of a minimal injective resolution of a dualizing complex over an ordinary commutative algebra. 

Specializing to results about a dualizing complex,  we study a Gorenstein CDGA. 
We generalize a  result by 
Felix-Halperin-Felix-Thomas and  
Avramov-Foxby  which gives  conditions that  a CDGA $R$ is Gorenstein in terms of its cohomology algebra $\tuH(R)$. 
\end{abstract}

\maketitle

\tableofcontents

\section{Introduction}
In this paper we study connective commutative DG-algebras (CDGA), 
more precisely, piecewise Noetherian CDGA, which is a DG-counter part of commutative Noetherian algebra. 
Aside from results, an important feature of the paper is that it is demonstrated that the standard  techniques for ordinary commutative algebra given in text books (e.g. \cite{Matsumura}) can be generalized to CDGAs. 
In the several previous research, to study CDGAs their own methods were separately developed. However we show that standard methods for ordinary commutative algebra can be applied to CDGAs with appropriate modifications.
Actually,  in that way we prove, for example, Auslander-Buchsbaum formula (Theorem \ref{Auslander-Buchsbaum formula}) and  Bass formula (Theorem \ref{Bass formula}) for piecewise Noetherian CDGAs without any unnecessary assumptions. 

To perform this, we need to replace the projective and injective dimensions for ordinary modules with that for DG-modules introduced by Yekutieli. 
We also need to replace the projective and injective resolutions for ordinary modules with that for DG-modules, which are called sup-projective (sppj) and inf-injective (ifij) resolutions, introduced in the previous paper \cite{coppepan}. 
Although DG-versions of the projective and injective resolutions which is called the DG-projective and DG-injective resolutions (or, cofibrant and fibrant replacements, semi-projective and semi-injective resolutions)  have been fundamental tools in research of DG-algebras and DG-modules, these are not suitable to measure the projective and injective dimensions. 
In \cite{coppepan}, the sppj and ifij resolutions are introduced and proved that, roughly speaking, their length measure the projective and injective dimensions respectively (Theorem \ref{sppj resolution theorem} and Theorem \ref{ifij resolution theorem}).

We show that many classical result of commutative algebras can be generalized to CDGAs by using Yekutieli's projective and injective dimensions and the sppj and ifij resolutions. 
(This fact supports that the sppj and  ifij resolutions are  proper generalization of the projective and  injective resolution 
other than DG-projective and DG-injective resolutions.) 
Among other things, we establish a DG-version of Bass's theory of minimal injective resolutions. 
In our DG-version, the role of indecomposable injective modules in ordinary Bass's theory  is played by  
DG-modules $E_{R}(R/\ulfrkp')$ associated to  prime ideals $\frkp \in \Spec H^{0}$ of the $0$-th cohomology algebra $\tuH^{0}(R)$.
This fact may be compatible with the view point of derived algebraic geometry that 
the base affine scheme of the derived affine scheme $\Spec R$ associated to 
a CDGA $R$ is the affine scheme $\Spec \tuH^{0}(R)$ (see e.g. \cite{Gaitsgory-Rozenblyum}). 
The Bass number $\mu_{R}^{i}(\frkp, M)$ for a DG-module $M$ is defined by the same formula for ordinary modules as below
\[
\mu^{n}(\frkp, M) := 
\dim_{\kappa(\frkp)}
 \Hom_{R_{\ulfrkp}}(
\kappa(\frkp), M_{\frkp}[n]).
\]
Then we obtain the DG-version of a structure theorem of a minimal ifij resolution. 

\begin{theorem}[{Theorem \ref{Bass injective decomposition theorem}}]
Let $M \in \sfD^{> -\infty}(R)$ and $I_{\bullet}$ a minmal ifij resolution of $M$. 
Then, $\mu^{n}(\frkp, M) = 0 $ if $n \neq i + \inf I_{-i}$ for any $i \in \NN$. 
If for $i \in \NN$ we set $n = i +\inf I_{-i}$,  then
\[
I_{-i} \cong 
\left(\bigoplus_{\frkp \in \Spec \tuH^{0}(R)} E_{R}(R/\ulfrkp')^{\oplus \mu^{ n }(\frkp, M)}\right)[-\inf I_{-i}]. 
\]
\end{theorem}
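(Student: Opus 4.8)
The plan is to follow the classical proof of Bass's structure theorem for minimal injective resolutions, substituting the DG-modules $E_{R}(R/\ulfrkp')$ for the indecomposable injectives $E(R/\frkp)$ and morphisms in the derived category for $\Ext$-groups, while keeping careful track of the internal $\inf$-shifts, which are the genuinely new feature of the DG setting. First I would apply the structure theory of inf-injective DG-modules from \cite{coppepan} to each term $I_{-i}$ of the minimal resolution. This expresses $I_{-i}$, up to shift, as a direct sum of copies of the modules $E_{R}(R/\ulfrkq')$ with $\frkq \in \Spec \tuH^{0}(R)$; using minimality to forbid a spread of shifts and the normalization $\inf E_{R}(R/\ulfrkq') = 0$, the common shift is forced to equal $-\inf I_{-i}$, so that
$$
I_{-i} \;\cong\; \Big(\bigoplus_{\frkq \in \Spec \tuH^{0}(R)} E_{R}(R/\ulfrkq')^{\oplus c_{i,\frkq}}\Big)[-\inf I_{-i}]
$$
for non-negative integers $c_{i,\frkq}$. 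The content of the theorem is then reduced to the identification $c_{i,\frkp} = \mu^{n}(\frkp, M)$ for $n = i + \inf I_{-i}$, together with the vanishing of all remaining Bass numbers.

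Next, since localization is exact and preserves both inf-injectivity and minimality, $(I_{\bullet})_{\frkp}$ is a minimal ifij resolution of $M_{\frkp}$ over $R_{\ulfrkp}$, and it computes $\RHom_{R_{\ulfrkp}}(\kappa(\frkp), M_{\frkp})$. The decisive local input is the orthogonality relation
$$
\RHom_{R_{\ulfrkp}}\big(\kappa(\frkp), E_{R}(R/\ulfrkq')_{\frkp}\big) \;\simeq\;
\begin{cases}
\kappa(\frkp) & \text{if } \frkq = \frkp,\\
0 & \text{if } \frkq \neq \frkp,
\end{cases}
$$
concentrated in a single cohomological degree and there one-dimensional over $\kappa(\frkp)$, which is the DG-analogue of the classical computation $\Hom_{R_{\frkp}}(\kappa(\frkp), E(R/\frkq)_{\frkp}) = 0$ unless $\frkp = \frkq$. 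Because the resolution is minimal, the differentials of the complex $\RHom_{R_{\ulfrkp}}(\kappa(\frkp), (I_{\bullet})_{\frkp})$ vanish, and its cohomology is the direct sum of the contributions of the individual terms $I_{-i}$.

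Combining these facts, $\mu^{n}(\frkp, M) = \dim_{\kappa(\frkp)} \tuH^{n} \RHom_{R_{\ulfrkp}}(\kappa(\frkp), M_{\frkp})$ receives a contribution precisely from those $I_{-i}$ whose single-degree orthogonality term lands in cohomological degree $n$. Tracking the shift $[-\inf I_{-i}]$ shows that this occurs exactly when $n = i + \inf I_{-i}$, giving at once the asserted vanishing $\mu^{n}(\frkp, M) = 0$ for $n \neq i + \inf I_{-i}$; and when $n = i + \inf I_{-i}$ the contribution is $c_{i,\frkp}$, so that $c_{i,\frkp} = \mu^{n}(\frkp, M)$. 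Substituting back into the decomposition of $I_{-i}$ yields the stated isomorphism.

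I expect the main obstacle to be twofold. The first difficulty is the \emph{alignment of shifts}: showing that every indecomposable summand of $I_{-i}$ carries the same shift $-\inf I_{-i}$, rather than a spread of distinct shifts, has no classical counterpart and requires a genuine use of the minimality of the ifij resolution. The second is to pin down the exact cohomological degree in which the local $\RHom$ against the residue field is concentrated, so that the degree bookkeeping producing $n = i + \inf I_{-i}$ comes out precisely; this is where the interplay between the internal grading of $E_{R}(R/\ulfrkp')$ and the homological position $i$ in the resolution must be controlled.
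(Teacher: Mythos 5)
Your proof is correct and follows essentially the same route as the paper, which simply invokes the classical argument of Matsumura together with Corollary \ref{ifij resolution corollary 1} (the vanishing of the differentials of the localized complex $\RHom(\kappa(\frkp),(I_{\bullet})_{\ulfrkp'})$ that you re-derive from minimality is exactly what that corollary packages) and the decomposition of objects of $\cI$ from Proposition \ref{decomposition of injectives}. The only remark worth making is that your anticipated ``alignment of shifts'' obstacle is vacuous: by Definition \ref{ifij morphism definition} each term $I_{-i}$ already lies in $\cI[-\inf I_{-i}]$, a single shift of the class $\cI$, so no spread of shifts can occur and minimality is not needed for that step.
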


We study a dualizing complex in the sense of  Yekutieli (Definition \ref{dualizing complex definition}).  
We prove   a structure theorem of a minimal ifij resolution of a dualizing complex $D$. 
The statement is completely analogues to the structure theorem of a minimal injective resolution of 
a dualizing complex, 
which is  one of the fundamental result in  classical commutative ring theory  
 proved in \cite{Residues and Duality}, summarized, for example, in \cite[Theorem 4.2]{Foxby:Complexes}.

\begin{theorem}[{Theorem \ref{structure of minamal ifij resolution of a daulizing complex}}]
Let $R$ be a connective piecewise Noetherian CDGA.  
Assume that $R$ has a dualizing complex  $D \in \sfD(R)$ with a minimal ifij resolution $I_{\bullet}$ of length $e$.  
Then  $\tuH^{0}(R)$ is catenary and $\dim \tuH^{0}(R) < \infty$. 
If moreover we assume that $\tuH^{0}(R)$ is local, 
then  the following statements hold. 
\begin{enumerate}[(1)]

\item  $\inf I_{-i} = \inf D$ for $i = 0, \cdots, e$.  

\item $e = \injdim D =\depth D = \dim \tuH^{0}(R)$.

\item 
\[
I_{-i} = \bigoplus_{\frkp } E_{R}(R/\ulfrkp')[-\inf D] 
\]
where $\frkp$ run all prime ideals such that $i = \dim \tuH^{0}(R) - \dim \tuH^{0}(R)/\frkp$. 
 \end{enumerate}
 \end{theorem}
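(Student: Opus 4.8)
The plan is to reduce the entire statement to a computation of the Bass numbers $\mu^{n}(\frkp, D)$ and then to feed the result into Theorem \ref{Bass injective decomposition theorem}, which already expresses each term $I_{-i}$ of a minimal ifij resolution in terms of these numbers. The crux is to prove that $D$ is \emph{pointwise of Bass type}: for every $\frkp \in \Spec \tuH^{0}(R)$ there is a unique degree $d(\frkp) \in \ZZ$ with $\mu^{d(\frkp)}(\frkp, D) = 1$ and $\mu^{n}(\frkp, D) = 0$ for all $n \neq d(\frkp)$, and that the resulting function $d \colon \Spec \tuH^{0}(R) \to \ZZ$ is a codimension function. Granting this, catenaryness, finiteness of dimension, and the explicit shape of $I_{-i}$ all follow from the standard combinatorics of codimension functions combined with Theorem \ref{Bass injective decomposition theorem}.

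First I would establish the local input. For each $\frkp$ the localization $D_{\frkp}$ should itself be a dualizing complex over the localized CDGA $R_{\ulfrkp}$; this is the DG-analogue of the fact that localizing a dualizing complex gives a dualizing complex, and I expect it to follow from the defining properties in Definition \ref{dualizing complex definition} together with the compatibility of $\RHom$ with localization. Applying the duality of $D_{\frkp}$ to the residue field, $\RHom_{R_{\ulfrkp}}(\kappa(\frkp), D_{\frkp})$ must be a dualizing complex over the field $\kappa(\frkp)$, hence a single shift $\kappa(\frkp)[-d(\frkp)]$ of it. By the defining formula of $\mu^{n}(\frkp, D)$ this immediately yields that the Bass numbers of $D$ are $0$ or $1$ and are concentrated in the single degree $d(\frkp)$.

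The heart of the argument, and the step I expect to be the main obstacle, is to show that $d$ is a codimension function, that is, $d(\frkq) = d(\frkp) + 1$ whenever $\frkp \subsetneq \frkq$ is an immediate specialization in $\Spec \tuH^{0}(R)$. In the classical setting this is the content of local duality for dualizing complexes; in the DG setting it requires comparing the two dualizing complexes $D_{\frkp}$ and $D_{\frkq}$ along the localization $R_{\ulfrkq} \to R_{\ulfrkp}$ and checking that the concentration degree shifts by exactly one along a saturated chain of length one. This is where the CDGA-specific bookkeeping of $\inf$ and $\sup$, and the behaviour of $\kappa(\frkp)$ under change of prime, enters, and where one must verify that no extra amplitude is introduced by the DG structure.

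Once $d$ is known to be a codimension function I would finish as follows. The existence of such a function forces all saturated chains between two comparable primes to have the same length, so $\tuH^{0}(R)$ is catenary; and since the minimal ifij resolution has finite length $e = \injdim D$ by Theorem \ref{ifij resolution theorem}, the range of $d$ is bounded, whence $\dim \tuH^{0}(R) \le e < \infty$. In the local case, catenaryness lets me normalize $d$ to the closed form $d(\frkp) = \inf D + \bigl(\dim \tuH^{0}(R) - \dim \tuH^{0}(R)/\frkp\bigr)$. Substituting the concentration $\mu^{n}(\frkp, D) \neq 0 \iff n = d(\frkp)$ into Theorem \ref{Bass injective decomposition theorem}, the relation $n = i + \inf I_{-i}$ identifies the index $i$ of the term containing $E_{R}(R/\ulfrkp')$ as $i = \dim \tuH^{0}(R) - \dim \tuH^{0}(R)/\frkp$ and simultaneously forces $\inf I_{-i} = \inf D$, giving (1) and (3). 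The largest such $i$ is attained at $\frkp = \frkm$, where $\dim \tuH^{0}(R)/\frkm = 0$, so the resolution has length $e = \dim \tuH^{0}(R) = \injdim D$; the remaining equality $\depth D = \dim \tuH^{0}(R)$ then follows from the Bass formula (Theorem \ref{Bass formula}) together with the fact that $\mu^{\bullet}(\frkm, D)$ is concentrated in the single degree corresponding to $\frkp = \frkm$, completing (2).
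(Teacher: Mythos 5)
Your strategy coincides with the paper's: localize $D$ at each prime (Corollary \ref{201712012051}), use the characterization of a dualizing complex over a local CDGA (Theorem \ref{characterization of a dualizing complex}) to get $\mu^{m}(\frkp,D)=\delta_{m,d(\frkp)}$, show that $d$ is a codimension function to obtain catenaryness and $\dim \tuH^{0}(R)<\infty$, and then read off the shape of $I_{\bullet}$ from Theorem \ref{Bass injective decomposition theorem}. The step you single out as the main obstacle --- that $d(\frkq)=d(\frkp)+1$ for an immediate specialization --- is not actually open: it is exactly Lemma \ref{one up lemma}, already proved in the paper by the classical Matsumura argument (apply $\Hom(-,M)$ to $0\to H^{0}/\frkp \xrightarrow{x} H^{0}/\frkp \to H^{0}/(xH^{0}+\frkp)\to 0$ and use Nakayama); combined with the single-degree concentration it gives $\mu^{d(\frkp)+1}(\frkq,D)>0$, hence $d(\frkq)=d(\frkp)+1$, with no further DG bookkeeping. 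Two details you should make explicit before the proof is complete. First, the normalization $d(\frkp)=\inf D+\dim\tuH^{0}(R)-\dim\tuH^{0}(R)/\frkp$ needs an anchor: note that $d(\frkp)\ge \inf D_{\ulfrkp'}\ge \inf D$ for every $\frkp$, with equality for the associated primes of $\tuH^{\inf}(D)$ (these are the primes occurring in $I_{0}$), and then compare with a prime of maximal coheight using $d(\frkm)=d(\frkp)+\dim\tuH^{0}(R)/\frkp$. Second, to split $i+\inf I_{-i}=d(\frkp)$ into the two separate equalities of (1) and (3), observe that $i\mapsto i+\inf I_{-i}$ is strictly increasing and must take all $\dim\tuH^{0}(R)+1$ consecutive values $\inf D,\dots,\inf D+\dim\tuH^{0}(R)$, which forces every increment to be exactly $1$; the paper instead obtains (1) from Corollary \ref{201712082342}. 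Finally, the Bass formula is not needed for $\depth D$: since $\RHom(k,D)\cong k[-d(\frkm)]$ is concentrated in one degree, Corollary \ref{criterion for finiteness of injective dimension corollary} gives $\chdepth D=\injdim D=\dim\tuH^{0}(R)$ directly.
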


Finally we consider a Gorenstein CDGA.
A local piecewise Noetherian CDGA $R$ is called 
 Gorenstein if it satisfies  $\injdim R < \infty$. 
In Theorem \ref{Bass theorem}, generalizing \cite[Theorem 4.3]{FIJ} we give several equivalent condition that $R$ to be Gorenstein.

We generalize a  result by 
Felix-Halperin-Felix-Thomas \cite{FHT:Gorenstein spaces}, 
Avramov-Foxby \cite{Avramov-Foxby:Locally Gorenstein homomorphism} 
which relates Gorenstein property of $R$ and its cohomology algebra $\tuH(R)$.

\begin{theorem}[{Theorem \ref{cohomology Gorenstein theorem}}]
Let $R$ be a local piecewise Noetherian CDGA  satisfying $\tuH^{\ll 0}(R) = 0$. 
Then the following conditions are equivalent. 
\begin{enumerate}[(1)]
\item 
$R$ is Gorenstein CDGA and $H$ is CM as a graded commutative algebra.

\item 
$\tuH(R)$ is Gorenstein CDGA when it is regarded as CDGA with the trivial differential $\partial_{\tuH(R)}= 0$. 

\item 
$\tuH(R)$ is Gorenstein as an ordinary graded commutative algebra. 

\item 
The following conditions are satisfied.
\begin{enumerate}
\item $\tuH^{\inf R}(R)$ is a canonical $\tuH^{0}(R)$-module. 

\item $\tuH^{-n}(R)$ is a MCM-module over $\tuH^{0}(R)$. 

\item 
There exists an isomorphism 
$\tuH(R) \xrightarrow{\cong}  \Hom_{\tuH^{0}(R)}(\tuH(R), \tuH^{\inf R}(R))$ of graded $\tuH(R)$-modules.
\end{enumerate}

\end{enumerate}
\end{theorem}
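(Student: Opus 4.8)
The plan is to prove the four conditions equivalent through $(1)\Rightarrow(4)\Rightarrow(3)\Rightarrow(1)$ together with the equivalence $(2)\Leftrightarrow(3)$, treating $(3)\Leftrightarrow(4)$ and $(2)\Leftrightarrow(3)$ as essentially graded-commutative-algebra statements and isolating the genuinely DG content in the two implications connecting $R$ with its cohomology algebra $H:=\tuH(R)$. Throughout I write $H^{0}:=\tuH^{0}(R)$, let $\frkm$ be its maximal ideal with residue field $\kappa$, and set $s:=\inf R$, so that $H$ is a graded commutative $H^{0}$-algebra concentrated in degrees $[s,0]$ and, by piecewise Noetherianity, finite as an $H^{0}$-module. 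Whenever $R$ is Gorenstein (as in (1)), $H^{0}$ admits a dualizing complex by Theorem \ref{structure of minamal ifij resolution of a daulizing complex}, hence a canonical module $\omega$, which is the datum referred to in (4)(a).

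For the two commutative-algebra links I would argue as follows. For $(2)\Leftrightarrow(3)$, I would first record the lemma that, for a \emph{formal} CDGA --- that is, $H$ equipped with the zero differential --- Yekutieli's DG injective dimension of $H$ over itself coincides with the ordinary graded injective dimension; this is a direct consequence of the Bass formula (Theorem \ref{Bass formula}) specialised to the case of vanishing differential, since then a minimal ifij resolution is nothing but a graded injective resolution. For $(3)\Leftrightarrow(4)$, I would invoke the classical theory of canonical modules for a finite graded algebra over the Cohen--Macaulay local ring $H^{0}$: granting (a), the module $\Hom_{H^{0}}(H,\omega)$ is the relative canonical module of $H$, the maximal Cohen--Macaulay condition (b) guarantees that it is the honest canonical module of $H$ (concentrated in the correct cohomological range), and (c) is then precisely the self-duality $H\cong\Hom_{H^{0}}(H,\omega)$ that characterises the Gorenstein property of a graded ring that is maximal Cohen--Macaulay over its degree-zero part.

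The heart of the theorem is the pair of implications $(1)\Rightarrow(4)$ and $(3)\Rightarrow(1)$. For $(1)\Rightarrow(4)$ I would use that a Gorenstein $R$ is, after normalisation, its own dualizing complex, so Theorem \ref{structure of minamal ifij resolution of a daulizing complex} applies with $D=R$: every term of the minimal ifij resolution $I_{\bullet}$ satisfies $\inf I_{-i}=\inf R=s$ and is a sum of the DG indecomposables $E_{R}(R/\ulfrkp')[-s]$. I would then recover $H$ from this resolution by means of the spectral sequence with $E_{1}$-term the internal cohomologies $\tuH^{*}(I_{-i})$ converging to $\tuH^{*}(R)$; identifying $\tuH^{*}(E_{R}(R/\ulfrkp'))$ with the injective-hull/local-cohomology data of $\frkp$ over $H^{0}$, I expect the Cohen--Macaulay hypothesis on $H$ to force this spectral sequence to degenerate, and reading off the surviving terms should yield (a) $\tuH^{s}(R)\cong\omega$, (b) each $\tuH^{-n}(R)$ maximal Cohen--Macaulay over $H^{0}$, and (c) the self-duality induced by the duality functor $\RHom_{R}(-,R)$. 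For $(3)\Rightarrow(1)$, the Cohen--Macaulay part of (1) is automatic since a graded Gorenstein ring is Cohen--Macaulay; for the Gorenstein property of $R$ I would use the Bass-number characterisation from Theorem \ref{Bass theorem} (finite injective dimension of local $R$ being detected by $\RHom_{R}(\kappa,R)$ concentrated in a single degree) together with a comparison spectral sequence $\Ext^{*}_{H}(\kappa,H)\Rightarrow\Ext^{*}_{R}(\kappa,R)$, so that finiteness of the graded injective dimension of $H$ forces boundedness on the $R$-side.

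The \emph{main obstacle} is precisely this comparison between the DG-algebra $R$ and its formal cohomology $H$: constructing the relevant Eilenberg--Moore type spectral sequence, proving its convergence in the unbounded DG setting, and --- most delicately --- showing that the Cohen--Macaulay condition on $H$ is exactly what governs its degeneration and thereby transfers the Gorenstein property in both directions. A secondary but persistent source of bookkeeping is tracking the normalising shift $[-s]$ throughout, so that the single-degree concentration of the Bass numbers of $R$ matches the degree-zero canonical module $\omega$ of $H^{0}$ on the cohomology side.
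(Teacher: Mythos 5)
Your overall architecture ($(1)\Rightarrow(4)\Rightarrow(3)\Rightarrow(1)$ plus $(2)\Leftrightarrow(3)$) differs from the paper's (which proves $(1)\Leftrightarrow(3)$ directly and then gets $(2)\Leftrightarrow(3)$ and $(1)\Leftrightarrow(4)$ as consequences), and the difference matters because the steps you leave open are exactly the ones the paper's route is designed to avoid. The main gap is $(1)\Rightarrow(4)$: the degeneration of your spectral sequence with $E_{1}$-term $\tuH^{*}(I_{-\bullet})$ is the entire content of the implication, and ``I expect the Cohen--Macaulay hypothesis to force degeneration'' is not an argument --- you would need to show that each triangle $M_{-i}\to I_{-i}\to M_{-i-1}$ of the ifij resolution has its long exact cohomology sequence split into short exact sequences in \emph{every} internal degree, not just in degree $\inf$, and nothing in your sketch produces that. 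The paper sidesteps this entirely: it proves $(1)\Rightarrow(3)$ by induction on $r=\dim H=\depth H$, using a homogeneous $H$-regular element $x\in H^{0}$, the identity $\tuH(R\wslash xR)=H/(x)$ (valid precisely because $x$ is $H$-regular), and Corollary \ref{waru corollary} to transfer Gorensteinness to $R\wslash xR$; the base case $r=0$ is the observation that then $R\in\cI$, so $H$ is a coinduced, hence graded-injective, $H$-module. The spectral-sequence-type content only enters in Theorem \ref{dualizing formal theorem}, where the exactness of the cohomology sequence of the ifij resolution is taken as a \emph{hypothesis} rather than derived from CM-ness.

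Two further steps are asserted without support. For $(3)\Rightarrow(1)$ your comparison spectral sequence $\Ext^{*}_{H}(\kappa,H)\Rightarrow\Ext^{*}_{R}(\kappa,R)$ is never constructed and its convergence is not addressed; the paper instead uses Proposition \ref{201711232212}, which builds a bounded DG-injective resolution of $R$ directly from a bounded graded injective resolution of $H$, with no spectral sequence. For $(2)\Leftrightarrow(3)$ your claim that for the formal CDGA $H$ a minimal ifij resolution ``is nothing but a graded injective resolution'', and that this follows from the Bass formula, is both inaccurate and a non sequitur: an ifij resolution is a tower of exact triangles whose terms lie in the restricted class $\cI$, not an exact sequence of arbitrary graded injectives, and the Bass formula (a depth/injective-dimension identity) says nothing about comparing the DG and graded notions of injective dimension. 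The paper obtains $(2)\Leftrightarrow(3)$ for free by applying the already-established equivalence $(1)\Leftrightarrow(3)$ to the CDGA $H$ itself and using $\tuH(H)\cong H$. Your reduction of $(4)\Leftrightarrow(3)$ to classical canonical-module theory over $H^{0}$ is reasonable and close to what Theorem \ref{dualizing formal theorem} encodes, but as it stands the cycle of implications does not close.
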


The paper is organized as follows. 
In Section \ref{Resolutions of DG-modules}, we recall projective dimension and injective dimension for DG-modules defined by Yekutieli \cite{Yekutieli}. 
We also recall   a sppj resolution and an ifij resolution which are DG-versions of projective and injective resolution from \cite{coppepan}.

In Section \ref{CDGA}, we investigate   commutative DG-algebras.
In Section \ref{CDGA:basics}, 
we develop basic notion and 
prove their properties. 
Section \ref{CDGA:Indecomposable} investigates a structure of minimal ifij resolutions. 
First we study indecomposable object of $\cI$.  
Then we  introduce  the Bass number for $M \in \sfD^{> - \infty}(R)$ 
and show that it gives a description of a minimal ifij resolution 
as in the classical case. 
Section \ref{CDGA:dualizing} deals with dualizing complexes and 
establishes a structure theorem of a minimal ifij resolution of a dualizing complex. 
As an application, we study a Gorenstein CDGA and its cohomology algebra. 
%
%In Section \ref{Local cohomology}, we study local cohomology of CDGA. 
%Using results developed in the previous sections,  
%we give alternative proofs of results about local cohomology 
%which were first obtained by Shaul \cite{Shaul:Homological} 

In Appendix \ref{CE resolution}, 
we recall the constructions of DG-projective resolutions and DG-injective resolutions.

\subsection{Notation and convention}

The basic setup and notation are the followings.

Throughout the paper, we fix a base commutative ring $\kk$ and  (DG, graded) algebra is  (DG, graded) algebra over $\kk$.
We denote by $R =(R, \partial) $   a connective commutative DG-algebra. 
We will \emph{not} assume that $R$ is strongly commutative, i.e., $a^{2} = 0$ for a homogeneous element $a$ of odd degree.   
Recall that  ``connective" means that $\tuH^{> 0}(R) = 0$.  
We note that every connective DG-algebra $R$ is quasi-isomorphic to a DG-algebra $S$ such that $S^{>0} =0$. 
Since quasi-isomorphic DG-algebras have equivalent derived categories, 
it is harmless to assume that $R^{>0} = 0$ for our purpose. 
The symbol $R^{\#}$ denotes the underlying graded algebra of $R$. 
For a DG-$R$-module $M$, 
the symbol $M^{\#}$ denotes the underlying graded $R^{\#}$-module of $M$.

For simplicity we denote by $H := \tuH(R)$ the cohomology algebra of $R$, 
by $H^{0} := \tuH^{0}(R)$ the $0$-th cohomology algebra of $R$. 
We denote by $\GrMod H$ the category of graded $H$-modules, 
by $\Mod H^{0}$ the category of $H^{0}$-modules. 
%\item $Z := \tuZ(R)$: the co-cycle algebra of $R$. 

We denotes by  $\sfC(R)$  the category of DG-$R$-modules and cochain morphisms, 
by $\sfK(R)$  the homotopy category of DG-$R$-modules 
and by  $\sfD(R)$ the derived category of DG $R$-modules. 
The symbol $\Hom$ denotes the $Hom$-space of $\sfD(R)$. 

Let $n \in \{ -\infty \} \cup \ZZ \cup \{ \infty\}$. 
The symbols $\sfD^{<n}(R)$, $\sfD^{>n}(R)$ denote the full subcategories of $\sfD(R)$ consisting of $M$ such that 
$\tuH^{\geq n}(M) = 0, \ \tuH^{\leq n}(M) = 0$ respectively. 
We set 
$\sfD^{[a,b]}(R) = \sfD^{\geq a}(R) \cap \sfD^{\leq b}(R)$ 
for $a,b \in \{ - \infty \} \cup \ZZ \cup \{ \infty\}$ such that $a \leq b$. 
We set  $\sfD^{\mrb}(R) := \sfD^{< \infty}(R) \cap \sfD^{> -\infty}(R)$. 

Since $R$ is connective, the pair $(\sfD^{\leq 0}(R), \sfD^{\geq 0}(R))$ is a $t$-structure in $\sfD(R)$, 
which is called the \textit{standard} $t$-structure. 
The truncation functors  are denote by  $\sigma^{<n}, \sigma^{>n}$.  
We identify the heart $\sfH = \sfD^{\leq 0}(R) \cap \sfD^{\geq 0}(R)$ 
 of the standard  $t$-structure with  $\Mod H^{0}$ via the functor $\Hom(H^{0}, -)$, 
 which fits into the following commutative diagram 
 \[
 \begin{xymatrix}{ 
  && \sfH \ar[drr]^{\mathsf{can}} \ar[dd]_{\cong}^{\Hom(H^{0},-)} && \\ 
 \sfD(R) \ar[urr]^{\tuH^{0}} \ar[drr]_{\Hom(R, -)}  &&&& \sfD(R) \\ 
 && \Mod H^{0},  \ar[urr]_{f_{*}} &&
 }\end{xymatrix}
 \]
 where $\mathsf{can}$ is the canonical inclusion functor 
 and $f_{*}$ is the restriction functor along a canonical projection $f: R \to H^{0}$.

For a DG-$R$-module $M\neq 0$, we set
$\inf M := \inf \{ n \in  \ZZ \mid \tuH^{n}(M) \neq 0\}$,  
$\sup M := \sup \{ n \in \ZZ \mid \tuH^{n}(M) \neq 0\}$, 
$\champ M := \sup M - \inf M$. 
In the case $\inf M > - \infty$, we use the abbreviation $\tuH^{\inf}(M) := \tuH^{\inf M }(M)$. 
Similarly in the case $\sup M < \infty $, we use the abbreviation $\tuH^{\sup}(M) := \tuH^{\sup M}(M)$. 
We formally set $\inf 0:= \infty$ and $\sup 0 := - \infty$. 

In the case where we need to indicate the DG-algebra $R$, we denote $\sup_{R} M, \inf_{R} M$ and $\champ_{R} M$.

\vspace{10pt}
\noindent
\textbf{Acknowledgment}

%The authors thank all girls in the world for cheering us up every day.  

The author would like to thank L. Shaul for his comments on the first draft of this paper 
which helped to improve many points.  
He also thanks M. Ono for drawing  his attention to  the papers by Shaul, 
for  answering a question about  dualizing complexes 
and for comments on earlier versions of this paper.   

The author  was partially  supported by JSPS KAKENHI Grant Number 26610009.

\section{Sup-projective (sppj) resolutions and inf-injective (ifij) resolutions}\label{Resolutions of DG-modules}

\subsection{Projective dimension and sppj resolution  of DG-modules}

\subsubsection{Projective dimension}

We recall the definition of the projective dimension of $M \in \sfD(R)$ introduced by Yekutieli.

\begin{definition}[{\cite[Definition 2.4]{Yekutieli}}]
Let  $a \leq b \in \{ -\infty \} \cup \ZZ \cup \{\infty\}$. 

\begin{enumerate}[(1)] 
\item 
An object $M \in \sfD(R)$ 
is said 
to have \textit{projective concentration} $[a,b]$ % and said
%to have  injective dimension $b-a$ (relative to $\sfD(R)$) in the sense of Yekutieli 
if  the functor  $F = \RHom_{R}(M, -)$ 
sends $\sfD^{[m,n]}(R)$ to 
$\sfD^{[m -b,n -a ]}(\kk)$ 
for any $m \leq n \in \{-\infty\} \cup \ZZ \cup \{\infty\}$.
\[
F(\sfD^{[m,n]}(R)) \subset \sfD^{[m -b,n -a]}(\kk).
\]

\item 
An object $M \in \sfD(R)$ 
is said 
to have \textit{strict projective concentration} $[a,b]$
if it has projective concentration $[a,b]$ 
and does't have projective concentration $[c,d]$ 
such that $[c,d] \subsetneq [a,b]$. 

\item 
An object $M \in \sfD(R)$ 
is said 
to have projective dimension $d \in \NN$ 
if it has strict projective concentration $[a,b]$ for $a,b\in \ZZ$.
such that $d= b-a$. 

In the case where, $M$ does't have a finite interval as  projective concentration, 
it is said to have infinite projective dimension. 

We denote the projective dimension by $\pd M$.
\end{enumerate}
%In this case $b-a$ is called the injective dimension of $M$. 
\end{definition}

We recall the following result from {\cite[Lemma 2.3]{coppepan}}

\begin{lemma}
If $M \in \sfD(R)$ has finite projective dimension, 
then it belongs to $\sfD^{<\infty}(R)$. 
\end{lemma}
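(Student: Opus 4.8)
The plan is to prove the statement in the sharp form $\sup M \le b$, where $[a,b]$ is the projective concentration supplied by the finite projective dimension. Indeed, by definition of projective dimension, finiteness of $\pd M$ means that $M$ has strict projective concentration $[a,b]$ for some $a,b \in \ZZ$; in particular it has projective concentration $[a,b]$ with $b < \infty$. The case $M = 0$ is trivial, so assume $M \neq 0$. My goal is then to show $\tuH^{s}(M) = 0$ for every $s > b$, which gives $\sup M \le b$ and hence $M \in \sfD^{<\infty}(R)$. The guiding idea is to probe $\tuH^{s}(M)$ for each individual $s$ by mapping $M$ into the heart, using that projective concentration forces $\RHom_{R}(M,N)$ to be concentrated in degrees $\ge -b$ whenever $N$ lies in the heart.

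First I would fix an injective cogenerator $E$ of $\Mod H^{0}$ and regard it, through the identification $\sfH \simeq \Mod H^{0}$, as an object of $\sfD^{[0,0]}(R)$. The elementary tool is the following detection formula coming purely from the $t$-structure: for any $Z \in \sfD^{<s+1}(R)$ one has $\Hom(Z, E[-s]) \cong \Hom_{H^{0}}(\tuH^{s}(Z), E)$. This holds because $E[-s] \in \sfD^{[s,s]}(R)$ annihilates maps out of $\sigma^{<s}Z \in \sfD^{<s}(R)$ by orthogonality, so that $\Hom(Z, E[-s]) \cong \Hom(\sigma^{>s-1}Z, E[-s])$, while $\sigma^{>s-1}Z = \tuH^{s}(Z)[-s]$ since $\sup Z \le s$. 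As $E$ is a cogenerator, the right-hand side is nonzero exactly when $\tuH^{s}(Z) \neq 0$.

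Next, taking $N = E \in \sfD^{[0,0]}(R)$ in the definition of projective concentration yields $\RHom_{R}(M, E) \in \sfD^{[-b,-a]}(\kk)$, so that $\Hom(M, E[-s]) = \tuH^{-s}\RHom_{R}(M, E) = 0$ for every $s > b$. To convert this into the vanishing of $\tuH^{s}(M)$ itself, I would apply $\Hom(-, E[-s])$ to the truncation triangle $\sigma^{<s+1}M \to M \to \sigma^{>s}M \xrightarrow{+1}$. In the resulting long exact sequence the middle term $\Hom(M, E[-s])$ vanishes for $s > b$ by the previous sentence, and the term $\Hom(\sigma^{>s}M, E[-s+1])$ vanishes by $t$-structure orthogonality, since $\sigma^{>s}M \in \sfD^{>s}(R)$ while $E[-s+1] \in \sfD^{\le s-1}(R)$. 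Hence $\Hom(\sigma^{<s+1}M, E[-s]) = 0$, and applying the detection formula to $Z = \sigma^{<s+1}M$, for which $\tuH^{s}(Z) = \tuH^{s}(M)$, forces $\Hom_{H^{0}}(\tuH^{s}(M), E) = 0$, hence $\tuH^{s}(M) = 0$. Since $s > b$ was arbitrary, $\sup M \le b < \infty$.

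The main obstacle to anticipate is that $M$ is not assumed bounded above, so there is no single ``top cohomology'' to test against, and the most natural single test object, the base change $H^{0}\lotimes_{R} M$, is awkward: base change along $R \to H^{0}$ can redistribute cohomology into lower degrees, and controlling $\sup(H^{0}\lotimes_{R} M)$ when $\sup M = \infty$ is genuinely delicate. The device that sidesteps this is to handle each degree $s$ separately through the truncation $\sigma^{<s+1}M$, whose top cohomology is precisely $\tuH^{s}(M)$ and which lies in $\sfD^{<s+1}(R)$, so that the $t$-structure detection formula applies without any boundedness of $M$; the two outer terms of the truncation triangle are then eliminated, respectively, by the concentration hypothesis and by orthogonality, which is all that is needed.
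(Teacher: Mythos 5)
The paper itself does not prove this lemma (it is quoted from \cite[Lemma 2.3]{coppepan}), so I can only judge your argument on its own terms; unfortunately it has a genuine gap at the final vanishing step. You claim that $\Hom(\sigma^{>s}M, E[-s+1])=0$ ``by $t$-structure orthogonality, since $\sigma^{>s}M \in \sfD^{>s}(R)$ while $E[-s+1]\in \sfD^{\le s-1}(R)$''. The orthogonality supplied by a $t$-structure runs the other way: it kills $\Hom(X,Y)$ for $X\in \sfD^{\le n}$ and $Y\in \sfD^{\ge n+1}$, whereas here $X=\sigma^{>s}M$ lies in $\sfD^{\ge s+1}$ and $Y=E[-s+1]$ is concentrated in degree $s-1$. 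This group is a \emph{positive}-degree Ext: for instance its ``bottom layer'' is $\Hom\bigl(\tuH^{s+1}(M)[-s-1],\,E[-s+1]\bigr)=\Hom_{\sfD(R)}\bigl(\tuH^{s+1}(M),E[2]\bigr)$, an $\Ext^{2}$ in $\sfD(R)$ between two heart objects. For a genuinely DG ring this need not vanish even though $E$ is injective over $H^{0}$: the heart object $E$ is the restriction of an $H^{0}$-module along $R\to H^{0}$ and is not ``derived injective'' over $R$. Concretely, for $R=\ZZ[\xi]/(\xi^{2})$ with $\deg\xi=-1$ and $\partial=0$ one has $\tuH(\ZZ\lotimes_{R}\ZZ)$ nonzero in every even nonpositive degree, hence $\Hom_{\sfD(R)}(\ZZ,E[2])\cong \Hom_{\ZZ}(\tuH^{-2}(\ZZ\lotimes_{R}\ZZ),E)\neq 0$ for $E=\QQ\oplus\QQ/\ZZ$. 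So the third term of your long exact sequence is not controlled, and the deduction $\Hom(\sigma^{<s+1}M,E[-s])=0$ does not follow. (Your detection formula in the second paragraph is fine --- there the orthogonality is used in the correct direction.)

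The repair is to test against the coinduced object $I=G(E)\in\cI$ of Section \ref{the class cI} rather than against the heart object $E$. By Lemma \ref{basic of cI lemma}(1), $\Hom(M,I[-s])\cong \Hom_{H^{0}}(\tuH^{s}(M),E)$ for \emph{every} $M\in\sfD(R)$, with no truncation and no orthogonality needed; and since $G(E)=\Hom^{\bullet}_{R^{0}}(R,E_{R^{0}}(E))$ lives in $\sfD^{[0,\infty]}(R)$, projective concentration $[a,b]$ gives $\RHom_{R}(M,I)\in\sfD^{\ge -b}(\kk)$, hence $\Hom_{H^{0}}(\tuH^{s}(M),E)=0$ and so $\tuH^{s}(M)=0$ for all $s>b$. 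This yields the sharp bound $\sup M\le b$ you were aiming for, and the truncation triangle becomes unnecessary.
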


\subsubsection{Sup-projective (sppj) resolution}

We recall the definition of a sup-projective (sppj) resolution of $M \in \sfD^{< \infty}(R)$. 
For this purpose first we need introduce the class $\cP$ of DG-$R$-modules which plays the role of ordinary projective modules for ordinary projective resolution.

\begin{definition}\label{cP definition}
We denote by  $\cP \subset \sfD(R)$  the full subcategory of 
direct summands of a direct sums of $R$. 
In other words, $\cP = \Add R$.  
\end{definition}

The basic properties of $\cP$ are summarized in the lemma below taken from \cite[Lemma 2.8]{coppepan}. 
We denote by $\Proj H^{0} \subset \Mod H^{0}$ the full subcategory of projective $H^{0}$-modules. 

\begin{lemma}\label{basic of cP lemma} 
\begin{enumerate}[(1)]
\item
For $N \in \sfD(R)$, the morphism induced from $\tuH^{0}$ is an isomorphism 
\[
\Hom(P, N) \xrightarrow{\cong} \Hom(\tuH^{0}(P), \tuH^{0}(N)).  
\]
%
%\item
%For $N \in \Mod H^{0}$, we have 
%\[
%\Hom(P, N[n]) =
%\begin{cases}
%\Hom_{\Mod H^{0}}(\tuH^{0}(P), N) & n = 0, \\ 
%0  & n \neq 0.
%\end{cases}
%\]

\item 
The functor $\tuH^{0}$ induces an equivalence $\cP \cong \Proj H^{0}$. 
\end{enumerate}
\end{lemma}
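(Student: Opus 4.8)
The plan is to establish (1) first and deduce (2) from it. Write $\Phi_{P}\colon \Hom(P,N)\to\Hom_{H^{0}}(\tuH^{0}(P),\tuH^{0}(N))$, $f\mapsto\tuH^{0}(f)$, for the natural comparison map (natural in both variables); the content of (1) is that $\Phi_{P}$ is bijective for every $P\in\cP$. I would first treat the base case $P=R$. Since $R$ is DG-free it is DG-projective, so $\Hom(R,N)=\tuH^{0}(\Hom_{R}(R,N))=\tuH^{0}(N)$, and under $\tuH^{0}(R)=H^{0}$ the target becomes $\Hom_{H^{0}}(H^{0},\tuH^{0}(N))=\tuH^{0}(N)$; a direct check shows $\Phi_{R}$ is exactly this identification.

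Next I would bootstrap from $P=R$ to all of $\cP=\Add R$ by showing that the class of objects on which $\Phi$ is an isomorphism is closed under arbitrary coproducts and under retracts. For a coproduct $Q=R^{(I)}$, cohomology commutes with direct sums so $\tuH^{0}(Q)=(H^{0})^{(I)}$, while $\Hom(R^{(I)},N)=\prod_{I}\Hom(R,N)$ and $\Hom_{H^{0}}((H^{0})^{(I)},\tuH^{0}(N))=\prod_{I}\Hom_{H^{0}}(H^{0},\tuH^{0}(N))$; under these identifications $\Phi_{Q}$ is a product of copies of $\Phi_{R}$, hence an isomorphism. For a retract $P$ of such a $Q$, naturality of $\Phi$ in the first variable exhibits $\Phi_{P}$ as a retract of $\Phi_{Q}$ in the morphism category, and a retract of an isomorphism is an isomorphism. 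Since $R$ lies in this class, so does every $P\in\cP$, which proves (1).

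For (2), note first that $\tuH^{0}$ sends $\cP$ into $\Proj H^{0}$: if $P$ is a retract of $R^{(I)}$ then $\tuH^{0}(P)$ is a retract of $(H^{0})^{(I)}$, hence projective. Full faithfulness is immediate from (1) applied with $N=P'\in\cP$, giving $\Hom(P,P')\cong\Hom_{H^{0}}(\tuH^{0}(P),\tuH^{0}(P'))$; since $\tuH^{0}$ respects composition, this bijection is compatible with composition, so on endomorphism rings it is a ring isomorphism. For essential surjectivity, I would realize a projective $M$ as the image of an idempotent $e\in\End_{H^{0}}((H^{0})^{(I)})=\End_{\sfD(R)}(R^{(I)})$, where the identification of endomorphism rings is the ring isomorphism just obtained; this lifts $e$ to an idempotent $\tilde e$ of $R^{(I)}$ in $\sfD(R)$.

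The step that needs the most care, and the one I would flag as the main obstacle, is completing essential surjectivity: one must split $\tilde e$. Here I would invoke that $\sfD(R)$, having arbitrary coproducts, is idempotent complete, so $\tilde e$ splits off an object $P\in\cP$ that is a retract of $R^{(I)}$; a final application of full faithfulness identifies $\tuH^{0}(P)$ with $M$. By contrast, the coproduct-and-retract bookkeeping in (1) is routine once the unit identification $\Hom(R,N)\cong\tuH^{0}(N)$ is in place, so the genuine content is the base case together with idempotent completeness.
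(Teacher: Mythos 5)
Your argument is correct and complete: the base case $P=R$ via DG-projectivity of $R$, closure under coproducts and retracts by naturality, and idempotent splitting in $\sfD(R)$ (B\"okstedt--Neeman, since $\sfD(R)$ has arbitrary coproducts) for essential surjectivity are exactly the right ingredients, and you correctly flag the idempotent-completeness step as the only nonroutine input. Note that the paper itself gives no proof of this lemma --- it is quoted from \cite[Lemma 2.8]{coppepan} --- so there is no in-paper argument to compare against; your proof is the standard one and fills that gap self-containedly. The only point left implicit is that the coproduct $R^{(I)}$ formed in $\sfC(R)$ is also the coproduct in $\sfD(R)$ (the quotient functor $\sfK(R)\to\sfD(R)$ preserves coproducts because coproducts of acyclics are acyclic), which is needed for the identification $\Hom(R^{(I)},N)\cong\prod_{I}\Hom(R,N)$.
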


\begin{definition}[sppj morphism and sppj resolution]\label{sppj morphism definition} 
Let $M \in \sfD^{< \infty}(R), M \neq 0$. 

\begin{enumerate}
\item 
A sppj  morphism $f: P \to M $ is a morphism in $\sfD(R)$ 
such that $P \in \cP[- \sup M]$ 
and the morphism $\tuH^{\sup M}(f)$ is surjecitve. 

\item 
A sppj morphism $f: P \to M $ is called minimal  if 
the morphism $\tuH^{\sup M}(f) $ is a projective cover.

\item 
A sppj resolution $P_{\bullet}$ of $M$ is a sequence $\{ \cE_{i} \}_{\geq 0} $ of exact triangles  
\[
\cE_{i}: M_{i+1} \xrightarrow{g_{i+1}} P_{  i } \xrightarrow{f_{i}} M_{i}
\]
with $M_{0} := M$ such that $f_{i}$ is sppj.

The following inequality folds
\[
\sup M_{i+1} = \sup P_{i+1} \leq \sup P_{i} =\sup M_{i}. 
\]

For a sppj resolution $P_{\bullet}$ with the above notations, 
we set $\delta_{i} := g_{i-1} \circ f_{i}$. 
\[
\delta_{i} :P_{ i} \to P_{i-1}.
\]
Moreover we write 
\[
 \cdots \to P_{i} \xrightarrow{\delta_{i}} P_{ i -1} \to \cdots \to P_{1} \xrightarrow{\delta_{1}} P_{0} \to M. 
\]

\item 
A sppj resolution $P_{\bullet}$ is said to have length $e$ if 
$P_{i } = 0$ for $i> e$ and 
$P_{e} \neq 0$.

\item 
A sppj resolution $P_{\bullet}$ is called minimal if $f_{i}$ is minimal 
for $i \geq 0$.

\end{enumerate}
\end{definition}

By Lemma \ref{basic of cP lemma}, 
 for any $M \in \sfD^{< \infty}(R)$, 
there exists a sppj morphism $f: P \to M$. 
More precisely, for any surjective $H^{0}$-module homomorphism $\phi: Q \to \tuH^{\sup}(M)$ with $Q \in \Proj H^{0}$, 
there exists a unique $f: P \to M$ such that $P \in \cP[-\sup M]$ satisfies  $\tuH^{\sup M}(P) \cong Q$ and 
$\tuH^{\sup M} (f) = \phi$ under this isomorphism.
Thus, in particular $M$ admits a minimal sppj morphism $f: P \to M$ if and only if $\tuH^{\sup }(M)$ admits a projective cover $Q \to \tuH^{\sup}(M)$ as a $H^{0}$-module.

An important feature of sppj resolution is, roughly speaking, that 
the ``length'' of sppj resolution of $M$ measures  the projective dimension of $M$. 
The precise statement below is extracted from \cite[Theorem 2.22]{coppepan}.

\begin{theorem}\label{sppj resolution theorem}
Let $M \in \sfD^{< \infty}(R)$ and $d \in \NN$ a natural number. 
%Set $t_{0} := -\sup M$. 
Then  
the following conditions are equivalent 

\begin{enumerate}[(1)]
\item 
$\pd  M  = d$. 

\item 
For any sppj resolution $P_{\bullet}$, 
there exists a natural number $e \in \NN$ 
which satisfying the following properties 

\begin{enumerate}[(a)]
\item $M_{e} \in \cP[-\sup M_{e}]$. 

\item 
$d = e+ \sup P_{0} -\sup M_{e}$. 

\item 
$g_{e}$ is not a split-monomorphism. 
\end{enumerate}

\item 
$M$ has sppj resolution $P_{\bullet}$ of length $e$ 
which satisfies the following properties. 
\begin{enumerate}[(a)]
\item 
$d = e+ \sup P_{0} -\sup P_{e}$. 

\item 
$\delta_{e}$ is not a split-monomorphism. 
\end{enumerate} 
\end{enumerate}

\end{theorem}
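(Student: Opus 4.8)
The plan is to reduce the statement to a precise description of the strict projective concentration of $M$ and then to control how that concentration propagates along the exact triangles $\cE_i$ of an sppj resolution. Throughout I test the functor $\RHom_R(M,-)$ against objects $N$ of the heart $\sfH = \Mod H^{0}$, which I expect to detect the whole concentration interval.

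First I would determine the upper end of the concentration. I claim that for every $M \in \sfD^{<\infty}(R)$ the strict projective concentration has the form $[\sup M - \pd M,\, \sup M]$; equivalently the upper end is always $\sup M$. Writing $s = \sup M$, for $N \in \sfH$ and $k > s$ one has $\Hom(M[k],N) = 0$ because $M[k] \in \sfD^{\leq -1}(R)$, $N \in \sfD^{\geq 0}(R)$ and these are orthogonal for the standard $t$-structure; hence $\inf \RHom_R(M,N) \geq -s$. On the other hand the canonical map $M \to \tuH^{\sup}(M)[-s]$ shows $\tuH^{-s}(\RHom_R(M, \tuH^{\sup}(M))) \neq 0$, so the bound $-s$ is attained. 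A shift argument promotes this from the heart to all test intervals $\sfD^{[m,n]}(R)$, so the upper concentration end equals $\sup M$ and $\pd M$ is encoded entirely in the lower end $a = \sup M - \pd M$.

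Next I would analyze a single triangle $\cE_i : M_{i+1} \to P_i \to M_i$, with $s_i := \sup M_i$ and $P_i \in \cP[-s_i]$. By Lemma \ref{basic of cP lemma} the object $\RHom_R(P_i,N) = \Hom_{H^{0}}(\tuH^{s_i}(P_i), N)$ is concentrated in cohomological degree $-s_i$ for $N \in \sfH$. Feeding $\RHom_R(-,N)$ into $\cE_i$ and reading the long exact sequence then yields isomorphisms $\tuH^{j}(\RHom_R(M_{i+1},N)) \cong \tuH^{j+1}(\RHom_R(M_i,N))$ in every degree $j > -s_i$, together with the boundary behaviour at $j = -s_i$. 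Combined with the first paragraph this expresses the lower concentration end of $M_{i+1}$ through that of $M_i$: one gets $a_{i+1} = a_i + 1$ at every nonterminal step, and hence $\pd M_{i+1} = \pd M_i - 1 + (\sup M_i - \sup M_{i+1})$. The base case $\pd M = 0 \Leftrightarrow M \in \cP[-\sup M]$ follows from the same computation using Lemma \ref{basic of cP lemma}, since $\pd M = 0$ forces $\RHom_R(M,N)$ to be concentrated in degree $-\sup M$, whence $\tuH^{\sup}(M)$ is projective and the first syzygy vanishes.

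With these ingredients the three conditions follow by induction on the length of the resolution. For $(1)\Rightarrow(2)$ I peel off sppj layers from an arbitrary resolution; the relation $a_{i+1} = a_i + 1$ makes the lower end rise by one at each stage, so after finitely many steps it reaches $\sup M_e$, i.e. $M_e \in \cP[-\sup M_e]$, and telescoping the sup-drops $\sum_i(\sup M_i - \sup M_{i+1}) = \sup P_0 - \sup M_e$ turns $a_0 = \sup M - d$ into the formula $d = e + \sup P_0 - \sup M_e$. The implication $(2)\Rightarrow(3)$ is obtained by stopping an arbitrary resolution at the stage $e$ provided by $(2)$ and setting $P_e := M_e$, the split condition on $g_e$ translating into the split condition on $\delta_e$ at the terminal stage; and $(3)\Rightarrow(1)$ reads the length formula back off the telescope, the invariance across different resolutions being automatic since $\pd M$ is intrinsic. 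The step I expect to be the main obstacle is the strictness/non-split clause: one must show that $\delta_e$ (resp. $g_e$) fails to be a split monomorphism exactly when the lower concentration end cannot be raised, i.e. when $\tuH^{-a}(\RHom_R(M,N)) \neq 0$ for some $N \in \sfH$; a split $\delta_e$ would split a copy of $\cP[-\sup M_{e}]$ off the last syzygy and so lower $\pd M$, and matching this dichotomy precisely to the end of the induction is the delicate point. A secondary nuisance is the bookkeeping of the sup-drops, since $\sup M_i$ may decrease strictly at each stage and the naive ``syzygy lowers $\pd$ by one'' must be corrected by $\sup M_i - \sup M_{i+1}$.
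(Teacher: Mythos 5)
First, a structural point: the paper itself contains no proof of this theorem --- it is quoted verbatim from \cite[Theorem 2.22]{coppepan} --- so there is no in-paper argument to measure you against; I can only judge your plan against the machinery the paper does set up (Lemma \ref{basic of cP lemma} and the $\Hom(M,N[n])$-formula it attributes to \cite[Lemma 2.18]{coppepan}). Your skeleton --- identify the strict projective concentration as $[\sup M - \pd M,\,\sup M]$, detect it by testing $\RHom_R(M,-)$ against the heart $\sfH$, and dimension-shift along the triangles $\cE_i$ using that $\RHom_R(P_i,N)$ is concentrated in degree $-\sup M_i$ --- is exactly the expected route, and your telescoping does land on $d=e+\sup P_0-\sup M_e$. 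Two corrections before the substance: the displayed recursion should read $\pd M_{i+1}=\pd M_i-1-(\sup M_i-\sup M_{i+1})$ (with your $+$ the sum would not telescope to (2)(b); your own relation $a_{i+1}=a_i+1$ forces the minus sign). And the ``shift argument'' promoting the heart computation to all test intervals is insufficient for the \emph{lower} concentration end when the test object is unbounded below: orthogonality for the standard $t$-structure only controls the top end $\inf\RHom_R(M,N)\ge m-\sup M$. The containment at the bottom has to be proved first for $M\in\cP$ (where Lemma \ref{basic of cP lemma}(1) gives it for arbitrary $N$) and then propagated up the finite resolution --- i.e.\ your paragraph two must come logically before the second half of your paragraph one.

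The genuine gap is the one you flag yourself: the equivalence between ``$g_e$ is not a split monomorphism'' and attainment of the lower concentration end is asserted, not proved, and it is the entire content separating $\pd M=d$ from $\pd M\le d$. Two observations close it. (i) If $M_e\in\cP[-\sup M_e]$ then $\tuH^{\sup}(f_e)$ is a surjection of projective $H^{0}$-modules, hence splits, hence $f_e$ splits by Lemma \ref{basic of cP lemma}; consequently $M_{e+1}\in\cP[-\sup M_{e+1}]$, $\sup M_{e+1}=\sup M_e$, and $g_{e+1}$ \emph{is} a split monomorphism. So the set of indices satisfying (2)(a) is upward closed, (a) together with (c) selects its minimum, and the quantity $e+\sup P_0-\sup M_e$ increases by exactly $1$ per step inside that set --- which is precisely why the non-split clause is indispensable for pinning down $d$. (ii) For the witness: take $N:=\tuH^{\sup}(M_e)\in\sfH$. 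Non-splitness of $g_e$ says the identity is not in the image of $g_e^{*}:\Hom(P_{e-1},N[-\sup M_e])\to\Hom(M_e,N[-\sup M_e])$, so the long exact sequence produces a nonzero class in $\Hom(M_{e-1},N[1-\sup M_e])$; since $1-\sup M_e>-\sup M_{e-1-k}$ for all $k\ge 0$ (the sups are non-increasing), your degree-shifting isomorphisms carry this class all the way up to a nonzero element of $\Hom(M,N[d-\sup M])$, which is exactly the attainment of the lower end. With (i) and (ii) supplied, and the order of your first two steps corrected, the plan goes through.
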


\subsubsection{Minimal sppj resolution}

A formula for computing $\Hom(M, N[n])$ for $M \in \sfD^{< \infty}(R)$ and $N \in \Mod H^{0}$  by using a sppj resolution of $M$ 
 is given in \cite[Lemma 2.18]{coppepan}. 
We recall the following corollary from \cite[Corollary 2.28]{coppepan} for the later use.

\begin{corollary}\label{sppj resolution corollary 1}
Assume that $M \in \sfD^{< \infty}(R)$ admits a minimal sppj resolution $P_{\bullet}$. 
Then for a simple $H^{0}$-module $S$ we have 
\[
\Hom(M,S[n]) 
= 
\begin{cases}
0 & n \neq i -\sup P_{i} \textup{ for any } i \geq 0,  \\
\Hom(\tuH^{\sup}(P_{i}), S) & n = i - \sup P_{i} \textup{ for some } i \geq 0.
\end{cases}
\]
\end{corollary}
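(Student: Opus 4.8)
The plan is to compute $\Hom(M,S[n])$ by applying the cohomological functor $\Hom(-,S[n])$ to each defining triangle $\cE_i\colon M_{i+1}\xrightarrow{g_{i+1}}P_i\xrightarrow{f_i}M_i$ and splicing the resulting long exact sequences, using the minimality hypothesis precisely to annihilate the connecting maps. Write $s_i:=\sup P_i=\sup M_i$, so that $s_{i+1}\le s_i\le s_0=\sup M<\infty$. The only input about the terms $P_i$ themselves is the following: since $P_i\in\cP[-s_i]$, writing $P_i=Q_i[-s_i]$ with $Q_i\in\cP$, Lemma \ref{basic of cP lemma}(1) gives $\Hom(P_i,S[n])\cong\Hom(\tuH^{0}(Q_i),\tuH^{0}(S[n+s_i]))$, and because $S$ is concentrated in degree $0$ this is $\Hom(\tuH^{\sup}(P_i),S)$ when $n=-s_i$ and $0$ otherwise.

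The heart of the argument is to prove that every connecting map $g_{i+1}^{*}\colon\Hom(P_i,S[n])\to\Hom(M_{i+1},S[n])$ vanishes. Since the source is $0$ unless $n=-s_i$, only that degree is at issue, and there are two cases. If $s_{i+1}<s_i$, then $S[-s_i]$ sits in cohomological degree $s_i>s_{i+1}=\sup M_{i+1}$, so $\Hom(M_{i+1},S[-s_i])=0$ by orthogonality of the standard $t$-structure. If $s_{i+1}=s_i=:s$, then for any $X\in\sfD^{\le s}(R)$ the map induced by $\tuH^{s}$ is a natural isomorphism $\Hom(X,S[-s])\xrightarrow{\cong}\Hom_{H^{0}}(\tuH^{s}(X),S)$ (because $S[-s]$ lies in the top degree $s$), under which $g_{i+1}^{*}$ becomes precomposition with $\tuH^{s}(g_{i+1})$. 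From the cohomology sequence of $\cE_i$ in degree $s$ one reads off $\IM\tuH^{s}(g_{i+1})=\Ker\tuH^{s}(f_i)$, and minimality of $f_i$ (so $\tuH^{s}(f_i)$ is a projective cover) forces this kernel into $\operatorname{rad}\tuH^{s}(P_i)$. Any homomorphism into the simple module $S$ kills the radical, whence $g_{i+1}^{*}=0$.

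Once $g_{i+1}^{*}=0$, each long exact sequence collapses to short exact sequences $0\to\Hom(M_{i+1},S[n-1])\to\Hom(M_i,S[n])\to\Hom(P_i,S[n])\to 0$. Splicing these for $i=0,1,2,\dots$ endows $\Hom(M,S[n])=\Hom(M_0,S[n])$ with a filtration whose $i$-th subquotient is $\Hom(P_i,S[n-i])$. The filtration is finite, since $\Hom(M_i,S[n-i])=0$ once $i-n>s_0$ (again by $t$-structure orthogonality together with $s_i\le s_0$). By the first step the subquotient $\Hom(P_i,S[n-i])$ is nonzero only when $n-i=-s_i$, i.e.\ $n=i-\sup P_i$, in which case it equals $\Hom(\tuH^{\sup}(P_i),S)$. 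Finally, as $s_i$ is non-increasing while $i$ increases by one, the assignment $i\mapsto i-s_i$ is strictly increasing, so for a fixed $n$ at most one subquotient survives; the filtration therefore degenerates to a single factor and yields the claimed value.

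The main obstacle is the middle step: extracting from the projective-cover condition the vanishing of the connecting map. This needs both the correct reduction of the ambient $\Hom$-groups to $H^{0}$-module homomorphisms via truncation and the observation that a map into a simple module annihilates the radical. By contrast, the bookkeeping in the final step (finiteness and collapse of the filtration) is routine given the monotonicity of $\sup P_i$.
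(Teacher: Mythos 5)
Your proof is correct. The paper itself gives no argument for this statement (it is quoted verbatim from \cite[Corollary 2.28]{coppepan}, where it is derived from a general formula for $\Hom(M,N[n])$ in terms of a sppj resolution), and your dévissage — reducing $\Hom(P_i,S[n])$ via Lemma \ref{basic of cP lemma}, killing the maps $g_{i+1}^{*}$ by $t$-structure orthogonality when $\sup$ drops and by the projective-cover/radical argument when it does not, then splicing the resulting short exact sequences and using the strict monotonicity of $i\mapsto i-\sup P_i$ to isolate a single surviving subquotient — is exactly the expected route and is complete as written.
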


A formula for computing $\tuH^{n}(M\lotimes_{R}N)$ for $M \in \sfD^{< \infty}(R)$ and $N \in \Mod H^{0}$  by using a sppj resolution of $M$ 
 is given in \cite[Lemma 2.27]{coppepan}. 
We recall the following corollary from \cite[Corollary 2.29]{coppepan} for the later use.

\begin{corollary}\label{sppj resolution corollary 2}
Assume that $M \in \sfD^{< \infty}(R)$ admits a minimal sppj resolution $P_{\bullet}$. 
Then for a simple $(H^{0})^{\op}$-module $T$ we have 
\[
\tuH^{n}(M\lotimes_{R} T) 
= 
\begin{cases}
0 & n \neq - i +\sup P_{i} \textup{ for any } i \geq 0,  \\
\tuH^{\sup}(P_{i})\otimes_{H^{0}} T & n = -i + \sup P_{i} \textup{ for some } i \geq 0. 
\end{cases}
\]
\end{corollary}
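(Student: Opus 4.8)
The plan is to apply the exact functor $-\lotimes_R T$ to the defining triangles of the minimal sppj resolution and to extract $\tuH^n(M\lotimes_R T)$ degree by degree, the only non-formal ingredient being that minimality forces every connecting map to vanish. Write $a_i:=\sup P_i=\sup M_i$, so that $a_0\ge a_1\ge\cdots$, and put $V_i:=\tuH^{\sup}(P_i)\otimes_{H^0}T$. Since $P_i\in\cP[-a_i]$ and $R\lotimes_R T=T$ sits in degree $0$, additivity together with the equivalence $\tuH^0\colon\cP\xrightarrow{\cong}\Proj H^0$ of Lemma~\ref{basic of cP lemma} identifies $P_i\lotimes_R T\cong V_i[-a_i]$, an $H^0$-module placed in cohomological degree $a_i$. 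Applying $-\lotimes_R T$ to each $\cE_i$ gives triangles $M_{i+1}\lotimes_R T\xrightarrow{\,g_{i+1}\lotimes T\,}P_i\lotimes_R T\to M_i\lotimes_R T\to$ in $\sfD(\kk)$. The candidate degrees $-i+a_i$ are \emph{strictly} decreasing in $i$ (because $a_{i+1}\le a_i$), so each integer equals $-i+a_i$ for at most one $i$; this is what makes the asserted formula unambiguous.

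Granting the vanishing $\tuH^{\blt}(g_{i+1}\lotimes T)=0$ for all $i$, the long exact sequence of each tensored triangle splits into short exact sequences
\[
0\to\tuH^n(P_i\lotimes_R T)\to\tuH^n(M_i\lotimes_R T)\to\tuH^{n+1}(M_{i+1}\lotimes_R T)\to 0 .
\]
Here $\tuH^{\blt}(P_i\lotimes_R T)$ is $V_i$ in degree $a_i$ and $0$ otherwise, and $\sup(M_{i+1}\lotimes_R T)\le a_{i+1}\le a_i$. Fixing $n$ and setting $m_j:=n+j$, these sequences give $\tuH^{m_j}(M_j\lotimes_R T)\cong\tuH^{m_{j+1}}(M_{j+1}\lotimes_R T)$ whenever $m_j\ne a_j$, while $\tuH^{m_j}(M_j\lotimes_R T)=0$ once $m_j>a_j$. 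Because $a_j-j$ strictly decreases to $-\infty$, the resulting recursion $\tuH^n(M\lotimes_R T)\cong\tuH^{m_1}(M_1\lotimes_R T)\cong\cdots$ either reaches the unique index $j$ with $n=a_j-j$, where the tail term $\tuH^{a_j+1}(M_{j+1}\lotimes_R T)$ vanishes and the contribution is exactly $V_j$, or it never does and the value is $0$. This is precisely the stated dichotomy at $n=-j+\sup P_j$.

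The heart of the argument, and the step I expect to be the main obstacle, is the vanishing $\tuH^{\blt}(g_{i+1}\lotimes T)=0$. As $P_i\lotimes_R T$ is concentrated in degree $a_i$, only $\tuH^{a_i}(g_{i+1}\lotimes T)$ can be nonzero. If $a_{i+1}<a_i$, then $\sup(M_{i+1}\lotimes_R T)\le a_{i+1}<a_i$ forces $\tuH^{a_i}(M_{i+1}\lotimes_R T)=0$ and there is nothing to prove. If $a_{i+1}=a_i=:a$, a standard truncation argument identifies the top-degree map with $\tuH^{a}(g_{i+1})\otimes_{H^0}T$; from the triangle $\cE_i$ one has $\im\tuH^{a}(g_{i+1})=\Ker\tuH^{a}(f_i)$, and minimality of $f_i$ says $\tuH^{a}(f_i)$ is a projective cover, so its kernel is superfluous in the projective $H^0$-module $\tuH^{a}(P_i)$ and hence lies in $\operatorname{rad}(H^0)\,\tuH^{a}(P_i)\subseteq\frkm\,\tuH^{a}(P_i)$, where $T\cong H^0/\frkm$. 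Tensoring a map with image in $\frkm\,\tuH^{a}(P_i)$ against $H^0/\frkm$ gives $0$, whence $\tuH^{a}(g_{i+1}\lotimes T)=0$. (I use here that $H^0$ is commutative, so $(H^0)^{\op}=H^0$ and a simple module is $H^0/\frkm$ for a maximal ideal $\frkm$.)

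Finally, I would note that the whole computation is formally dual to that of Corollary~\ref{sppj resolution corollary 1}: the role played there by the projective-cover condition under $\Hom_{H^0}(-,S)$ is played here by the same condition under $-\otimes_{H^0}T$. Consequently an alternative, more economical route is to quote the general (non-minimal) expression for $\tuH^n(M\lotimes_R N)$ in terms of an arbitrary sppj resolution and then impose minimality, mirroring the derivation of Corollary~\ref{sppj resolution corollary 1} verbatim; the only genuinely new inputs are the identification $P_i\lotimes_R T\cong V_i[-a_i]$ and the top-degree formula $\tuH^{\sup}(X\lotimes_R T)=\tuH^{\sup}(X)\otimes_{H^0}T$, both routine.
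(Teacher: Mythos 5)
Your proof is correct. The paper itself gives no argument for this corollary --- it is recalled verbatim from \cite[Corollary 2.29]{coppepan}, where it is deduced from a general formula for $\tuH^{n}(M\lotimes_{R}N)$ in terms of an arbitrary sppj resolution --- so your write-up is essentially an inline reconstruction of that derivation: tensoring each triangle $\cE_{i}$ with $T$, using right $t$-exactness of $-\lotimes_{R}T$ and the identification $P_{i}\lotimes_{R}T\cong \tuH^{\sup}(P_{i})\otimes_{H^{0}}T[-\sup P_{i}]$, and killing the connecting maps via minimality. The key step is exactly where you locate it: $\im \tuH^{a}(g_{i+1})=\Ker \tuH^{a}(f_{i})$ is superfluous in the projective module $\tuH^{a}(P_{i})$, hence lies in $\operatorname{rad}(H^{0})\,\tuH^{a}(P_{i})\subseteq \frkm\,\tuH^{a}(P_{i})$ and dies after $-\otimes_{H^{0}}H^{0}/\frkm$; together with the strict decrease of $-i+\sup P_{i}$ this gives the stated dichotomy. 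No gaps.
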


\subsection{Injective dimension and ifij resolution of DG-modules}

\subsubsection{Injective dimension}
We recall the definition of the injective dimension of $M \in \sfD(R)$ 
introduced by Yekutieli. 

\begin{definition}[{\cite[Definition 2.4]{Yekutieli}}]
Let  $a \leq b \in \{ -\infty \} \cup \ZZ \cup \{\infty\}$. 

\begin{enumerate}[(1)] 
\item 
An object $M \in \sfD(R)$ 
is said 
to have \textit{injective concentration} $[a,b]$ 
% and said to have  injective dimension $b-a$ (relative to $\sfD(R)$) in the sense of Yekutieli 
if 
such that the functor  $F = \RHom_{R}(-, M)$ 
sends $\sfD^{[m,n]}(R)$ to 
$\sfD^{[a-n,b -m ]}(\kk)$ 
for any $m \leq n \in \{-\infty\} \cup \ZZ \cup \{\infty\}$.
\[
F(\sfD^{[m,n]}(R)) \subset \sfD^{[a-n, b-m]}(\kk).
\]

\item 
An object $M \in \sfD(R)$ 
is said 
to have \textit{strict injective concentration} $[a,b]$
if it has injective concentration $[a,b]$ 
and does't have injective concentration $[c,d]$ 
such that $[c,d] \subsetneq [a,b]$. 

\item 
An object $M \in \sfD(R)$ 
is said 
to have injective dimension $d \in \NN$ 
if it has strict injective concentration $[a,b]$ for $a,b\in \ZZ$ 
such that $d= b-a$.

In the case where, $M$ does't have a finite interval as  injective concentration, 
it is said to have infinite injective dimension. 

We denote the injective  dimension by $\id M$.
\end{enumerate}

%In this case $b-a$ is called the injective dimension of $M$. 
\end{definition}

We recall the following result from \cite[Lemma 3.3]{coppepan}.

\begin{lemma}\label{201903152344}
If $M \in \sfD(R)$ has finite injective dimension, 
then it belongs to $\sfD^{>-\infty}(R)$. 
\end{lemma}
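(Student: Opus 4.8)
The goal is to show that finite injective dimension forces $\inf M > -\infty$, i.e. that the cohomology of $M$ is bounded below. The plan is to feed a single, carefully chosen test object into the defining inequality for injective concentration, rather than to argue with truncations or simple modules. The natural choice is the unit object $R$ itself, since the canonical identification $\RHom_R(R,M) \cong M$ realizes the value of $F = \RHom_R(-,M)$ at $R$ as $M$.

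First I would record that finite injective dimension means, by definition, that $M$ has strict injective concentration $[a,b]$ for some $a,b \in \ZZ$; in particular $M$ has injective concentration $[a,b]$ with $a$ finite, so the containment $F(\sfD^{[m,n]}(R)) \subseteq \sfD^{[a-n,b-m]}(\kk)$ holds for every $m \leq n$, including the endpoints $\pm\infty$. Next, since $R$ is connective we have $\tuH^{>0}(R) = 0$, hence $R \in \sfD^{[-\infty,0]}(R)$. Applying the concentration inequality with $m = -\infty$ and $n = 0$ gives
\[
M \cong \RHom_R(R,M) = F(R) \in \sfD^{[a-0,\,b-(-\infty)]}(\kk) = \sfD^{[a,\infty]}(\kk) = \sfD^{\geq a}(\kk).
\]
Reading this back as a statement about the cohomology $\tuH^{\bullet}(M)$, we obtain $\tuH^{k}(M) = 0$ for all $k < a$, so $\inf M \geq a$. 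Since $a$ is a finite integer, this yields $M \in \sfD^{>-\infty}(R)$, which is exactly the claim.

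There is essentially no serious obstacle here; the only point requiring care is the bookkeeping with the infinite endpoints in the definition of injective concentration, namely verifying that $\sfD^{[-\infty,0]}$ and $\sfD^{[a,\infty]}$ are the legitimate source and target of $F$ in that case. It is worth emphasizing that this argument is markedly simpler than the dual statement for projective dimension, precisely because $R$ is the monoidal unit and so $\RHom_R(R,M)$ returns $M$ on the nose; for the projective version one cannot test against $R$ in the same way and must instead extract $\sup M$ from $\RHom_R(M,-)$ by a less direct route.
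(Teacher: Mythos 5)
Your argument is correct: evaluating the containment $F(\sfD^{[m,n]}(R))\subseteq \sfD^{[a-n,b-m]}(\kk)$ at the test object $R\in\sfD^{[-\infty,0]}(R)$ and using $\RHom_R(R,M)\cong M$ immediately gives $\tuH^{<a}(M)=0$, hence $\inf M\geq a>-\infty$. Note that the paper does not prove this lemma itself but imports it from the reference (Lemma 3.3 of the author's earlier paper on resolutions of DG-modules); your proof is the natural one-line verification from Yekutieli's definition, and the only point needing care — that the defining containment is required to hold for the infinite endpoints $m=-\infty$, $n=0$ — is exactly the one you flag and handle correctly.
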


\subsubsection{The class $\cI$}\label{the class cI} 

The aim of Section \ref{the class cI} 
is to introduce the full subcategory $\cI \subset \sfD(R)$ 
which plays the role of injective module for ordinary injective resolutions. 

Since we are assuming $R^{> 0} = 0$, there exists a canonical surjection $\pi: R^{0} \to H^{0}$, 
by which we regard $H^{0}$-modules as $R^{0}$-modules. 
For an injective $H^{0}$-module $J \in \Inj H^{0}$, 
we define a DG-$R$-module $G(J)$ in the following way. 
First we take a injective-hull $E_{R^{0}}(J)$ of $J$ as $R^{0}$-module, 
then we define $G(J) := \Hom^{\bullet}_{R^{0}}(R, E_{R^{0}}(J))$ with the differential induced from that of $R$. 

We define the full subcategory $\cI \subset \sfD(R)$ 
as below 
\[
\cI := \{ \textup{ the quasi-isomorphism class of } G(J) \mid J \in \Inj H^{0}\}. 
\]

We summarize basic properties of $\cI$ which are proved in \cite[Section 3.2]{coppepan}. 

\begin{lemma}\label{basic of cI lemma}
The following assertions hold. 

\begin{enumerate}[(1)]
\item 
For $M \in \sfD(R), I \in \cI$, the morphism below induced from $H^{0}$ is an isomorphism  
\[
\Hom(M, I) \cong \Hom_{H^{0}}(\tuH^{0}(M), \tuH^{0}(I)) .
\]

\item 
The functor $\tuH^{0}: \cI \to \Inj H^{0}$ gives an equivalence 
whose quasi-inverse functor is given by $G$ followed by taking the quasi-isomorphism class. 
In particular we have $\tuH^{0}(G(J)) = J$. 
\end{enumerate}
\end{lemma}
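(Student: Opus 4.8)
The plan is to realise every object of $\cI$ as a derived coinduction from $R^{0}$ and to read off both assertions from the coinduction adjunction together with one classical fact about injective hulls. Since $R^{>0}=0$, the differential $\partial^{0}\colon R^{0}\to R^{1}=0$ vanishes, so the inclusion of the degree-zero part is a morphism of DG-algebras $\phi\colon R^{0}\to R$, where $R^{0}$ is viewed as concentrated in degree $0$ with zero differential. Restriction $\phi^{*}$ along $\phi$ is exact, so its right adjoint, the derived coinduction $\RHom_{R^{0}}(R,-)$, gives a derived adjunction
\[
\Hom(M,\RHom_{R^{0}}(R,E))\;\cong\;\Hom_{\sfD(R^{0})}(\phi^{*}M,E)
\]
natural in $M\in\sfD(R)$ and $E\in\sfD(R^{0})$. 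Because $E:=E_{R^{0}}(J)$ is an injective $R^{0}$-module, it is K-injective as a complex concentrated in degree $0$, whence $\RHom_{R^{0}}(R,E)=\Hom^{\bullet}_{R^{0}}(R,E)=G(J)$ in $\sfD(R)$. This reduces assertion (1) to computing the right-hand side.

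Next I would compute over $R^{0}$. As $E$ is injective, $\Hom_{R^{0}}(-,E)$ is exact and hence commutes with cohomology up to reindexing, so $\Hom_{\sfD(R^{0})}(\phi^{*}M,E)=\tuH^{0}(\Hom^{\bullet}_{R^{0}}(M,E))\cong\Hom_{R^{0}}(\tuH^{0}(M),E)$. Now $\tuH^{0}(M)$ is an $H^{0}$-module, so it is annihilated by $\frka:=\ker(R^{0}\twoheadrightarrow H^{0})$; therefore every $R^{0}$-linear map $\tuH^{0}(M)\to E$ factors through $(0:_{E}\frka)=\Hom_{R^{0}}(H^{0},E)$. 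The classical fact that the $\frka$-torsion of an $R^{0}$-injective hull of an $H^{0}$-module is its $H^{0}$-injective hull gives $(0:_{E}\frka)=E_{H^{0}}(J)=J$, the last equality because $J\in\Inj H^{0}$. Combining these steps yields
\[
\Hom(M,G(J))\;\cong\;\Hom_{R^{0}}(\tuH^{0}(M),E)\;\cong\;\Hom_{H^{0}}(\tuH^{0}(M),J),
\]
and specialising to $M=R$ gives $\tuH^{0}(G(J))=\Hom_{R^{0}}(H^{0},E)=J$. This proves the ``in particular'' clause of (2) and simultaneously identifies the target $J=\tuH^{0}(I)$, completing (1).

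Assertion (2) then follows formally from (1). Essential surjectivity of $\tuH^{0}\colon\cI\to\Inj H^{0}$ is exactly the equality $\tuH^{0}(G(J))=J$; full faithfulness is (1) applied with $M=I\in\cI\subset\sfD(R)$. Hence $\tuH^{0}$ is an equivalence, and $G$ followed by passage to the quasi-isomorphism class is a quasi-inverse, once one observes that $G$ is functorial up to quasi-isomorphism since injective hulls are unique up to (non-canonical) isomorphism.

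I expect the main obstacle to be not the existence of the isomorphism but its \emph{naturality}: the lemma asserts that the specific map \emph{induced by $\tuH^{0}$}, namely $f\mapsto\tuH^{0}(f)$, is invertible, so I must verify that the composite of the adjunction isomorphism with the exactness identification and the torsion identification coincides, as a natural transformation, with this map. Concretely this amounts to tracking the counit of the coinduction adjunction, which is evaluation at $1\in R$: under the adjoint correspondence a map $f\colon M\to G(J)$ goes to $\phi^{*}M\xrightarrow{\phi^{*}f}\phi^{*}G(J)\to E$, and applying $\tuH^{0}$ identifies $\tuH^{0}(G(J))=J$ with $(0:_{E}\frka)$ so that the counit becomes the inclusion $J\hookrightarrow E$; matching this against the identification of the heart $\sfH$ with $\Mod H^{0}$ via $\Hom(H^{0},-)$ from the conventions is the one genuinely delicate bookkeeping point. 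The remaining ingredients—exactness of $\Hom_{R^{0}}(-,E)$, the injective-hull torsion fact, and the formal equivalence argument—are standard.
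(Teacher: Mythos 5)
Your argument is correct, and it is essentially the approach of the source: the paper itself gives no in-text proof (it defers to \cite[Section 3.2]{coppepan}, where $G(J)=\psi_R(E_{R^0}(J))$ is studied precisely via the restriction/coinduction adjunction along $R^0\to R$ that you use). Your reduction $\Hom_{\sfD(R)}(M,G(J))\cong\Hom_{\sfD(R^0)}(M,E)\cong\Hom_{R^0}(\tuH^0(M),E)\cong\Hom_{H^0}(\tuH^0(M),J)$ is sound (K-injectivity of $E$, exactness of $\Hom_{R^0}(-,E)$, and $(0:_E\frka)=E_{H^0}(J)=J$ are all correctly invoked), and you rightly flag and resolve the one delicate point, namely that the composite identification is the map $f\mapsto\tuH^0(f)$ via the counit $\ev_1$.
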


\subsubsection{Inf-injective (ifij) resolutions}
We introduce the notion of an inf-injective (ifij)-resolution of $M \in \sfD^{> -\infty}(R)$ 
and show its basic properties. 
Since almost all the proofs are analogous to that for the similar statement of sppj resolution, 
we omit them.

\begin{definition}[ifij morphism and ifij resolution]\label{ifij morphism definition} 
Let $M \in \sfD^{>-\infty}(R), M \neq 0$. 

\begin{enumerate}
\item 
A ifij morphism $f: M \to I $ is a morphism in $\sfD(R)$ 
such that $I \in \cI[- \inf M]$ 
and the morphism $\tuH^{\inf M}(f)$ is injective. 

\item 
A ifij morphism $f: M \to I$ is called minimal 
if 
the morphism $\tuH^{\inf M}(f) $ is an injective envelope.

\item 
A ifij resolution $I_{\bullet}$ of $M$ is a sequence $\{\cE_{ -i}\}_{i \geq 0}$ of exact triangles  
\[
\cE_{-i}: M_{-i} \xrightarrow{f_{-i}} I_{-i} \xrightarrow{g_{i}} M_{-i-1}
\]
with $M_{0} := M$  such that $f_{-i}$ is ifij.

The following inequality folds
\begin{equation}\label{201903141554}
\inf M_{-i} = \inf I_{-i} \leq \inf I_{-i-1} = \inf M_{-i-1}. 
\end{equation}

For an ifij resolution $I_{\bullet}$ with the above notations, 
we set $\delta_{-i} := f_{-i-1} \circ g_{-i}$. 
\[
\delta_{-i} :I_{-i} \to I_{-i-1}.
\]
Moreover we write 
\[M \to I_{0} \xrightarrow{\delta_{0}} 
I_{-1} \xrightarrow{\delta_{-1}}  \cdots \to I_{-i}  \xrightarrow{\delta_{-i}} I_{-i-1} \to \cdots. 
\]

\item 
A ifij resolution $I_{\bullet}$ is said to have length $e$ if 
$I_{-i} = 0$ for $i> e$ and 
$I_{-e} \neq 0$. 

\item 
A ifij resolution $I_{\bullet}$ is called minimal if $f_{i}$ is minimal 
for $i \leq 0$.
\end{enumerate}
\end{definition}

By Lemma \ref{basic of cI lemma}, 
 for any $M \in \sfD^{< \infty}(R)$, 
there exists an ifij morphism $f: M \to I$. 
More precisely, for any injective $H^{0}$-module homomorphism $\phi: \tuH^{\inf}(M) \to J$ with $J \in \Inj H^{0}$, 
there exists a unique $f: M \to I$ with $I = G(J)[-\inf M]$ which satisfies  $\tuH^{\sup M} (f) = \phi$.
Thus, in particular $M$ admits a minimal ifij morphism $f: M \to I$ if and only if $\tuH^{\inf }(M)$ admits an injective envelope $\tuH^{\inf}(M)$ as a $H^{0}$-module.

The following theorem extracted from \cite[Theorem 3.21]{coppepan} 
is a ifij version of Theorem \ref{sppj resolution theorem}.

\begin{theorem}\label{ifij resolution theorem}
Let $M \in \sfD^{>-\infty}(R)$ and $d$ a natural number. 
Set $F :=\RHom(-,M) $. 
Then  
the following conditions are equivalent 

\begin{enumerate}[(1)]
\item 
$\injdim  M  = d$.

\item 
For any ifij resolution $I_{\bullet}$, 
there exists a natural number $e \in \NN$ 
which satisfying the following properties.  

\begin{enumerate}[(a)]
\item 
$M_{e} \in \cI[-\inf M_{e}]$. 

\item 
$d = e + \inf M_{-e} - \inf I_{0}$. 

\item 
$g_{-e}$ is not a split-epimorphism. 
\end{enumerate}

\item 
$M$ has ifij resolution $I_{\bullet}$ of length $e$ 
which satisfies the following properties.

\begin{enumerate}[(a)]
\item $d = e + \inf I_{-e} - \inf I_{0}$.

\item 
$\delta_{e}$ is not a split-epimorphism. 
\end{enumerate} 
\end{enumerate}
\end{theorem}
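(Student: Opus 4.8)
The plan is to dualise the proof of Theorem~\ref{sppj resolution theorem}, interchanging $\cP$ with $\cI$, suprema with infima, and projective with injective concentration (the latter governed by $F=\RHom(-,M)$). Everything rests on one computation, the injective counterpart of Lemma~\ref{basic of cP lemma}: for $J\in\Inj H^{0}$ the adjunction isomorphism $\RHom_{R}(-,G(J))\cong\Hom^{\bullet}_{R^{0}}(-,E_{R^{0}}(J))$ together with the injectivity of $E_{R^{0}}(J)$ over $R^{0}$ shows that $\RHom_{R}(-,G(J))$ is exact. Hence every nonzero $I\in\cI$ has strict injective concentration $[0,0]$, and every nonzero $I\in\cI[-s]$ has $\inf I=s$ and strict injective concentration $[s,s]$, that is $\injdim I=0$. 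I would pair this with its converse, drawn from \cite{coppepan} via Lemma~\ref{basic of cI lemma}: an object $X\in\sfD^{>-\infty}(R)$ with $\injdim X=0$ lies in $\cI[-\inf X]$.

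The technical core is a one-step lemma governing how injective concentration crosses a single ifij triangle $\cE_{-i}\colon M_{-i}\to I_{-i}\xrightarrow{g_{-i}}M_{-i-1}$. Applying $\RHom(N,-)$ produces a triangle joining $\RHom(N,M_{-i})$, $\RHom(N,I_{-i})$ and $\RHom(N,M_{-i-1})[-1]$; since $\RHom(N,I_{-i})$ occupies only the single-width window prescribed by $\inf I_{-i}$, and since $\inf M_{-i}=\inf I_{-i}\le\inf I_{-i-1}=\inf M_{-i-1}$ by \eqref{201903141554}, comparing the two ends of the long exact sequence yields, whenever $M_{-i}\notin\cI[-\inf M_{-i}]$,
\[
\injdim M_{-i}=\injdim M_{-i-1}+1+(\inf I_{-i-1}-\inf I_{-i}).
\]
As the shift term is nonnegative, $\injdim M_{-i-1}\le\injdim M_{-i}-1$, so the injective dimensions of the cosyzygies strictly decrease at every genuine step.

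These two inputs let the conditions interlock as in the sppj case. For (3)$\Rightarrow$(1) I would splice the length-$e$ resolution and iterate the one-step lemma; since $\inf I_{-j}+j$ is strictly increasing in $j$, the top contribution sits at the last term, giving injective concentration inside $[\inf I_{0},\,e+\inf I_{-e}]$, with $\inf M=\inf I_{0}$ fixing the bottom and the non-split hypothesis on the final map guaranteeing that the top end is attained, so that $\injdim M=e+\inf I_{-e}-\inf I_{0}$. For (1)$\Rightarrow$(2), Lemma~\ref{201903152344} and the strict decrease force any ifij resolution to reach an injective cosyzygy $M_{-e}\in\cI[-\inf M_{-e}]$ after $e\le d$ steps; telescoping the one-step formula gives $d=e+\inf M_{-e}-\inf I_{0}$, and the non-split condition pins $e$ as the first such index. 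Finally (2)$\Rightarrow$(3) follows by truncating the resolution at stage $e$ and using $M_{-e}\in\cI[-\inf M_{-e}]$ as its last term.

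The principal obstacle is (1)$\Rightarrow$(2), and inside it the descent to \emph{arbitrary}, not necessarily minimal, ifij resolutions. The delicate points are: (i) the converse characterisation of injective-dimension-zero objects, so that the decreasing sequence of injective dimensions terminates in an honest member of $\cI[-\inf(-)]$ rather than in some object with merely single-width concentration; (ii) the careful bookkeeping of the shift terms $\inf I_{-i-1}-\inf I_{-i}$, which may be strictly positive, so that the telescoped identity and the degree in which $\RHom$ is nonzero are tracked correctly; and (iii) showing that superfluous split injective summands in a non-minimal resolution change neither $\injdim M$ nor the non-split criterion. Point (iii) is where the ifij analogue of Corollary~\ref{sppj resolution corollary 1}, detecting the length through $\Hom$ into simple $H^{0}$-modules, carries the real weight.
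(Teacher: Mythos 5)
Your plan follows the route the paper intends: Theorem \ref{ifij resolution theorem} is stated here without proof, being extracted from \cite[Theorem 3.21]{coppepan} with the explicit remark that the arguments are the duals of those for sppj resolutions, and your dualization --- injective concentration $[0,0]$ for objects of $\cI$ (which is in fact immediate from Lemma \ref{basic of cI lemma}(1) applied to shifts of $M$, so the adjunction computation can be bypassed), the one-step dimension shift across each triangle $\cE_{-i}$, and the telescoped identity $d=e+\inf I_{-e}-\inf I_{0}$ --- is the correct reconstruction of that omitted argument. The delicate points you flag (termination at an honest object of $\cI[-\inf M_{-e}]$ rather than merely a width-zero object, the edge case where $\injdim M_{-i-1}=0$ and $\inf I_{-i-1}=\inf I_{-i}$ so that the non-split condition carries the load, and insensitivity to redundant summands in non-minimal resolutions) are exactly the ones handled in \cite{coppepan} via the detection criterion of Lemma \ref{201712011651} and the ifij analogue of Corollary \ref{sppj resolution corollary 1}.
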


\subsubsection{Minimal ifij resolution}

A formula for computing $\Hom(N, M[n])$ for $M \in \sfD^{> - \infty}(R)$ and $N \in \Mod H^{0}$  by using a ifij resolution of $M$ 
 is given in \cite[Lemma 3.20]{coppepan}. 
As a  corollary we obtain the following result which is a ifij version of Corollary \ref{sppj resolution corollary 1}

\begin{corollary}\label{ifij resolution corollary 1}
Assume that $M \in \sfD^{> -\infty}(R)$ admits a minimal ifij resolution $I_{\bullet}$. 
Then for a simple $H^{0}$-module $S$ we have 
\[
\Hom(S, M[n]) 
= 
\begin{cases}
0 & n \neq i +\inf I_{-i} \textup{ for any } i \geq 0,  \\
\Hom(S, \tuH^{\inf}(I_{i})) & n = i + \inf I_{-i} \textup{ for some } i \geq 0.
\end{cases}
\]
\end{corollary}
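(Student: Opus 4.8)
The plan is to apply the cohomological functor $\Hom(S,-)$ to the tower of triangles $\{\cE_{-i}\}_{i\geq 0}$ defining the resolution and to read $\Hom(S,M[n])$ off the injective terms $I_{-i}$, minimality being used exactly to annihilate the connecting maps. I would first record three inputs. (i) Since $I_{-i}\in\cI[-\inf I_{-i}]$, the shift $I_{-i}[\inf I_{-i}]$ lies in $\cI$, so Lemma \ref{basic of cI lemma}(1) applied to the shifts $S[m]\in\Mod H^{0}\subset\sfD(R)$ yields
\[
\Hom(S,I_{-i}[n])\cong
\begin{cases}
\Hom_{H^{0}}(S,\tuH^{\inf}(I_{-i})), & n=\inf I_{-i},\\
0, & n\neq \inf I_{-i},
\end{cases}
\]
using that $\tuH^{0}(S[m])=S$ for $m=0$ and $0$ otherwise. (ii) Since $M_{-i}\in\sfD^{\geq \inf I_{-i}}(R)$, orthogonality for the standard $t$-structure gives $\Hom(S,M_{-i}[n])=0$ for $n<\inf I_{-i}$. (iii) By \eqref{201903141554} the sequence $(\inf I_{-i})_{i}$ is non-decreasing, hence $n_{i}:=i+\inf I_{-i}$ is strictly increasing, so each integer $n$ equals $n_{i}$ for at most one $i$.

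Next I would package the dévissage through $\{\cE_{-i}\}$ as the spectral sequence of this tower, with $E_{1}^{i,q}=\Hom(S,I_{-i}[q])$ and $d_{1}$ induced by the composites $\delta_{-i}\colon I_{-i}\xrightarrow{g_{-i}}M_{-i-1}\xrightarrow{f_{-i-1}}I_{-i-1}$, converging to $\Hom(S,M[i+q])$. By (i) the term $E_{1}^{i,q}$ is supported on $q=\inf I_{-i}$, i.e. in total degree $n=n_{i}$; by (ii) and (iii), for each fixed total degree only one term is nonzero, and since $\Hom(S,M_{-i}[n-i])=0$ once $n-i<\inf I_{-i}$ the filtration is exhaustive, so the spectral sequence converges. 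A degree count shows $d_{r}$ can be nonzero only if $\inf I_{-(i+r)}=\inf I_{-i}-r+1$, which by the monotonicity in (iii) forces $r=1$ and $\inf I_{-(i+1)}=\inf I_{-i}$.

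The crux is to kill this surviving $d_{1}$. When $\inf I_{-i}=\inf I_{-(i+1)}=:d$, Lemma \ref{basic of cI lemma}(1) identifies $d_{1}$ with $\Hom_{H^{0}}(S,\tuH^{d}(\delta_{-i}))$. Here minimality enters exactly as in the classical theory of minimal injective resolutions: minimality of $f_{-i}$ makes $\tuH^{d}(f_{-i})$ an injective envelope, so $\im\tuH^{d}(f_{-i})=\ker\tuH^{d}(g_{-i})$ is essential in $\tuH^{d}(I_{-i})$; therefore $\soc\tuH^{d}(I_{-i})\subseteq\ker\tuH^{d}(g_{-i})$, and $\tuH^{d}(g_{-i})$—hence $\tuH^{d}(\delta_{-i})=\tuH^{d}(f_{-i-1})\circ\tuH^{d}(g_{-i})$—annihilates the socle. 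As any map $S\to\tuH^{d}(I_{-i})$ lands in the socle, $\Hom_{H^{0}}(S,\tuH^{d}(\delta_{-i}))=0$.

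Thus every differential vanishes, $E_{\infty}=E_{1}$, and the unique surviving term gives $\Hom(S,M[n])=\Hom_{H^{0}}(S,\tuH^{\inf}(I_{-i}))=\Hom(S,\tuH^{\inf}(I_{-i}))$ precisely when $n=i+\inf I_{-i}$, and $0$ otherwise, the last equality holding because $\Hom$ restricted to the heart $\Mod H^{0}$ is $\Hom_{H^{0}}$. I expect the main obstacle to be this final vanishing—pinning down the socle inclusion coming from minimality and checking that it survives the passage through both cohomology functors—together with fixing the spectral-sequence indexing and convergence; everything else is formal, and the whole argument is the evident ifij-dual of the proof of Corollary \ref{sppj resolution corollary 1}.
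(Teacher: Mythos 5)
Your proof is correct and follows essentially the route the paper intends: the paper deduces this corollary from the d\'evissage formula of \cite[Lemma 3.20]{coppepan} applied to a minimal ifij resolution, and your argument reconstructs exactly that d\'evissage (packaged as the spectral sequence of the tower $\{\cE_{-i}\}_{i\geq 0}$ rather than as iterated long exact sequences of the triangles), with minimality entering through the socle argument to kill the $d_{1}$ induced by $\delta_{-i}$ in the only case $\inf I_{-i}=\inf I_{-i-1}$ where it could survive. The ingredients you isolate --- the concentration $\Hom(S,I_{-i}[n])\neq 0$ only for $n=\inf I_{-i}$ via Lemma \ref{basic of cI lemma}, the vanishing $\Hom(S,M_{-i}[n])=0$ for $n<\inf I_{-i}$, the strict monotonicity of $i+\inf I_{-i}$, and the essential-image/socle step --- are precisely those of the paper's (outsourced) proof, so only the spectral-sequence packaging is a cosmetic difference.
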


\section{Commutative DG-algebras}\label{CDGA}

In Section \ref{CDGA}, we deal with a  connective commutative  DG-algebra (CDGA) $R$.  
Moreover, a CDGA $R$ is assumed to be piecewise Noetherian and that $R^{>0}= 0$.  
Consequently, every homogeneous element $x \in R^{0}$ of degree $0$ is cocycle.

\subsection{Basics}\label{CDGA:basics}

In Section \ref{CDGA:basics}, 
we develop basic notion of 
piecewise Noetherian CDGA $R$ and 
its derived categories with finitely generated  cohomology groups.

Recall that 
 a CDGA $R$ is called  
\textit{ piecewise Noetherian} 
if $H^{0}$ is right Noetherian and $H^{-i}$ is finitely generated as a right $H^{0}$-module 
for $i \geq 0$. 
The name is taken from \cite{Avramov-Halperin}. 
The same notion is called  cohomological pseudo-Noetherian in \cite{Yekutieli} 
and  Noetherian in \cite{Shaul:Homological, Shaul:Injective}.

Let $\mod H^{0} \subset \Mod H^{0}$ denotes the full subcategory of finite $H^{0}$-modules. 
We denote by $\sfD_{\mod H^{0}}(R) \subset \sfD(R)$ be the full subcategory 
consisting $M$ such that $\tuH^{i}(M) \in \mod H^{0}$ for $i \in \ZZ$.  
We denote by $\sfD(R)_{\textup{fpd}}, \sfD(R)_{\textup{fid}}$ the full subcategories of DG-$R$-modules of finite projective dimensions and 
finite injective dimensions respectively. 
We set 
\[
\sfD_{\mod H^{0}}^{\square}(R) :=
\sfD^{\square}(R) \cap
\sfD_{\mod H^{0}}(R)
\] 
where $\square = [a,b] , $ etc. 
We define $\sfD^{\square}_{\mod H^{0}}(R)_{\textup{fpd}}, \sfD^{\square}_{\mod H^{0}}(R)_{\textup{fid}}$ similarly.

We will tacitly use the following finiteness of $\Hom$-spaces.

\begin{proposition} 
If $M \in \sfD^{> -\infty}_{\mod H^{0}}(R)$ and $N \in \sfD^{< \infty}_{\mod H^{0}} (R)$, 
then $\Hom(N, M[n])$  is finitely generated as $H^{0}$-module for $n \in \ZZ$. 
\end{proposition}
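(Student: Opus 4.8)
The plan is to resolve the first variable $N$ by a sup-projective resolution whose terms are finitely generated and to compute $\Hom(N,M[n])$ through the resulting long exact sequences, exploiting that over the Noetherian ring $H^0$ finite generation is preserved under the relevant operations. Throughout I use that, since $R$ is commutative, each $\Hom(X,Y[m]) = \tuH^{m}\RHom_{R}(X,Y)$ carries a natural $H^{0}$-module structure and that the connecting maps of the long exact sequences attached to an exact triangle are $H^{0}$-linear.

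First I would construct a convenient sppj resolution $P_{\bullet}$ of $N$, with triangles $\cE_{i}: M_{i+1} \to P_{i} \to M_{i}$ and $M_{0}=N$. At each step $\tuH^{\sup M_{i}}(M_{i})$ is a finite $H^{0}$-module and $H^{0}$ is Noetherian, so by the remark following Definition \ref{sppj morphism definition} I may choose the sppj morphism $f_{i}$ so that $P_{i}\in\cP[-\sup M_{i}]$ has $\tuH^{\sup M_{i}}(P_{i})$ a finitely generated projective $H^{0}$-module. Writing $P_{i}=P_{i}'[-\sup M_{i}]$ with $P_{i}'\in\cP$, the module $P_{i}'$ is a direct summand of a finite direct sum of copies of $R$, so $\tuH^{j}(P_{i}')$ is a direct summand of a finite sum of copies of $H^{j}$; piecewise Noetherianity gives $\tuH^{j}(P_{i}')\in\mod H^{0}$, whence $P_{i}\in\sfD_{\mod H^{0}}(R)$. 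The long exact cohomology sequence of $\cE_{i}$ then keeps $M_{i+1}\in\sfD^{<\infty}_{\mod H^{0}}(R)$ with $\sup M_{i+1}\le\sup M_{i}\le\sup N$, so the construction iterates.

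Next I would record two finiteness inputs. By Lemma \ref{basic of cP lemma}(1), for every integer $m$
\[
\Hom(P_{i},M[m]) \cong \Hom(P_{i}',M[m+\sup M_{i}]) \cong \Hom_{H^{0}}\!\left(\tuH^{0}(P_{i}'),\,\tuH^{m+\sup M_{i}}(M)\right),
\]
a $\Hom$ from a finitely generated projective $H^{0}$-module into a finite $H^{0}$-module, hence finite. On the other hand, $t$-structure orthogonality $\Hom(\sfD^{\le a}(R),\sfD^{\ge b}(R))=0$ for $b>a$, combined with $M_{i}\in\sfD^{\le\sup N}(R)$ and $M[n-i]\in\sfD^{\ge \inf M-n+i}(R)$, yields the vanishing
\[
\Hom(M_{i},M[n-i]) = 0 \quad\text{whenever } i > \sup N - \inf M + n.
\]
Finally I would run a descending induction. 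Applying $\Hom(-,M[n-i])$ to $\cE_{i}$ gives the exact sequence of $H^{0}$-modules
\[
\Hom(M_{i+1},M[n-i-1]) \longrightarrow \Hom(M_{i},M[n-i]) \longrightarrow \Hom(P_{i},M[n-i]).
\]
The vanishing above is the base case: for $i>\sup N-\inf M+n$ the middle term is $0$. For the inductive step, $\Hom(P_{i},M[n-i])$ is finite, so the image of the right-hand map is finite (Noetherian), while the kernel is a quotient of the finite module $\Hom(M_{i+1},M[n-i-1])$; thus $\Hom(M_{i},M[n-i])$ is finite. Descending to $i=0$ gives that $\Hom(N,M[n])=\Hom(M_{0},M[n])$ is a finite $H^{0}$-module.

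I expect the main obstacle to be the bookkeeping around the \emph{infinite} sppj resolution: one must check that finite generation of all cohomology modules is genuinely propagated along the resolution (this is exactly where piecewise Noetherianity is used) and that the descending induction is well-founded, which is secured by the $t$-structure vanishing forcing $\Hom(M_{i},M[n-i])$ to die for large $i$. Everything else is a formal consequence of the long exact sequences and the Noetherian property of $H^{0}$.
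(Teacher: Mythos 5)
Your proof is correct and follows essentially the same route as the paper's: take a sppj resolution of $N$ with finitely generated terms $P_{i}$, apply $\Hom(-,M[\,\cdot\,])$ to the triangles $\cE_{i}$, observe that $\Hom(P_{i},M[m])$ is a finite $H^{0}$-module, and start the (descending) induction from the vanishing of $\Hom(M_{i},M[n-i])$ for $i\gg 0$, which holds because $M$ is bounded below while $\sup M_{i}\leq\sup N$. The extra details you supply (why the $P_{i}$ can be taken in $\sfD_{\mod H^{0}}(R)$, and the explicit $t$-structure orthogonality bound) are correct elaborations of the same argument.
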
 

\begin{proof}
We take a sppj resolution $\{\cE_{i}\}_{i \geq 0}$ of $N$ 
such that each $P_{i}$ belongs to $\cP_{\mod H^{0}}$.
\[
\cE_{i}: N_{ i+1} \to P_{i} \to N_{i}.  
\] 
Then, we have 
an exact sequence 
\[
 \Hom(N_{i+1}, M[n-1]) \to \Hom(N_{i}, M[n]) \to \Hom(P_{i},M[n]) 
\]
for $i \geq 0, n \in \ZZ$. 
Since $\Hom(P_{i}, M)$ is finitely generated $H^{0}$-module, 
the middle term $\Hom(N_{i}, M[n])$ is finitely generated 
if and only if so is the left term $\Hom(N_{i+1}, M[n-1])$.
Thus it is enough to show that $\Hom(N_{i+j}, M[n-j])$ is finitely generated for some  $j \geq 0$. 

Since $M \in \sfD^{> -\infty}(R)$ and $\sup N_{i} \leq \sup N < \infty$ for $i \geq 0$, 
we have $\Hom(N_{i+j}, M[n-j]) = 0$ and in particular it is finitely generated for $j \gg 0$.   
This completes the proof. 
\end{proof}

\subsubsection{Bound of projective dimension}

A piecewise Noetherian CDGA $R$ is called \emph{local} if the $0$-th cohomological algebra $H^{0}$ is local. 
In the case where  $R$ is local, we denote by $\frkm$  a maximal ideal of $H^{0}$ and by $k$ the residue field. 
Let $\pi^{0}: R^{0} \to H^{0}$ be a canonical projection. 
We set   $\ulfrkm := (\pi^{0})^{-1}(\frkm)$ and call it a \emph{maximal ideal} of a local CDGA $R$.   
We may identify $k = H^{0}/\frkm$ with $R^{0}/\ulfrkm$ and call it the \emph{residue field} of a local CDGA $R$. 

Recall that over a commutative Noetherian local algebra, 
every module has a projective cover. 
Therefore, 
if $R$ is local, then 
every object $M \in \sfD^{< \infty}(R)$ admits  a minimal sppj resolution.  
Hence, the projective dimension $\pd M$ can be measured at the residue field $k$ of $R$.  

\begin{proposition}\label{bound of projective dimension}
Let $R$ be a local piecewise Noetherian CDGA with the residue field $k$. 
For $M \in \sfD^{<\infty}_{\mod H^{0}}(R)$  and $d \in \NN$, 
the following conditions are equivalent.
\begin{enumerate}[(1)]
\item $\pd M \leq d$. 

\item $\Hom(M,k[n]) = 0$ for $n > d-\sup M$. 

\item $\tuH^{n}(M \lotimes_{R} k) = 0$ for $n< -d +\sup M$.
\end{enumerate}
\end{proposition}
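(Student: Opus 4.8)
The plan is to compute everything from a single minimal sppj resolution $P_{\bullet}$ of $M$ and to read off both cohomological conditions from the two formulas of Corollary~\ref{sppj resolution corollary 1} and Corollary~\ref{sppj resolution corollary 2}. First I would fix the setup: since $R$ is local and $M \in \sfD^{<\infty}_{\mod H^{0}}(R)$, the module $M$ admits a minimal sppj resolution $P_{\bullet}$ in which every $P_{i}$ lies in $\cP_{\mod H^{0}}$, so that by Lemma~\ref{basic of cP lemma} each $\tuH^{\sup}(P_{i})$ is a finitely generated projective, hence free, $H^{0}$-module (projectives over the local ring $H^{0}$ are free). The residue field $k$ is simultaneously the unique simple $H^{0}$-module and the unique simple $(H^{0})^{\op}$-module. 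From the defining inequality $\sup P_{i+1} = \sup M_{i+1} \le \sup M_{i} = \sup P_{i}$ it follows that $i \mapsto i - \sup P_{i}$ is strictly increasing, a fact I will use repeatedly.

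Next I would apply Corollary~\ref{sppj resolution corollary 1} with $S = k$ and Corollary~\ref{sppj resolution corollary 2} with $T = k$. Because $\tuH^{\sup}(P_{i})$ is free, of rank $r_{i}$ say, we have $\Hom(\tuH^{\sup}(P_{i}), k) \cong k^{r_{i}}$ and $\tuH^{\sup}(P_{i}) \otimes_{H^{0}} k \cong k^{r_{i}}$, both nonzero exactly when $P_{i} \neq 0$. Hence the two corollaries give
\[
\Hom(M, k[n]) \neq 0 \iff n = i - \sup P_{i} \text{ for some } i \text{ with } P_{i} \neq 0,
\]
\[
\tuH^{n}(M \lotimes_{R} k) \neq 0 \iff n = -(i - \sup P_{i}) \text{ for some } i \text{ with } P_{i} \neq 0,
\]
so the two nonvanishing loci are reflections of one another through the origin. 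Using the strict monotonicity of $i \mapsto i - \sup P_{i}$, condition (2) (vanishing of $\Hom(M,k[n])$ for $n > d - \sup M$) and condition (3) (vanishing of $\tuH^{n}(M\lotimes_{R}k)$ for $n < -d + \sup M$) both translate into the single inequality
\[
i - \sup P_{i} \le d - \sup M \quad\text{for every } i \text{ with } P_{i} \neq 0.
\]
This already establishes the equivalence of (2) and (3).

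Finally I would connect this inequality to $\pd M$. If $P_{\bullet}$ has infinite length, then $i - \sup P_{i} \ge i - \sup M \to \infty$, so the displayed inequality fails for every $d$; moreover Theorem~\ref{sppj resolution theorem} (whose part~(2) would otherwise force a finite terminating index $e$ with $M_{e}\in\cP[-\sup M_{e}]$) then gives $\pd M = \infty$, so $\pd M \le d$ likewise fails, and all three conditions are simultaneously false. If $P_{\bullet}$ has finite length $e$, then $\sup M - \sup P_{i}$ is non-negative and non-decreasing, so $i - \sup P_{i}$ attains its maximum at $i = e$ and the displayed inequality reduces to $e - \sup P_{e} \le d - \sup M$, i.e. $e + \sup M - \sup P_{e} \le d$. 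Applying Theorem~\ref{sppj resolution theorem} to $P_{\bullet}$ yields $\pd M = e + \sup P_{0} - \sup P_{e} = e + \sup M - \sup P_{e}$, so the inequality is exactly $\pd M \le d$, giving (1)$\Leftrightarrow$(2) and completing the argument. The main obstacle is this last step: to invoke the length formula I must know that for a \emph{minimal} resolution the terminal connecting map $\delta_{e}$ is not a split monomorphism (equivalently, a finite minimal resolution cannot be shortened), which is precisely where minimality rather than the mere existence of a sppj resolution is essential. I would verify this by using that $\tuH^{\sup}(f_{i})$ is a projective cover at each stage, which prevents any nonzero $P_{i}$ from splitting off.
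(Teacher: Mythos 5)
Your argument is correct and follows essentially the same route as the paper: read off conditions (2) and (3) from a minimal sppj resolution via Corollaries \ref{sppj resolution corollary 1} and \ref{sppj resolution corollary 2}, then identify the resulting bound on $i - \sup P_{i}$ with $\pd M$ through Theorem \ref{sppj resolution theorem}. The only organizational difference is that the paper closes the (2) $\Rightarrow$ (1) step by verifying the projective-concentration bound against all $N \in \Mod H^{0}$ rather than invoking the length formula of Theorem \ref{sppj resolution theorem}(3), which lets it sidestep the (true, and correctly flagged by you as needing verification) auxiliary fact that the terminal map $\delta_{e}$ of a minimal resolution is never a split monomorphism.
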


\begin{proof}
The implications (1) $\Rightarrow$ (2) $\Leftrightarrow$ (3) are clear. 
We prove (2) $\Rightarrow$ (1). 
Let $P_{\bullet}$ be a minimal sppj resolution of $M$. 
Then if $i -\sup P_{i}> d- \sup M $, then $P_{i} = 0$ by the assumption and Corollary \cite[Corollary 2.28]{coppepan}. 
Hence in particular $\pd M < \infty$ by Theorem \ref{sppj resolution theorem}. 
Thus we only have to show that for $N \in \Mod H^{0}$ 
the condition $\Hom(M, N[n]) \neq 0$ implies $n \leq d -\sup M$. 
By Theorem \ref{sppj resolution theorem}, the condition 
implies that there exists $i$ such that $n = i - \sup P_{i}$ 
and $P_{i} \neq 0$. 
It follows from the first consideration  that $n \leq d -\sup M$. 
\end{proof}

\begin{corollary}\label{bound of projective dimension corollary}
Let $R, k$ be as in the above Proposition. 
For $M \in \sfD^{<\infty}_{\mod H^{0}}(R)$, 
we have the following equalities. 
\[
\pd M = \sup \RHom(M, k) + \sup M =  -\inf (M \lotimes_{R} k) + \sup M
\]
\end{corollary}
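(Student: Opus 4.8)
The plan is to derive the displayed equalities of Corollary \ref{bound of projective dimension corollary} as a direct consequence of Proposition \ref{bound of projective dimension}, by reformulating the latter in terms of suprema and infima of cohomology. First I would observe that, for $M \in \sfD^{<\infty}_{\mod H^{0}}(R)$ with $M \neq 0$, the projective dimension $\pd M$ is a well-defined element of $\NN \cup \{\infty\}$, and the three conditions of Proposition \ref{bound of projective dimension} hold for a given $d$ precisely when $d$ is at least certain threshold values. The strategy is to identify those thresholds with the quantities $\sup \RHom(M,k) + \sup M$ and $-\inf(M \lotimes_{R} k) + \sup M$ respectively.

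The key step is the translation between ``vanishing for $n$ in a range'' and ``$\sup$/$\inf$ of a complex.'' Concretely, by definition $\sup \RHom(M,k) = \sup\{ n \mid \tuH^{n}(\RHom(M,k)) \neq 0\}$, and since $\tuH^{n}(\RHom(M,k)) = \Hom(M, k[n])$, the condition ``$\Hom(M,k[n]) = 0$ for all $n > d - \sup M$'' in Proposition \ref{bound of projective dimension}(2) is equivalent to the inequality $\sup \RHom(M,k) \leq d - \sup M$, i.e. $\sup \RHom(M,k) + \sup M \leq d$. Similarly, $\inf(M \lotimes_{R} k) = \inf\{n \mid \tuH^{n}(M \lotimes_{R} k) \neq 0\}$, so condition (3), namely ``$\tuH^{n}(M\lotimes_{R} k) = 0$ for $n < -d + \sup M$,'' is equivalent to $\inf(M \lotimes_{R} k) \geq -d + \sup M$, that is $-\inf(M \lotimes_{R} k) + \sup M \leq d$.

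Combining these reformulations with the equivalence of (1), (2), (3) in Proposition \ref{bound of projective dimension}, I obtain that for every $d \in \NN$,
\[
\pd M \leq d \iff \sup \RHom(M,k) + \sup M \leq d \iff -\inf(M \lotimes_{R} k) + \sup M \leq d.
\]
Since two monotone quantities that define the same set of admissible upper bounds $d$ must be equal, taking the minimal such $d$ yields the desired equalities $\pd M = \sup \RHom(M,k) + \sup M = -\inf(M \lotimes_{R} k) + \sup M$. I would phrase this last step as: both sides are characterized as $\min\{ d \in \NN \mid \pd M \leq d\}$, so they coincide.

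The main obstacle, and the point requiring care, is the boundary behavior at infinity and the edge cases. When $\pd M = \infty$, I must check that the right-hand quantities are also $\infty$: if $\sup \RHom(M,k) + \sup M$ were finite, it would furnish a finite $d$ satisfying condition (2), forcing $\pd M \leq d < \infty$ by Proposition \ref{bound of projective dimension}, a contradiction. I should also confirm that the suprema and infima are finite in the finite-dimensional case, and address the trivial case $M = 0$ separately if needed (recalling the conventions $\sup 0 = -\infty$, $\inf 0 = \infty$). Finally, I would note that finiteness of $\pd M$ guarantees $M \in \sfD^{<\infty}(R)$ so that $\sup M$ is finite and the arithmetic is meaningful; this is already built into the hypothesis $M \in \sfD^{<\infty}_{\mod H^{0}}(R)$.
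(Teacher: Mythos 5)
Your proposal is correct and is essentially the argument the paper intends: the corollary is stated without proof as an immediate reformulation of Proposition \ref{bound of projective dimension}, obtained exactly by translating the vanishing ranges in conditions (2) and (3) into the inequalities $\sup \RHom(M,k) + \sup M \leq d$ and $-\inf(M\lotimes_{R}k) + \sup M \leq d$ and then minimizing over $d$. Your attention to the infinite case and the conventions for $M=0$ is appropriate but does not change the route.
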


\subsubsection{Localization}

A prime ideal $\frkp \in \Spec H^{0}$ induces  prime (DG) ideals 
$\frkp', \ulfrkp, \ulfrkp'$ of $H, R^{0}$ and $R$ 
in the following way.
Let $\pi: R \to H^{0}, \pi^{0}: R^{0} \to H^{0}$ the canonical projections. 
Then we set  
\[
\begin{split}
\frkp' &:= \frkp \oplus H^{-1} \oplus H^{-2} \oplus \cdots , \\
\ulfrkp & := (\pi^{0})^{-1}(\frkp), \\
\ulfrkp' & := (\pi)^{-1}(\frkp) = \ulfrkp \oplus R^{-1} \oplus R^{-2} \oplus \cdots.  
\end{split}
\]
We may identify the following residue algebras. 
\[
H^{0}/\frkp =H/\frkp' = R^{0}/\ulfrkp = R/\ulfrkp'.
\]

By 
$R_{\ulfrkp'}$, we  denotes the localization of graded algebra $R^{\#}$ 
 with respect to  $\ulfrkp'$  
 with the differential 
\[
\partial_{R_{\ulfrkp'}} \left( \dfrac{r}{s} \right) := \dfrac {\partial_{R}(r)} {s}. 
\]
We note that the localization map $\phi: R \to R_{\ulfrkp'}$ is a DG-algebra morphism.

For a DG-$R$-module $M \in \sfC(R)$, we set $M_{\ulfrkp'} := M \otimes_{R}R_{\ulfrkp'}$. 
Then, 
\[
\begin{split}
(M_{\ulfrkp'})^{n}& = (M^{n})_{\ulfrkp}, \\
\tuH(M_{\ulfrkp'})& = \tuH(M)_{\frkp'}, 
\tuH^{n}(M_{\ulfrkp'})  = \tuH^{n}(M)_{\frkp} \textup{ for } n \in \ZZ.
\end{split}
\]
We may identify the residue fields 
\[
\kappa(\frkp) = \left( H^{0}/\frkp \right)_{\frkp} 
=\left(H/\frkp' \right)_{\frkp'} 
= \left( R^{0}/\ulfrkp \right)_{\ulfrkp} = 
\left(R/\ulfrkp' \right)_{\ulfrkp'}.
\]

\begin{proposition}\label{201712041945}
Let $M \in \sfD^{<\infty}_{\mod H^{0}}(R)$ and $N \in \sfD^{>-\infty}(R)$. 
Then the canonical morphism below is an isomorphism
\[
\Phi:\RHom_{R}(M,N)_{\ulfrkp} \xrightarrow{ \cong} \RHom_{R_{\ulfrkp}}(M_{\ulfrkp}, N_{\ulfrkp}).
\]
\end{proposition}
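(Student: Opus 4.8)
The plan is to prove that $\Phi$ is an isomorphism in $\sfD(R_{\ulfrkp})$ by checking it degreewise, reducing an arbitrary $M$ to the class $\cP$ via an sppj resolution and then forcing termination through the $t$-structure orthogonality. First I would record the formal features of $\Phi$: both $F := \RHom_{R}(-,N)_{\ulfrkp}$ and $G := \RHom_{R_{\ulfrkp}}((-)_{\ulfrkp}, N_{\ulfrkp})$ are contravariant triangulated functors in the first variable, and $\Phi\colon F \to G$ is a natural transformation between them. Since localization $(-)_{\ulfrkp}$ is exact, it commutes with $\tuH^{n}$ and sends each exact triangle in $\sfD(R)$ to one in $\sfD(R_{\ulfrkp})$; hence it suffices to show that $\Phi$ induces an isomorphism $\Hom(M,N[n])_{\frkp} \xrightarrow{\cong} \Hom_{R_{\ulfrkp}}(M_{\ulfrkp}, N_{\ulfrkp}[n])$ for every $n \in \ZZ$. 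I also note at once that $\{M : \Phi_{M} \text{ is an isomorphism}\}$ is closed under shifts, summands and cones, so it is a thick subcategory; the key point is that $M$ need \emph{not} lie in $\thick(R)$, so a purely finite dévissage is not enough and the infinite resolution must be handled directly.

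Next comes the base case. For $M \in \cP_{\mod H^{0}}$ (finitely generated, i.e.\ a direct summand of some $R^{\oplus r}$, which by Lemma \ref{basic of cP lemma} corresponds to a finitely generated projective $H^{0}$-module) one has $\RHom_{R}(R,N) \cong N$, and localization commutes with \emph{finite} direct sums and with passage to summands, so $\Phi$ is an isomorphism on $\cP_{\mod H^{0}}$ and all its shifts. I would stress that finiteness is essential here: for an infinite coproduct $R^{(\Lambda)}$ one gets $\RHom(R^{(\Lambda)},N) \cong N^{\Lambda}$, and localization does not commute with infinite products, which is precisely why the hypothesis $M \in \sfD^{<\infty}_{\mod H^{0}}(R)$ is used. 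Using that $R$ is piecewise Noetherian and $M \in \sfD_{\mod H^{0}}(R)$, I choose an sppj resolution of $M$ with each $P_{i} \in \cP_{\mod H^{0}}[-\sup M_{i}]$ (the syzygies $M_{i}$ stay in $\sfD_{\mod H^{0}}$ by two-out-of-three, and each $\tuH^{\sup}(M_{i})$ is finitely generated over the Noetherian ring $H^{0}$), and localize the defining triangles $\cE_{i}\colon M_{i+1} \to P_{i} \to M_{i} \to M_{i+1}[1]$ to obtain a system built from the $P_{i,\ulfrkp}$ computing the right-hand side.

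The engine is then a degreewise induction along the localized long exact sequences obtained by applying $F$ and $G$ to the $\cE_{i}$, with $\Phi$ giving a map between them; by the base case $\Phi$ is an isomorphism on every $P_{i}$ in every degree. The termination mechanism is the orthogonality $\Hom(\sfD^{\le s}, \sfD^{\ge t}) = 0$ for $s<t$: since $\sup M_{i} \le \sup M < \infty$ and $\inf N > -\infty$ (and the same bounds hold after localizing, as $\sup M_{i,\ulfrkp}\le\sup M_{i}$ and $\inf N_{\ulfrkp}\ge\inf N$), both $\tuH^{n}F(M_{i})$ and $\tuH^{n}G(M_{i})$ vanish for $n < \inf N - \sup M$, \emph{uniformly in $i$}. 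The crucial structural fact is that in the long exact sequence the term $\tuH^{n}(F(M_{i}))$ is squeezed between $\tuH^{n-1}(F(M_{i+1}))$ and $\tuH^{n}(F(P_{i}))$, so a five-lemma/diagram chase reduces the claim in degree $n$ at index $i$ to the claim in degree $n-1$ at index $i+1$. I would run this as a two-stage downward induction on $n$: first prove injectivity of $\Phi$ on all $\tuH^{n}(F(M_{i}))$ (a four-lemma chase, reducing injectivity at $(i,n)$ to injectivity at $(i+1,n-1)$), and then, using the now-established injectivity, prove surjectivity (reducing surjectivity at $(i,n)$ to injectivity at $(i+1,n)$ plus surjectivity at $(i+1,n-1)$). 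Each reduction lowers the degree by one and raises the resolution index by one, so it terminates once $n-j < \inf N - \sup M$, where the relevant groups vanish. The base of both inductions is the uniform vanishing for $n < \inf N - \sup M$. The main obstacle I anticipate is exactly this bookkeeping: because $\RHom(M,N)$ may be unbounded above, no single truncation of the resolution works in all degrees simultaneously, so the argument must be organized along the degree-shifting long exact sequence (as in the proof of finiteness of the $\Hom$-spaces above) so that each fixed cohomological degree is settled by finitely many steps.
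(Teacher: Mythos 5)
Your proof is correct, but it takes a genuinely different route from the paper's. The paper localizes the problem at a \emph{truncated DG-projective resolution}: it first computes both sides explicitly for semi-free modules of the form $\bigoplus_{i\leq m}R^{\oplus p_{i}}[-i]$ with $p_{i}\in\NN$ (where the a priori infinite product $\Hom_{R}(\bigoplus_{i}R^{\oplus p_{i}}[-i],N)$ collapses to the finite sum $\bigoplus_{\inf N\leq i\leq m}\tuH^{i}(N)^{\oplus p_{i}}_{\frkp}$ precisely because $\inf N>-\infty$), then truncates the resolution at length $\sup M-\inf N+1$ so that the cone of $T\to M$ lies in $\sfD^{<\inf N}(R)$ and is annihilated by $\Hom(-,N)$ in degree $0$; unboundedness of $\RHom(M,N)$ is handled by rerunning this for each shift $M[i]$ with its own truncation. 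You instead resolve by the sppj resolution with terms in $\cP_{\mod H^{0}}[-\sup M_{i}]$, so your base case needs only that localization commutes with \emph{finite} direct sums and summands, and you absorb the unboundedness into a doubly indexed four-/five-lemma induction on $(i,n)$, terminated by the same orthogonality bound $\Hom(\sfD^{\leq\sup M},\sfD^{\geq t})=0$. The two termination mechanisms are the same in substance (both hinge on $\sup M<\infty$ and $\inf N>-\infty$, exactly as in the paper's proof of finiteness of $\Hom$-spaces), but your argument avoids the appendix material on DG-projective resolutions and the explicit computation with infinite semi-free sums, at the cost of a more delicate diagram chase; the paper's version is shorter once the truncated resolution and Lemma \ref{20171124151} are in hand, and its dévissage step is a one-liner. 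Both proofs correctly isolate where the hypotheses $M\in\sfD^{<\infty}_{\mod H^{0}}(R)$ and $N\in\sfD^{>-\infty}(R)$ enter, and your remark that the finite-generation hypothesis is what prevents localization from having to commute with infinite products is exactly the right diagnosis.
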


\begin{proof}
We set $\phi_{M,N}:= \tuH^{0}(\Phi_{M,N})$
\[
\phi_{M,N}:\Hom_{R}(M,N)_{\frkp} \to \Hom_{R_{\ulfrkp}}(M_{\ulfrkp}, N_{\ulfrkp}).
\]

We fix $N \in \sfD^{> -\infty}(R)$. 
Let $n = \inf N$. 
First we consider the case  $M = \bigoplus_{i\leq m} R^{\oplus p_{i}}[-i]$ for some $m \in \ZZ$ and $p_{i} \in \NN$.  
Then, 
\[
\begin{split}
\Hom_{R}(M,N)_{\ulfrkp} 
\cong 
\left(\bigoplus_{n \leq i \leq m}\Hom_{R}(R^{\oplus p_{i}}[-i], N)\right)_{\frkp} 
\cong 
\bigoplus_{n\leq i \leq m}\tuH^{i}(N^{\oplus p_{i}})_{\frkp} \\
\Hom_{R_{\ulfrkp}}(M_{\ulfrkp}, N_{\ulfrkp}) 
\cong 
\bigoplus_{n \leq i \leq m} \Hom_{R_{\ulfrkp}}(R_{\ulfrkp}^{\oplus p_{i}}[-i], N_{\ulfrkp}) 
\cong 
\bigoplus_{n\leq i \leq m}\tuH^{i}(N_{\ulfrkp}^{\oplus p_{i}})
\end{split}
\]
The left most terms of both equations are isomorphic 
via the morphism induced from $\phi_{M,N}$. 
Therefore, in this case $\phi_{M,N}$ is an isomorphism. 
Since $\phi_{M[i],N}$ is isomorphism, we conclude that $\Phi_{M,N}$ is an isomorphism. 

Since $\Phi_{M,N}$ is a natural transformation between exact functors. 
By divisage argument, 
we see that $\Phi_{M,N}$ is an isomorphism 
in the case $M$ is obtained from the objects of the forms 
$M = \bigoplus_{i\leq m} R^{\oplus p_{i}}[-i]$ 
by taking cones and shift and direct summands. 

Finally, 
we consider the general case. 
Let $m = \sup M $. 
We take a truncated DG-projective  resolution of $M$ truncated at $k = m -n +1$-th degree 
(see Appendix \ref{Cartan-Eilenberg projective resolution}). 
\[
  P_{k} \to P_{k -1} \to \cdots \to P_{0} \to M \to 0. 
\]
Let $T$ be the totalization of $P_{k} \to \cdots \to P_{0}$. 
Let   $L := \cone(\textsf{can})$ 
be the cone of   the canonical morphism  $\textsf{can}: T \to M$. 
Then $L,L[1]$  belong to $\sfD^{< n}(R)$ by Lemma \ref{20171124151}. 
Therefore, the induced morphisms 
$\Hom_{R}(M, N) \to \Hom_{R}(T, N)$ 
and
$\Hom_{R_{\ulfrkp}}(M_{\ulfrkp}, N_{\ulfrkp}) \to \Hom_{R_{\ulfrkp}}(T_{\ulfrkp}, N_{\ulfrkp})$ 
 are isomorphisms. 
Under these isomorphisms, the morphism 
 $\phi_{M,N}$ corresponds to $\phi_{T,N}$. 
 Since we already know that $\phi_{T,N}$ is an isomorphism, 
 we conclude that $\phi_{M,N}$ is an isomorphism.
\end{proof}

\subsubsection{Nakayama's Lemma} 

\begin{lemma}\label{Nakayama lemma}
For $M \in \sfD(R)$, we consider   the following conditions. 
\begin{enumerate}[(1)]
\item 
$M_{\frkp }= 0$. 
\item 
$M \lotimes_{R} \kappa(\frkp) = 0$. 
\end{enumerate}
Then (1) always implies (2). 
If $M \in \sfD^{< \infty}_{\mod H^{0}}(R)$, then (2) implies (1). 
\end{lemma}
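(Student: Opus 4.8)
The plan is to reduce both implications to a single base-change identity and then, for the harder direction, to localize so that a minimal sppj resolution becomes available.

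First I would establish the base-change isomorphism $M \lotimes_R \kappa(\frkp) \cong M_{\ulfrkp'} \lotimes_{R_{\ulfrkp'}} \kappa(\frkp)$. This follows from associativity of the derived tensor product together with the flatness of the localization map $R \to R_{\ulfrkp'}$, which gives $M \lotimes_R R_{\ulfrkp'} \cong M_{\ulfrkp'}$ and lets one regard $\kappa(\frkp)$ as an $R_{\ulfrkp'}$-module. I would also record the routine bookkeeping that $R_{\ulfrkp'}$ is again a connective piecewise Noetherian CDGA, now local, with $\tuH^0(R_{\ulfrkp'}) = (H^0)_{\frkp}$ and residue field $\kappa(\frkp)$, and that $M_{\ulfrkp'}$ inherits membership in $\sfD^{<\infty}_{\mod (H^0)_{\frkp}}(R_{\ulfrkp'})$ from $M \in \sfD^{<\infty}_{\mod H^0}(R)$, since $\tuH^n(M_{\ulfrkp'}) = \tuH^n(M)_{\frkp}$. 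Granting this, the implication (1)$\Rightarrow$(2) is immediate: if $M_{\frkp} = M_{\ulfrkp'} = 0$, then the right-hand side of the base-change isomorphism vanishes.

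For (2)$\Rightarrow$(1), the substantive direction, I would pass to the local CDGA $S := R_{\ulfrkp'}$ and set $N := M_{\ulfrkp'}$, so that the hypothesis reads $N \lotimes_S \kappa(\frkp) = 0$ and the goal is $N = 0$. Suppose $N \neq 0$; then $\sup N < \infty$ and $\tuH^{\sup N}(N) \neq 0$. Since $S$ is local, $N$ admits a minimal sppj resolution $P_{\bullet}$, and because $\tuH^{\sup N}(P_0) \to \tuH^{\sup N}(N)$ is a projective cover, $\tuH^{\sup}(P_0)$ is a nonzero finitely generated projective, hence free of positive rank, over the local ring $(H^0)_{\frkp}$. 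I would then apply Corollary \ref{sppj resolution corollary 2} with $T = \kappa(\frkp)$, the unique simple $(H^0)_{\frkp}$-module. The degrees $-i + \sup P_i$ are strictly decreasing in $i$ (since $\sup P_{i+1} \leq \sup P_i$ forces $-i-1+\sup P_{i+1} \leq -i-1+\sup P_i < -i+\sup P_i$), so the top contribution occurs at $n = \sup P_0 = \sup N$ and yields $\tuH^{\sup N}(N \lotimes_S \kappa(\frkp)) \cong \tuH^{\sup}(P_0) \otimes_{(H^0)_{\frkp}} \kappa(\frkp)$. The right-hand side is nonzero because $\tuH^{\sup}(P_0)$ is free of positive rank, contradicting $N \lotimes_S \kappa(\frkp) = 0$. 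Hence $N = 0$, that is, $M_{\frkp} = 0$.

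The base change and the verification that localization preserves the standing hypotheses are routine; the conceptual heart is the top-cohomology computation via Corollary \ref{sppj resolution corollary 2}. The point that requires care is that the finiteness hypothesis is used \emph{twice} in the converse — to guarantee that a minimal sppj resolution exists over the local ring and to supply $\kappa(\frkp)$ as the relevant simple module — which is precisely why (2)$\Rightarrow$(1) is expected to fail without it.
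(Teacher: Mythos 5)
Your proof is correct, but it takes a genuinely different route from the paper's. The paper never localizes the DG-algebra: it sets $s := \sup M_{\frkp}$, splits $M$ by the truncation triangle $\sigma^{\leq s}M \to M \to \sigma^{>s}M$, notes that $\sigma^{>s}M$ localizes to zero at $\frkp$ and hence (by the easy implication) contributes nothing to $M\lotimes_{R}\kappa(\frkp)$, and then identifies the top cohomology $\tuH^{s}\bigl((\sigma^{\leq s}M)\lotimes_{R}\kappa(\frkp)\bigr)$ with $\tuH^{s}(M)\otimes_{H^{0}}\kappa(\frkp)$ by right-exactness in the top degree; classical Nakayama for the finitely generated module $\tuH^{s}(M)$, which is nonzero after localizing at $\frkp$, gives the contradiction. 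You instead pass to the local CDGA $R_{\ulfrkp'}$, invoke the existence of a minimal sppj resolution there, and read off the top degree of Corollary~\ref{sppj resolution corollary 2}; since $\tuH^{\sup}(P_{0})$ is a nonzero finitely generated projective over the local ring $(H^{0})_{\frkp}$, hence free of positive rank, its image $\tuH^{\sup N}(N\lotimes_{S}\kappa(\frkp))$ is nonzero. The decisive computation is the same object in both arguments (a projective cover induces an isomorphism modulo the maximal ideal, so $\tuH^{\sup}(P_{0})\otimes\kappa(\frkp)\cong \tuH^{\sup}(N)\otimes\kappa(\frkp)$), but your route buys it with heavier machinery: you must check that $R_{\ulfrkp'}$ is again local piecewise Noetherian and supports minimal sppj resolutions, whereas the paper needs only the truncation triangle and ordinary Nakayama. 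One small correction to your closing remark: the finiteness hypothesis is not needed ``to supply $\kappa(\frkp)$ as the relevant simple module'' --- $\kappa(\frkp)$ is the unique simple over $(H^{0})_{\frkp}$ regardless --- what it actually buys, in either proof, is that $\tuH^{\sup M_{\frkp}}(M)$ is finitely generated so that Nakayama (equivalently, the nontriviality of the projective cover after applying $-\otimes\kappa(\frkp)$) applies, together with $\sup M<\infty$ so that a top nonvanishing degree exists at all.
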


\begin{proof}
The first statement is clear from the canonical isomorphism 
$ M\lotimes_{R}\kappa(\frkp)  \cong M_{\frkp}\lotimes_{R_{\frkp}}\kappa(\frkp)$. 

We prove the second statement. 
Assume $M \lotimes_{R} \kappa(\frkp) = 0$ and $M_{\frkp} \neq 0$. 
Let $s := \sup M_{\frkp}$ and set $M' := \sigma^{\leq s} M, \ M" := \sigma^{> s}M$. 
Then, since $M"_{\frkp} = 0$, we have $M"\lotimes_{R} \kappa(\frkp) = 0$. 
It follows from the exact triangle  $M' \to M \to M" \to $ that 
$M'\lotimes_{R} \kappa(\frkp)  =  0$. 
Therefore we have 
\[
\tuH^{s}(M) \otimes_{H^{0}}\kappa(\frkp) = \tuH^{s}(M') \lotimes_{H^{0}} \kappa(\frkp) = \tuH^{s}(M' \lotimes_{R} \kappa(\frkp)) = 0. 
\]
 On the other hand, 
 since $\tuH^{s}(M)_{\frkp} \neq 0$, we have $\tuH^{s}(M) \otimes_{H^{0}}\kappa(\frkp) \neq 0$, a contradiction. 
\end{proof}

\subsubsection{Support and small support}

\begin{definition}
For $M \in \sfD(R)$, 
we set 
\[
\begin{split}
\Supp M &:= \{ \frkp \in \Spec H^{0} \mid M_{p} = 0 \},  \\
\supp M  &:= \{ \frkp \in \Spec H^{0} \mid M\lotimes_{R}\kappa (\frkp )  = 0 \}. 
\end{split}
\]
\end{definition}

By Nakayama's Lemma, we immediately see that 
\begin{lemma} 
For $M \in \sfD(R)$, we have 
\[
\supp M \subset \Supp M.
\]
If $M \in \sfD^{< \infty}_{\mod H^{0}}(R)$, then the equality holds. 
\end{lemma}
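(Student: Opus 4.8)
The plan is to deduce both assertions directly from Nakayama's Lemma (Lemma~\ref{Nakayama lemma}), checked one prime at a time; there is essentially no content beyond rephrasing that lemma by contraposition.

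First I would fix $\frkp \in \Spec H^{0}$ and recall, from the localization formulas established above, that $M_{\frkp} = 0$ in $\sfD(R)$ means precisely $\tuH(M)_{\frkp'} = \tuH(M_{\ulfrkp'}) = 0$, while $M \lotimes_{R} \kappa(\frkp)$ is the derived fibre at $\frkp$. With this dictionary, the first implication of Lemma~\ref{Nakayama lemma} says that $M_{\frkp} = 0$ forces $M \lotimes_{R} \kappa(\frkp) = 0$ for every $M \in \sfD(R)$. Taking the contrapositive prime by prime yields that $M \lotimes_{R} \kappa(\frkp) \neq 0$ implies $M_{\frkp} \neq 0$, which is exactly the inclusion $\supp M \subset \Supp M$.

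For the reverse inclusion under the hypothesis $M \in \sfD^{<\infty}_{\mod H^{0}}(R)$, I would invoke the second statement of Lemma~\ref{Nakayama lemma}: for such $M$, vanishing of $M \lotimes_{R} \kappa(\frkp)$ forces $M_{\frkp} = 0$. Passing again to the contrapositive gives $M_{\frkp} \neq 0 \Rightarrow M \lotimes_{R} \kappa(\frkp) \neq 0$, i.e.\ $\Supp M \subset \supp M$, and combining the two inclusions gives the asserted equality.

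The only point requiring care—rather than a genuine obstacle—is bookkeeping: the finiteness hypothesis $M \in \sfD^{<\infty}_{\mod H^{0}}(R)$ enters only in the second half, precisely because that is the hypothesis under which the reverse implication of Nakayama's Lemma holds, so the first inclusion must be recorded for arbitrary $M \in \sfD(R)$. Since Lemma~\ref{Nakayama lemma} already packages both implications together with the correct hypotheses, no further estimates or resolutions are needed.
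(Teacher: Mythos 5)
Your proof is correct and is essentially the paper's own argument: the paper simply states the lemma with the preface ``By Nakayama's Lemma, we immediately see that,'' and your prime-by-prime contrapositive of the two implications of Lemma~\ref{Nakayama lemma} is exactly the intended reasoning, with the finiteness hypothesis entering only for the reverse inclusion.
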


\subsection{Cohomological regular sequence}

Let $x \in R^{0}$ and $M$ a DG-$R$-module. 
By $M\wslash xM$ we denote 
the cone of the multiplication map $x \times - : M \to M$. 
Namely, it is a DG-$R$-module having $M \oplus M[1]$ as the underlying graded $R$-module 
which has the differential  given as below.
\[
M\wslash xM := 
\left(M \oplus M[1], 
\begin{pmatrix}
\partial_{M} & x \\
0 & -\partial_{M}
\end{pmatrix} \right).
\]

For 
$R\wslash  xR$, 
we always take the commutative DG-algebra model $R[\xi]/(\xi^{2})$ 
with the differential 
$\partial (\xi) = x$ 
and the degree $\deg \xi = -1$.
Then the canonical morphism $R \to R\wslash xR$ which sends each element $r \in R$ to the constant polynomial 
equals to the canonical morphism $R \to \cone(x \times -)$ 
of the  cone of the morphism $x \times - : R \to R$. 
It is easy to check that  $S =R\wslash xR$ is also a piecewise Noetherian  CDGA satisfying $S^{>0} = 0$. 
We note that $M \wslash x M$ has a canonical DG-$R\wslash x R$-module structure and 
there is a canonical isomorphism
$M \wslash xM  \cong M \otimes_{R} (R\wslash x R) $.

The following lemma asserts that the quasi-isomorphism class of $R\wslash x R$ depends only on the cohomology class 
of $x \in R^{0}$. 

\begin{lemma}
Assume that two elements  $x , y \in R^{0}$ are cohomologous. 
Then, there exists a CDGA $T$ and 
quasi-isomorphisms $f: R\wslash xR \to T $ 
and $g : R \wslash  yR \to T$ 
which make the following commutative diagram 
\begin{equation}\label{201801091853}
\begin{xymatrix}{
R \ar[r] \ar[d] & R\wslash xR \ar[d]^{f}_{\wr} \\
R\wslash y R \ar[r]_{g}^{\sim} & T.
}\end{xymatrix}
\end{equation}

Moreover, if we consider the equivalences  
\[
\sfD(R\wslash xR) \cong \sfD(T) \cong  \sfD(R\wslash y R)
\]
induced from $f$ and $g$, 
then 
the DG-$R\wslash xR$-module $M \wslash x M$ 
corresponds to the DG-$R\wslash y R$-module $M\wslash yM$. 
\end{lemma}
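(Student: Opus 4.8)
The plan is to exploit that $R\wslash xR$ is the derived quotient of $R$ by $x$, which ought to depend only on the cohomology class of $x$. First I would write $x - y = \partial w$ with $w \in R^{-1}$, which is exactly the hypothesis that $x$ and $y$ are cohomologous. The commutativity of the displayed square is the easy part: since $T$ will be an $R$-algebra, both composites $R \to T$ are forced to equal the structure map, so it suffices to produce $f$ and $g$ as $R$-algebra morphisms. The real content is the construction of $T$ together with the proof that $f$ and $g$ are quasi-isomorphisms.

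For the construction I would realise both Koszul models uniformly. Writing $\Lambda = \kk[t]$ with $t$ in degree $0$, the assignments $t \mapsto x$ and $t \mapsto y$ define CDGA morphisms $a_x, a_y \colon \Lambda \to R$ (every degree-$0$ element of $R$ is a cocycle), and with $K = \Lambda[\xi]/(\xi^2)$, $\partial\xi = t$, the semifree resolution of $\kk$ over $\Lambda$, one has $R \otimes_{\Lambda, a_x} K = R\wslash xR$ and similarly for $y$. The element $w$ exhibits $a_x$ and $a_y$ as homotopic, the homotopy being recorded by the cylinder $\Cyl(\Lambda) = \kk[t_0,t_1,\sigma]$ with $|\sigma| = -1$, $\partial\sigma = t_1 - t_0$, together with the map $\Cyl(\Lambda) \to R$ sending $t_0 \mapsto x$, $t_1 \mapsto y$, $\sigma \mapsto -w$. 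I would then take $T$ to be the homotopy pushout built from this cylinder; concretely, adjoin to $U := R\wslash xR \wslash yR = R[\xi,\eta]/(\xi^2,\eta^2)$ a contracting generator for the cocycle $\xi - \eta - w$, producing a CDGA that receives $R\wslash xR$ via $\xi$ and $R\wslash yR$ via $\eta$.

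The hard part will be verifying that $f$ and $g$ are quasi-isomorphisms, and here the failure of strong commutativity is the genuine obstacle. The naive attempt to transport the Koszul variable along the homotopy, say $\eta \mapsto \xi - w$, breaks down because $(\xi - w)^2 = w^2$ need not vanish when $w$ is odd; the same $w^2$-term obstructs every attempt at a direct $R$-algebra map between the two models. For the same reason $T$ cannot be $R\wslash xR$, $R\wslash yR$, or their pushout $U$: the inclusion $R\wslash xR \to U$ is not a quasi-isomorphism, since $y = \partial(\xi - w)$ acts as zero on $\tuH(R\wslash xR)$ and hence $U \simeq R\wslash xR \oplus (R\wslash xR)[1]$. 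I would instead verify the quasi-isomorphisms through the cylinder, filtering $T$ by the homotopy generator: in the associated spectral sequence the contractible pair coming from $\Cyl(\Lambda)$ collapses, leaving $\tuH(T) \cong \tuH(R\wslash xR)$, with the $w^2$-contributions absorbed into the higher terms of the resolution rather than appearing as an on-the-nose relation. Checking that this collapse is clean over a general base $\kk$, where divided powers versus polynomials on the even homotopy generator intervene, is the delicate technical point.

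Finally, the statement about DG-modules would follow formally once $T$ is in hand. Under the equivalence $\sfD(R\wslash xR) \cong \sfD(T)$ induced by $f$, extension of scalars sends $M\wslash xM = M \otimes_R (R\wslash xR)$ to $(M \otimes_R R\wslash xR)\lotimes_{R\wslash xR} T \cong M \lotimes_R T$ by associativity of the tensor product, using that $R\wslash xR$ is semifree over $R$. The identical computation on the $y$-side sends $M\wslash yM$ to $M\lotimes_R T$, so both DG-modules correspond to the same object $M\lotimes_R T$ of $\sfD(T)$, hence to one another under $\sfD(R\wslash xR) \cong \sfD(T) \cong \sfD(R\wslash yR)$.
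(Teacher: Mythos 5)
Your strategy is the paper's strategy: adjoin to $R[\xi,\eta]/(\xi^2,\eta^2)$ a generator contracting the cocycle $\xi-\eta-w$, and you correctly diagnose both why no direct map $R\wslash xR\to R\wslash yR$ exists (the $w^2$ obstruction from the failure of strong commutativity) and why the naive pushout $U$ is too big. You also correctly flag that the even degree of the homotopy generator forces a choice between a polynomial generator and divided powers. But that flagged item is not a peripheral "delicate technical point" --- it is the entire content of the proof, and your proposal stops exactly there. With a single polynomial generator $\zeta$, $\partial\zeta=\xi-\eta-w$, the construction genuinely fails over a general base: already for $R=\kk$ of characteristic $p$ and $x=y=w=0$ one computes $\partial(\zeta^p)=p(\xi-\eta)\zeta^{p-1}=0$ and $\partial(\xi\zeta^p)=p\,\xi\eta\zeta^{p-1}=0$, so $H^{-2p}$ acquires extra classes and $T$ is not quasi-isomorphic to $\kk[\xi]/(\xi^2)$. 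So the divided-power choice is forced, and the collapse you hope to get from a spectral sequence still has to be exhibited.

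The paper closes this gap concretely: it sets $T=R[\xi,\eta,\zeta^{(n)}\mid n\geq 1]$ modulo $\xi^2=0$, $\eta^2=0$, $\zeta^{(n)}\zeta^{(m)}=\tfrac{(n+m)!}{n!\,m!}\zeta^{(n+m)}$, with $\partial(\zeta^{(n)})=(\xi-\eta-z)\zeta^{(n-1)}$, and then, instead of a spectral sequence, writes down an explicit exhaustive filtration $F_0=R\oplus R\xi$, $F_1=F_0\oplus R\eta\oplus R\zeta^{(1)}$, $F_{2n}=F_{2n-1}\oplus R\xi\eta\zeta^{(n-1)}\oplus R\xi\zeta^{(n)}$, $F_{2n+1}=F_{2n}\oplus R\eta\zeta^{(n)}\oplus R\zeta^{(n+1)}$, for which each quotient $F_n/F_{n-1}$ ($n\geq 1$) is isomorphic to $\cone(\id_R)$, hence acyclic; since $F_0=R\wslash xR$, the inclusion $f$ is a quasi-isomorphism, and $g$ is handled symmetrically. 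You should either reproduce such a filtration (pairing each new divided power against the previous monomial so that the divided-power coefficients never appear as scalars) or give some other characteristic-free argument; without it the quasi-isomorphism claim is unproved. Your final paragraph on the module statement is fine and matches the paper's one-line argument via $M\otimes_{R\wslash xR}T\cong M\otimes_RT\cong M\otimes_{R\wslash yR}T$.
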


\begin{proof}
Let $z \in R^{-1}$ be such that $\partial(z) = x -y$. 
We define a CDGA $T$ as follows. 
As a graded algebra, it is a graded commutative polynomial algebra over $R$ 
modulo the relations 
\[
\begin{split}
T &= R[\xi,\eta, \zeta^{(n)} \mid n \geq 1]/ (\textup{relations}),\\
\textup{relations}&: \xi^{2} = 0, \eta^{2} = 0, \zeta^{(n)}\zeta^{(m)} = \frac{(n+m)!}{n!m!}\zeta^{(n+m)} \textup{ for } n,m \geq 1.
\end{split}
\]
with the gradings $\deg \xi = -1, \deg \eta = -1$ and $\deg \zeta^{(n)} = -2n$ for $ n\geq 1$. 
We extend the differential of $R$ to  that of $T$ by setting  
the differential for the variables as below 
\[
\partial(\xi): = x, \partial(\eta) := y, \partial(\zeta^{(n)}) := (\xi - \eta -z)\zeta^{(n-1)} \textup{ for } n \geq 1
\]
where we set $\zeta^{(0)} := 1$.

We set $f: R\wslash xR \to T $ and $g: R\wslash y R \to T$ to be the obvious inclusions. 
It is clear that these morphisms make the commutative diagram (\ref{201801091853}). 
It only remains to prove $f$ and $g$ are quasi-isomorphisms. 

We prove that $f$ is a quasi-isomorphism. 
For this purpose, we give $T$ an exhaustive increasing filter $\{F_{n}\}_{n \geq 0}$ of 
DG-$R$-submodules of $T$ which is 
defined inductively as below 
\[
\begin{split}
F_{0} &:= R \oplus R\xi, \\F_{1}&: = F_{0} \oplus R\eta \oplus R\zeta, \\
F_{2n} &:= F_{2n -1} \oplus R\xi \eta \zeta^{(n-1)} \oplus R \xi \zeta^{(n)}, \\
F_{2n+1}& := F_{2n} \oplus R\eta \zeta^{(n)} \oplus R \zeta^{(n+1)}. 
\end{split}
\]  
We can check that 
for $n \geq 1$ the graded quotient 
$F_{n}/F_{n-1}$ is isomorphic to  the cone $\cone (\id_{R})$ of the identity map $\id_{R} : R \to R$. 
Thus, in particular $F_{n}/F_{n-1}$ is acyclic. 
Since $R\wslash xR$ is identified with $F_{0}$ via $f$, 
we conclude that $f$ is a quasi-isomorphism. 

In the same way we can prove that $g$ is a quasi-isomorphism.

The second statement is a consequence of isomorphisms below  of DG-$T$-modules 
for $M \in \sfC(R)$,  
\[
M \otimes_{R\wslash xR} T \cong M \otimes_{R}T \cong M \otimes_{R\wslash y R} T.
\]
\end{proof}

For $x \in H^{0}$, 
we define the CDGA $R\wslash xR$ to be $R\wslash \underline{x} R$ 
for some $\underline{x} \in R^{0}$ whose cohomology class is $x$. 
By the above lemma, 
we may denote $\underline{x}$ by the same symbol $x$.

The following is a DG-version of \cite[p. 140, Lemma 2]{Matsumura}.

\begin{lemma}\label{waru lemma} 
Let $M \in \sfD(R)$ and $x \in H^{0}$.   
Set $S: =R\wslash  xR$. 
Then for $N \in \sfD(S)$, we have 
\[
\Hom_{S}(N,M\wslash  xM[-1] ) \cong \Hom_{R}(N,M)
\]
where in the left hand side, we regard $N$ as an object of $\sfD(R)$ by restriction of scalar. 
\end{lemma}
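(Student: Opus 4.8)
The plan is to recognise the asserted isomorphism as an instance of the coinduction--restriction adjunction attached to the DG-algebra morphism $u\colon R\to S$, combined with a concrete identification of the coinduced module. Recall that $u$ induces the restriction functor $u_{*}\colon \sfD(S)\to \sfD(R)$, whose right adjoint is the coinduction functor $u^{!}=\RHom_{R}(S,-)\colon \sfD(R)\to \sfD(S)$. The adjunction $u_{*}\dashv u^{!}$ yields, for $N\in\sfD(S)$ and $M\in\sfD(R)$, a natural isomorphism
\[
\Hom_{S}(N,\RHom_{R}(S,M))\cong \Hom_{R}(N,M),
\]
where on the right $N$ is restricted along $u$. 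Thus it suffices to produce an isomorphism
\[
\RHom_{R}(S,M)\cong (M\wslash xM)[-1]
\]
in $\sfD(S)$; this identification is the heart of the matter.

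By the previous lemma we may assume $x\in R^{0}$ is an honest cocycle and $S=R[\xi]/(\xi^{2})$ with $\partial\xi=x$ and $\deg\xi=-1$. First I would observe that $S$ is semifree over $R$: the two-step filtration $0\subset R\cdot 1\subset S$ consists of DG-$R$-submodules with graded-free quotients $R$ and $R\xi\cong R[1]$, so $\RHom_{R}(S,M)$ is computed by the honest Hom-complex $\Hom^{\bullet}_{R}(S,M)$, carrying its natural DG-$S$-module structure coming from the $S$-action on the contravariant argument.

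Since $S$ is finite free as a graded $R^{\#}$-module on the basis $\{1,\xi\}$, the natural evaluation map furnishes an isomorphism of DG-$S$-modules
\[
\Hom^{\bullet}_{R}(S,R)\otimes_{R}M \xrightarrow{\ \cong\ } \Hom^{\bullet}_{R}(S,M),
\]
valid for arbitrary $M$ (no finiteness on $M$ is needed). It therefore remains to prove the self-duality $\Hom^{\bullet}_{R}(S,R)\cong S[-1]$ of DG-$S$-modules. Working with the dual basis $\{1^{\ast},\xi^{\ast}\}$, where $\deg 1^{\ast}=0$ and $\deg\xi^{\ast}=1$, one checks that $\xi\cdot 1^{\ast}=0$ and $\xi\cdot\xi^{\ast}=\pm 1^{\ast}$; hence $\xi^{\ast}$ freely generates $\Hom^{\bullet}_{R}(S,R)$ over $S$ and, lying in degree $1$, yields $\Hom^{\bullet}_{R}(S,R)\cong S[-1]$. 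Combining these steps gives
\[
\RHom_{R}(S,M)\cong S[-1]\otimes_{R}M \cong (M\otimes_{R}S)[-1]=(M\wslash xM)[-1]
\]
as DG-$S$-modules, using commutativity of $S$ to swap the tensor factors and the identification $M\otimes_{R}S=M\wslash xM$ recorded earlier.

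The main obstacle is bookkeeping rather than conceptual: one must verify that the differential on $\Hom^{\bullet}_{R}(S,R)$, whose cross term encodes $\partial\xi=x$, matches the shifted differential $-\partial_{S}$ of $S[-1]$, and that the three $S$-module structures in play (the coinduction action, the action on the first tensor factor, and left multiplication on $S[-1]$) are carried to one another compatibly with signs. These are finite verifications on a rank-two free module, so the real work is confined to a careful, but entirely routine, sign analysis.
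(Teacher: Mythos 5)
Your proof is correct, and its overall architecture is the same as the paper's: both reduce the statement to the coinduction--restriction adjunction $\Hom_{S}(N,\RHom_{R}(S,M))\cong\Hom_{R}(N,M)$ and then identify $\RHom_{R}(S,M)$ with $(M\wslash xM)[-1]$. Where you diverge is in that second identification. The paper dispatches it in one line by applying the contravariant functor $\RHom_{R}(-,M)$ to the defining triangle $R\xrightarrow{x}R\to S$, so that $\RHom_{R}(S,M)$ becomes the cocone of $x\colon M\to M$, i.e.\ $(M\wslash xM)[-1]$. You instead compute at the chain level: $S$ is semifree of rank two over $R$, so $\RHom_{R}(S,M)\cong\Hom^{\bullet}_{R}(S,R)\otimes_{R}M$, and the Koszul self-duality $\Hom^{\bullet}_{R}(S,R)\cong S[-1]$ (generated by $\xi^{\ast}$) finishes the job. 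Your version costs a routine sign verification on a rank-two free module, but it buys something the paper's one-liner leaves implicit: the adjunction requires the isomorphism $\RHom_{R}(S,M)\cong(M\wslash xM)[-1]$ to hold in $\sfD(S)$, not merely in $\sfD(R)$, and the triangle-rotation argument by itself only produces the latter. Your explicit identification carries the $S$-module structure along at every step, so it actually closes that small gap.
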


\begin{proof}
Since $N \cong N \lotimes_{S} S$, we have 
\[
\RHom_{R}(N, M)  \cong \RHom_{R}(N \lotimes_{S} S, M ) \cong \RHom_{S}(N, \RHom_{R}(S, M)). 
\]
The contravariant functor $\RHom(- , M)$ sends the morphism $x \times -: R \to R$ 
to $x \times- : M \to M$. 
Thus we have $\RHom(S, M) \cong M\wslash  x M [-1]$. 
\end{proof}

We introduce the notion of a regular sequence for DG-modules 
and a cohomological depth. 
We note that for complexes of module over a commutative algebra, 
the notion of a regular sequence was already introduced by Foxby \cite[Remark 3.18]{Foxby}.

\begin{definition}
Let $M \in \sfD^{> -\infty}(R)$. 

\begin{enumerate}[(1)]
\item 
An element $ x \in R^{0}$ is called \textit{cohomological} $M$-regular 
if it is  $\tuH^{\inf}(M)$-regular, that is, 
the induced morphism $x \times -: \tuH^{\inf}(M) \to \tuH^{\inf}(M)$ is  
injective.

\item A sequence $x_{1}, x_{2}, \cdots, x_{r} \in R^{0}$ is called 
\textit{cohomological $M$-regular} 
if it satisfies the following condition. 
We define inductively $M_{i}$ for $i = 1, \dots, r$ 
as $M_{1}:= M\wslash  x_{1}M, M_{i} :=M_{i-1}\wslash   x_{i}M_{i-1}$. 
Then, $x_{i }$ is cohomological regular on $M_{i -1}$.   
\end{enumerate}
\end{definition}

\begin{definition}
Let $R$ be a piecewise Noetherian CDGA with the residue field $k$.  
Then we set 
\[
\chdepth M := \inf \RHom(k, M) - \inf M\]
and call it the \textit{cohomological depth} of $M$. 
\end{definition}

\begin{remark}
In several papers for example \cite{Foxby, Jorgensen:Amplitude}, the depth of a DG-module $M$ is defined  to be $\inf \RHom_{R}(k, M)$, which can be any integer and $\pm \infty$. 
Contrary to this our cohomological depth is always non-negative. 
Moreover, 
we will prove in Proposition \ref{cohomological depth proposition} 
that
it coincides with the maximal length of cohomological regular sequence of $M$
\end{remark}

Looking the cohomological exact sequence of 
the exact triangle 
\begin{equation}\label{20181151632}
M \xrightarrow{x} M \to M\wslash xM \to,
\end{equation} 
we can deduce the following lemma. 
 
\begin{lemma}\label{amplitude lemma} 
Let $R$ be a piecewise Noetherian CDGA with a maximal ideal $\ulfrkm$.  
Let $M$ be a non-zero object of $\sfD^{> -\infty}_{\mod H^{0}}(R)$  and 
$x\in \ulfrkm$. 
Then, the following statements hold
\begin{enumerate}[(1)]
\item
$\inf M -1 \leq \inf (M\wslash xM) \leq \inf M$. 

\item
$\inf (M\wslash xM) = \inf M$ 
if and only if 
the element $x$ is cohomological $M$-regular. 
\end{enumerate}
\end{lemma}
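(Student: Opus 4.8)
The plan is to extract everything from the long exact cohomology sequence of the exact triangle (\ref{20181151632}). Applying $\tuH^{\bullet}$ to $M \xrightarrow{x} M \to M\wslash xM \to M[1]$ and splitting the resulting long exact sequence degree by degree, I obtain for every $n \in \ZZ$ a short exact sequence of $H^{0}$-modules
\[
0 \to \operatorname{coker}\bigl(x\colon \tuH^{n}(M) \to \tuH^{n}(M)\bigr) \to \tuH^{n}(M\wslash xM) \to \ker\bigl(x\colon \tuH^{n+1}(M) \to \tuH^{n+1}(M)\bigr) \to 0.
\]
Writing $s := \inf M$, the whole argument then reduces to reading off $\tuH^{n}(M\wslash xM)$ in the two boundary degrees $n = s-1$ and $n = s$.

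For $n < s-1$ both $\tuH^{n}(M)$ and $\tuH^{n+1}(M)$ vanish, so $\tuH^{n}(M\wslash xM) = 0$; this immediately yields $\inf(M\wslash xM) \geq s-1$, the lower bound in (1). In degree $n = s-1$ the cokernel term vanishes because $\tuH^{s-1}(M) = 0$, so the sequence collapses to an isomorphism
\[
\tuH^{s-1}(M\wslash xM) \cong \ker\bigl(x\colon \tuH^{s}(M) \to \tuH^{s}(M)\bigr),
\]
while in degree $n = s$ it provides an injection of $\operatorname{coker}\bigl(x\colon \tuH^{s}(M) \to \tuH^{s}(M)\bigr)$ into $\tuH^{s}(M\wslash xM)$. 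These two identifications carry all the information needed for both (1) and (2).

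To complete (1) I would invoke Nakayama's lemma. Since $M \in \sfD^{> -\infty}_{\mod H^{0}}(R)$, the module $\tuH^{s}(M) = \tuH^{\inf}(M)$ is a nonzero finite $H^{0}$-module, and the cohomology class of $x$ lies in the maximal ideal $\frkm$ because $x \in \ulfrkm = (\pi^{0})^{-1}(\frkm)$; hence $x\tuH^{s}(M) \neq \tuH^{s}(M)$ and the cokernel above is nonzero. By the injection just noted, $\tuH^{s}(M\wslash xM) \neq 0$, so $\inf(M\wslash xM) \leq s$, which together with the lower bound proves (1). This single appeal to Nakayama is the only point where the local hypothesis and the finiteness of the cohomology are genuinely used, and it is the step I would guard most carefully.

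Finally, for (2) I would combine the two established facts: $\tuH^{s}(M\wslash xM) \neq 0$ always holds, and $\tuH^{n}(M\wslash xM) = 0$ for all $n < s-1$. Consequently $\inf(M\wslash xM) = s$ holds precisely when the one remaining boundary degree vanishes, that is when $\tuH^{s-1}(M\wslash xM) = 0$. By the displayed isomorphism this is equivalent to the injectivity of multiplication by $x$ on $\tuH^{\inf}(M)$, which is exactly the definition of $x$ being cohomological $M$-regular. This gives the equivalence in (2) and finishes the proof.
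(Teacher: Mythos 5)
Your proof is correct and follows essentially the same route as the paper: both arguments read off the long exact cohomology sequence of the triangle $M \xrightarrow{x} M \to M\wslash xM$ in the boundary degrees $\inf M - 1$ and $\inf M$, and both use Nakayama's lemma on the nonzero finite $H^{0}$-module $\tuH^{\inf}(M)$ to see that the cokernel of multiplication by $x$ is nonzero. Your write-up actually spells out the degreewise short exact sequences more explicitly than the paper does, but the content is identical.
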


\begin{proof}
The first inequality of (1) can be deduced from 
the long cohomology exact sequence of (\ref{20181151632}). 
Set $b := \inf M$. 
By   Nakayama's Lemma, the map $x: \tuH^{b}(M) \to \tuH^{b}(M)$ is not surjective. 
Therefore  $\tuH^{b+1}(M) \neq 0$. 
This proves the second inequality of (2). 

We have $\inf M\wslash xM = \inf M$ if and only if the map 
$x: \tuH^{b}(M) \to \tuH^{b}(M)$ is injective. 
The latter condition is satisfied precisely when 
$x$ is cohomological $M$-regular. 
\end{proof}

The relationship between the depth and the maximal regular sequence 
has cohomological version.

Combining Lemma \ref{waru lemma} and Lemma \ref{amplitude lemma}, 
we obtain 
\begin{lemma}\label{cdp lemma}
Let $R$ be a piecewise Noetherian CDGA with a maximal ideal $\ulfrkm$  and the residue field $k$. 
Let $M \in \sfD^{> -\infty}_{\mod H^{0}}(R)$ and $x$  a cohomological $M$-regular element in $\frkm'$.  
Then, 
$\chdepth_{R\wslash xR} (M\wslash  xM) = \chdepth M  -1$.
\end{lemma}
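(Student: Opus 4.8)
The plan is to unwind both occurrences of $\chdepth$ from the definition and then transport the computation of $\RHom$ and of $\inf$ across the DG-algebra morphism $R \to S := R\wslash xR$ using the two lemmas just established. First I would record that $S$ is again local with the \emph{same} residue field $k$: applying $\tuH^{0}$ to the exact triangle $R \xrightarrow{x} R \to S \to$ gives $\tuH^{0}(S) = H^{0}/xH^{0}$, and since the class of $x$ reduces into the maximal ideal $\frkm$ of $H^{0}$, this quotient is local with residue field $k$. Hence $\chdepth_{S}(M\wslash xM)$ is computed against the same $k$, so the asserted identity is meaningful; note also that $M\wslash xM$ lies in $\sfD^{>-\infty}_{\mod H^{0}}(S)$, being built from two shifted copies of $M$, and is nonzero because $x$ is cohomological $M$-regular.

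Next I would compute the two summands of $\chdepth_{S}(M\wslash xM) = \inf\RHom_{S}(k, M\wslash xM) - \inf_{S}(M\wslash xM)$ separately. For the second term, observe that $\inf$ depends only on the underlying cohomology, not on whether $M\wslash xM$ is viewed over $R$ or over $S$; since $x$ is cohomological $M$-regular, Lemma \ref{amplitude lemma}(2) gives $\inf_{S}(M\wslash xM) = \inf_{R}(M\wslash xM) = \inf M$. For the first term, I would apply Lemma \ref{waru lemma} with $N = k[n]$ for every $n \in \ZZ$: the isomorphisms $\Hom_{S}(k[n], M\wslash xM[-1]) \cong \Hom_{R}(k[n], M)$ assemble, after rewriting $\Hom(k[n],-)$ as cohomology of $\RHom(k,-)$ and tracking the shift by $[-1]$, into an identification of graded cohomology $\tuH^{p}(\RHom_{S}(k, M\wslash xM)) \cong \tuH^{p+1}(\RHom_{R}(k, M))$; that is, $\RHom_{S}(k, M\wslash xM)$ has the cohomology of $\RHom_{R}(k, M)[1]$. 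Taking infima yields $\inf\RHom_{S}(k, M\wslash xM) = \inf\RHom_{R}(k, M) - 1$.

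Combining the two computations finishes the argument, since $\chdepth_{S}(M\wslash xM) = (\inf\RHom_{R}(k,M) - 1) - \inf M = (\inf\RHom_{R}(k,M) - \inf M) - 1 = \chdepth M - 1$. The only genuinely delicate point is the bookkeeping in the shift step, namely confirming that the $[-1]$ in Lemma \ref{waru lemma} produces a degree shift by exactly one in $\RHom(k,-)$ rather than by the amplitude of $M\wslash xM$; I would therefore carry out that index chase explicitly, setting $p = -1-n$ and matching $\tuH^{p}$ on the $S$-side with $\tuH^{p+1}$ on the $R$-side. Everything else is formal, given that $k$ is the common residue field and that the cohomological $M$-regularity of $x$ guarantees $\inf$ is preserved rather than dropping by one.
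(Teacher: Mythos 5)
Your proposal is correct and follows exactly the route the paper intends: the paper derives this lemma by ``combining Lemma \ref{waru lemma} and Lemma \ref{amplitude lemma}'', and your argument is precisely that combination spelled out --- Lemma \ref{amplitude lemma}(2) gives $\inf(M\wslash xM)=\inf M$ from cohomological regularity, while Lemma \ref{waru lemma} applied to all shifts $k[n]$ gives $\inf\RHom_{S}(k,M\wslash xM)=\inf\RHom_{R}(k,M)-1$. Your index bookkeeping ($\tuH^{p}$ on the $S$-side matching $\tuH^{p+1}$ on the $R$-side) and the observation that $\tuH^{0}(S)=H^{0}/xH^{0}$ has the same residue field $k$ are both accurate.
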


The next proposition justify the terminology ``the cohomological depth''. 
\begin{proposition}\label{cohomological depth proposition}
Let $R$ be a piecewise Noetherian CDGA  with the residue field $k$. 
Let $M \in \sfD^{> -\infty}_{\mod H^{0}}(R)$. 
The cohomological depth $\chdepth M $
 coincides with the maximal length of 
cohomological $M$-regular sequence in $\ulfrkm$. 
\end{proposition}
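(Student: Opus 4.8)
The plan is to show that both quantities are governed by the same recursion: peeling off one cohomological regular element replaces $(R,M)$ by $(S,M\wslash xM)$ with $S=R\wslash xR$ and drops each side by one, via Lemma \ref{cdp lemma}. Throughout write $b:=\inf M$ and $d:=\chdepth M = \inf\RHom(k,M)-b$, assume $M\neq 0$, and recall that since $R$ is local with $H^{0}$ Noetherian, every object of $\sfD^{>-\infty}_{\mod H^{0}}(R)$ admits a minimal ifij resolution. The first thing I would establish is the base case, which is the real engine of the argument. Non-negativity $d\ge 0$ follows from Corollary \ref{ifij resolution corollary 1}: if $\Hom(k,M[n])\neq 0$ then $n=i+\inf I_{-i}$ for some $i\ge 0$, and the monotonicity (\ref{201903141554}) gives $\inf I_{-i}\ge \inf I_{0}=b$, so $n\ge b$. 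The same computation shows the bottom degree is attained precisely by the $i=0$ term, so $d=0$ if and only if $\Hom_{H^{0}}(k,\tuH^{\inf}(M))\neq 0$. I then invoke the classical statement over the Noetherian local ring $H^{0}$: for the nonzero finite module $N=\tuH^{\inf}(M)$ one has $\Hom_{H^{0}}(k,N)\neq 0$ iff $\frkm\in\Ass_{H^{0}}N$, and by prime avoidance this holds iff every element of $\frkm$ is a zero divisor on $N$, i.e.\ iff there is no $\tuH^{\inf}(M)$-regular element in $\frkm$. Translating through $\ulfrkm=(\pi^{0})^{-1}(\frkm)$, this reads: $d=0$ iff there is no cohomological $M$-regular element in $\ulfrkm$.

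For the inequality $\ell(M)\le d$ (where $\ell(M)$ denotes the maximal length of a cohomological $M$-regular sequence in $\ulfrkm$), I would take any cohomological $M$-regular sequence $x_{1},\dots,x_{r}$ and, with $M_{i},S_{i}$ as in the definition, iterate Lemma \ref{cdp lemma} to get $\chdepth_{S_{r}}M_{r}=d-r$. Here each $S_{i}$ is again a local piecewise Noetherian CDGA with residue field $k$, and $M_{r}\in\sfD^{>-\infty}_{\mod H^{0}(S_{r})}(S_{r})$: its infimum is unchanged by regularity (Lemma \ref{amplitude lemma}(2)) and its cohomologies are finite over $H^{0}(S_{r})$. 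Hence the non-negativity above applies to $(S_{r},M_{r})$, giving $0\le d-r$, so $r\le d$.

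For the reverse inequality $\ell(M)\ge d$ I would induct on $d$, uniformly over all such pairs. If $d=0$ the empty sequence suffices. If $1\le d<\infty$, the base case produces a cohomological $M$-regular $x_{1}\in\ulfrkm$; setting $S=R\wslash x_{1}R$ and $M_{1}=M\wslash x_{1}M$, Lemma \ref{cdp lemma} gives $\chdepth_{S}M_{1}=d-1$, strictly smaller, so the inductive hypothesis yields a cohomological $M_{1}$-regular sequence of length $d-1$ in $\ulfrkm_{S}$. The bookkeeping point making the concatenation legitimate is that $S^{0}=R^{0}$ (because $\xi$ has degree $-1$) and $\ulfrkm_{S}=\ulfrkm$ as subsets of $R^{0}$, while the cone construction $-\wslash y-$ agrees whether formed over $R$ or over $S$; thus prepending $x_{1}$ gives a cohomological $M$-regular sequence of length $d$. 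When $d=\infty$ the same procedure never halts—each reduction again has infinite cohomological depth by Lemma \ref{cdp lemma}—so cohomological $M$-regular sequences of every finite length exist and $\ell(M)=\infty$. Combining with the previous paragraph gives $\ell(M)=d$ in all cases.

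The main obstacle is the depth-zero equivalence of the first paragraph: converting the purely homological statement $\inf\RHom(k,M)=\inf M$ into the genuine existence (or non-existence) of a cohomological regular element. This is exactly where Corollary \ref{ifij resolution corollary 1} is indispensable, since it collapses $\RHom(k,M)$ in its bottom degree to the ordinary socle datum $\Hom_{H^{0}}(k,\tuH^{\inf}(M))$, after which the classical associated-prime and prime-avoidance argument over $H^{0}$ finishes the equivalence. Once this base case is secured, Lemma \ref{cdp lemma} makes the induction essentially formal; the only remaining care is the secondary, routine verification that $S=R\wslash x_{1}R$ stays within the class of local piecewise Noetherian CDGAs with the same residue field $k$, so that the inductive hypothesis truly applies to $(S,M_{1})$.
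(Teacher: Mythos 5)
Your proposal is correct and follows essentially the same route as the paper: induction on $\chdepth M$, with the base case reducing to $\Hom(k,M[\inf M])=\Hom_{H^{0}}(k,\tuH^{\inf}(M))$ and the classical characterization of regular elements over the Noetherian local ring $H^{0}$ (the paper cites Matsumura, Theorem 16.6, where you use associated primes and prime avoidance), and the inductive step driven by Lemma \ref{cdp lemma}. Your write-up is somewhat more scrupulous — splitting the two inequalities, checking that $S=R\wslash x_{1}R$ remains a local piecewise Noetherian CDGA with $S^{0}=R^{0}$ and the same $\ulfrkm$, and treating the case $\chdepth M=\infty$ — but these are refinements of, not departures from, the paper's argument.
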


\begin{proof}
We prove the statement by induction on $c := \chdepth( M)$. 
Let $r$ be the maximal length in question.  

In the case $c= 0$. 
Then since 
\[
\Hom_{H^{0}}(k , \tuH^{\inf}(M) ) 
= \Hom(k, M [\inf M] )  \neq 0,
\]
there exists no $\tuH^{\inf}(M)$-regular element in $\frkm$ by \cite[Theorem 16.6]{Matsumura}. 
This shows that $r= 0$.

Assume that $ c> 0$ and the case $c -1$ is already proved. 
Since
\[\Hom(k , \tuH^{\inf}(M) ) 
= \Hom(k, M [\inf M] )  =  0,\]
there exists a $\tuH^{\inf}(M)$-regular element $x$ in $\frkm'$ by \cite[Theorem 16.6]{Matsumura}. 
Then by Lemma \ref{cdp lemma}, $\chdepth_{R\wslash xR}( M\wslash  x M) = c -1$. 
Thus by the induction hypothesis, 
the maximal length of cohomological $M\wslash  xM$-regular sequence is $c -1$. 
Thus we have $r = c$.  
\end{proof}

\subsubsection{Auslander-Buchsbaum formula} 

\begin{theorem}\label{Auslander-Buchsbaum formula}
Let $R$ be a piecewise Noetherian CDGA with a maximal ideal $\ulfrkm$ and  the residue field $k$. 
Let $M \in \sfD_{\mod H^{0}}(R)_{\textup{fpd}}$. 
Then,
\[
\inf \RHom(k, M) + \pd M - \sup M= \inf \RHom(k, R) . 
\]
If moreover we assume $H^{\ll 0}= 0$, then, 
\[
\chdepth M + \pd M - \chdepth R = \champ M - \champ R.
\]
\end{theorem}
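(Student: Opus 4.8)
The theorem has two parts. The first equality
\[
\inf \RHom(k, M) + \pd M - \sup M = \inf \RHom(k, R)
\]
is the genuine Auslander--Buchsbaum formula; the second is a reformulation in terms of cohomological depth and amplitude under the extra hypothesis $H^{\ll 0} = 0$. My plan is to prove the first equality directly, then derive the second by bookkeeping with the definitions of $\chdepth$ and $\champ$.

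**Proving the first equality.**

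The plan is to exploit Corollary \ref{bound of projective dimension corollary}, which gives
\[
\pd M = \sup \RHom(M, k) + \sup M.
\]
So proving the main formula amounts to showing
\[
\inf \RHom(k, M) + \sup \RHom(M, k) = \inf \RHom(k, R),
\]
i.e.\ that the quantity $\inf \RHom(k,M) + \sup\RHom(M,k)$ is independent of $M \in \sfD_{\mod H^{0}}(R)_{\textup{fpd}}$. First I would reduce to the case where $R$ is local, which is already our standing hypothesis in this subsection, so that $M$ admits a \emph{minimal} sppj resolution $P_{\bullet}$. The key computational input is the pair of formulas in Corollary \ref{sppj resolution corollary 1} and Corollary \ref{sppj resolution corollary 2}, which compute $\Hom(M, k[n])$ and $\tuH^{n}(M \lotimes_{R} k)$ from the minimal resolution: both are governed by the same indices $i - \sup P_i$. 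Concretely, $\sup \RHom(M,k)$ is realized at the top of the resolution, namely $\sup \RHom(M,k) = e - \sup P_e$ where $e = \pd$-length, while $\inf(M \lotimes_R k)$ is realized at the same top index with opposite sign, $\inf(M \lotimes_R k) = -e + \sup P_e$. I would then take $M = R$: here the minimal sppj resolution is trivial ($P_0 = R$, $e=0$, $\sup P_0 = 0$), so $\sup \RHom(R,k) = 0$ and the right-hand side $\inf \RHom(k,R)$ is just $\inf \RHom(k,R)$. The heart of the argument is an adjunction/duality linking $\inf \RHom(k, M)$ to $\sup \RHom(M, k)$: since $k$ is a field and both $M$ and $k$ lie in $\sfD_{\mod H^0}$, I expect a Matlis-type self-duality over $k$ to give
\[
\inf \RHom(k, M) = - \sup \bigl(k \lotimes_R M\bigr) = -\sup \RHom(M,k) + (\text{correction from } R),
\]
with the correction term being exactly $\inf \RHom(k,R)$.

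**The main obstacle and the second equality.**

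The hard part will be establishing the precise duality relating $\inf \RHom(k,M)$ and $\sup \RHom(M,k)$ — that is, nailing down the shift by $\inf \RHom(k,R)$ rather than treating it as a black box. I would handle this by specializing the general identity to $M = R$ to \emph{calibrate} the constant: the formula must reduce to the trivial identity $\inf\RHom(k,R) + 0 - 0 = \inf \RHom(k,R)$, which forces the correction to be precisely $\inf\RHom(k,R)$. Once the first equality holds, the second follows formally: by definition $\chdepth M = \inf \RHom(k,M) - \inf M$ and $\chdepth R = \inf \RHom(k,R) - \inf R$, while under $H^{\ll 0} = 0$ we have $\inf R = 0$ and $\champ M = \sup M - \inf M$, $\champ R = \sup R = -\inf R + \sup R$. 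Substituting $\inf \RHom(k,M) = \chdepth M + \inf M$ and $\inf \RHom(k,R) = \chdepth R + \inf R = \chdepth R$ into the first equality and rearranging $\inf M$, $\sup M$ into $\champ M$ (and likewise for $R$) yields
\[
\chdepth M + \pd M - \chdepth R = \champ M - \champ R,
\]
which completes the proof. The only subtlety in this last step is confirming that the hypothesis $H^{\ll 0} = 0$ is what guarantees $\sup R$ and $\champ R$ enter with the correct signs so that the $\inf R = 0$ simplification is legitimate.
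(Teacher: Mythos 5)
There is a genuine gap at the heart of your argument. You correctly reduce the first equality to showing that $\inf \RHom(k,M) + \sup\RHom(M,k)$ is independent of $M$ and equals $\inf\RHom(k,R)$, but you never actually prove this independence: you ``expect a Matlis-type self-duality'' with an unspecified correction term and then propose to \emph{calibrate} the correction by plugging in $M=R$. Calibration only makes sense once you already know the quantity is a constant independent of $M$ --- and that independence \emph{is} the theorem. As written, the argument is circular. The missing ingredient is the isomorphism
\[
\RHom(k, M) \;\cong\; \RHom(k, R) \lotimes_{R} M,
\]
valid because $M$ has finite projective dimension (so $M$ lies in the thick subcategory generated by $\cP$ and the natural evaluation map is an isomorphism). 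This is exactly what the paper uses: since $\RHom(k,R)$ is a complex of $k$-vector spaces it splits as $\bigoplus_{s}k^{\oplus n_{s}}[-s]$ with lowest shift $\inf\RHom(k,R)$, and then Corollary \ref{sppj resolution corollary 2} applied to a minimal sppj resolution of $M$ gives $\inf(\RHom(k,R)\lotimes_R M)=\inf\RHom(k,R)-\pd M+\sup M$, which is the formula. Your reliance on Corollaries \ref{sppj resolution corollary 1} and \ref{sppj resolution corollary 2} for the ``top index'' computations is fine, but without the tensor-hom identity there is nothing connecting $\inf\RHom(k,M)$ to those resolution data.

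There is also a smaller but real error in your derivation of the second equality: you assert that $H^{\ll 0}=0$ gives $\inf R=0$, hence $\inf\RHom(k,R)=\chdepth R$. This is false (e.g.\ for a Koszul complex $\inf R<0$); what is true for a connective $R$ is $\sup R=0$, so $\champ R=-\inf R$, and $H^{\ll 0}=0$ only guarantees these are finite. The correct bookkeeping is
\[
\chdepth M+\pd M-\chdepth R=\inf R-\inf M+\sup M=\champ M-\champ R,
\]
using $\inf R=-\champ R$, not $\inf R=0$. Your final formula is right, but the intermediate claim would not survive scrutiny.
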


\begin{proof}
First note we have an isomorphism 
\begin{equation}\label{201712021706} 
\RHom(k, M) \cong \RHom(k, R) \lotimes_{R} M. 
\end{equation}

Since $\RHom(k,R)$ belongs to $\sfD(k)$, 
it is a shifted direct sum of $k$. 
Namely, it is of the form 
\[
\RHom(k,R) = \bigoplus_{s=a}^{b}k^{\oplus n_{s}}[-s]
\]
where $a= \inf \RHom(k,R), b =\sup \RHom(k,R)$. 
Since $M$ belongs to $\sfD_{\mod H^{0}}^{< \infty}(R)$, 
it has a minimal sppj resolution $P_{\bullet}$. 
Thus, we can compute 
$\RHom(k,R) \lotimes_{R} M$  
by using Corollary \ref{sppj resolution corollary 2}. 
In particular, we obtain the equation 
\[
\inf (\RHom(k, R) \lotimes_{R} M) = \inf \RHom(k, R) - \pd M + \sup M.
\] 
Hence, from the isomorphism (\ref{201712021706}) we deduce the first equality.

In the case $H^{\ll 0}=0$, we have 
\[
\begin{split}
\chdepth M + \pd M - \chdepth R 
&= \inf\RHom(k, M) -\inf M \\
  & \ \ \ \ \ \ \ \ \ \ \ \ \ + \pd M  -\inf \RHom(k, R)  +\inf R\\ 
& = \champ M - \champ R.
\end{split}
\]
\end{proof}

\begin{remark}
This formula for the case $H^{\ll 0} = 0$ was proved by Frankild-Jorgensen \cite{Frankild-Jorgensen:Homological identities} 
for a nonocommutative DGA having a dualizing complex.
\end{remark}

\begin{remark}\label{Amplitude remark} 
P. Jorgensen \cite{Jorgensen:Amplitude} showed that  
if $H^{\ll 0} =0$, then we have 
\[\champ M - \champ R \geq 0\]  
for $M \in \sfD_{\mod H^{0}}(R)_{\textup{fpd}}, M\neq 0$  
under a slightly stronger assumption. 
\end{remark}

\subsection{Indecomposable injective DG-modules and the Bass numbers}\label{CDGA:Indecomposable}

\subsubsection{Indecomposable injective modules}

For $\frkp \in \Spec H^{0}$, 
we define the DG-$R$-module $E_{R}(R/\ulfrkp')$ to be 
\[
E_{R}(R/\ulfrkp') := \Hom_{R^{0}}^{\bullet}(R, E_{R^{0}}(R^{0}/\ulfrkp)). 
\] 
We note that since $E_{R^{0}}(R^{0}/\ulfrkp) = E_{R^{0}}(E_{H^{0}}(H^{0}/\frkp))$, we have $E_{R}(R/\ulfrkp') = G(E_{H^{0}}(H^{0}/\frkp))$ where $G$ is the functor defined in Section \ref{the class cI}. 
It is clear  that $E_{R}(R/\ulfrkp')$ belongs to $\cI$ 
and hence that under the equivalence $\tuH^{0}: \cI \to \Inj H^{0}$,  
it corresponds to $E_{H^{0}}(H^{0}/\frkp)$. 

We remark that the quasi-isomorphism class of $E_{R}(R/\ulfrkp')$, i.e., 
an object of  $\cI$ corresponding  to $E_{H^{0}}(H^{0}/\frkp) \in \Inj H^{0}$,  
 is already studied by Shaul \cite{Shaul:Injective}.

Since we are assuming $R$ is piecewise Noetherian, 
$\cI$ is closed under direct sums inside $\cD(R)$ by Bass-Papp theorem for DG-algebras \cite{coppepan, Shaul:Injective}.
It follows that the coproduct  of the category $\cI$ 
is the direct sum taken inside $\cD(R)$. 
Similarly, 
the coproduct of the category $\Inj H^{0}$ 
is the direct sum inside $\Mod H^{0}$. 
Therefore, 
using the equivalence $\cI \cong \Inj H^{0}$, 
we can deduce the following statements 
from the same statement for a ordinary commutative ring 
(see e.g. \cite[Theorem 18.4,Theorem 18.5]{Matsumura}).  

\begin{proposition}[{Shaul \cite{Shaul:Injective}}]\label{decomposition of injectives}
The following statements hold. 
\begin{enumerate}[(1)] 
\item An indecomposable object of $\cI$ 
is the form of $E_{R}(R/\ulfrkp')$ for some $\frkp \in \Spec H^{0}$. 

\item 
An object $I \in \cI$ is 
a direct sum of objects of the forms $E_{R}(R/ \ulfrkp')$. 
\end{enumerate}
\end{proposition}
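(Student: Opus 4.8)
The plan is to deduce both assertions by transporting the corresponding classical structure theorems for injective modules over the commutative Noetherian ring $H^{0}$ along the equivalence $\tuH^{0}: \cI \xrightarrow{\cong} \Inj H^{0}$ of Lemma \ref{basic of cI lemma}(2), whose quasi-inverse is $G$ and which carries $E_{R}(R/\ulfrkp')$ to $E_{H^{0}}(H^{0}/\frkp)$. The point is that, once this equivalence is in hand, (1) and (2) are formal consequences of \cite[Theorem 18.4, Theorem 18.5]{Matsumura}.

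First I would record that $\cI$ is an additive category (it contains $0$, and being closed under arbitrary direct sums in $\sfD(R)$ it is in particular closed under finite ones) and that $\tuH^{0}$ is an additive functor; hence the equivalence preserves the zero object and finite direct sums. Indecomposability being the purely categorical condition that $X \neq 0$ and every splitting $X \cong A \oplus B$ forces $A = 0$ or $B = 0$, it is preserved and reflected by any additive equivalence. Thus $I \in \cI$ is indecomposable if and only if $\tuH^{0}(I)$ is indecomposable in $\Inj H^{0}$. Since the indecomposable injective $H^{0}$-modules are exactly the $E_{H^{0}}(H^{0}/\frkp)$ for $\frkp \in \Spec H^{0}$ by \cite[Theorem 18.4]{Matsumura}, applying the quasi-inverse $G$ shows that the indecomposable objects of $\cI$ are precisely the $E_{R}(R/\ulfrkp')$. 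This gives (1).

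For (2) the additional ingredient is that the equivalence is compatible with arbitrary coproducts. As recalled just above the statement, $\cI$ is closed under direct sums in $\sfD(R)$ by the Bass--Papp theorem for DG-algebras, so coproducts in $\cI$ exist and are computed as direct sums in $\sfD(R)$; likewise coproducts in $\Inj H^{0}$ are direct sums in $\Mod H^{0}$. Since any equivalence preserves all colimits that exist, the functors $\tuH^{0}$ and $G$ exchange these direct sums. Given $I \in \cI$, the injective $H^{0}$-module $\tuH^{0}(I)$ decomposes as a direct sum of modules of the form $E_{H^{0}}(H^{0}/\frkp)$ by \cite[Theorem 18.5]{Matsumura}; applying $G$ and invoking coproduct-compatibility then yields a decomposition of $I$ as a direct sum of objects $E_{R}(R/\ulfrkp')$, which is exactly (2).

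The only step carrying genuine content beyond the citation of \cite{Matsumura} is the coproduct-compatibility used in (2); everything else is a formal transport of structure along an additive equivalence. That compatibility has, however, already been arranged by the closure of $\cI$ under direct sums established via Bass--Papp, so I expect no real obstacle: the substantive work lies entirely in setting up the equivalence $\tuH^{0} \colon \cI \cong \Inj H^{0}$ correctly, and this has been done in Lemma \ref{basic of cI lemma} and the preceding discussion.
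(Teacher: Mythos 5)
Your proposal is correct and follows essentially the same route as the paper: the text preceding the proposition already sets up exactly this argument, namely that $\cI$ is closed under direct sums in $\sfD(R)$ by the Bass--Papp theorem so that coproducts in $\cI$ and in $\Inj H^{0}$ are both computed as honest direct sums, and then both statements are transported along the equivalence $\tuH^{0}\colon \cI \cong \Inj H^{0}$ from \cite[Theorem 18.4, Theorem 18.5]{Matsumura}. Your explicit remarks that additive equivalences preserve and reflect indecomposability and preserve existing coproducts merely spell out what the paper leaves implicit.
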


The special case $\frkp = \frkq$ of following lemma  is proved by Shaul \cite{Shaul:Injective}.

\begin{lemma}
For another $\frkq \in \Spec H^{0}$, 
the canonical map $\mathsf{can}$  
below is a quasi-isomorphism
\[
\mathsf{can}: E_{R}(R/\ulfrkp')_{\ulfrkq'}% = \Hom_{R}^{\bullet}(R, E_{R^{0}}(R/\ulfrkp))_{\ulfrkq'}
 \to 
\Hom^{\bullet}_{R_{\ulfrkq'}}(R_{\ulfrkq'}, E_{R^{0}_{\ulfrkq}}(R^{0}_{\ulfrkq}/\ulfrkp_{\ulfrkq}) )=
E_{R_{\ulfrkq'}}(R_{\ulfrkq'}/\ulfrkp'_{\ulfrkq'}).
\]
\end{lemma}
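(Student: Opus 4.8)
The plan is to reduce the claim to the classical computation of the localization of an indecomposable injective module over the Noetherian ring $H^{0}$, of which this is the DG-lift. Since both sides are DG-$R_{\ulfrkq'}$-modules and $\mathsf{can}$ is a cochain morphism, it suffices to show that $\mathsf{can}$ induces an isomorphism on $\tuH^{n}$ for every $n$. The starting observation is that both complexes have the form $\Hom^{\bullet}_{A}(R, E)$ with $E$ an injective $A$-module ($A = R^{0}$ on the source, $A = R^{0}_{\ulfrkq}$ on the target), so the contravariant functor $\Hom_{A}(-,E)$ is exact and commutes with cohomology. Concretely, using that $E_{R^{0}}(R^{0}/\ulfrkp)$ is $R^{0}$-injective and that localization is exact, I would first record
\[
\tuH^{n}\!\left(E_{R}(R/\ulfrkp')_{\ulfrkq'}\right) \cong \Hom_{R^{0}}\!\left(H^{-n}, E_{R^{0}}(R^{0}/\ulfrkp)\right)_{\ulfrkq},
\]
and, using $\tuH^{-n}(R_{\ulfrkq'}) = (H^{-n})_{\frkq}$, similarly for the target
\[
\tuH^{n}\!\left(E_{R_{\ulfrkq'}}(R_{\ulfrkq'}/\ulfrkp'_{\ulfrkq'})\right) \cong \Hom_{R^{0}_{\ulfrkq}}\!\left((H^{-n})_{\frkq}, E_{R^{0}_{\ulfrkq}}(R^{0}_{\ulfrkq}/\ulfrkp_{\ulfrkq})\right).
\]

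Next I would transfer both expressions to the Noetherian ring $H^{0}$. Each $H^{-n}$ is an $H^{0}$-module, so by the adjunction for the surjection $R^{0} \twoheadrightarrow H^{0}$ together with the standard identification $\Hom_{R^{0}}(H^{0}, E_{R^{0}}(N)) \cong E_{H^{0}}(N)$ for an $H^{0}$-module $N$ (valid for any surjection of rings, with no Noetherian hypothesis), the source cohomology becomes $\Hom_{H^{0}}(H^{-n}, E_{H^{0}}(H^{0}/\frkp))_{\frkq}$ and the target cohomology becomes $\Hom_{H^{0}_{\frkq}}((H^{-n})_{\frkq}, E_{H^{0}_{\frkq}}(H^{0}_{\frkq}/\frkp_{\frkq}))$. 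Here one also uses that localizing an $H^{0}$-module at $R^{0}\setminus\ulfrkq$ agrees with localizing at $H^{0}\setminus\frkq$. Now I invoke the two classical facts over $H^{0}$: since $H^{0}$ is Noetherian and $H^{-n}$ is finitely generated (piecewise Noetherianity), $\Hom_{H^{0}}(H^{-n}, -)$ commutes with localization at $\frkq$; and by \cite[Theorem 18.4]{Matsumura} one has $E_{H^{0}}(H^{0}/\frkp)_{\frkq} \cong E_{H^{0}_{\frkq}}(H^{0}_{\frkq}/\frkp_{\frkq})$. Combining these identifies the two cohomology modules in every degree, and since all the identifications are natural they are induced by $\mathsf{can}$, which yields the quasi-isomorphism.

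The main obstacle is that $R^{0}$ is not assumed Noetherian -- only $H^{0}$ is -- so neither the classical localization statement for injective hulls nor the commutation of $\Hom_{R^{0}}(-, E)$ with localization can be applied to $R^{0}$ directly. The device that circumvents this is precisely the passage to $H^{0}$: because the only modules paired against the injective hulls are the cohomologies $H^{-n}$, which are genuine $H^{0}$-modules and finitely generated over the Noetherian ring $H^{0}$, all localization arguments go through once the adjunction has moved everything over $H^{0}$. The remaining work is the routine bookkeeping that $\mathsf{can}$ is compatible with the adjunction and localization isomorphisms, and that the $\frkp \not\subseteq \frkq$ case is covered uniformly (there $E_{H^{0}_{\frkq}}(H^{0}_{\frkq}/\frkp_{\frkq}) = 0$).
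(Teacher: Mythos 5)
Your proof is correct and follows essentially the same strategy as the paper's: both reduce the claim to an isomorphism on cohomology, identify the cohomology of each side with a localization of an injective-hull-type object over the Noetherian cohomology ring, and then invoke the classical commutation of injective hulls with localization. The only difference is in the packaging of the last step --- the paper assembles the degree-wise data into the graded injective hull $E_{H}(H/\frkp')$ over the graded ring $H$ and cites the well-known graded statement, whereas you stay degree by degree over $H^{0}$, using finite presentation of $H^{-n}$ to commute $\Hom$ with localization together with \cite[Theorem 18.4]{Matsumura}; this is an equally valid and somewhat more self-contained way to finish.
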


\begin{proof}
Using \cite[Lemma 3.11]{coppepan}, we have the following isomorphisms. 
\[
\begin{split}
&\tuH(E_{R}(R/\ulfrkp')_{\ulfrkq'}) \cong \tuH(E_{R}(R/\ulfrkp'))_{\frkq'} \cong \Hom_{H^{0}}^{\bullet}(H, E_{H^{0}}(H^{0}/\frkp))_{\frkq'} 
\cong E_{H}(H/\frkp')_{\frkq'}, \\ 
&\tuH(E_{R_{\ulfrkq'}}(R_{\ulfrkq'}/\ulfrkp'_{\ulfrkq'})) \cong  \Hom_{H^{0}_{\frkq}}^{\bullet}(H_{\frkq'}, E_{H^{0}_{\frkq}}(H^{0}_{\frkq}/\frkp_{\frkq})) 
\cong E_{H_{\frkq'}}(H_{\frkq'}/\frkp'_{\frkq'}).
\end{split}
\]
It is well-known that there exists a canonical isomorphism between 
$E_{H}(H/\frkp')_{\frkq}$ and $E_{H_{\frkq'}}(H_{\frkq'}/\frkp'_{\frkq'})$, which coincides with $\tuH(\mathsf{can})$ under the above isomorphism. 
\end{proof}

There are  two immediate  consequences.

\begin{corollary}\label{20181162135}
\begin{enumerate}[(1)]
\item 
The localization functor $(-)_{\ulfrkp'}$ sends $\cI(R)$ to 
$\cI(R_{\ulfrkp'})$ and makes 
the following diagram commutative. 
\[
\begin{xymatrix}{ 
\cI(R) \ar@{=}[rr]^{\tuH^{0} \ \sim} \ar[d]_{(-)_{\ulfrkp'}} & &
\Inj H^{0} \ar[d]^{(- )_{\frkp}} \\ 
\cI( R_{\ulfrkp'} ) \ar@{=}[rr]^{\tuH^{0} \ \sim} & &
\Inj H^{0}_{\frkp}  
}\end{xymatrix} 
\]

\item 
$\id_{R_{\ulfrkp'}} M_{\ulfrkp'} \leq \id_{R} M$.

\item 
The localization functor $(-)_{\ulfrkp'}$ preserves finiteness of injective dimension. 
Namely, 
if $M \in \sfD(R)_{\textup{fid}}$, then $M_{\ulfrkp'} \in \sfD(R_{\ulfrkp'})_{\textup{fid}}$.  
\end{enumerate}
\end{corollary}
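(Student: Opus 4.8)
The plan is to deduce all three parts from the preceding lemma together with the structure theory of $\cI$ recalled in Section \ref{the class cI}, establishing (1) first, bootstrapping (2) from (1), and reading off (3) from (2). For (1) I would start from Proposition \ref{decomposition of injectives}: every $I \in \cI(R)$ is a direct sum $\bigoplus_{\frkq} E_{R}(R/\ulfrkq')^{\oplus \mu_{\frkq}}$. Since $(-)_{\ulfrkp'} = (-)\otimes_{R} R_{\ulfrkp'}$ is exact and commutes with direct sums, and the preceding lemma identifies each $E_{R}(R/\ulfrkq')_{\ulfrkp'}$ with $E_{R_{\ulfrkp'}}(R_{\ulfrkp'}/\ulfrkq'_{\ulfrkp'})$ (which lies in $\cI(R_{\ulfrkp'})$, being zero unless $\frkq \subseteq \frkp$), closure of $\cI(R_{\ulfrkp'})$ under direct sums via Bass--Papp yields $I_{\ulfrkp'} \in \cI(R_{\ulfrkp'})$. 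Commutativity of the square then reduces to the identity $\tuH^{0}(I_{\ulfrkp'}) = \tuH^{0}(I)_{\frkp}$, the degree-$0$ instance of the cohomology--localization formula $\tuH^{n}(M_{\ulfrkp'}) = \tuH^{n}(M)_{\frkp}$ recorded in Section \ref{CDGA:basics}, combined with the equivalences $\tuH^{0}\colon \cI(R) \simeq \Inj H^{0}$ and $\tuH^{0}\colon \cI(R_{\ulfrkp'}) \simeq \Inj H^{0}_{\frkp}$ of Lemma \ref{basic of cI lemma}(2); the right-hand vertical arrow is exactly $(-)_{\frkp}$, which lands in $\Inj H^{0}_{\frkp}$ by the classical fact that localizations of injectives over a Noetherian ring are injective.

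For (2) I would first dispose of the case $\id_{R} M = \infty$, where the inequality is vacuous, so that Lemma \ref{201903152344} places $M \in \sfD^{>-\infty}(R)$ and Theorem \ref{ifij resolution theorem}(3) supplies a finite ifij resolution $I_{\bullet}$ of length $e$ with $\id_{R} M = e + \inf I_{-e} - \inf I_{0}$. The key move is to localize the defining exact triangles $M_{-i} \to I_{-i} \to M_{-i-1}$: exactness of $(-)_{\ulfrkp'}$ preserves them, and by part (1) each $(I_{-i})_{\ulfrkp'}$ lies in $\cI(R_{\ulfrkp'})[-\inf I_{-i}]$ (possibly zero). Splicing the localized triangles exhibits $M_{\ulfrkp'}$ as a finite iterated extension of the objects $(I_{-i})_{\ulfrkp'}[-i]$ for $i = 0,\dots,e$.

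Now every object of $\cI(R_{\ulfrkp'})$ has injective concentration $[0,0]$ over $R_{\ulfrkp'}$: this is the defining feature of the class $\cI$, visible from the coinduction adjunction $\RHom_{R_{\ulfrkp'}}(-, G(J)) \cong \RHom_{R^{0}_{\frkp}}(-, E)$ with $E$ an injective $R^{0}_{\frkp}$-module, whose cohomology $\Hom_{R^{0}_{\frkp}}(\tuH^{-k}(-), E)$ is confined to the expected degrees. Hence $(I_{-i})_{\ulfrkp'}[-i]$ has injective concentration $[\inf I_{-i}+i,\ \inf I_{-i}+i]$, and applying $\RHom_{R_{\ulfrkp'}}(N,-)$ to each localized triangle propagates these concentration bounds through the extension. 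Since $\inf I_{-i}+i$ is nondecreasing in $i$ by the monotonicity \eqref{201903141554}, the extension $M_{\ulfrkp'}$ has injective concentration contained in $[\inf I_{0},\ \inf I_{-e}+e]$, so $\id_{R_{\ulfrkp'}} M_{\ulfrkp'} \leq e + \inf I_{-e} - \inf I_{0} = \id_{R} M$. Part (3) is then immediate: finiteness of $\id_{R} M$ forces $\id_{R_{\ulfrkp'}} M_{\ulfrkp'} \leq \id_{R} M < \infty$.

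The main obstacle I anticipate is resisting the tempting shortcut of declaring $(I_{\bullet})_{\ulfrkp'}$ to be an honest ifij resolution of $M_{\ulfrkp'}$ and invoking Theorem \ref{ifij resolution theorem} directly. This fails, because localization can strictly raise $\inf$ on the truncated modules $M_{-i}$ and can annihilate terms outright, so the shift-matching and injectivity-of-$\tuH^{\inf}$ conditions of Definition \ref{ifij morphism definition} need not survive localization. Tracking injective concentration through the localized triangles, rather than insisting on a genuine localized ifij resolution, is precisely what lets the length bound go through without these complications.
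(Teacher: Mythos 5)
Your proposal is correct and follows the route the paper intends: the paper derives this corollary as an ``immediate consequence'' of the preceding localization lemma for $E_{R}(R/\ulfrkp')$ together with Proposition \ref{decomposition of injectives}, which is exactly your argument for (1), and (2)--(3) by localizing an ifij resolution. Your extra care in part (2) --- tracking injective concentration through the localized triangles rather than claiming the localized sequence is itself an ifij resolution (which can fail since localization may raise $\inf$ or kill terms) --- is a correct and worthwhile filling-in of a detail the paper leaves implicit.
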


\begin{proposition}\label{20181162135II}
Let $M \in \sfD_{\mod H^{0}}(R)$ and $N \in \sfD(R)_{\textup{fid}}$. 
Then the canonical morphism below is an isomorphism
\[
\Phi_{M, N} :\RHom_{R}(M,N)_{\ulfrkp} \xrightarrow{ \cong} \RHom_{R_{\ulfrkp}}(M_{\ulfrkp}, N_{\ulfrkp}).
\]
\end{proposition}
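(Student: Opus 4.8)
The plan is to deduce this from the already-established Proposition \ref{201712041945}, which is precisely the present statement under the extra hypothesis that $M$ be bounded above. The role of the new hypothesis---that $N$ has finite injective dimension rather than merely lying in $\sfD^{>-\infty}(R)$---is that it lets us discard the unbounded upper part of $M$: writing $M$ through the truncation triangle $\sigma^{>n}M \to M \to \sigma^{\leq n}M \to$, the contribution of $\sigma^{>n}M$ to $\RHom_R(-,N)$ is pushed into arbitrarily negative cohomological degrees as $n$ grows, so only the bounded-above piece $\sigma^{\leq n}M$ survives in any fixed degree.

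I would argue one cohomological degree at a time. Fix $j \in \ZZ$; it suffices to show that $\tuH^j(\Phi_{M,N})$ is an isomorphism. Apply the contravariant triangulated functors $\RHom_R(-,N)_{\ulfrkp}$ and $\RHom_{R_{\ulfrkp}}((-)_{\ulfrkp}, N_{\ulfrkp})$ to the truncation triangle; since $\Phi$ is a natural transformation between them it yields a morphism of exact triangles, where one uses that localization is exact and commutes with the truncation functors, so that $(\sigma^{\leq n}M)_{\ulfrkp} = \sigma^{\leq n}(M_{\ulfrkp})$ and likewise for $\sigma^{>n}$ (this rests on $\tuH^i(M_{\ulfrkp}) = \tuH^i(M)_{\frkp}$). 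Because $N$ has finite injective dimension over $R$, say with injective concentration $[a,b]$, and $\sigma^{>n}M \in \sfD^{\geq n+1}(R)$, the object $\RHom_R(\sigma^{>n}M, N)$ lies in $\sfD^{\leq b-n-1}(\kk)$; by Corollary \ref{20181162135}(3) the localization $N_{\ulfrkp}$ again has finite injective dimension, so $\RHom_{R_{\ulfrkp}}((\sigma^{>n}M)_{\ulfrkp}, N_{\ulfrkp})$ is concentrated in degrees $\leq b'-n-1$ for some fixed $b'$. Choosing $n$ large enough that both $b-n-1$ and $b'-n-1$ are $< j-1$ makes the $\sigma^{>n}M$-terms vanish in cohomological degrees $j-1$ and $j$ on both sides.

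With such an $n$ fixed, the long exact cohomology sequences of the two triangles give isomorphisms $\tuH^j(\RHom_R(\sigma^{\leq n}M,N)_{\ulfrkp}) \cong \tuH^j(\RHom_R(M,N)_{\ulfrkp})$ and $\tuH^j(\RHom_{R_{\ulfrkp}}((\sigma^{\leq n}M)_{\ulfrkp}, N_{\ulfrkp})) \cong \tuH^j(\RHom_{R_{\ulfrkp}}(M_{\ulfrkp}, N_{\ulfrkp}))$, compatibly with $\Phi$ by naturality. Now $\sigma^{\leq n}M \in \sfD^{<\infty}_{\mod H^{0}}(R)$ while $N \in \sfD^{>-\infty}(R)$ by Lemma \ref{201903152344}, so Proposition \ref{201712041945} applies and shows $\Phi_{\sigma^{\leq n}M, N}$ is an isomorphism. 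Chasing the resulting commutative square of isomorphisms gives that $\tuH^j(\Phi_{M,N})$ is an isomorphism, and letting $j$ vary shows $\Phi_{M,N}$ is an isomorphism.

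The main thing to verify carefully is the compatibility package in the second step: that $\Phi$ genuinely is a natural transformation of triangulated functors, hence produces an honest morphism of triangles, and that localization commutes with both truncation functors in $\sfD(R)$. Once this bookkeeping is in place the finiteness of $\injdim N$ (and of $\injdim N_{\ulfrkp}$) does all the work through the concentration estimates, and the reduction to Proposition \ref{201712041945} is automatic. I would avoid the alternative route of dévissage through a finite ifij resolution down to $N \in \cI$ and the adjunction $\RHom_R(M, G(J)) \cong \Hom_{R^{0}}(M, E_{R^{0}}(J))$, since localizing the latter is awkward: the underlying graded module of $M$ need not be finitely generated, whereas the truncation argument only ever uses the cohomological finiteness that Proposition \ref{201712041945} already handles.
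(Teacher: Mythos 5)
Your argument is correct, but it is genuinely different from the paper's. The paper keeps $M$ untouched and instead does d\'evissage on $N$: since $\injdim N < \infty$, a finite ifij resolution reduces to the case $N \in \cI$, where Lemma \ref{basic of cI lemma}(1) identifies $\Hom(M, N[n])$ with $\Hom_{H^{0}}(\tuH^{-n}(M), \tuH^{0}(N))$, and localizing is then the classical fact that $\Hom$ out of a finitely generated module over a Noetherian ring commutes with localization. Your worry that this route is ``awkward'' because $M^{\#}$ need not be finitely generated does not actually arise: the reduction to $\cI$ only ever sees the finitely generated cohomology modules $\tuH^{-n}(M)$, not the underlying graded module. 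Your route instead truncates $M$ and uses the way-out estimate coming from the injective concentration of $N$ (and of $N_{\ulfrkp'}$, via Corollary \ref{20181162135}) to kill the contribution of $\sigma^{>n}M$ in any fixed degree, reducing to Proposition \ref{201712041945}; this is a clean alternative whose merit is that it isolates exactly where the hypothesis $\injdim N < \infty$ enters (as a concentration bound rather than as finiteness of a resolution), at the cost of an $\varepsilon$ more bookkeeping with morphisms of triangles. One small slip: the truncation triangle for the standard $t$-structure is $\sigma^{\leq n}M \to M \to \sigma^{>n}M \to$, not $\sigma^{>n}M \to M \to \sigma^{\leq n}M \to$ as you wrote; this does not affect the long-exact-sequence argument, which works identically with the correctly oriented triangle.
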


\begin{proof}
By divisage argument, we may reduce the problem to the case $N \in \cI$. 
 We have to prove that 
 the morphism below 
 \[
 \phi_{M,N[n]}: \Hom(M,N[n])_{\frkp} \to \Hom(M_{\ulfrkp'}, N_{\ulfrkp'}[n])
 \]
 is an isomorphism for $n \in \ZZ$.
 \[
 \begin{split} 
 \Hom(M,N[n])_{\frkp}
 & \cong \Hom(\tuH^{-n}(M), \tuH^{0}(N))_{\frkp} \\
 & \cong \Hom(\tuH^{-n}(M)_{\frkp}, \tuH^{0}(N)_{\frkp})\\
 & \cong \Hom(\tuH^{-n}(M_{\ulfrkp'}), \tuH^{0}(N_{\ulfrkp'})) 
\cong \Hom(M_{\ulfrkp'}, N_{\ulfrkp'}[n])
\end{split}
\]
\end{proof}

\subsubsection{Bass number}

The Bass number is defined in the same way of usual commutative algebras. 

\begin{definition}[Bass number]
For  $\frkp \in \Spec H^{0}$,  we set 
\[
\mu^{n}(\frkp, M) := 
\dim_{\kappa(\frkp)}
 \Hom_{R_{\ulfrkp}}(
\kappa(\frkp), M_{\frkp}[n]).
\]
\end{definition}

The structure theorem of minimal injective resolution of $M \in \Mod A$
 over an ordinary Noetherian algebra $A$  \cite[Theorem 18.7]{Matsumura} 
holds for a minimal ifij resolution of $M \in \sfD(R)$ 
over  $R$, 
which can be proved by the same way of the ordinary case with the aide of Corollary \ref{ifij resolution corollary 1}. 

\begin{theorem}\label{Bass injective decomposition theorem}
Let $M \in \sfD^{> -\infty}(R)$ and $I_{\bullet}$ a minmal ifij resolution of $M$. 
Then, $\mu^{n}(\frkp, M) = 0 $ if $n \neq i + \inf I_{-i}$ for any $i \in \NN$. 
If for $i \in \NN$ we set $n = i +\inf I_{-i}$,  then
\[
I_{-i} \cong 
\left(\bigoplus_{\frkp \in \Spec H^{0}} E_{R}(R/\ulfrkp')^{\oplus \mu^{ n }(\frkp, M)}\right)[-\inf I_{-i}]. 
\]
\end{theorem}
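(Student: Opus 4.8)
The plan is to transcribe the classical proof of the structure theorem for minimal injective resolutions (\cite[Theorem 18.7]{Matsumura}), using the equivalence $\tuH^{0}\colon \cI \xrightarrow{\sim} \Inj H^{0}$ of Lemma \ref{basic of cI lemma} to translate everything into a statement about $H^{0}$-modules, and using localization to reduce to the local case. First I would record the coarse shape of each term: since $I_{-i}\in\cI[-\inf I_{-i}]$, the object $I_{-i}[\inf I_{-i}]$ lies in $\cI$, so by Proposition \ref{decomposition of injectives} it is a direct sum of indecomposables $E_{R}(R/\ulfrkp')$. Writing $\mu_{i}(\frkp)$ for the multiplicity of $E_{R}(R/\ulfrkp')$, we get
\[
I_{-i}\cong\Big(\bigoplus_{\frkp\in\Spec H^{0}}E_{R}(R/\ulfrkp')^{\oplus\mu_{i}(\frkp)}\Big)[-\inf I_{-i}],
\]
so the whole theorem reduces to proving $\mu_{i}(\frkp)=\mu^{n}(\frkp,M)$ for $n=i+\inf I_{-i}$, together with the vanishing of $\mu^{n}(\frkp,M)$ for all other $n$.

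Next I would pass to the local CDGA $R_{\ulfrkp'}$, whose zeroth cohomology is the local ring $H^{0}_{\frkp}$ with residue field $\kappa(\frkp)$. Since localization $(-)_{\ulfrkp'}$ is exact, sends $\cI(R)$ into $\cI(R_{\ulfrkp'})$ (Corollary \ref{20181162135}), and carries $E_{R}(R/\ulfrkq')$ to $E_{R_{\ulfrkp'}}(R_{\ulfrkp'}/\ulfrkq'_{\ulfrkp'})$ when $\frkq\subseteq\frkp$ and to $0$ otherwise, applying it to the tower $\{\cE_{-i}\}$ yields a sequence of exact triangles realizing an ifij resolution of $M_{\frkp}$. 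The crucial point, and the main obstacle, is that this localized resolution is again \emph{minimal}. Under $\tuH^{0}$ this is a statement purely about $H^{0}$-modules: it reduces to the classical fact that the localization at $\frkp$ of an injective envelope of $H^{0}$-modules is an injective envelope over $H^{0}_{\frkp}$, transported through the equivalence $\cI\cong\Inj H^{0}$ and its compatibility with localization. One subtlety to verify is the bookkeeping of infima: every nonzero object of $\cI(R_{\ulfrkp'})$ has $\tuH^{<0}=0$ and $\tuH^{0}\neq0$, hence infimum $0$, so that $\inf(I_{-i})_{\ulfrkp'}=\inf I_{-i}$ whenever $(I_{-i})_{\ulfrkp'}\neq0$, i.e. whenever some $\frkq\subseteq\frkp$ has $\mu_{i}(\frkq)\neq0$; in particular this holds whenever $\mu_{i}(\frkp)\neq0$.

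Finally I would compute the Bass number from this localized minimal resolution. Over the local ring $R_{\ulfrkp'}$ the residue field $\kappa(\frkp)$ is the unique simple $H^{0}_{\frkp}$-module, so Corollary \ref{ifij resolution corollary 1} applies and gives $\mu^{n}(\frkp,M)=\dim_{\kappa(\frkp)}\Hom_{R_{\ulfrkp'}}(\kappa(\frkp),M_{\frkp}[n])=0$ unless $n=i+\inf(I_{-i})_{\ulfrkp'}$ for some $i$, in which case it equals $\dim_{\kappa(\frkp)}\Hom_{H^{0}_{\frkp}}(\kappa(\frkp),\tuH^{\inf}((I_{-i})_{\ulfrkp'}))$. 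Since $\tuH^{\inf}((I_{-i})_{\ulfrkp'})=\bigoplus_{\frkq\subseteq\frkp}E_{H^{0}_{\frkp}}(H^{0}_{\frkp}/\frkq_{\frkp})^{\oplus\mu_{i}(\frkq)}$ and $\Hom_{H^{0}_{\frkp}}(\kappa(\frkp),E_{H^{0}_{\frkp}}(H^{0}_{\frkp}/\frkq_{\frkp}))$ is $\kappa(\frkp)$ for $\frkq=\frkp$ and $0$ otherwise, only the $\frkq=\frkp$ summand survives and contributes $\kappa(\frkp)^{\oplus\mu_{i}(\frkp)}$. Taking dimensions gives $\mu^{n}(\frkp,M)=\mu_{i}(\frkp)$ when $n=i+\inf(I_{-i})_{\ulfrkp'}$ and $0$ otherwise; combined with the identity $\inf(I_{-i})_{\ulfrkp'}=\inf I_{-i}$, which holds exactly when $\mu_{i}(\frkp)\neq0$, this yields both the asserted vanishing and $\mu_{i}(\frkp)=\mu^{\,i+\inf I_{-i}}(\frkp,M)$, identifying the decomposition above with the one in the statement.
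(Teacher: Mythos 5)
Your overall strategy is the one the paper intends: decompose each $I_{-i}$ via Proposition \ref{decomposition of injectives}, localize at $\frkp$, and compute the Bass numbers over the local CDGA $R_{\ulfrkp'}$ with Corollary \ref{ifij resolution corollary 1}, exactly as in the classical proof of \cite[Theorem 18.7]{Matsumura}. The final local computation --- that $\Hom_{H^{0}_{\frkp}}(\kappa(\frkp),E_{H^{0}_{\frkp}}(H^{0}_{\frkp}/\frkq_{\frkp}))$ is $\kappa(\frkp)$ for $\frkq=\frkp$ and $0$ otherwise --- is correct, as is the initial reduction of the theorem to the identity $\mu_{i}(\frkp)=\mu^{i+\inf I_{-i}}(\frkp,M)$.

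The gap is in the step you yourself single out as crucial: the localized tower $\{(\cE_{-i})_{\ulfrkp'}\}$ need not be an ifij resolution of $M_{\ulfrkp'}$ at all, let alone a minimal one. The term $(I_{-j})_{\ulfrkp'}$ vanishes exactly when $\tuH^{\inf M_{-j}}(M_{-j})_{\frkp}=0$, and this can happen while $(M_{-j})_{\ulfrkp'}\neq 0$, namely when some higher cohomology of $M_{-j}$ survives localization. Already for $j=0$ this occurs: take $M$ with $\tuH^{\inf M}(M)$ supported away from $\frkp$ but $\tuH^{m}(M)_{\frkp}\neq 0$ for some $m>\inf M$. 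Then $(f_{0})_{\ulfrkp'}\colon M_{\ulfrkp'}\to 0$ is not an ifij morphism in the sense of Definition \ref{ifij morphism definition} ($\tuH^{\inf}$ of a nonzero object does not inject into $0$), the infimum of the localized cosyzygy jumps by one, and Corollary \ref{ifij resolution corollary 1} --- which presupposes a genuine minimal ifij resolution --- cannot be invoked for the localized tower. Your ``subtlety about infima'' only handles the bookkeeping for the terms that survive localization; it does not address the fact that the resolution structure itself breaks at the degenerate indices. (This has no counterpart for modules over an ordinary ring, where exactness of a minimal injective resolution forces every term after a vanishing localized term to vanish; here the cosyzygies carry the untouched higher cohomology of $M$ along.) The statement remains true, but to close the argument you must either (i) re-derive the formula of Corollary \ref{ifij resolution corollary 1} for such towers with degenerate steps, checking that a triangle $(M_{-j})_{\ulfrkp'}\to 0\to (M_{-j})_{\ulfrkp'}[1]$ merely shifts the d\'evissage without contributing in the degrees $n=i+\inf I_{-i}$, or (ii) avoid localizing the resolution and instead apply $\Hom_{R}(H^{0}/\frkp,-)_{\frkp}$ to the original tower, using Proposition \ref{201712041945} together with minimality to show the induced differentials vanish after localization.
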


We collect basic properties of the Bass numbers.

\begin{lemma}\label{pre one up lemma}
For $M \in \sfD^{-\infty}_{\mod H^{0}}(R)$ 
and $i \in \ZZ$, we have 
 $\mu^{n}(\frkp, M) > 0$ 
if and only if $\Hom(H^{0}/\frkp, M[n]) \neq 0$
\end{lemma}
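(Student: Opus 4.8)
The plan is to reduce the whole statement to a computation over the localization $R_{\ulfrkp}$, exploiting that the Bass number $\mu^{n}(\frkp,M)$ is, by definition, a $\kappa(\frkp)$-dimension computed after localizing at $\frkp$. The only global ingredient needed is the compatibility of $\RHom$ with localization. Since $H^{0}/\frkp$ is a finite $H^{0}$-module concentrated in cohomological degree $0$, it lies in $\sfD^{<\infty}_{\mod H^{0}}(R)$, and $M \in \sfD^{>-\infty}(R)$, so Proposition \ref{201712041945} applies with $H^{0}/\frkp$ in the first argument and $M$ in the second.

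Concretely, I would first invoke Proposition \ref{201712041945} to get
\[
\RHom_{R}(H^{0}/\frkp, M)_{\ulfrkp} \xrightarrow{\ \cong\ } \RHom_{R_{\ulfrkp}}\bigl((H^{0}/\frkp)_{\ulfrkp},\, M_{\ulfrkp}\bigr),
\]
and then identify $(H^{0}/\frkp)_{\ulfrkp}$ with the residue field $\kappa(\frkp) = (H^{0}/\frkp)_{\frkp}$ via the identifications of residue algebras and of localized cohomology recorded in Section \ref{CDGA:basics}. Since localization commutes with $\tuH^{n}$, passing to $\tuH^{n}$ yields
\[
\Hom(H^{0}/\frkp, M[n])_{\frkp} \cong \Hom_{R_{\ulfrkp}}(\kappa(\frkp),\, M_{\frkp}[n]).
\]
The right-hand side is a $\kappa(\frkp)$-vector space of dimension $\mu^{n}(\frkp,M)$ by definition. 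Hence $\mu^{n}(\frkp,M) > 0$ \emph{if and only if} $\Hom(H^{0}/\frkp, M[n])_{\frkp} \neq 0$; this is the genuine content and is essentially forced once the localization isomorphism is in hand.

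It then remains to bridge the localized $\Hom$ and the plain one. The implication $\Hom(H^{0}/\frkp,M[n])_{\frkp}\neq 0 \Rightarrow \Hom(H^{0}/\frkp,M[n])\neq 0$ is immediate. The reverse implication is the step I expect to be the main obstacle, and it is where one must be careful. Writing $E := \Hom(H^{0}/\frkp, M[n])$, this is a finitely generated $H^{0}$-module (by the finiteness of $\Hom$-spaces recorded at the start of Section \ref{CDGA:basics}) that is annihilated by $\frkp$, hence a finite module over the domain $H^{0}/\frkp$; what is really being asserted is $\frkp \in \Supp_{H^{0}} E$. The cleanest rigorous route is to localize at $\frkp$ from the outset: replacing $R,M$ by $R_{\ulfrkp},M_{\ulfrkp}$ turns $\frkp$ into the maximal ideal of $H^{0}_{\frkp}$, for which $H^{0}/\frkp$ becomes the residue field and $(-)_{\frkp}$ is the identity, so the two non-vanishing conditions coincide tautologically. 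The subtlety to keep in mind is that the \emph{global} module $E$ can be nonzero because of contributions of indecomposable injectives $E_{R}(R/\ulfrkq')$ with $\frkq \supsetneq \frkp$ appearing in a minimal ifij resolution of $M$ (Theorem \ref{Bass injective decomposition theorem}), so that localization at $\frkp$ is precisely what isolates the summand $E_{R}(R/\ulfrkp')$ counted by $\mu^{n}(\frkp,M)$; the invariant form of the criterion is $\frkp \in \Supp_{H^{0}}\Hom(H^{0}/\frkp,M[n])$, which is what the stated equivalence must be read as.
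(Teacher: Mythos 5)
Your first two steps are exactly the paper's proof: the paper derives the lemma from the localization isomorphism $\Hom_{R_{\ulfrkp}}(\kappa(\frkp), M_{\frkp}[n]) \cong \Hom_{R}(H^{0}/\frkp, M[n])_{\frkp}$ (i.e.\ Proposition \ref{201712041945} applied to $H^{0}/\frkp \in \sfD^{<\infty}_{\mod H^{0}}(R)$ and $M$) together with the finite generation of $E:=\Hom_{R}(H^{0}/\frkp, M[n])$ as an $H^{0}/\frkp$-module. So up to the equivalence ``$\mu^{n}(\frkp,M)>0$ if and only if $E_{\frkp}\neq 0$'' you are reproducing the intended argument, and the implication $E_{\frkp}\neq 0\Rightarrow E\neq 0$ is indeed immediate.

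Your worry about the remaining implication $E\neq 0 \Rightarrow E_{\frkp}\neq 0$ is well founded, and neither your proposal nor the paper's one-line proof closes it. Finite generation of $E$ over the domain $H^{0}/\frkp$ does not suffice: $E_{\frkp}=E\otimes_{H^{0}/\frkp}\kappa(\frkp)$ is a localization at the generic point of $H^{0}/\frkp$, and a nonzero finite module over a domain can be torsion. The statement even fails as written for an ordinary ring: take $R=A$ a discrete valuation ring, $M=k$ its residue field, $\frkp=(0)$, $n=0$; then $\Hom_{A}(A/\frkp,M)=k\neq 0$ while $M_{\frkp}=0$, so $\mu^{0}(\frkp,M)=0$. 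Note also that your proposed repair of ``localizing at $\frkp$ from the outset'' proves only the tautology $\mu^{n}(\frkp,M)>0 \Leftrightarrow \Hom_{R_{\ulfrkp}}(\kappa(\frkp),M_{\frkp}[n])\neq 0$, not the lemma as stated. Your diagnosis, however, is the right one: the provable statement is $\mu^{n}(\frkp,M)>0 \Leftrightarrow \frkp\in\Supp_{H^{0}}\Hom(H^{0}/\frkp,M[n])$, and this support form, combined with Lemma \ref{one up lemma} to push nonvanishing up to maximal ideals, is what the paper actually needs in the proofs of Proposition \ref{criterion for finiteness of injective dimension} and Theorem \ref{structure of minamal ifij resolution of a daulizing complex}. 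In short: same approach as the paper, the ``only if'' direction is correct, and you have correctly located a genuine gap that the paper's own proof shares.
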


\begin{proof}
The claim follows from the facts that 
\[
\Hom_{R_{\ulfrkp}}(\kappa(\frkp), M_{\frkp}[n]) \cong \Hom_{R}(H^{0}/\frkp, M[n])_{\frkp}\] 
and 
$\Hom_{R}(H^{0}/\frkp, M[n]) $ is a finitely generated $H^{0}/\frkp$-module. 
\end{proof}

The following is essentially a DG-version of \cite[p. 141, Lemma 3]{Matsumura}.

\begin{lemma}\label{one up lemma}
Let $M \in \sfD^{> -\infty}_{\mod H^{0}}(R)$,  
 $i \in\ZZ$ and $\frkp, \frkq \in \Spec H^{0}$. 
We assume that  $\frkp\subsetneq \frkq$ 
and that $\dim H^{0}_{\frkq}/\frkp H^{0}_{\frkq}=1$. 
If $\mu^{n}(\frkp, M) > 0$, then  
$\mu^{n+ 1}(\frkq, M) > 0$. 
\end{lemma}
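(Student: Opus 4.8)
\textbf{The plan} is to imitate the classical argument of \cite[p.~141, Lemma~3]{Matsumura}, using Lemma~\ref{pre one up lemma} to translate non-vanishing of Bass numbers into non-vanishing of $\Hom$-spaces in $\sfD(R)$, and realizing a short exact sequence of $H^{0}$-modules as an exact triangle in $\sfD(R)$ through the identification of the heart $\sfH$ with $\Mod H^{0}$.

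First I would reduce to the local case. Since $\frkp \subset \frkq$, localizing $M$ at $\ulfrkq'$ and then at (the image of) $\ulfrkp'$ reproduces $M_{\ulfrkp'}$, so by the definition of the Bass number, transitivity of localization, and Proposition~\ref{20181162135II}, we have $\mu^{n}(\frkp, M) = \mu^{n}(\frkp H^{0}_{\frkq}, M_{\ulfrkq'})$ and $\mu^{n+1}(\frkq, M) = \mu^{n+1}(\frkq H^{0}_{\frkq}, M_{\ulfrkq'})$. Hence I may replace $R$ by $R_{\ulfrkq'}$ and $M$ by $M_{\ulfrkq'}$ and assume $R$ is local with maximal ideal $\frkq = \ulfrkm$ and residue field $k = \kappa(\frkq) = H^{0}/\frkq$; the hypothesis then reads $\dim H^{0}/\frkp = 1$, so $\frkq$ is the unique prime of $H^{0}$ strictly containing $\frkp$.

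Next I would choose $x \in \frkq \setminus \frkp$. As $H^{0}/\frkp$ is a one-dimensional Noetherian local domain and $x \notin \frkp$, the element $x$ is a non-zerodivisor on $H^{0}/\frkp$, and the support of $H^{0}/(\frkp + xH^{0})$ is $\{\frkq\}$, so $C := H^{0}/(\frkp + xH^{0})$ has finite length with all composition factors isomorphic to $k$. The short exact sequence $0 \to H^{0}/\frkp \xrightarrow{x} H^{0}/\frkp \to C \to 0$ in $\Mod H^{0} \cong \sfH$ yields an exact triangle in $\sfD(R)$, and applying the cohomological functor $\Hom_{R}(-, M)$ produces a long exact sequence in which the map $\Hom_{R}(H^{0}/\frkp, M[n]) \to \Hom_{R}(H^{0}/\frkp, M[n])$ induced by $x\cdot-$ is multiplication by $x$. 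Writing $N := \Hom_{R}(H^{0}/\frkp, M[n])$, this gives an injection $N/xN \hookrightarrow \Hom_{R}(C, M[n+1])$. By the finiteness of $\Hom$-spaces, $N$ is a finitely generated $H^{0}$-module, and by Lemma~\ref{pre one up lemma} the hypothesis $\mu^{n}(\frkp, M) > 0$ gives $N \neq 0$; since $x \in \frkq = \ulfrkm$, Nakayama's lemma yields $N/xN \neq 0$, whence $\Hom_{R}(C, M[n+1]) \neq 0$.

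Finally I would descend from $C$ to $k$ by dévissage: using a composition series of $C$ and the long exact sequences obtained by applying $\Hom_{R}(-, M[n+1])$ to the resulting exact triangles, an induction on the length of $C$ shows that $\Hom_{R}(C, M[n+1]) \neq 0$ forces $\Hom_{R}(k, M[n+1]) \neq 0$, which by Lemma~\ref{pre one up lemma} means $\mu^{n+1}(\frkq, M) > 0$ for the localized data, hence for the original. The point that most demands care—where the DG-setting departs from the classical module argument—is the third step: one must verify that the short exact sequence in $\sfH$ really induces multiplication by $x$ on the $\Hom$-space and that the non-vanishing produced by Nakayama lands precisely in cohomological degree $n+1$. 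Once the heart $\sfH$ is identified with $\Mod H^{0}$ and the finiteness of $\Hom$-spaces is available, the remaining steps are a routine transcription of the classical dévissage.
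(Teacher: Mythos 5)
Your proposal is correct and follows essentially the same route as the paper: localize at $\frkq$, pick $x \in \frkq \setminus \frkp$, apply $\Hom(-,M)$ to the short exact sequence $0 \to H^{0}/\frkp \xrightarrow{x} H^{0}/\frkp \to H^{0}/(\frkp + xH^{0}) \to 0$, invoke Nakayama on the finitely generated module $\Hom(H^{0}/\frkp, M[n])$, and finish by d\'evissage from $H^{0}/(\frkp + xH^{0})$ down to $H^{0}/\frkq$. The only difference is that you spell out explicitly the finiteness of the $\Hom$-space and the identification of the heart with $\Mod H^{0}$, both of which the paper uses tacitly.
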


\begin{proof}
Localizing at $\frkq$, we may assume $\frkq$ is a maximal. 
Take $x \in \frkq \setminus \frkp$ and consider the exact sequence 
\[
0 \to H^{0}/\frkp \xrightarrow{x \times -} H^{0}/ \frkp \to H^{0}/(xH^{0} + \frkp) \to 0. 
\]
Applying $\Hom(-, M)$, we obtain the exact sequence  
\[
\Hom(H^{0}/ \frkp, M[n]) \xrightarrow{x \times -} \Hom(H^{0}/\frkp, M[n]) 
\to  \Hom(H^{0}/(x H^{0} +\frkp), M[n +1]). 
\]
If $\Hom(H^{0}/(x H^{0} +\frkp), M[n +1]) = 0$, then 
$\Hom(H^{0}/\frkp, M[n]) = 0$ by Nakayama's Lemma, a contradiction. 
Thus $\Hom(H^{0}/(x H^{0} +\frkp), M[n +1]) \neq 0$. 
Since $H^{0}/(xH^{0} + \frkp)$ is obtained from  $ H^{0}/\frkq$ by extensions, 
we must have $\Hom(H^{0}/\frkq, M[n +1]) \neq 0$. 
\end{proof}

\begin{corollary}\label{201712082342}
Let  $M \in \sfD_{\mod H^{0}}^{< \infty}(R)$ 
with a minimal ifij resolution $I_{\bullet}$.  
Let $i \in \NN$ and  set $n := i + \inf I_{-i}$. 
Assume that there exist a non-maximal prime ideal 
$\frkp$  
such that $\mu^{ n}(\frkp, M) \neq 0$. 
Then $\inf I_{-i} = \inf I_{ -i -1}$. 
\end{corollary}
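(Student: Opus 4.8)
The plan is to produce, from the given non-maximal prime $\frkp$, an immediate specialization $\frkq \supsetneq \frkp$, to transport the nonvanishing of the Bass number one cohomological degree upward via Lemma \ref{one up lemma}, and then to read off from the structure theorem (Theorem \ref{Bass injective decomposition theorem}) which term of the resolution this forces. The decisive bookkeeping device will be the integer sequence $a_{j} := j + \inf I_{-j}$ for $j \in \NN$. The ifij inequality $\inf I_{-j} \leq \inf I_{-j-1}$ from (\ref{201903141554}) gives $a_{j+1} \geq a_{j} + 1$, so $(a_{j})_{j}$ is strictly increasing; by construction $a_{i} = n$, and the structure theorem says that $\mu^{m}(\frkq, M) \neq 0$ can hold only when $m = a_{j}$ for some $j$.

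First I would produce $\frkq$. Since $\frkp$ is not maximal, the Noetherian domain $H^{0}/\frkp$ has positive Krull dimension, so it contains a height-one prime; pulling it back gives a prime $\frkq$ with $\frkp \subsetneq \frkq$ and $\dim H^{0}_{\frkq}/\frkp H^{0}_{\frkq} = 1$. (Concretely, choose a nonzero nonunit $\bar{x} \in H^{0}/\frkp$ and take $\frkq$ over a minimal prime of $(\bar{x})$; Krull's principal ideal theorem gives height one.) As $M$ admits an ifij resolution it lies in $\sfD^{>-\infty}_{\mod H^{0}}(R)$, so Lemma \ref{one up lemma} applies: from $\mu^{n}(\frkp, M) \neq 0$ we obtain $\mu^{n+1}(\frkq, M) > 0$.

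It then remains to locate the degree $n+1$ among the $a_{j}$. Since $\mu^{n+1}(\frkq, M) > 0$, Theorem \ref{Bass injective decomposition theorem} forces $n+1 = a_{j}$ for some $j \in \NN$. Because $(a_{j})$ is strictly increasing with $a_{i} = n$, we have $a_{j} \leq n$ for $j \leq i$ and $a_{j} \geq a_{i+2} \geq a_{i+1} + 1 \geq n+2$ for $j \geq i+2$, while $a_{i+1} \geq a_{i} + 1 = n+1$. Hence $n+1 = a_{j}$ can hold only for $j = i+1$, which forces $a_{i+1} = n+1$, i.e. $(i+1) + \inf I_{-i-1} = (i+1) + \inf I_{-i}$, that is, $\inf I_{-i-1} = \inf I_{-i}$.

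I expect the only genuinely delicate point to be the strict monotonicity of $(a_{j})$ together with the pinching at $j = i+1$: one must check both that no earlier or later index can realize $n+1$ and that $a_{i+1}$, which a priori only satisfies $a_{i+1} \geq n+1$, is pinned to the value $n+1$ exactly by the existence of a realizing index. Producing the immediate specialization $\frkq$ is routine Noetherian commutative algebra and should present no obstacle.
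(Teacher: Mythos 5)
Your proposal is correct and follows essentially the same route as the paper: both lift the nonvanishing Bass number from $\frkp$ in degree $n$ to an immediate specialization $\frkq$ in degree $n+1$ via Lemma \ref{one up lemma}, and both exploit the strict monotonicity of $j \mapsto j + \inf I_{-j}$ (from \eqref{201903141554}) together with Theorem \ref{Bass injective decomposition theorem} to conclude; the paper merely phrases this as a contradiction while you argue directly, and you helpfully make explicit the construction of $\frkq$ that the paper leaves implicit.
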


Before giving a proof, we recall  the inequality $\inf I_{-i} \leq \inf I_{-i -1}$ from  \eqref{201903141554}.

\begin{proof}
Assume on the contrary that $\inf I_{-i} < \inf I_{-i-1}$. 
Then for $k,j $ such that $k< i < j$, we  have  
\[
k + \inf I_{-k} \leq i + \inf I_{-i}=n,\ \ \  n +1 = i +1 + \inf I_{-i} < j + \inf I_{-j}. 
\]
Therefore, there exists no $j \in \NN$ such that $j+ \inf I_{-j} = n +1$. 
Thus for any $\frkq \in \Spec H^{0}$ we have 
$\mu^{n +1} (\frkq , M) = 0$ by Theorem \ref{Bass injective decomposition theorem}. 
This contradicts to Lemma \ref{one up lemma}. 
\end{proof}

\begin{corollary}\label{bound of dimension lemma}
If $\id M < \infty$, 
then $\dim \tuH^{\inf}(M) \leq \id M$. 
\end{corollary}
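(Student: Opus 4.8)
The plan is to produce a prime $\frkp_c \in \Spec H^{0}$ and an integer $n$ with $\mu^{n}(\frkp_c, M) \neq 0$ and $n - \inf M = \dim \tuH^{\inf}(M)$, and then read the inequality off the structure theorem for a minimal ifij resolution. Throughout I work with $M \in \sfD^{>-\infty}_{\mod H^{0}}(R)$, $M \neq 0$; note $\inf M =: b$ is finite by Lemma \ref{201903152344}, and write $c := \dim \tuH^{\inf}(M) = \dim \tuH^{b}(M)$. First I would fix a minimal prime $\frkp_0$ of $\Supp \tuH^{b}(M)$ with $\dim H^{0}/\frkp_0 = c$; such a prime is associated to $\tuH^{b}(M)$. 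The base step is to check $\mu^{b}(\frkp_0, M) > 0$. By Lemma \ref{pre one up lemma} this amounts to $\Hom(H^{0}/\frkp_0, M[b]) \neq 0$. Since $H^{0}/\frkp_0$ lies in the heart and $M[b] \in \sfD^{\geq 0}(R)$ with $\tuH^{0}(M[b]) = \tuH^{b}(M)$, the standard $t$-structure identification gives $\Hom(H^{0}/\frkp_0, M[b]) \cong \Hom_{H^{0}}(H^{0}/\frkp_0, \tuH^{b}(M)) = (0 :_{\tuH^{b}(M)} \frkp_0)$, which is nonzero because $\frkp_0 \in \Ass \tuH^{b}(M)$.

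Next I would propagate the nonvanishing upward. Choosing a chain $\frkp_0 \subsetneq \frkp_1 \subsetneq \cdots \subsetneq \frkp_c$ of primes of maximal length $c$ in $H^{0}/\frkp_0$, maximality forces it to be saturated, so $\dim H^{0}_{\frkp_{i+1}}/\frkp_i H^{0}_{\frkp_{i+1}} = 1$ for each $i$. Iterating Lemma \ref{one up lemma} along this chain then yields $\mu^{b+i}(\frkp_i, M) > 0$ for $i = 0, \ldots, c$; in particular $\mu^{b+c}(\frkp_c, M) > 0$.

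Finally I would convert this into the bound using a minimal ifij resolution $I_{\bullet}$ of $M$, which exists because $H^{0}$ is Noetherian. By Theorem \ref{Bass injective decomposition theorem}, $\mu^{b+c}(\frkp_c, M) \neq 0$ forces $b + c = i + \inf I_{-i}$ for some $i \in \NN$ with $I_{-i} \neq 0$, whence $i \leq e$ for $e$ the length of $I_{\bullet}$. As the sequence $(\inf I_{-j})_j$ is nondecreasing by \eqref{201903141554}, this gives $b + c = i + \inf I_{-i} \leq e + \inf I_{-e}$. Since $\inf I_{0} = \inf M = b$, Theorem \ref{ifij resolution theorem} yields $\injdim M = e + \inf I_{-e} - \inf I_{0} = e + \inf I_{-e} - b$. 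Combining with $b + c \leq e + \inf I_{-e}$, I obtain $c \leq e + \inf I_{-e} - b = \injdim M$, i.e. $\dim \tuH^{\inf}(M) \leq \injdim M$.

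I expect the main obstacle to be the base step: confirming that the lowest Bass number $\mu^{b}(\frkp_0, M)$ is nonzero at a minimal prime of $\Supp \tuH^{b}(M)$. This is precisely where the $t$-structure identification $\Hom(H^{0}/\frkp_0, M[b]) \cong \Hom_{H^{0}}(H^{0}/\frkp_0, \tuH^{b}(M))$ must be combined with the classical fact that minimal primes of the support are associated; once this is in hand, the remainder is formal bookkeeping with the Bass decomposition of Theorem \ref{Bass injective decomposition theorem} and the length formula of Theorem \ref{ifij resolution theorem}.
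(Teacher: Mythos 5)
Your proof is correct and fills in exactly the argument the paper's one-line proof intends: the minimal prime $\frkp_0$ of $\Supp\tuH^{\inf}(M)$ appears in the injective envelope of $\tuH^{\inf}(M)$ (equivalently, $\mu^{\inf M}(\frkp_0,M)\neq 0$), and one then climbs a saturated chain via Lemma \ref{one up lemma} and reads off the bound from the Bass decomposition and the length formula. The only cosmetic difference is that you verify the base step through the $t$-structure identification of $\Hom(H^{0}/\frkp_0, M[\inf M])$ rather than directly from the structure of $I_0$ as an injective envelope; these are equivalent.
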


\begin{proof}
This follows from the fact that a injective envelope $J^{0}$ 
of $\tuH^{\inf}(M)$ is a direct sum of injective envelope of minimal prime ideals of $M$. 
\end{proof}

The injective dimension $\id M$ can be measured by the Bass numbers $\mu^{n}(\frkm, M)$ 
of  maximal ideals $\frkm$.

\begin{proposition}\label{criterion for finiteness of injective dimension}
%Assume that $H^{0}$ is local with a maximal ideal $\frkm$.  
%Then 
For  $d \in \NN$ and   $M \in \sfD^{>-\infty}_{\mod H^{0}}(R)$ with $b= \inf M$,  
the following conditions are equivalent. 
\begin{enumerate}[(1)]
\item 
$\injdim M = d$ 

\item The following conditions are satisfied. 
\begin{enumerate}
\item 
$\mu^{d +b}(\frkm, M) \neq 0$  for some maximal ideal $\frkm$ of $H^{0}$. 
\item 
$\mu^{n}(\frkm, M) = 0$ 
for  $n > d +b$ and any maximal ideal $\frkm$ of $H^{0}$. 
\end{enumerate}
\end{enumerate}
\end{proposition}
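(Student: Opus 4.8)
The plan is to reduce the whole statement to a single numerical invariant, namely the top nonvanishing degree of the Bass numbers, and to establish two facts about it: that it computes $\injdim M + \inf M$, and that it is always realized at a maximal ideal. With these, both implications become formal.

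First I would fix a minimal ifij resolution $I_{\bullet}$ of $M$ (it exists because injective envelopes exist in $\Mod H^{0}$). By the monotonicity \eqref{201903141554} the sequence $(\inf I_{-i})_{i}$ is non-decreasing, so the integers $n_{i} := i + \inf I_{-i}$ are strictly increasing, with $n_{0} = \inf I_{0} = \inf M = b$. By Theorem \ref{Bass injective decomposition theorem} a Bass number $\mu^{n}(\frkp,M)$ can be nonzero only for $n = n_{i}$, in which case $\mu^{n_{i}}(\frkp,M)$ is the multiplicity of $E_{R}(R/\ulfrkp')$ in $I_{-i}$; hence $I_{-i}\neq 0$ exactly when some $\mu^{n_{i}}(\frkp,M)\neq 0$. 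Setting $N := \sup\{\, n : \mu^{n}(\frkp,M)\neq 0 \text{ for some } \frkp\in\Spec H^{0}\,\}$, we obtain $N = \sup\{\, n_{i} : I_{-i}\neq 0\,\}$; since $n_{i}\geq i+b$, the resolution has finite length $e$ precisely when $N<\infty$, and then $N = n_{e} = e + \inf I_{-e}$. Feeding this into Theorem \ref{ifij resolution theorem} for the minimal resolution then gives, in all cases, the formula $\injdim M = N - b$ (using $\inf 0 = \infty$ in the infinite case).

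The second ingredient is the identity
\[
N = \sup\{\, n : \mu^{n}(\frkm,M)\neq 0 \text{ for some maximal ideal } \frkm\,\},
\]
i.e. the top Bass degree is already attained at a maximal ideal. The inequality $\geq$ is trivial. For $\leq$, I take any prime $\frkp$ with $\mu^{n}(\frkp,M)\neq 0$, choose a maximal ideal $\frkm\supseteq\frkp$ and a saturated chain $\frkp=\frkp_{0}\subsetneq\cdots\subsetneq\frkp_{t}=\frkm$ (available since $H^{0}$ is Noetherian); each step then satisfies $\dim H^{0}_{\frkp_{j+1}}/\frkp_{j}H^{0}_{\frkp_{j+1}}=1$, so iterating Lemma \ref{one up lemma} yields $\mu^{n+t}(\frkm,M)\neq 0$ with $n+t\geq n$. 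This climbing argument is the heart of the proof.

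Granting these two facts, the proposition follows at once: conditions (2)(a) and (2)(b) together say exactly that $N = d+b$. Indeed (b) forces $N\leq d+b$, since any prime Bass number strictly above $d+b$ would, by the climbing argument, produce a maximal-ideal Bass number strictly above $d+b$; and (a) forces $N\geq d+b$. Conversely $\injdim M = d$ gives $N = d+b$ by the formula $\injdim M = N-b$, so (b) holds trivially, and (a) holds because the finite top degree $N=d+b$ is realized by some prime $\frkp$, which must be maximal—otherwise Lemma \ref{one up lemma} would force a nonzero Bass number in degree $d+b+1$. The main obstacle I anticipate is the bookkeeping in the first paragraph: verifying that a minimal ifij resolution genuinely realizes $\injdim M$ through Theorem \ref{ifij resolution theorem} (so that its non-split hypotheses are automatic for a minimal resolution) and handling the infinite-length case cleanly under the stated conventions. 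The commutative-algebra input—existence of the saturated chain and the one-step Bass climbing—is standard, so once Lemma \ref{one up lemma} is in hand the climbing argument is routine.
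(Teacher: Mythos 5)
Your argument is correct and follows essentially the same route as the paper: both reduce the statement to the fact that the Bass numbers detect $\injdim M$ (you via the minimal ifij resolution and Theorem \ref{Bass injective decomposition theorem}, the paper via Lemma \ref{201712011651} together with Lemma \ref{pre one up lemma}), and both use Lemma \ref{one up lemma} to climb from an arbitrary prime to a maximal ideal. The only cosmetic difference is that the paper phrases the first ingredient as a $\Hom$-vanishing criterion rather than through your single invariant $N$.
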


We need the following lemma which is  a DG-version of \cite[p 139, Lemma 1]{Matsumura}.

\begin{lemma}\label{201712011651}
Let $M \in \sfD(R)$ and 
 $b:= \inf M$. 
Then for $d \in \ZZ$, 
the following conditions are equivalent: 
\begin{enumerate}[(1)]
\item 
$\injdim M \leq  d$ 
\item 
$\Hom(N , M[c+ 1 + b]) = 0$ 
for $N \in \mod H^{0}$ and for $c \geq d$
\item 
$\Hom(H^{0}/\frkp, M[c +1 +b]) = 0$ for all 
$\frkp \in \Spec H^{0}$ and for $c \geq d$. 
\end{enumerate}
\end{lemma}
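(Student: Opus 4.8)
The plan is to run the cycle $(1)\Rightarrow(2)\Rightarrow(3)\Rightarrow(1)$, following \cite[p.~139, Lemma~1]{Matsumura} but importing the DG Bass structure theorem in place of the classical one; I assume $M\neq0$ with $b=\inf M\in\ZZ$, the case $M=0$ being vacuous. For $(1)\Rightarrow(2)$ I would use Yekutieli's injective concentration. Let $[a_{0},b_{0}]$ be the strict injective concentration of $M$, so $\injdim M=b_{0}-a_{0}\le d$. Its lower endpoint satisfies $a_{0}\le b$, because the canonical map $\tuH^{b}(M)[-b]\to M$ from the standard $t$-structure is nonzero (otherwise the defining triangle splits and forces $\tuH^{b}(M)=0$), so $\Hom(\tuH^{b}(M),M[b])\neq0$ and hence $\inf\RHom(\tuH^{b}(M),M)\le b$. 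Therefore $b_{0}\le a_{0}+d\le b+d$, and for every $N$ in the heart $\sfD^{[0,0]}(R)$ we obtain $\RHom(N,M)\in\sfD^{\le b_{0}}(\kk)\subset\sfD^{\le b+d}(\kk)$, which is precisely the vanishing $\Hom(N,M[c+1+b])=0$ for $c\ge d$ asserted in~(2). As $H^{0}/\frkp\in\mod H^{0}$, the implication $(2)\Rightarrow(3)$ is immediate.

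The substance is $(3)\Rightarrow(1)$, which I would prove by converting the hypothesis into the vanishing of Bass numbers. For a prime $\frkp$ the cyclic module $H^{0}/\frkp$ lies in $\sfD^{<\infty}_{\mod H^{0}}(R)$, so Proposition~\ref{201712041945} gives a natural isomorphism $\Hom_{R}(H^{0}/\frkp,M[n])_{\frkp}\cong\Hom_{R_{\ulfrkp}}(\kappa(\frkp),M_{\frkp}[n])$, the right-hand side having $\kappa(\frkp)$-dimension $\mu^{n}(\frkp,M)$. Hence the vanishing in~(3) localizes to $\mu^{n}(\frkp,M)=0$ for all $\frkp$ and all $n>b+d$. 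Taking a minimal ifij resolution $I_{\bullet}$ of $M$ and writing $n_{i}:=i+\inf I_{-i}$, the inequality \eqref{201903141554} shows that $\inf I_{-i}$ is nondecreasing with $\inf I_{0}=b$, so the $n_{i}$ are strictly increasing with $n_{i}\ge b+i$. By Theorem~\ref{Bass injective decomposition theorem} the term $I_{-i}$ is a sum of copies of the $E_{R}(R/\ulfrkp')$ with multiplicities $\mu^{n_{i}}(\frkp,M)$, so $n_{i}>b+d$ forces $I_{-i}=0$; as $n_{i}\ge b+i$, all $I_{-i}$ with $i>d$ vanish, the resolution has finite length $e\le d$ with $n_{e}\le b+d$, and Theorem~\ref{ifij resolution theorem} gives $\injdim M=n_{e}-b\le d$.

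The step I expect to be the crux is this last argument, for two linked reasons. First, the terms of the resolution cannot be detected by Corollary~\ref{ifij resolution corollary 1} alone: a summand $E_{R}(R/\ulfrkp')$ with $\frkp$ \emph{not} maximal has zero socle, so simple test modules miss it entirely, and one is forced to test against the cyclic modules $H^{0}/\frkp$ and to route through the localization isomorphism of Proposition~\ref{201712041945} in order to reach $\mu^{n}(\frkp,M)$. Second, the argument must use only the easy implication ``global $\Hom$ vanishes $\Rightarrow$ Bass number vanishes'', which is mere exactness of localization; the converse can fail, since a nonzero $\frkp$-torsion module may localize to zero at $\frkp$, so it is essential that the reverse implication is never invoked. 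Once the DG Bass structure theorem is granted, the only remaining labor is the shift bookkeeping that aligns $\inf I_{-i}$ with $n_{i}$ and yields $\injdim M=n_{e}-b$.
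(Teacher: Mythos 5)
Your argument is correct, but for the substantive implication it takes a genuinely different route from the paper. The paper closes the cycle as $(3)\Rightarrow(2)\Rightarrow(1)$: the step $(3)\Rightarrow(2)$ is pure d\'evissage (every $N\in\mod H^{0}$ has a finite filtration with subquotients $H^{0}/\frkp$, so the vanishing against prime cyclic modules propagates to all finite modules), and $(2)\Rightarrow(1)$ is then read off from Theorem \ref{ifij resolution theorem}; no localization, no Bass numbers, and no classification of indecomposables in $\cI$ are needed. You instead prove $(3)\Rightarrow(1)$ directly by localizing $\Hom(H^{0}/\frkp,M[n])$ to get $\mu^{n}(\frkp,M)=0$ for $n>b+d$ (this is exactly the content of Lemma \ref{pre one up lemma}, via Proposition \ref{201712041945}) and then killing the terms $I_{-i}$ of a minimal ifij resolution through Theorem \ref{Bass injective decomposition theorem} and the monotonicity \eqref{201903141554}. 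Your route is heavier -- it needs minimal resolutions, injective envelopes, and the full Bass decomposition -- but it buys something the paper's proof does not make visible: it shows that the vanishing in (3) is precisely the vanishing of Bass numbers, which is the mechanism later exploited in Proposition \ref{criterion for finiteness of injective dimension}; and your remark that simple test objects cannot detect summands $E_{R}(R/\ulfrkp')$ at non-maximal $\frkp$ (so that Corollary \ref{ifij resolution corollary 1} alone is insufficient and one must test against $H^{0}/\frkp$) correctly identifies why the lemma is stated the way it is. Your $(1)\Rightarrow(2)$, carried out via the strict injective concentration $[a_{0},b_{0}]$ and the observation $a_{0}\le b$ coming from the nonzero truncation map $\tuH^{b}(M)[-b]\to M$, is a careful expansion of what the paper dismisses as clear. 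Two small points to tidy: the applicability of Proposition \ref{201712041945} and of minimal ifij resolutions silently uses $b>-\infty$ and the Noetherianity of $H^{0}$, which you should state up front rather than leave implicit; and the final appeal to Theorem \ref{ifij resolution theorem}(3) requires checking the non-splitting condition on the last differential, which holds by minimality but deserves a sentence.
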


\begin{proof}
The implications (1) $\Rightarrow$ (2) $\Rightarrow$ (3) are clear. 
(2) $\Rightarrow$ (1) follows from Theorem \ref{ifij resolution theorem}. 
(3) $\Rightarrow $ (2) follows from the fact that 
every finite $H^{0}$-module  is obtained from $\{ H^{0}/ \frkp\mid \frkp \in \Spec H^{0}\}$ 
by extensions. 
\end{proof}

\begin{proof}[Proof of Proposition \ref{criterion for finiteness of injective dimension}]
We prove the implication (1) $\Rightarrow $ (2). 
The condition (b) follows from the definition of the injective dimension and Lemma \ref{pre one up lemma}. 
By Lemma \ref{201712011651} and Lemma \ref{pre one up lemma}, 
there exists $\frkp \in \Spec H^{0}$ such that $\mu^{d +b}( \frkp, M) \neq 0$. 
However, 
by  Lemma \ref{one up lemma}, such  $\frkp$ must be a maximal ideal.

We prove the implication (2) $\Rightarrow$ (1). 
If there exists $\frkp \in \Spec H^{0}$ 
such that $\mu^{n}(\frkp, M) \neq 0$ for some $n > b+d$, 
then by Lemma \ref{one up lemma} 
there exists $m \geq  n$  and a maximal ideal containing $\frkp$ 
such that $\mu^{m}(\frkm, M) \neq 0$. 
Therefore, by the assumption (b), we must have $\mu^{n}(\frkp, M) = 0$ for $n > b+d, \frkp \in \Spec H^{0}$. 
By Lemma \ref{201712011651} and Lemma \ref{pre one up lemma}, 
we deduce that $\injdim  M \leq d$. 
From the assumption (a), we conclude that $\injdim M =d$. 
\end{proof}

In the case where $R$ is local, we obtain the following corollary. 

\begin{corollary}\label{criterion for finiteness of injective dimension corollary}
Let $R$ be a piecewise Noetherian CDGA with  the residue field $k$.  
Then, for $M\in \sfD^{> -\infty}(R)$ we have 
\[
\id M = \sup \RHom(k, M) - \inf M. 
\]
Thus in particular, $\injdim M < \infty$ 
if and only if 
$\sup \RHom(k, M) < \infty$. 
\end{corollary}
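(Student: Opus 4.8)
The plan is to read off the corollary from the Bass-number criterion of Proposition \ref{criterion for finiteness of injective dimension}, specialized to the local situation in which $H^{0}$ has a single maximal ideal $\frkm$ with $H^{0}/\frkm = k$. First I would rewrite the right-hand side in terms of Bass numbers. Writing $b := \inf M$, one has $\tuH^{n}\RHom(k, M) = \Hom(k, M[n])$ for every $n$, so Lemma \ref{pre one up lemma} applied to $\frkp = \frkm$ (where $H^{0}/\frkm = k$) yields the equivalence $\mu^{n}(\frkm, M) > 0 \Leftrightarrow \tuH^{n}\RHom(k, M) \neq 0$. Hence
\[
\sup \RHom(k, M) = \sup\{\, n \in \ZZ \mid \mu^{n}(\frkm, M) > 0 \,\}.
\]
I would also record that $\Hom(k, M[n]) = 0$ for $n < b$ by the $t$-structure bound (as $k \in \sfD^{\leq 0}(R)$ while $M[n] \in \sfD^{\geq 1}(R)$ for such $n$), so the supremum above is at least $b$ whenever $M \neq 0$.

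Next I would invoke Proposition \ref{criterion for finiteness of injective dimension} directly. Since $\frkm$ is the unique maximal ideal of $H^{0}$, conditions (a) and (b) there say exactly that $d + b$ is the largest integer $n$ with $\mu^{n}(\frkm, M) > 0$. Combined with the first paragraph, the biconditional of the Proposition becomes, for each $d \in \NN$,
\[
\injdim M = d \iff \sup\RHom(k, M) = d + b.
\]
Reading this in both directions settles the finite case: if $\injdim M = d < \infty$ then $\sup\RHom(k, M) = d + b$, and conversely if $\sup\RHom(k, M) = s < \infty$ then $d := s - b$ lies in $\NN$ (by the lower bound of the first paragraph) and $\injdim M = d$. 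In either case $\injdim M = \sup\RHom(k, M) - \inf M$.

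Finally I would reconcile the infinite cases, since the Proposition only quantifies over $d \in \NN$. If $\sup\RHom(k, M) = \infty$, then $\mu^{n}(\frkm, M) > 0$ for arbitrarily large $n$, so condition (b) fails for every finite $d$ and $\injdim M = \infty$; conversely, if $\sup\RHom(k, M)$ were finite while $\injdim M = \infty$, the displayed biconditional would produce a finite value of $\injdim M$, a contradiction. Thus the two sides are infinite together, which also gives the stated equivalence $\injdim M < \infty \Leftrightarrow \sup\RHom(k, M) < \infty$. The one substantive ingredient is Proposition \ref{criterion for finiteness of injective dimension} itself, whose proof relies on the one-step ascent of Bass numbers (Lemma \ref{one up lemma}) to force the top nonvanishing Bass number to sit at the maximal ideal; this is precisely where finite generation of the cohomology of $M$ over $H^{0}$ is used, so I would carry the hypothesis $M \in \sfD^{>-\infty}_{\mod H^{0}}(R)$ throughout, matching the cited results.
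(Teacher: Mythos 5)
Your proof is correct and follows exactly the route the paper intends: the corollary is obtained by specializing Proposition \ref{criterion for finiteness of injective dimension} to the local case and translating the Bass numbers at the maximal ideal into the cohomology of $\RHom(k,M)$ via Lemma \ref{pre one up lemma}, with the infinite cases reconciled as you describe. The only point left slightly under-justified is the claim that $\sup \RHom(k,M) \geq b$ for $M \neq 0$ (vanishing below $b$ alone does not exclude $\RHom(k,M)=0$, which must be ruled out separately, e.g.\ since otherwise all Bass numbers vanish and $M$ would have injective concentration in every interval); this is easily repaired, and your remark that the hypothesis $M \in \sfD^{>-\infty}_{\mod H^{0}}(R)$ should be carried along is apt.
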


Combining Lemma \ref{amplitude lemma} and Lemma \ref{criterion for finiteness of injective dimension corollary} above, 
we deduce 

\begin{corollary}\label{waru corollary}
Let $R$ be a piecewise Noetherian CDGA with a maximal ideal $\ulfrkm$  and the residue field $k$.  
Let $x \in \ulfrkm$ an  element and $S := R\wslash xR$. 
Then, 
\[
\id_{R} M -1 \leq \id_{S} M\wslash xM \leq \id_{R} M 
\]
 In particular $\injdim_{R} M < \infty$ if and only if $\injdim_{S} M\wslash xM < \infty$. 
Moreover, $\id_{R}M -1 = \id_{S}M \wslash xM$ 
if and only if $x$ is cohomological $M$-regular element.  
\end{corollary}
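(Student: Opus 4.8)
The plan is to reduce the whole statement to the two displayed formulas for injective dimension (Corollary \ref{criterion for finiteness of injective dimension corollary}) and for amplitude (Lemma \ref{amplitude lemma}), linked by the adjunction of Lemma \ref{waru lemma}. First I would record that, since the cohomology class of $x$ lies in $\frkm$, the CDGA $S = R\wslash xR$ is again local with $\tuH^{0}(S) \cong H^{0}/xH^{0}$, so its residue field is again $k$; in particular $k$ is naturally a DG-$S$-module (the class of $x$ acts as zero on $k$). This makes $N = k$ a legitimate input to Lemma \ref{waru lemma}, giving a quasi-isomorphism $\RHom_{S}(k, M\wslash xM[-1]) \cong \RHom_{R}(k, M)$, that is
\[
\RHom_{S}(k, M\wslash xM) \cong \RHom_{R}(k, M)[1].
\]
Taking suprema yields the single numerical input $\sup \RHom_{S}(k, M\wslash xM) = \sup \RHom_{R}(k, M) - 1$ (with the convention $\infty - 1 = \infty$).

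Next I would apply Corollary \ref{criterion for finiteness of injective dimension corollary} over both $R$ and $S$, using that both are local with residue field $k$:
\[
\id_{R} M = \sup\RHom_{R}(k, M) - \inf M, \qquad
\id_{S}(M\wslash xM) = \sup\RHom_{S}(k, M\wslash xM) - \inf(M\wslash xM).
\]
Since cohomology is insensitive to restriction of scalars along $R \to S$, the quantity $\inf(M\wslash xM)$ is the same whether computed over $R$ or over $S$. Substituting the relation from the first step and eliminating $\sup\RHom_{R}(k, M)$ gives
\[
\id_{S}(M\wslash xM) = \id_{R} M - 1 + \bigl(\inf M - \inf(M\wslash xM)\bigr).
\]

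Finally I would invoke Lemma \ref{amplitude lemma}. Part (1) gives $\inf M - 1 \leq \inf(M\wslash xM) \leq \inf M$, so the correction term $\inf M - \inf(M\wslash xM)$ lies in $\{0,1\}$; this immediately yields $\id_{R} M - 1 \leq \id_{S}(M\wslash xM) \leq \id_{R} M$. The finiteness equivalence falls out of the displayed isomorphism of the first step together with the finiteness criterion of Corollary \ref{criterion for finiteness of injective dimension corollary}, since $\sup\RHom_{S}(k, M\wslash xM) < \infty$ holds exactly when $\sup\RHom_{R}(k, M) < \infty$. For the last clause, equality $\id_{S}(M\wslash xM) = \id_{R} M - 1$ holds precisely when the correction term vanishes, i.e. $\inf(M\wslash xM) = \inf M$, which by Lemma \ref{amplitude lemma}(2) is equivalent to $x$ being a cohomological $M$-regular element.

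I expect the proof to present no genuine obstacle beyond two points of care: the bookkeeping with the values $\pm\infty$ (handled by the stated conventions and by the exactness of the shift), and the verification that $S$ is local with residue field $k$ so that Corollary \ref{criterion for finiteness of injective dimension corollary} applies verbatim over $S$. Once $k$ is seen to be a DG-$S$-module, so that Lemma \ref{waru lemma} can be fed $N = k$, everything else is formal substitution.
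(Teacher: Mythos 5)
Your proposal is correct and follows essentially the same route the paper intends: the paper derives Corollary \ref{waru corollary} by ``combining Lemma \ref{amplitude lemma} and Corollary \ref{criterion for finiteness of injective dimension corollary}'', and your argument is exactly that combination, with the one ingredient the paper leaves implicit --- the identification $\RHom_{S}(k, M\wslash xM[-1]) \cong \RHom_{R}(k,M)$ from Lemma \ref{waru lemma} applied to $N = k$ --- spelled out, together with the (correct) check that $S$ is again local with residue field $k$. No changes needed.
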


We prove a DG-version of the Bass formula. 

\begin{theorem}[Bass formula]\label{Bass formula} 
Let $R$ be a piecewise Noetherian CDGA with a maximal ideal $\ulfrkm$  and the residue field $k$.  
Assume that $H^{\ll 0} = 0$. 
If $M \in \sfD^{\mrb}_{\mod H^{0}}(R) $ satisfies $\injdim M < \infty$, then 
\[
\chdepth R -\champ R =  \injdim M - \champ M.
\]
\end{theorem}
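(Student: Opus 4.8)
\emph{Reformulation.} Since $R$ is connective with $R^{>0}=0$ we have $\sup R=0$, hence $\champ R=-\inf R$; feeding this and the definition of $\chdepth$ into Corollary~\ref{criterion for finiteness of injective dimension corollary} one checks that the asserted equality is equivalent to the single identity
\begin{equation}\label{plan-star}
\inf\RHom(k,R)=\sup\RHom(k,M)-\sup M .
\end{equation}
I would keep \eqref{plan-star} in mind as the content (``the depth of $R$ equals the top of $\RHom(k,M)$ measured from $\sup M$''), but organise the proof as an induction on $\chdepth R$.

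\emph{The inductive engine.} Suppose $\chdepth R>0$. By Proposition~\ref{cohomological depth proposition} there is a cohomological $R$-regular element $x\in\ulfrkm$; put $S:=R\wslash xR$ and $M':=M\wslash xM$. From the triangles $R\xrightarrow{x}R\to S$ and $M\xrightarrow{x}M\to M'$, together with Nakayama's Lemma, one reads off $\sup_S S=0$ and $\sup_S M'=\sup_R M$; Lemma~\ref{amplitude lemma} gives $\inf_S S=\inf_R R$ (as $x$ is $R$-regular), so $\champ_S S=\champ R$, while Lemma~\ref{cdp lemma} applied to $R$ gives $\chdepth_S S=\chdepth R-1$. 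On the $M$-side Corollary~\ref{waru corollary} and Lemma~\ref{amplitude lemma} give, when $x$ happens to be $M$-regular, $\injdim_S M'=\injdim_R M-1$ and $\champ_S M'=\champ M$, and otherwise $\injdim_S M'=\injdim_R M$ and $\champ_S M'=\champ M+1$. In \emph{both} cases the two ``$-1$'s'' match, so the identity $\chdepth R-\champ R=\injdim_R M-\champ M$ is equivalent to $\chdepth_S S-\champ_S S=\injdim_S M'-\champ_S M'$. As $S$ is again a local piecewise Noetherian CDGA with $S^{>0}=0$ and $H^{\ll 0}_S=0$, and $M'\in\sfD^{\mrb}_{\mod H^0}(S)$ is nonzero with $\injdim_S M'<\infty$ (Corollary~\ref{waru corollary}), the inductive hypothesis for $(S,M')$ yields the formula for $(R,M)$. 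This reduces everything to the case $\chdepth R=0$.

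\emph{The base case $\chdepth R=0$.} Here $\inf\RHom(k,R)=\inf R$, i.e. $\frkm$ is associated to $\tuH^{\inf}(R)$, so there is a monomorphism $k\hookrightarrow\tuH^{\inf}(R)$ of $H^0$-modules, equivalently a morphism $\iota\colon k\to R[\inf R]$ in $\sfD(R)$ that is injective on $\tuH^{0}$. Let $C:=\cone(\iota)$; a cohomology computation shows $C\in\sfD^{\ge 0}$ with $\sup C=-\inf R$ and $\tuH^{-\inf R}(C)\cong H^0$. Applying $\RHom(-,M)$ produces the triangle
\[
\RHom(C,M)\to M[-\inf R]\to\RHom(k,M)\xrightarrow{+1}
\]
in which $\sup\bigl(M[-\inf R]\bigr)=\sup M+\inf R$. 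Reading the long exact cohomology sequence in degrees $>\sup M+\inf R$, and using that $\injdim M<\infty$ forces $\RHom(-,M)$ to have finite cohomological amplitude, should identify $\sup\RHom(k,M)=\sup M+\inf R$, which is exactly \eqref{plan-star}, i.e. $\injdim M-\champ M=-\champ R=\chdepth R-\champ R$.

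\emph{Main obstacle.} The inductive engine is essentially bookkeeping once the effect of $\wslash x$ on $\sup$, $\inf$, $\chdepth$ and $\injdim$ is assembled from Lemmas~\ref{amplitude lemma} and \ref{cdp lemma} and Corollary~\ref{waru corollary}. The genuine difficulty is the base case: to run the long exact sequence one must control the top cohomology of $\RHom(C,M)$, pinning $\sup\RHom(k,M)$ to $\sup M+\inf R$ and no higher. This is the DG-avatar of the depth-zero step of the classical Bass formula (cf.\ Matsumura), and the new phenomenon is that $R$ carries cohomological amplitude, so $C$ is a genuine complex rather than a cyclic module. I expect it to be handled either by a direct estimate $\sup\RHom(C,M)\le\sup M+\inf R$ from the injective concentration of $M$ together with $\inf C\ge 0$, or by a secondary reduction on $\champ R$ whose base $\champ R=0$ collapses $R$ to the ordinary ring $H^0$ (where $R\simeq H^0$ and the statement is the classical Bass formula for complexes over a Noetherian local ring).
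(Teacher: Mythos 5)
Your overall architecture is sound and genuinely different from the paper's. The paper does not induct on $\chdepth R$: it takes a maximal cohomological $R$-regular sequence $x_{1},\dots,x_{p}$ all at once, forms the Koszul complex $K=K(x_{1},\dots,x_{p})\in\sfD^{[a,0]}(R)$ (with $a=\inf R$), gets the inequality $p+\sup M\leq \injdim M-a+\inf M$ from the nonvanishing of $\tuH^{\sup M+p}(\RHom(K,M))=\tuH^{\sup M}(M)/(x_{1},\dots,x_{p})\tuH^{\sup M}(M)$ together with the injective concentration of $M$, and then rules out strict inequality by applying $\RHom(-,M)$ to the triangle $k[-a]\to K\to L$ built from a socle element of $\tuH^{a}(K)$. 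Your induction replaces the Koszul computation by the $\wslash x$ bookkeeping (which is correct: in both the $M$-regular and non-$M$-regular cases the quantity $\injdim-\champ$ drops by exactly $1$, matching the drop in $\chdepth_{S}S-\champ_{S}S$), and your base case is then literally the $p=0$ instance of the paper's cone argument, with $K=R$ and your $C$ equal to the paper's $L$ shifted by $[\inf R]$. What your route buys is that the only cohomology computation needed is Nakayama at $\sup M$; what the paper's route buys is that it avoids re-verifying the standing hypotheses for $(S,M')$ at each stage.

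One point in your base case needs correction: the ``direct estimate $\sup\RHom(C,M)\le\sup M+\inf R$'' is not available a priori --- it is essentially equivalent to the conclusion you are proving. What the injective concentration of $M$ together with $\inf C\geq 0$ actually gives is the weaker bound $\sup\RHom(C,M)\leq \injdim M+\inf M=\sup\RHom(k,M)$, and this suffices: setting $n:=\sup\RHom(k,M)$ and assuming for contradiction that $n>\sup M+\inf R$, the long exact sequence of $\RHom(C,M)\to M[-\inf R]\to\RHom(k,M)\to$ in degree $n$ has left term $\tuH^{n+\inf R}(M)=0$ (by the contradiction hypothesis) and right term $\tuH^{n+1}(\RHom(C,M))=0$ (by the concentration bound), forcing $\tuH^{n}(\RHom(k,M))=0$, which contradicts Corollary \ref{criterion for finiteness of injective dimension corollary}. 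Your second fallback --- a ``secondary reduction on $\champ R$'' --- should be discarded: there is no operation in this setting that lowers the amplitude of $R$ while preserving the problem, so the depth-zero case cannot be collapsed to the classical Bass formula over $H^{0}$.
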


\begin{remark}
This formula was proved by Frankild-Jorgensen \cite{Frankild-Jorgensen:Homological identities} 
for a nonocommutative DGA having a dualizing complex.
\end{remark}

\begin{proof}
Let 
$
p := \chdepth R, 
q := \injdim M, 
a =   \inf R, 
b= \inf M,
c := \sup M. 
$
Note that $\champ R = - a, \champ M = c-b$. 
Take a cohomological $R$-regular sequence $x_{1}, \cdots, x_{p}$.  
Then, since the Koszul complex 
$K := K(x_{1}, \cdots, x_{p})$ belongs to $\sfD^{[a, 0]}(R)$, 
we have $\RHom(K ,M) \in \sfD^{[b, q -a +b]}(R)$. 
On the other hand, we have
\[
\begin{split}
&\Hom(K, M[c + p]) = \tuH^{c+p}(\RHom(K, M) ) = \tuH^{c}(M)/ (x_{1}, \cdots, x_{p}) \tuH^{c}(M) \neq 0, \\
&\Hom(K, M[i ]) = \tuH^{i}(\RHom(K, M) ) = \tuH^{ i -p}(M)/ (x_{1}, \cdots, x_{p})\tuH^{i-p}(M) = 0 \textup{ for } i > c +q
\end{split}  
\]
Consequently, $ p + c \leq q -a + b$. 

Assume that $ p + c < q - a +b$. 
Let  $f: k [-a] \to  K$ a nonzero morphism. 
Then, 
 the induced morphism $k \to \tuH^{a}(K)$  is injective. 
We set $L := \cone (f)$.  
Applying $\Hom(-, M)$ to the exact triangle $k[-a] \to K \to L$ 
we obtain the exact sequence 
\[
\Hom(K,M[q -a + b]) \to \Hom(k[-a], M [q-a +b]) \to \Hom(L, M [q-a +b + 1] ).  
\]
It is already shown that the left term is zero. 
Since  the cone $L = \cone (f)$ belongs to $\sfD^{[a, 0]}(R)$, 
$\RHom(L,M)$ belongs  to $\sfD^{[b, q + b -a]}$. 
Therefore the right term is zero. 
Consequently, looking the middle term, 
we come to the contradiction $\Hom(k, M [q +b]) = 0$.
Thus we must have $p + c = q -a + b$ 
or equivalently $p -a = q -(c-b)$. \color{black}
\end{proof}

\begin{corollary}
If $\injdim R < \infty$, then 
\[
\dim \tuH^{\inf}(R) \leq \injdim R = \chdepth R. 
\] 
\end{corollary}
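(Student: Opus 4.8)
The plan is to obtain both assertions by specializing the preceding results to the module $R$ itself. First I would check that $R$ is an admissible input for Theorem \ref{Bass formula}. Since $R$ is connective we have $\sup R \le 0$, so $R \in \sfD^{<\infty}(R)$; since we are assuming $\injdim R < \infty$, Lemma \ref{201903152344} gives $R \in \sfD^{>-\infty}(R)$, whence $\inf R > -\infty$. In particular the hypothesis $H^{\ll 0} = 0$ required by the Bass formula is \emph{automatic} once $\injdim R < \infty$, rather than an extra assumption. Finally, piecewise Noetherianity ensures that each $\tuH^{i}(R)$ is finite over $H^{0}$, so $R \in \sfD^{\mrb}_{\mod H^{0}}(R)$ and Theorem \ref{Bass formula} applies to $M = R$.

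For the equality $\injdim R = \chdepth R$, I would substitute $M = R$ into the Bass formula $\chdepth R - \champ R = \injdim M - \champ M$. Here $\champ M = \champ R$, so the two amplitude terms cancel and the identity collapses at once to $\chdepth R = \injdim R$.

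For the inequality, I would apply Corollary \ref{bound of dimension lemma} directly to $M = R$: since $\injdim R < \infty$, it yields $\dim \tuH^{\inf}(R) \le \injdim R$. Chaining this with the equality already established produces $\dim \tuH^{\inf}(R) \le \injdim R = \chdepth R$, which is the claim.

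There is no substantial obstacle here; the statement is a formal consequence of Theorem \ref{Bass formula} and Corollary \ref{bound of dimension lemma}. The only point deserving care is the verification of the hypotheses of the Bass formula for the self-input $M = R$ — in particular, recognizing via Lemma \ref{201903152344} that cohomological boundedness below (and hence both $H^{\ll 0} = 0$ and the finiteness of $\champ R$) comes for free from $\injdim R < \infty$, so that the cancellation of the amplitude terms in the Bass formula is legitimate.
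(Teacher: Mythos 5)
Your proof is correct and is essentially the argument the paper intends: the corollary is stated as an immediate consequence of Theorem \ref{Bass formula} applied to $M=R$ (where the amplitude terms cancel) together with Corollary \ref{bound of dimension lemma}, and your observation that $H^{\ll 0}=0$ is automatic from Lemma \ref{201903152344} matches the paper's own remark that a Gorenstein CDGA is lower bounded.
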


From the inequality given in Remark \ref{Amplitude remark}, 
it is natural to expect the same inequality holds 
for an object $M \in \sfD_{\mod H^{0}}(R)_{\textup{fid}}$. 
Shaul informed the author  in private communication that he also conjectured the same statement.

\begin{conjecture}
For $M \in \sfD_{\mod H^{0}}(R)_{\textup{fid}}$, 
\[
\champ M \geq \champ R. 
\]
\end{conjecture}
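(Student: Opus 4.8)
The plan is to convert the amplitude inequality into a statement about injective dimension and depth, and then attack the latter. Since finite injective dimension forces $M\in\sfD^{\mrb}_{\mod H^{0}}(R)$ (it lies in $\sfD^{>-\infty}(R)$ by Lemma \ref{201903152344}, and feeding $R\in\sfD^{[\inf R,0]}(R)$ into a finite injective concentration bounds $M$ above), the Bass formula (Theorem \ref{Bass formula}) applies under the standing hypotheses ($R$ local, $H^{\ll 0}=0$) and gives
\[
\champ M-\champ R=\injdim M-\chdepth R .
\]
Hence the conjecture is \emph{equivalent} to the one-sided inequality $\injdim M\geq\chdepth R$, which is exactly the DG-analogue of one half of Bass's theorem $\injdim_{A}M=\depth A$ for a finitely generated module of finite injective dimension over a Noetherian local ring. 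So the whole problem is to transport that classical inequality to the CDGA setting.

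Two direct attacks stall, and understanding why locates the difficulty. \emph{(i) Dévissage by a regular element.} If $\chdepth R>0$ one chooses a cohomological $R$-regular $x\in\ulfrkm$ and passes to $S:=R\wslash xR$, where $\chdepth_{S}S=\chdepth R-1$ (Lemma \ref{cdp lemma}) while $\injdim_{S}(M\wslash xM)$ drops by one \emph{only} when $x$ is cohomological $M$-regular (Corollary \ref{waru corollary}). The obstruction is that an $R$-regular element need not be $M$-regular: already for $M=k$ over a regular $R$ one has $\chdepth M=0<\chdepth R$, so no common regular element exists and the induction loses a unit. \emph{(ii) A Koszul test object.} Taking a maximal cohomological $R$-regular sequence $x_{1},\dots,x_{t}$ with $t=\chdepth R$ and $K=K(x_{1},\dots,x_{t})\in\sfD^{[\inf R,0]}(R)$, iterated use of Lemma \ref{waru lemma} gives $\RHom_{R}(K,M)\cong(M\wslash x_{1}M\wslash\cdots\wslash x_{t}M)[-t]$, whose top cohomology $\tuH^{\sup}(M)/(x_{1},\dots,x_{t})\tuH^{\sup}(M)$ is nonzero by Nakayama; thus $\RHom_{R}(K,M)$ is nonzero in cohomological degree $\sup M+t$. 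Classically this would force $\injdim M\geq t$ because the test object $A/(\underline x)$ has amplitude $0$; here $\champ K=\champ R>0$ dilutes the injective-concentration estimate and merely reproduces the Bass-formula equality instead of the desired inequality.

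The route I would pursue is change of rings along the canonical $R\to H^{0}$, replacing the amplitude-carrying test object by honest $H^{0}$-modules. By adjunction $\RHom_{R}(k,M)\cong\RHom_{H^{0}}(k,\RHom_{R}(H^{0},M))$, and since $\RHom_{R}(H^{0},G(J))\cong\Hom_{R^{0}}(H^{0},E_{R^{0}}(J))$ is an injective $H^{0}$-module (notation of Section \ref{the class cI}), applying $\RHom_{R}(H^{0},-)$ to a finite ifij resolution of $M$ should exhibit $N:=\RHom_{R}(H^{0},M)$ as a nonzero bounded complex with finitely generated cohomology over the honest Noetherian local ring $H^{0}$ and $\injdim_{H^{0}}N<\infty$. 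The classical Bass theorem for complexes then yields $\sup\RHom_{H^{0}}(k,N)=\depth H^{0}+\sup_{H^{0}}N$, and feeding this through Corollary \ref{criterion for finiteness of injective dimension corollary} gives the clean formula $\injdim_{R}M=\sup\RHom_{R}(k,M)-\inf M=\depth H^{0}+\sup N-\inf M$ (which correctly specializes to $\injdim M=\depth H^{0}+\champ M$ when $R=H^{0}$). The remaining, and hardest, step is then to prove $\depth H^{0}+\sup N-\inf M\geq\chdepth R$, i.e. to bound $\sup_{H^{0}}\RHom_{R}(H^{0},M)$ from below while reconciling $\chdepth R$ with $\depth H^{0}$. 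I expect this final estimate to be the main obstacle: it is where the interplay between Yekutieli's shift-invariant injective dimension and the classical absolute one concentrates, and it appears to need a genuinely new input beyond the tools developed here — consistent with the statement being left as a conjecture.
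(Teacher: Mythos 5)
You should first note that the paper does not prove this statement: it is recorded as a \emph{conjecture}, with a remark that it was subsequently settled by Shaul \cite{Shaul CM}, so there is no proof in the paper to measure your attempt against. Your proposal is, by your own admission, not a proof either; what it does contain is a correct and useful reduction. In the essential case ($R$ local, $H^{\ll 0}=0$, $M\neq 0$; for general $R$ one would still have to localize at a prime realizing $\champ R$ and check that $\champ M$ does not drop, which you do not address), finite injective dimension does force $M\in\sfD^{\mrb}_{\mod H^{0}}(R)$, and Theorem \ref{Bass formula} converts the conjecture into the inequality $\injdim M\geq\chdepth R$. Your post-mortem of the two classical strategies is also accurate: the d\'evissage fails because a cohomological $R$-regular element need not be $M$-regular, and the Koszul test object $K(x_{1},\dots,x_{t})$ has amplitude $\champ R$, so the injective-concentration estimate it yields is weakened by exactly the quantity one is trying to control and collapses back to the Bass-formula equality.

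The genuine gap is the final inequality $\depth H^{0}+\sup\RHom_{R}(H^{0},M)-\inf M\geq\chdepth R$, which you leave unproven; this is not a loose end but the entire content of the conjecture. In particular there is no a priori comparison between $\chdepth R$ and $\depth H^{0}$ in the direction you need: the change of rings along $R\to H^{0}$ controls $\sup\RHom_{R}(k,M)$ well, but $\chdepth R=\inf\RHom_{R}(k,R)-\inf R$ sits on the other side of the duality and is not visible to $H^{0}$ alone, so the reformulation, while clean, does not obviously bring the problem closer to the classical one. Your instinct that genuinely new input is required is confirmed by the literature: Shaul's proof in \cite{Shaul CM} proceeds through local cohomology over $\tuH^{0}(R)$ and a derived Cohen--Macaulay formalism rather than through the sppj/ifij machinery of this paper. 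As a diagnosis of where the difficulty is concentrated your proposal is sound; as a proof it establishes nothing beyond the equivalence with $\injdim M\geq\chdepth R$.
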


\begin{remark}
This conjecture is solved affirmatively by Shaul \cite{Shaul CM}.
\end{remark}

\section{A dualizing complex and a Gorenstein CDGA}\label{CDGA:dualizing}

\subsection{Dualizing complexes and the structure of their minimal ifij resolution}

\subsubsection{Dualizing complexes}

We follow  Yekutieli's definition of a dualizing complex of CDGA. 
We refer \cite{Yekutieli} for other definitions of a dualizing complex  and comparisons to this definition.

\begin{definition}[{Yekutieli}]\label{dualizing complex definition}
An object $D \in \sfD(R)$ is called \textit{dualizing}, 
if it satisfies the following conditions. 
\begin{enumerate}[(1)]
\item $D \in \sfD_{\mod H^{0}}(R)_{\textup{fid}}$, 
i.e., 
$\injdim D < \infty$ and $\tuH^{n}(D)$ is finitely generated $H^{0}$-module 
for $n \in \ZZ$. 

\item 
The homothety morphism $R \to \RHom(D,D)$ is an isomorphism 
in $\sfD(R)$.
\end{enumerate}
\end{definition}

Combining Corollary \ref{20181162135} and Proposition \ref{20181162135II} 
we obtain the following corollary. 

\begin{corollary}\label{201712012051}
If $D \in \sfD(R)$ is a dualizing object, then so is the localization  $D_{\ulfrkp'} \in \sfD(R_{\ulfrkp'})$. 
\end{corollary}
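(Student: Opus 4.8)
The plan is to verify the two defining conditions of Definition \ref{dualizing complex definition} for $D_{\ulfrkp'}$ as an object of $\sfD(R_{\ulfrkp'})$, deriving each one from its counterpart for $D$ over $R$ by localizing at $\frkp$. Since $\tuH^{0}(R_{\ulfrkp'}) = H^{0}_{\frkp}$ is Noetherian and each $\tuH^{-i}(R_{\ulfrkp'}) = H^{-i}_{\frkp}$ is finitely generated over it, the localized algebra $R_{\ulfrkp'}$ is again a connective piecewise Noetherian CDGA, so the definition makes sense there.

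First I would check condition (1). The finiteness of the injective dimension is immediate: since $D \in \sfD(R)_{\textup{fid}}$, Corollary \ref{20181162135}(3) gives $D_{\ulfrkp'} \in \sfD(R_{\ulfrkp'})_{\textup{fid}}$, that is $\injdim_{R_{\ulfrkp'}} D_{\ulfrkp'} < \infty$. For the finiteness of cohomology I would invoke the localization formula $\tuH^{n}(D_{\ulfrkp'}) \cong \tuH^{n}(D)_{\frkp}$ recorded in Section \ref{CDGA:basics}: because $\tuH^{n}(D)$ is a finitely generated $H^{0}$-module, its localization is finitely generated over $H^{0}_{\frkp} = \tuH^{0}(R_{\ulfrkp'})$, so $D_{\ulfrkp'} \in \sfD_{\mod H^{0}_{\frkp}}(R_{\ulfrkp'})$. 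Together these place $D_{\ulfrkp'}$ in $\sfD_{\mod H^{0}_{\frkp}}(R_{\ulfrkp'})_{\textup{fid}}$.

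Next I would establish condition (2), which is the crux. Starting from the hypothesis that the homothety morphism $R \to \RHom_{R}(D,D)$ is an isomorphism in $\sfD(R)$, I apply the exact functor $(-)_{\ulfrkp'} = (-) \otimes_{R} R_{\ulfrkp'}$ to obtain an isomorphism $R_{\ulfrkp'} \xrightarrow{\cong} \RHom_{R}(D,D)_{\ulfrkp'}$. On the other hand, $D \in \sfD_{\mod H^{0}}(R)$ and $D \in \sfD(R)_{\textup{fid}}$, so Proposition \ref{20181162135II} applies with $M = N = D$ and furnishes a canonical isomorphism $\Phi_{D,D} : \RHom_{R}(D,D)_{\ulfrkp'} \xrightarrow{\cong} \RHom_{R_{\ulfrkp'}}(D_{\ulfrkp'}, D_{\ulfrkp'})$. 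Composing the two, I get an isomorphism $R_{\ulfrkp'} \xrightarrow{\cong} \RHom_{R_{\ulfrkp'}}(D_{\ulfrkp'}, D_{\ulfrkp'})$.

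The main obstacle is to confirm that this composite is exactly the homothety morphism of $R_{\ulfrkp'}$, rather than merely some abstract isomorphism. This is a naturality check on the comparison map $\Phi_{D,D}$: one must verify that $\Phi_{D,D}$ is $R_{\ulfrkp'}$-linear and intertwines the two module structures, so that the localized homothety (multiplication by $r/s$ on $D_{\ulfrkp'}$) is carried to the genuine action of $R_{\ulfrkp'}$ on $\RHom_{R_{\ulfrkp'}}(D_{\ulfrkp'}, D_{\ulfrkp'})$. Concretely I would trace $\Phi$ on the distinguished element $\id_{D}$ and along the localization algebra map $R \to R_{\ulfrkp'}$, checking that the two evident triangles commute. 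Once this compatibility is in hand, the composite is a homothety that is an isomorphism, establishing condition (2) and completing the argument.
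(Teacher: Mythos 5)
Your proposal is correct and follows essentially the same route as the paper, which simply combines Corollary \ref{20181162135}(3) for the finiteness of injective dimension with Proposition \ref{20181162135II} (applied with $M=N=D$) for the homothety condition. Your additional remarks on the finite generation of $\tuH^{n}(D_{\ulfrkp'})$ and on the naturality of $\Phi_{D,D}$ with respect to the homothety are exactly the routine verifications the paper leaves implicit.
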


We give a DG-version of \cite[V. Proposition 3.4]{Residues and Duality}.
\begin{theorem}\label{characterization of a dualizing complex}
Let $R$ be a piecewise Noetherian CDGA with a maximal ideal $\ulfrkm$  and the residue field $k$.  
Then $D \in \sfD^{> -\infty}_{\mod H^{0}}(R)$ 
is dualizing 
if and only if 
$\RHom(k, D) \cong k[n]$ 
for some $n$. 
\end{theorem}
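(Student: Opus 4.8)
My plan is to run both implications through a single device, the \emph{Hom-evaluation morphism}
\[
\omega\colon k\lotimes_{R}\RHom(D,D)\longrightarrow \RHom(\RHom(k,D),D),
\]
which is an isomorphism whenever $k,D\in\sfD^{\mrb}_{\mod H^{0}}(R)$ and $\injdim D<\infty$. I would use two preliminary observations throughout. First, by Corollary \ref{criterion for finiteness of injective dimension corollary} one has $\injdim D=\sup\RHom(k,D)-\inf D$, so in \emph{either} direction $\injdim D<\infty$: it follows from the definition of dualizing, and conversely from $\RHom(k,D)\cong k[n]$. Second, writing $E:=\RHom(k,D)$, the $R$-action on $E$ factors through the residue field $k$ (indeed $E$ is the coinduction of $D$ along $R\to k$, so it lies in the image of $\sfD(k)\to\sfD(R)$); since $k$ is a field, $E$ splits as $E\cong\bigoplus_{s}k^{\oplus n_{s}}[-s]$ with $n_{s}=\dim_{k}\Hom(k,D[s])$, and once $\injdim D<\infty$ these are finite and almost all zero.

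For the forward implication I would assume $D$ dualizing, so that $E$ is bounded with finite total $k$-dimension, and combine $\omega$ with the homothety isomorphism $R\xrightarrow{\ \sim\ }\RHom(D,D)$ to obtain
\[
\RHom(E,D)\ \cong\ k\lotimes_{R}\RHom(D,D)\ \cong\ k\lotimes_{R}R\ =\ k.
\]
Feeding the splitting of $E$ into the left-hand side gives $\RHom(E,D)\cong\bigoplus_{s,t}k^{\oplus n_{s}n_{t}}[s-t]$, and comparing total $k$-dimension with $k$ forces $\bigl(\sum_{s}n_{s}\bigr)^{2}=1$. Since $E\neq0$, exactly one $n_{s}$ equals $1$ and the rest vanish, i.e. $\RHom(k,D)\cong k[n]$.

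For the converse I would assume $\RHom(k,D)\cong k[n]$. We already have $\injdim D<\infty$, so $\RHom(D,D)\in\sfD^{\mrb}_{\mod H^{0}}(R)$ (the injective concentration of $D$ bounds it both ways), whence $C:=\cone(\lambda)\in\sfD^{<\infty}_{\mod H^{0}}(R)$ for the homothety $\lambda\colon R\to\RHom(D,D)$, \emph{even if} $R$ is itself unbounded below. Now $\omega$ gives $k\lotimes_{R}\RHom(D,D)\cong\RHom(k[n],D)\cong k$, and the biduality map $\eta_{k}\colon k\to\RHom(\RHom(k,D),D)$ factors as $\omega\circ(k\lotimes_{R}\lambda)$. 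Since $\eta_{k}$ is adjoint to the evaluation $k\lotimes_{R}\RHom(k,D)\to D$, which is the image of $\mathrm{id}_{\RHom(k,D)}\neq0$ and hence nonzero, and since $\End_{\sfD(R)}(k)=k$, I conclude $\eta_{k}$ is an isomorphism. Therefore $k\lotimes_{R}\lambda$ is an isomorphism, so $k\lotimes_{R}C=0$; as $R$ is local and $C\in\sfD^{<\infty}_{\mod H^{0}}(R)$, Nakayama's Lemma (Lemma \ref{Nakayama lemma}) forces $C=0$, i.e. $\lambda$ is an isomorphism, and with the given finiteness of cohomology $D$ is dualizing.

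The main obstacle I anticipate is the Hom-evaluation isomorphism $\omega$ itself, together with the compatibility $\eta_{k}=\omega\circ(k\lotimes_{R}\lambda)$: everything else reduces to Corollary \ref{criterion for finiteness of injective dimension corollary} and Nakayama's Lemma. In ordinary commutative algebra $\omega$ is classical, proved by dévissage to the case of a free module (where it is tautological), the passage to an arbitrary finitely generated object being controlled by the finite injective dimension of the target through a way-out argument. I would establish the DG-analogue along the same lines, using $\injdim D<\infty$ for the boundedness estimates; this is the one ingredient not already packaged in the excerpt. I note finally that, because $C$ is automatically bounded above with finitely generated cohomology, no boundedness hypothesis on $R$ is needed.
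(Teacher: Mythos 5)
Your argument is correct in outline and, for the harder ``if'' direction, takes a genuinely different route from the paper. The paper proves that direction by establishing biduality $\eta_{M}\colon M\to\RHom(\RHom(M,D),D)$ for \emph{every} $M\in\sfD^{<\infty}_{\mod H^{0}}(R)$: first for $M\in\mod H^{0}$ by the argument of \cite[V.~2.5]{Residues and Duality}, then for bounded complexes by d\'evissage, then in general by truncation, and finally specializes to $M=R$. You instead verify biduality only at the residue field --- where $\RHom(k,D)\cong k[n]$, $\End_{\sfD(R)}(k)=k$ and the nonvanishing of the evaluation map make it a one-line check --- and transfer this to the homothety $\lambda$ through the Hom-evaluation isomorphism $\omega$, finishing with Nakayama applied to $\cone(\lambda)$. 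This is the Avramov--Foxby/Frankild--Iyengar--Jorgensen strategy; it avoids the induction over $\mod H^{0}$ entirely, at the price of one extra lemma ($\omega$, together with the compatibility $\eta_{k}=\omega\circ(k\lotimes_{R}\lambda)$) that the paper's route does not need and that you rightly identify as the missing ingredient. The ``only if'' direction --- splitting $E=\RHom(k,D)$ over $k$ and comparing total $k$-dimensions in $\RHom(E,D)\cong k$ --- is the same classical counting argument the paper invokes without detail.

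One caveat deserves attention: you state $\omega$ under the hypothesis $D\in\sfD^{\mrb}_{\mod H^{0}}(R)$ and claim $\RHom(D,D)\in\sfD^{\mrb}_{\mod H^{0}}(R)$ because the injective concentration ``bounds it both ways''. Neither is available: the theorem only assumes $D\in\sfD^{>-\infty}_{\mod H^{0}}(R)$, and when $H^{\ll 0}\neq 0$ a dualizing complex is typically unbounded above (its ifij resolution is built from shifts of $\Hom^{\bullet}_{H^{0}}(H,-)$ applied to injectives, which is nonzero in every degree $n\geq 0$ with $H^{-n}\neq 0$). The finite injective concentration of $D$ together with $\inf D>-\infty$ bounds $\RHom(D,D)$ only \emph{above}. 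Fortunately that is all your argument actually uses: the way-out comparison proving $\omega$ needs only $\RHom(D,D)\in\sfD^{<\infty}(R)$ on the source side and the injective concentration of $D$ on the target side, and Nakayama needs only $\cone(\lambda)\in\sfD^{<\infty}_{\mod H^{0}}(R)$. Do note, however, that finite generation of $\tuH^{n}(\RHom(D,D))$ is not directly covered by the paper's finiteness proposition when $D$ is unbounded above; one needs the small extra observation that $\Hom(D,D[n])\cong\Hom(\sigma^{\leq N}D,D[n])$ for $N\gg 0$ depending on $n$, again by the injective concentration. With the hypotheses of your Hom-evaluation lemma restated accordingly, the proof goes through.
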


\begin{remark}
This result was proved in the case $H^{\ll 0}= 0$ 
by Frankild Jorgensen and Iyengar \cite[Theorem 3.1]{FIJ}.
\end{remark}

\begin{proof}
We prove ``if" part. 
By Proposition \ref{criterion for finiteness of injective dimension}, 
$D$ has finite injective dimension. 
We set $ F  := \RHom(-,D)$.  
It is remained to prove that the canonical evaluation morphism $R \to F^{2}(R)$ 
is an isomorphism.
For this we prove that 
the natural morphism $\eta_{M} : M \to F^{2}(M)$ 
is an isomorphism for $M \in \sfD^{< \infty}_{\mod H^{0}}(R)$. 

By the same argument with \cite[V. Proposition 2.5]{Residues and Duality}, 
we can show  that $\eta_{M}$ is an isomorphism 
for $M \in \mod H^{0}$. 
Therefore by divisage argument, 
we can show that $\eta_{M}$ is an isomorphism for $M \in \sfD^{\mrb}_{\mod H^{0}}(R)$.

We deal with the general case. 
Since $\sfD^{< \infty}_{\mod H^{0}}(R)$ is stable under the shift $[n]$ for $n \in \ZZ$, 
it is enough to show that $\tuH^{0}(\eta_{M})$ is an isomorphism. 
Let $d:= \injdim M $ and $b:= \inf M$. 
Then $F(\sfD^{[m,n]}(R)) \subset \sfD^{[b-n, b+d -m]}(R)$. 
Therefore, 
\[
F^{2}(\sfD^{< -d}(R)) \subset F(\sfD^{> d+b}(R)) \subset \sfD^{< 0}(R).
\]
Consequently, we have 
$\tuH^{0}(F^{2}(\sigma^{< -d}(M))) = 0, \tuH^{1}(F^{2}(\sigma^{< -d}(M))) = 0$. 
Looking the cohomology long exact sequence of the exact triangle 
\[
F^{2}(\sigma^{< -d}(M)) \to F^{2}(M) \to F(\sigma^{\geq d}(M)) \to 
\]
we see that  the canonical morphism 
$\tuH^{0}(F^{2}(M)) \to \tuH^{0}(F^{2}\sigma^{\geq -d}(M))$ is an isomorphism. 
Similarly, 
since $\tuH^{0}(\sigma^{<-d}(M)) = 0$ and $\tuH^{1}(\sigma^{< -d} (M)) = 0$, 
we see that 
the canonical morphism $\tuH^{0}(M) \to \tuH^{0}(\sigma^{\geq -d}(M))$ is an isomorphism.

Since $\eta$ is a triangulated natural transformation, 
we have the following commutative diagram whose both lows are exact. 
\[
\begin{xymatrix}{ 
\tuH^{0}(M) \ar[r]^{\cong} \ar[d]_{\tuH^{0}(\eta_{M})} & 
\tuH^{0}(\sigma^{\geq -d} M)  \ar[d]^{\tuH^{0}(\eta_{\sigma^{\geq -d}M})} \\ 
\tuH^{0}(F^{2}M) \ar[r]^{\cong}  & 
\tuH^{0}(F^{2}\sigma^{\geq -d} M)  
}\end{xymatrix}
\]
By induction step, the right vertical arrow is an isomorphism, 
so is the left.  This completes the proof of ``if" part. 

``Only if" part can be proved by a similar argument to 
the proof of the same statement for a dualizing complex over a ordinary commutative algebra. 
\end{proof}

\subsubsection{The structure of minimal ifij resolution of a dualizing complex}
We establish a structure theorem of a minimal ifij resolution of a dualizing complex. 
%It is a DG-version of \cite[whrere is precise statement?]{Residues and Duality}.

\begin{theorem}\label{structure of minamal ifij resolution of a daulizing complex}
If $R$ has a dualizing complex  $D \in \sfD(R)$ with a minimal ifij resolution $I_{\bullet}$ of length $e$.  
Then  $H^{0}$ is catenary and $\dim H^{0} < \infty$. 
If moreover we assume that $R$ is local, 
then  the following statements hold. 
\begin{enumerate}[(1)]

\item  $\inf I_{-i} = \inf D$ for $i = 0, \cdots, e$.  

\item $e = \injdim D =\depth D = \dim H^{0}$.

\item 
\[
I_{-i} = \bigoplus_{\frkp } E_{R}(R/\ulfrkp')[-\inf D].  
\]
where $\frkp$ run all prime ideals such that $i = \dim H^{0} - \dim H^{0}/\frkp$. 
 \end{enumerate}
 \end{theorem}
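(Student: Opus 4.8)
The plan is to reduce the entire statement to the behaviour of the Bass numbers $\mu^{n}(\frkp,D)$, imitating the classical treatment of dualizing complexes but feeding in the machinery established above. The crux is the following computation. For each $\frkp\in\Spec H^{0}$, Corollary \ref{201712012051} shows that $D_{\ulfrkp'}$ is dualizing over the local CDGA $R_{\ulfrkp'}$, whose $0$-th cohomology is local with residue field $\kappa(\frkp)$. Applying the residue-field characterization of Theorem \ref{characterization of a dualizing complex} to $R_{\ulfrkp'}$ yields $\RHom_{R_{\ulfrkp'}}(\kappa(\frkp),D_{\ulfrkp'})\cong\kappa(\frkp)[n_{\frkp}]$ for a unique integer $n_{\frkp}$. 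Since $\mu^{n}(\frkp,D)=\dim_{\kappa(\frkp)}\tuH^{n}\RHom_{R_{\ulfrkp'}}(\kappa(\frkp),D_{\ulfrkp'})$, I would conclude that for every $\frkp$ there is exactly one degree $d(\frkp):=-n_{\frkp}$ with $\mu^{d(\frkp)}(\frkp,D)=1$, all other Bass numbers being zero. This ``single nonzero Bass number, equal to $1$'' is the heart of the proof; the remaining work is bookkeeping with the integer-valued function $d$.

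Next I would establish the global assertions. Lemma \ref{one up lemma} together with the uniqueness just obtained forces $d(\frkq)=d(\frkp)+1$ whenever $\frkq$ covers $\frkp$, so $d$ is a codimension function on $\Spec H^{0}$; the mere existence of such a function gives that $H^{0}$ is catenary. Combining Lemma \ref{pre one up lemma} with Lemma \ref{201712011651}, the nonvanishing of $\mu^{d(\frkp)}(\frkp,D)$ confines $d(\frkp)$ to the interval $[\inf D,\inf D+\injdim D]$. As $d$ increases by exactly $1$ along each cover, every chain of primes has length at most $\injdim D$, whence $\dim H^{0}\le\injdim D<\infty$.

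Now assume $R$ local with maximal ideal $\frkm$. Proposition \ref{criterion for finiteness of injective dimension} locates the top Bass number, giving $d(\frkm)=\inf D+\injdim D$ and showing $\frkm$ is the unique prime attaining the maximal value of $d$. Picking $\frkp^{*}\in\Ass\tuH^{\inf}(D)$, Lemma \ref{pre one up lemma} gives $\mu^{\inf D}(\frkp^{*},D)\neq0$, so $d(\frkp^{*})=\inf D$ is the minimal value; by catenary a saturated chain $\frkp^{*}\subsetneq\cdots\subsetneq\frkm$ has length $d(\frkm)-d(\frkp^{*})=\injdim D$, whence $\dim H^{0}\ge\injdim D$ and therefore $\dim H^{0}=\injdim D$. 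Since $\RHom(k,D)$ is concentrated in a single degree, $\chdepth D=\inf\RHom(k,D)-\inf D=\injdim D$ by Corollary \ref{criterion for finiteness of injective dimension corollary}, which yields the equalities of part (2) (with $\depth D=\chdepth D$) apart from $e=\injdim D$. For part (1) I would argue by induction along the minimal resolution: $\inf I_{-i}$ is non-decreasing with $\inf I_{0}=\inf D$, and at each level $i<\injdim D$ the chain above provides a prime $\frkp$ with $d(\frkp)=i+\inf D$ which is necessarily non-maximal; Corollary \ref{201712082342} then forbids the jump $\inf I_{-i}<\inf I_{-i-1}$. Hence $\inf I_{-i}=\inf D$ for $0\le i\le\injdim D$, while at levels above $\injdim D$ no prime can contribute, so $I_{-i}=0$ there. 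This pins the length as $e=\injdim D$ (via Theorem \ref{ifij resolution theorem}), completing (1) and (2).

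Finally, part (3) is read off Theorem \ref{Bass injective decomposition theorem}: with $\inf I_{-i}=\inf D$, the term $I_{-i}$ is $\bigoplus_{\frkp}E_{R}(R/\ulfrkp')^{\oplus\mu^{i+\inf D}(\frkp,D)}[-\inf D]$, and $\mu^{i+\inf D}(\frkp,D)=1$ precisely for the primes with $d(\frkp)=i+\inf D$; using catenary in the form $d(\frkp)=d(\frkm)-\dim H^{0}/\frkp$ this is exactly the condition $\dim H^{0}-\dim H^{0}/\frkp=i$, as required. I expect the main obstacle to be the crux computation of the first paragraph---showing each $\frkp$ carries a single Bass number equal to $1$---since it rests on the stability of dualizing complexes under localization and on the residue-field criterion; a secondary delicacy is avoiding circularity in part (1), which I resolve by proving the $\inf I_{-i}$ are constant before fixing the length $e$.
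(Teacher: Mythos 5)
Your proposal is correct and follows essentially the same route as the paper: localization plus the residue-field criterion (Corollary \ref{201712012051} and Theorem \ref{characterization of a dualizing complex}) to show each prime carries a single Bass number equal to $1$, Lemma \ref{one up lemma} to turn this into a codimension function giving catenarity and finite dimension, Corollary \ref{201712082342} to rule out jumps in $\inf I_{-i}$, and Theorem \ref{Bass injective decomposition theorem} plus Proposition \ref{criterion for finiteness of injective dimension} for parts (2) and (3). The only differences are in bookkeeping and the order in which the equalities of part (2) are extracted, which do not affect the substance.
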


\begin{proof}
Combining Corollary \ref{201712012051} and Theorem \ref{characterization of a dualizing complex}, 
we see that 
for $\frkp \in \Spec H^{0}$, 
there exists $n_{\frkp} \in \ZZ$ 
such that $\mu^{m}(\frkp, D) =  \delta_{m,n_{\frkp}}$. 
By Lemma \ref{one up lemma}, 
we have $n_{\frkp} = n_{\frkq} -1$ 
for a pair $\frkp \subset \frkq$ satisfying $\dim H^{0}_{\frkq}/\frkp H^{0}_{\frkq} = 1$.  
It follows that $H^{0}$ is catenary (see \cite[V. 7.2]{Residues and Duality}).
Moreover since the set $\{ n_{\frkp} \mid \frkp \in \Spec H^{0} \}$ is finite, 
we conclude that $\dim H^{0} < \infty$.

From now we assume that $H^{0}$ is local with a maximal ideal $\frkm$.  
By Lemma \ref{one up lemma}, we have 
$I_{-e} = E_{R}(R/\ulfrkm')[-\inf I_{e}]$. 
For $i < e$, there exists a non-maximal prime ideal $\frkp$ 
such that $n_{\frkp} = i+ \inf I_{-i}$ by Lemma \ref{one up lemma}.  
Thus by Corollary \ref{201712082342}, we deduce the condition  (1). 
Now it is easy to deduce (3) by descending induction on $i$.
It follows $\dim H^{0} =e$.
The equation $\id D = e$   
is a consequence of  Proposition \ref{criterion for finiteness of injective dimension}.
Finally the equality $\depth D = e$ is deduced from a straightforward computation.
\end{proof}

\begin{remark}
An ordinary commutative ring $A$ having a dualizing complex is universal catenary by \cite[V. Section 10]{Residues and Duality}. 
On the other hand, 
by \cite[Proposition 7.5]{Yekutieli}, if $R$ has a dualizing complex $D$, 
then $\RHom_{R}(H^{0}, D)$ is a dualizing complex over $H^{0}$. 
It follows  that if $R$ has a dualizing complex $D$, 
then $H^{0}$ is universal catenary. 
The author thanks L. Shaul for pointing out this statement. 
\end{remark}

 \subsubsection{When is the cohomology module $\tuH(D)$  a dualizing complex of $H$.}
We discuss a  condition that the cohomology group $\tuH(D)$ of  a dualizing complex  $D$ 
is a dualizing complex over $H$ where we regard $\tuH(D)$ as a DG-$H$-module with the trivial differential.

\begin{theorem}\label{dualizing formal theorem}
Let $R$ be a piecewise Noetherian CDGA.   Set $ d: = \dim H^{0}$. 
For $D \in \sfD^{> -\infty}_{\mod H^{0}}(R)$ with $b= \inf D$, 
the following conditions are equivalent.
\begin{enumerate}[(1)]
\item 
$D$  is a dualizing complex 
such that
the cohomology groups of  a minimal ifij resolution 
\[
D \to I_{0} \to I_{-1} \to \cdots \to I_{-d}
\]
gives an injective resolution 
\begin{equation}\label{201711241918}
0\to \tuH(D) \to  \tuH(I_{0}) \to  \tuH(I_{-1}) \to \cdots \to  \tuH(I_{-d}) \to 0. 
\end{equation}

\item 
The following conditions are satisfied.
\begin{enumerate}[(a)]
\item 
$C= H^{b}(D)$ is  a canonical module over $H^{0}$, 
that is, a dualizing complex concentrated at degree $0$. 

\item  
$H^{-n}$ is a maximal Cohen-Macaulay (MCM)-module for $n \geq 0$. 

\item 
There exists an isomorphism 
\[
\psi: \tuH(D) \to \Hom_{H^{0}}^{\bullet}(H, C)[-b]
\]  of graded $H$-modules. 
\end{enumerate}
\end{enumerate}
\end{theorem}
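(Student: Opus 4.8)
The plan is to prove both implications by transporting the minimal ifij resolution $I_{\bullet}$ of $D$ to the minimal injective resolution of $C=\tuH^{b}(D)$ over $H^{0}$. The bridge is the cohomology functor, which identifies the objects of $\cI$ with the coinduced injectives: for $J\in\Inj H^{0}$ one has $\tuH(G(J))\cong\Hom^{\bullet}_{H^{0}}(H,J)$, with $E_{R}(R/\ulfrkp')=G(E_{H^{0}}(H^{0}/\frkp))$, and by Lemma \ref{basic of cI lemma} the morphisms between objects of $\cI$ are rigidly determined by their $\tuH^{0}$-part. Hence $\tuH$ intertwines any ifij resolution whose terms all satisfy $\inf I_{-i}=b$ with the complex $\Hom^{\bullet}_{H^{0}}(H,J^{\bullet})[-b]$ coinduced from a complex $J^{\bullet}$ of $H^{0}$-injectives. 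Throughout I would localize at a maximal ideal $\frkm$ of $H^{0}$, using Corollary \ref{201712012051} and the good behaviour of MCM-ness, the canonical-module property and ifij resolutions under localization, so as to reduce to the case where $R$ is local with residue field $k$; catenarity and finiteness of $\dim H^{0}$ come for free from Theorem \ref{structure of minamal ifij resolution of a daulizing complex}.

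For $(1)\Rightarrow(2)$: assuming $D$ dualizing, Theorem \ref{structure of minamal ifij resolution of a daulizing complex} gives $\inf I_{-i}=b$ and $I_{-i}=\bigoplus_{\height\frkp=i}E_{R}(R/\ulfrkp')[-b]$. Applying $\tuH$ identifies the whole cohomology complex with $\Hom^{\bullet}_{H^{0}}(H,J^{\bullet})[-b]$, where $J^{i}=\bigoplus_{\height\frkp=i}E_{H^{0}}(H^{0}/\frkp)$. Its internal degree-$b$ row is the minimal injective resolution $0\to C\to J^{0}\to\cdots\to J^{d}\to 0$, whose Bass numbers are those of a canonical module; this yields (a) (and in particular that $H^{0}$ is Cohen--Macaulay). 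Condition (1) says $\tuH(I_{\bullet})$ resolves $\tuH(D)$. Left exactness of $\Hom^{\bullet}_{H^{0}}(H,-)$ identifies the kernel at the first spot with $\Hom^{\bullet}_{H^{0}}(H,C)[-b]$, giving (c); exactness further out says precisely that $\Ext^{i}_{H^{0}}(H^{-n},C)=0$ for all $i>0,\ n\ge 0$, which, since $C$ is a canonical module, is equivalent to $H^{-n}$ being MCM, giving (b).

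For $(2)\Rightarrow(1)$: I would build the minimal ifij resolution of the given $D$ by hand. By (a) the canonical module $C$ has minimal injective resolution $0\to C\to J^{0}\to\cdots\to J^{d}\to 0$ with cosyzygies $C_{i}=\ker(J^{i}\to J^{i+1})$. The first ifij morphism is $D\to I_{0}=G(J^{0})[-b]$, and by (c) its effect on cohomology is the inclusion $\Hom^{\bullet}_{H^{0}}(H,C)\hookrightarrow\Hom^{\bullet}_{H^{0}}(H,J^{0})$; since (b) gives $\Ext^{1}_{H^{0}}(H,C)=0$, the cokernel is $\Hom^{\bullet}_{H^{0}}(H,C_{1})$, so the cone $M_{-1}$ satisfies $\inf M_{-1}=b$ and $\tuH(M_{-1})\cong\Hom^{\bullet}_{H^{0}}(H,C_{1})[-b]$, again of the form (c). Iterating, using $\Ext^{>0}_{H^{0}}(H,C)=0$ at each step to compute cokernels and (a) to identify injective envelopes, I get $I_{-i}=G(J^{i})[-b]$, and the process terminates at $i=d$ because $C_{d}=J^{d}$ is injective forces $M_{-d}\cong G(J^{d})[-b]\in\cI[-b]$. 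Thus $\injdim_{R}D\le d<\infty$ and the cohomology complex is by construction the coinduced resolution $\Hom^{\bullet}_{H^{0}}(H,J^{\bullet})[-b]$ of $\tuH(D)$, which is the resolution asserted in (1). Finally, feeding this resolution into Corollary \ref{ifij resolution corollary 1} and using that $\Hom_{H^{0}}(k,E_{H^{0}}(H^{0}/\frkp))$ equals $k$ for $\frkp=\frkm$ and $0$ otherwise, only $i=d$ contributes, so $\RHom_{R}(k,D)\cong k[-(d+b)]$; Theorem \ref{characterization of a dualizing complex} then shows $D$ is dualizing.

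The hard part will be the two bookkeeping identifications that make everything run. First, that the differentials of $\tuH(I_{\bullet})$ are exactly $\Hom^{\bullet}_{H^{0}}(H,-)$ applied to those of $J^{\bullet}$; this rests squarely on Lemma \ref{basic of cI lemma}, namely that maps in $\cI$ are determined by their $\tuH^{0}$-part, so that $\tuH$ genuinely converts the ifij resolution into the coinduction of an $H^{0}$-injective resolution. Second, the precise translation ``exactness of $\Hom^{\bullet}_{H^{0}}(H,J^{\bullet})$ $\Leftrightarrow$ $\Ext^{>0}_{H^{0}}(H^{-n},C)=0$ $\Leftrightarrow$ $H^{-n}$ is MCM'', which is where the canonical module $C$ and the Cohen--Macaulayness of $H^{0}$ enter and must be invoked carefully. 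The local-to-global reduction is routine by localization but needs to be stated explicitly, since MCM-ness and the canonical module are cleanest in the local setting.
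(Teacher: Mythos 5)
Your proposal is correct and follows essentially the same route as the paper: both directions hinge on identifying $\tuH$ of the ifij resolution with the coinduced complex $\Hom^{\bullet}_{H^{0}}(H,K_{\bullet})[-b]$ built from a minimal injective resolution $K_{\bullet}$ of the canonical module $C$, on the equivalence between exactness of $\Hom^{\bullet}_{H^{0}}(H^{-n},K_{\bullet})$ and $\Ext^{>0}_{H^{0}}(H^{-n},C)=0$ (i.e.\ MCM-ness), and on Theorem \ref{characterization of a dualizing complex} via the Bass numbers $\mu^{i}(\frkm,D)=\delta_{id}$. The only divergence is that in $(2)\Rightarrow(1)$ you assemble the ifij resolution triangle-by-triangle in $\sfD(R)$ (relying on the adjunction that maps into $\Hom^{\bullet}_{H^{0}}(H,J)$ are determined by their degree-$b$ component), whereas the paper realizes the same coinduced complex all at once as an exact sequence in $\sfC(R)$ via the construction of Appendix \ref{CE injective resolution}; the two are interchangeable here.
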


\begin{proof} 
We prove the implication (1) $\Rightarrow$ (2). 
Since $b= \inf I_{-i}$ for $i = 0, \dots, d$ by Theorem \ref{structure of minamal ifij resolution of a daulizing complex}, 
looking at the $b$-th degree of the  exact sequence (\ref{201711241918}), 
we obtain  a minimal injective resolution of $C := \tuH^{b}(D)$  
\begin{equation}\label{201712021619}
0 \to C \to K_{0} \to K_{-1} \to \cdots \to K_{-d} \to 0 
\end{equation}
where we set $K_{-i} := \tuH^{b}(I_{-i})$. 
Since $\tuH^{0}(E_{R}(R/\ulfrkp')) =E_{H^{0}}(H^{0}/\frkp)$, 
combining Theorem \ref{Bass injective decomposition theorem} and 
 Theorem \ref{characterization of a dualizing complex}, 
 we see that $\mu_{H}^{n}(\frkm, C) = \delta_{n,d}$. 
 Thus, the module $C$ is a dualizing complex by Theorem \ref{characterization of a dualizing complex}. 
 Since $\tuH(I_{-i}) = \Hom_{H^{0}}^{\bullet}(H ,K_{-i})$, 
looking at the $n$-th degree of the exact sequence (\ref{201711241918}), 
we see that 
the complex 
$\Hom(H^{-n}, K_{\bullet})$ below is exact. 
\[
\Hom(H^{n}, K_{\bullet}) : \Hom(H^{n}, K_{0} ) \to \Hom(H^{n}, K_{-1}) \to \cdots \to \Hom(H^{n}, K_{-d}) \to 0 
\]
In other words, $\Ext^{i}_{H^{0}}(H^{-n}, C) = 0$ for $i >0$. 
It is well-know  (e.g. \cite[Proposirtion 3.3.3]{Bruns-Herzog})
that this implies  that  $H^{-n}$ is a MCM-$H^{0}$-module. 
Finally, from the canonical isomorphism $\tuH(I_{-i}) \cong \Hom_{H^{0}}(H, K_{-i})[-b]$ 
we deduce (c) as below 
\[
\begin{split}
\tuH(D) = \Ker[\tuH(I_{0}) \to \tuH(I_{-1}) ] 
&\cong 
\Ker\left[\Hom_{H^{0}}^{\bullet}(H,K_{0})[-b] \to \Hom_{H^{0}}^{\bullet}(H, K_{-1})[-b] \right]\\
& = \Hom_{H^{0}}^{\bullet}(H, C)[-b].
\end{split}
\]

We prove the implication (2) $\Rightarrow$ (1). 
We use a result obtained in Appendix \ref{CE injective resolution}. 
Let $K_{\bullet}$ be a minimal injective resolution of $C$.    
Then the collection  of graded $H$-modules  $J_{i} := \Hom_{H^{0}}^{\bullet}(H, K_{i})$ 
forms a complex $J_{\bullet}$ of graded $H$-modules.  
Since by the assumption $\Ext_{H^{0}}^{i}(H^{-n}, C) = 0$ for $ i > 0, n \geq 0$, 
 the complex  $J_{\bullet }$  is exact. 
 Moreover since $\Ker[J_{0} \to J_{-1} ] = \Hom_{H^{0}}(H,C)$ is isomorphic to $\tuH(D)$ by the  assumption, 
 we obtain an exact sequence of graded $H$-modules 
\[
0\to \tuH(D) \xrightarrow{\overline{\delta}_{1}}  J_{0} 
\xrightarrow{\overline{\delta}_{0}} J_{-1} 
\xrightarrow{\overline{\delta}_{-1}} \cdots 
\xrightarrow{\overline{\delta}_{-d+1}} J_{-d} \to 0. 
\]

By Appendix \ref{CE injective resolution} 
there exists an exact sequence inside $\sfC(R)$ 
\[
0\to D \xrightarrow{\delta_{1}}  I_{0} 
\xrightarrow{\delta_{0}} I_{-1} 
\xrightarrow{\delta_{-1}} \cdots 
\xrightarrow{\delta_{-d+1}} I_{-d} \to 0 
\]
such that $\tuH(I_{-i}) \cong J_{-i}$. 
It is easy to see that this gives a minimal ifij resolution of $D$. 
From the equation $\mu_{H^{0}}^{i}(\frkm, C) = \delta_{id}$ (see e.g. \cite[Theorem 3.3.10]{Bruns-Herzog}), 
we deduce $\mu_{R}^{i}(\frkm, D) = \delta_{id}$ by the construction. 
Thus, by  Theorem \ref{characterization of a dualizing complex}, we conclude that $D$ is a dualizing complex.  
\end{proof}

\subsection{Gorenstein CDGA}

We recall the definition of a Gorenstein CDGA. 

\begin{definition}
A CDGA $R$ is called Gorenstein if $\injdim R < \infty$. 
\end{definition}

We note that a Gorenstein CDGA $R$ is lower bounded by Lemma \ref{201903152344}. 
It is easy to see that a CDGA $R$ is Gorenstein if and only if $R$ is a dualizing complex over $R$. 
Therefore, combining Proposition \ref{criterion for finiteness of injective dimension corollary}, 
Theorem \ref{characterization of a dualizing complex} and 
Theorem \ref{structure of minamal ifij resolution of a daulizing complex},
 we deduced the following result.

\begin{theorem}\label{Bass theorem}
Let $R$ be a piecewise Noetherian CDGA with a maximal ideal $\ulfrkm$ and the residue field $k$.  
Then the following conditions are equivalent
\begin{enumerate}[(1)]
\item $R$ is Gorenstein. 

\item 
$\RHom(k, R) \cong k[n]$ for some $n \in \ZZ$. 

\item 
$H^{\ll 0} = 0$ and 
$\RHom(k, R) \cong k[- \dim H^{0} - \inf R]$.

\item 
$\dim H^{0} = \injdim R$. 

\end{enumerate}
\end{theorem}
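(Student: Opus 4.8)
The plan is to reduce the entire statement to the dualizing-complex theory developed above, exploiting the observation recorded just before the theorem that a CDGA $R$ is Gorenstein precisely when $R$ is a dualizing complex over itself. Indeed, the homothety $R \to \RHom_R(R,R)$ is always an isomorphism and $R$ has finitely generated cohomology, so by Definition \ref{dualizing complex definition} the only extra condition for $R$ to be dualizing is $\injdim R < \infty$, which is the definition of Gorenstein. Moreover, once $\injdim R < \infty$, Lemma \ref{201903152344} gives $R \in \sfD^{>-\infty}(R)$, hence $H^{\ll 0} = 0$. So in the directions emanating from (1) I may freely regard $R$ as a self-dualizing complex lying in $\sfD^{>-\infty}_{\mod H^{0}}(R)$.

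First I would prove (1) $\Rightarrow$ (3). The reduction puts $R \in \sfD^{>-\infty}_{\mod H^{0}}(R)$ and makes $R$ dualizing, so Theorem \ref{characterization of a dualizing complex} yields $\RHom(k,R) \cong k[m]$ for some $m$. To pin down $m$, apply the structure theorem \ref{structure of minamal ifij resolution of a daulizing complex} to the self-dualizing complex $D = R$ in the local case: one gets a minimal ifij resolution with $\inf I_{-i} = \inf R$ and length $e = \dim H^{0}$, the maximal ideal $\frkm$ occurring only in $I_{-e}$. By Corollary \ref{ifij resolution corollary 1}, $\Hom(k, R[j]) \neq 0$ exactly at $j = e + \inf I_{-e} = \dim H^{0} + \inf R$; since $\tuH^{j}\RHom(k,R) = \Hom(k, R[j])$ and $k[m]$ is concentrated in degree $-m$, this forces $m = -\dim H^{0} - \inf R$, which is (3). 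The implication (3) $\Rightarrow$ (2) is trivial.

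Next I would settle (1) $\Leftrightarrow$ (4). For (1) $\Rightarrow$ (4), the same application of Theorem \ref{structure of minamal ifij resolution of a daulizing complex} to $D = R$ gives directly $\injdim R = e = \dim H^{0}$. For (4) $\Rightarrow$ (1), the key point is that $H^{0}$ is Noetherian local, hence of finite Krull dimension; thus the equality $\dim H^{0} = \injdim R$ forces $\injdim R < \infty$, i.e. $R$ is Gorenstein. This is the DG analogue of the classical characterization of Gorenstein local rings by $\injdim A = \dim A < \infty$.

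The remaining and delicate implication is (2) $\Rightarrow$ (1). Here one is given only $\RHom(k,R) \cong k[n]$, so $\sup \RHom(k,R) = n < \infty$, and wishes to conclude $\injdim R < \infty$ via Corollary \ref{criterion for finiteness of injective dimension corollary}. The subtlety -- and what I expect to be the main obstacle -- is that this corollary, like Theorem \ref{characterization of a dualizing complex}, presupposes $R \in \sfD^{>-\infty}(R)$, whereas boundedness below is exactly part of what (3) asserts. So the crux is to first extract $H^{\ll 0} = 0$ from the single-shift condition on $\RHom(k,R)$; granting this, Corollary \ref{criterion for finiteness of injective dimension corollary} applies and $\sup \RHom(k,R) < \infty$ gives $\injdim R < \infty$ at once. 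I expect this boundedness step to carry the real content: without it the implication genuinely fails, since a graded polynomial CDGA on a single generator of negative even degree has $\RHom(k,R)$ a single shift of $k$ yet is unbounded below and of infinite injective dimension. Hence one must invoke the standing hypotheses of this section -- in effect $H^{\ll 0} = 0$ -- or supply a separate finiteness argument ruling such examples out before the cited corollary can be brought to bear.
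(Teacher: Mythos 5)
Your treatment of (1) $\Rightarrow$ (3) $\Rightarrow$ (2) and of (1) $\Leftrightarrow$ (4) is correct and is exactly the paper's route: the paper's entire argument consists of the observation that $R$ is Gorenstein iff $R$ is a dualizing complex over itself together with the instruction to combine Corollary \ref{criterion for finiteness of injective dimension corollary}, Theorem \ref{characterization of a dualizing complex} and Theorem \ref{structure of minamal ifij resolution of a daulizing complex}, and you have supplied the missing details (including the identification of the shift $n=-\dim H^{0}-\inf R$ via Corollary \ref{ifij resolution corollary 1} and the structure theorem, and the finiteness of $\dim H^{0}$ for a Noetherian local ring in (4) $\Rightarrow$ (1)).

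The obstacle you isolate in (2) $\Rightarrow$ (1) is genuine, and the counterexample you sketch does work. Take $R=k[x]$ with $\deg x=-2$ and zero differential: it is a connective, local, piecewise Noetherian CDGA with $H^{0}=k$, and the semifree resolution $R[\xi]/(\xi^{2})$ with $\deg\xi=-3$, $\partial\xi=x$, of $k$ gives $\RHom_{R}(k,R)\cong k[-3]$, so (2) holds; yet $\tuH^{-2m}(R)\neq 0$ for all $m\geq 0$, so $R\notin\sfD^{>-\infty}(R)$ and $\injdim R=\infty$ by Lemma \ref{201903152344}, so (1), (3) and (4) all fail. Thus (2) $\Rightarrow$ (1) is false as stated; the assertion in the remark following the theorem that condition (2) implies $H^{\ll 0}=0$ is exactly where the paper's one-line deduction breaks down, since Corollary \ref{criterion for finiteness of injective dimension corollary} and Theorem \ref{characterization of a dualizing complex} both presuppose membership in $\sfD^{>-\infty}(R)$ and nothing in the quoted results delivers that from (2) alone. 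The statement becomes correct once $H^{\ll 0}=0$ is added to the hypotheses (the setting of \cite[Theorem 4.3]{FIJ} that the author was attempting to relax), and under that extra hypothesis your appeal to Corollary \ref{criterion for finiteness of injective dimension corollary} finishes (2) $\Rightarrow$ (1) exactly as you describe. So the defect lies in the theorem rather than in your proposal, and you have located it precisely.
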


\begin{remark}
The equivalence (1) $\Leftrightarrow$ (2) is proved under the assumption $H^{\ll 0}= 0$ in \cite[Theorem 4.3]{FIJ}. 
In the above theorem we showed that the condition (1) and (2) imply $H^{\ll 0} = 0$. 
\end{remark}

We discuss when a Gorenstein CDGA $R$ has a Gorenstein cohomology algebras $H$. 

\begin{theorem}\label{cohomology Gorenstein theorem}
Let $R$ be a local piecewise Noetherian  CDGA  
such that $H^{\ll 0} = 0$. 
Then 
the following conditions are equivalent. 
\begin{enumerate}[(1)]
\item 
$R$ is Gorenstein CDGA and $H$ is CM as a graded commutative algebra.

\item 
$H$ is Gorenstein CDGA when it is regarded as CDGA with the trivial differential $\partial_{H}= 0$. 

\item 
$H$ is Gorenstein as an ordinary graded commutative algebra. 

\item 
The following conditions are satisfied.
\begin{enumerate}
\item $H^{\inf R}$ is a canonical $H^{0}$-module. 

\item $H^{-n}$ is a MCM-module over $H^{0}$. 

\item 
There exists an isomorphism 
$H \xrightarrow{\cong}  \Hom_{H^{0}}(H,H^{\inf R})$ of graded $H$-modules.
\end{enumerate}

\end{enumerate}
\end{theorem}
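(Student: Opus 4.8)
The plan is to run everything through the single object $D=R$, using that a CDGA is Gorenstein exactly when $R$ is a dualizing complex over itself. With this identification Theorem~\ref{dualizing formal theorem}, applied to $D=R$ (so $b=\inf R$ and $C=\tuH^{\inf R}(R)=H^{\inf R}$), says that condition (4) of the present statement is — up to the normalization shift $[-\inf R]$ appearing in part (c) — precisely condition (2) of that theorem. Hence (4) is equivalent to: $R$ is a dualizing complex and the cohomology of a minimal ifij resolution of $R$ is an injective resolution of $H$ over $H$. Since ``$R$ dualizing'' means ``$R$ Gorenstein'', I get the clean reformulation that (4) holds if and only if $R$ is Gorenstein and this ifij-cohomology condition holds. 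In particular (4)$\Rightarrow$(1) is immediate: (4) forces $R$ Gorenstein, while (4)(b) forces every $H^{-n}$ to be maximal Cohen--Macaulay over $H^{0}$, i.e.\ $H$ is $\CM$. The whole theorem therefore reduces to proving, for Gorenstein $R$, that the ifij-cohomology condition is equivalent to $H$ being $\CM$, and to fitting (2) and (3) into the chain.

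The step I expect to be the main obstacle is this equivalence. First I would feed $R$ Gorenstein into Theorem~\ref{structure of minamal ifij resolution of a daulizing complex} (with $D=R$): a minimal ifij resolution $I_{\bullet}$ then has $\inf I_{-i}=\inf R=b$ for all $i$, so each $I_{-i}=G(K_{-i})[-b]$ with $K_{-i}:=\tuH^{b}(I_{-i})\in\Inj H^{0}$, whence $\tuH(I_{-i})\cong\Hom^{\bullet}_{H^{0}}(H,K_{-i})[-b]$ and the cohomology sequence of $I_{\bullet}$ is $\Hom^{\bullet}_{H^{0}}(H,K_{\bullet})[-b]$. Next I would identify $K_{\bullet}$ with a shift of $\RHom_{R}(H^{0},R)$, which by \cite[Proposition 7.5]{Yekutieli} is a dualizing complex $\omega^{\bullet}$ over $H^{0}$; since the $K_{-i}$ are injective, the cohomology sequence is, up to shift, $\RHom_{H^{0}}(H,\omega^{\bullet})$, a dualizing complex over the module-finite $H^{0}$-algebra $H$. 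The ifij-cohomology condition asserts exactly that this complex has cohomology concentrated in a single degree — namely $\tuH(D)=H$ at the $0$th spot — and the classical fact that a dualizing complex is concentrated in one degree precisely when the ring is $\CM$ yields both implications at once. When the condition holds, that single cohomology is simultaneously the canonical module $\omega_{H}$ and equal to $H$, so $\omega_{H}\cong H$ and $H$ is even Gorenstein, anticipating (3).

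It remains to place (3) and (2). For (3)$\Leftrightarrow$(4) I would argue by classical commutative algebra for the finite extension $H^{0}\to H$: once $H$ is $\CM$ over $H^{0}$ — which is exactly (4)(b) — its canonical module is $\omega_{H}=\Hom^{\bullet}_{H^{0}}(H,\omega_{H^{0}})$, and $H$ is Gorenstein iff $\omega_{H}\cong H$ up to shift; comparing the extreme graded pieces of such an isomorphism identifies $\omega_{H^{0}}$ with $H^{\inf R}$, which is (4)(a), and the isomorphism itself is (4)(c). For (2)$\Leftrightarrow$(3) I would exploit that $H$, viewed as a CDGA with zero differential, is formal: its class $\cI$ consists of the objects $G(J)=\Hom^{\bullet}_{H^{0}}(H,E_{H^{0}}(J))$, and for the finite map $H^{0}\to H$ these are precisely the graded-injective $H$-modules $E_{H}(H/\frkp')$, so an ifij resolution of $H$ over the CDGA $H$ is nothing but a graded-injective resolution of $H$ over the ring $H$. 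Finiteness of the DG injective dimension — i.e.\ $H$ Gorenstein as a CDGA, tested through Theorem~\ref{Bass theorem} — is therefore the same as finiteness of the graded injective dimension, i.e.\ $H$ Gorenstein as a graded ring. Chaining (1)$\Leftrightarrow$(4)$\Leftrightarrow$(3)$\Leftrightarrow$(2) finishes the proof; the only genuinely delicate input is the identification of $\tuH(I_{\bullet})$ with the dualizing complex $\RHom_{H^{0}}(H,\omega^{\bullet})$ of $H$ carried out in the second paragraph.
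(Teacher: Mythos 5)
Your treatment of (1), (3), (4) is a genuinely different route from the paper's. The paper proves (1) $\Rightarrow$ (3) by induction on $r=\dim H=\depth H$: the base case is $R\in\cI$, whence $H$ is self-injective, and the inductive step passes to $R\wslash xR$ for a homogeneous $H$-regular $x\in H^{0}$ using $\tuH(R\wslash xR)=H/(x)$; the converse is Proposition \ref{201711232212}; and (1) $\Leftrightarrow$ (4) is simply read off from Theorem \ref{dualizing formal theorem}. You instead identify the cohomology of the minimal ifij resolution of $R$ with the dualizing complex $\RHom_{H^{0}}(H,\omega^{\bullet})$ of the finite $H^{0}$-algebra $H$ and invoke the classical fact that a dualizing complex is concentrated in one degree exactly when the ring is CM. This is attractive: it explains where the CM hypothesis in (1) enters and delivers $\omega_{H}\cong H$, hence (3), in the same stroke. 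To make it airtight you must supply two points you leave implicit: that $K_{\bullet}=\tuH^{b}(I_{\bullet})$ is indeed an injective model of a shift of $\RHom_{R}(H^{0},R)$ (via $\RHom_{R}(H^{0},G(J))\cong J$ and compatibility of $\RHom_{R}(H^{0},-)$ with the tower of triangles), and that concentration of the cohomology of the \emph{unaugmented} complex $\tuH(I_{\bullet})$ in column $0$ forces exactness of the augmented sequence in Theorem \ref{dualizing formal theorem}(1) with kernel the canonical image of $\tuH(D)$ --- this needs the (convergent, since the tower is finite) spectral sequence of the Postnikov tower and the identification of its edge map with $\tuH(f_{0})$. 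Both are doable, so this part stands as a valid alternative.

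The step that does not work as written is (2) $\Leftrightarrow$ (3). An ifij resolution of $H$ over the formal CDGA $H$ is a sequence of exact triangles in $\sfD(H)$ whose cosyzygies $M_{-i}$ are genuine DG-modules with nonzero differentials, so it is not literally a graded-injective resolution in $\GrMod H$. Moreover $\cI$ furnishes only the injectives $\Hom^{\bullet}_{H^{0}}(H,J)$ with $J\in\Inj H^{0}$, i.e.\ the $E_{H}(H/\frkp')$ with socle in a single internal degree and in the single shift $[-\inf M_{-i}]$ at each stage, whereas $\grinjdim_{H}H$ is computed in $\GrMod H$ against all graded-injectives $E_{H}(H/\frkp')(s)$ with arbitrary degree shifts $s$ (which a minimal graded-injective resolution of $H$ genuinely requires, since the associated primes of $H$ sit in various degrees); and the comparison of DG-$\Ext$ over the formal CDGA $H$ with graded $\Ext$ over the ring $H$ involves a spectral sequence that need not degenerate. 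So ``$\injdim_{H}H<\infty$ iff $\grinjdim_{H}H<\infty$'' is a theorem, not a definition-chase: one direction is exactly Proposition \ref{201711232212}, and the paper obtains the other by applying the already-established equivalence (1) $\Leftrightarrow$ (3) to the formal CDGA $H$, using $\tuH(H)\cong H$. The conclusion (2) $\Leftrightarrow$ (3) is true, but you should replace your identification with that argument.
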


\begin{proof}
We  prove the implication (1) $\Rightarrow$ (3). 
We proceed the proof by induction on $r= \dim H = \depth H$. 

In the case where $r = 0$, 
 $R$ belongs to $\cI$. 
 Therefore the cohomology algebra $H$ is self-injective. 
 Hence, in particular Gorenstein. 

Assume that the case $r -1$ is already proved. 
Let  $x$ be a homogeneous $H$-regular element. 
We note that $x \in H^{0}$.  
We denote their lifts by the same symbol $x \in R^{0}$. 
Then, we have $\tuH(R\wslash  xR) = H/(x)$. 
Hence in particular $R\wslash  xR$ is a Gorenstein CDGA such that 
 $\tuH(R\wslash  xR)$ is CM of dimension $r -1$. 
Thus by induction hypothesis, $H/(x)$ is Gorenstein.   
Therefore $H$ is Gorenstein.

The implication (3) $\Rightarrow$ (1) follows from Proposition \ref{201711232212}. 

Now, we have proved the equivalence (1) $\Leftrightarrow$ (3). 
It follows the equivalence (2) $\Leftrightarrow$ (3), since $\tuH(H) \cong H$.

The equivalence (1) $\Leftrightarrow$ (4) immediately follows from Theorem \ref{dualizing formal theorem}. 
\end{proof}

\begin{remark}
The equivalence (2) $\Leftrightarrow$ (3) is proved in \cite{anodai}.
\end{remark}

Since a commutative algebra of dimension $0$ is CM, 
we deduce the following corollary. 

\begin{corollary}[{\cite[Theorem 3.1]{Avramov-Foxby:Locally Gorenstein homomorphism}, \cite[Theorem 3.6]{FHT:Gorenstein spaces}}]
Let $R$ be a CDGA. Assume that $\dim H= 0$. 
Then $R$ is Gorenstein if and only if $\tuH$ is Gorenstein. 
\end{corollary}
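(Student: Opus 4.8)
The plan is to read the corollary off from Theorem \ref{cohomology Gorenstein theorem}, exploiting that the hypothesis $\dim H = 0$ renders the Cohen--Macaulay clause in that theorem automatic. A graded commutative Noetherian algebra of Krull dimension zero is Artinian, and every Artinian algebra is Cohen--Macaulay; hence the assertion ``$H$ is CM as a graded commutative algebra'' imposes no constraint here. Consequently condition (1) of Theorem \ref{cohomology Gorenstein theorem} collapses to the single statement ``$R$ is Gorenstein,'' while condition (3) is literally ``$H=\tuH(R)$ is Gorenstein as an ordinary graded commutative algebra.'' The equivalence (1) $\Leftrightarrow$ (3) of that theorem then yields exactly the desired conclusion that $R$ is Gorenstein if and only if $H$ is Gorenstein.

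Before I may invoke Theorem \ref{cohomology Gorenstein theorem} I must supply its two standing hypotheses, namely $H^{\ll 0}=0$ and locality, in each direction of the equivalence. For boundedness: if $R$ is Gorenstein then $\injdim R < \infty$, so $R \in \sfD^{> -\infty}(R)$ by Lemma \ref{201903152344}, which is precisely the statement $H^{\ll 0}=0$. Conversely, if $H$ is Gorenstein of dimension zero then $\injdim_{H} H = 0$, so $H$ is graded-injective over itself, and one checks that this self-duality together with the connectivity $H^{>0}=0$ forces $H$ to be bounded, hence again $H^{\ll 0}=0$. Thus in both directions the boundedness hypothesis is free.

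For the locality hypothesis I would pass to stalks. Finiteness of injective dimension localizes (Corollary \ref{20181162135}), so $R$ is Gorenstein precisely when each $R_{\ulfrkp'}$ is, and likewise $H$ is Gorenstein precisely when each $H_{\frkp'}$ is. Since $\dim H = 0$ the ring $H$ is Artinian with finitely many primes, all maximal, and each localization remains piecewise Noetherian, local, of dimension zero and bounded below. Applying the local case established above prime by prime then matches the two conditions and yields the global equivalence.

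The step I expect to require the most care is the ``if'' direction together with the localization bookkeeping: one must be sure that ``$H$ Gorenstein of dimension zero'' genuinely propagates the cohomological boundedness $H^{\ll 0}=0$ presupposed by Theorem \ref{cohomology Gorenstein theorem}, and that localization simultaneously preserves piecewise Noetherianness, dimension zero, and boundedness. If instead one reads the corollary under the local standing hypothesis already in force for the theorem, the third paragraph is unnecessary and the whole argument reduces to the one-line observation that dimension zero makes the Cohen--Macaulay clause vacuous, so that conditions (1) and (3) of Theorem \ref{cohomology Gorenstein theorem} become the two sides of the corollary.
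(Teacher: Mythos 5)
Your argument is exactly the paper's: the corollary is deduced from Theorem \ref{cohomology Gorenstein theorem} by observing that an algebra of Krull dimension zero is automatically Cohen--Macaulay, so the CM clause in condition (1) becomes vacuous and (1) $\Leftrightarrow$ (3) is precisely the assertion. The extra bookkeeping you supply for the standing hypotheses $H^{\ll 0}=0$ and locality goes beyond what the paper records (its entire proof is the remark that dimension zero implies CM), and is a sensible supplement rather than a deviation.
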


The following theorem immediately follows from Corollary \ref{waru corollary}. 
The case where $R$ is an ordinary algebra is proved by Frankild and Jorgensen \cite[Theorem 4.9]{Frankild-Jorgensen:Gorenstein}.

\begin{theorem}\label{FJ theorem}
Let $R$ be a piecewise Noetherian CDGA with a maximal ideal $\ulfrkm$ such that $H^{\gg 0} = 0$.
Let  $x_{1}, \cdots, x_{r} \in \ulfrkm$ 
elements. 
Then, $R$ is Gorenstein if and only if the Kosuzl complex 
$K(x_{1}, \cdots, x_{r})$ is Gorenstein.
\end{theorem}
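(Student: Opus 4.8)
The plan is to reduce the statement to one application of Corollary~\ref{waru corollary} per Koszul variable and then iterate. Recall first that, by definition, a CDGA is Gorenstein precisely when its injective dimension over itself is finite, and that this criterion is available for every CDGA of the form $R\wslash xR$: as noted in the construction of $R\wslash xR$, the algebra $S := R\wslash xR$ is again a piecewise Noetherian CDGA with $S^{>0} = 0$. Moreover, reading off the long cohomology exact sequence of the triangle $R \xrightarrow{x} R \to R\wslash xR \to{}$ gives $\tuH^{0}(S) \cong H^{0}/xH^{0}$, so $S$ is again local, with maximal ideal the image of $\ulfrkm$. Hence the inductive construction of the Koszul complex stays inside the class of local piecewise Noetherian CDGAs to which Corollary~\ref{waru corollary} applies.

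I would then argue by induction on $r$. For $r = 1$, take $M = R$ and $x = x_{1}$ in Corollary~\ref{waru corollary}: since $M\wslash x_{1}M = R\wslash x_{1}R$ is precisely $K(x_{1})$ regarded as a DG-module over itself, the finiteness part of the corollary yields $\injdim_{R} R < \infty$ if and only if $\injdim_{K(x_{1})} K(x_{1}) < \infty$, that is, $R$ is Gorenstein iff $K(x_{1})$ is. For the inductive step, write $T := K(x_{1}, \cdots, x_{r-1})$, which by the previous paragraph is a local piecewise Noetherian CDGA, and observe that $K(x_{1}, \cdots, x_{r}) = T\wslash x_{r}T$. Applying Corollary~\ref{waru corollary} with $T$ in place of $R$, $M = T$, and $x = x_{r}$ shows $T$ is Gorenstein iff $T\wslash x_{r}T = K(x_{1}, \cdots, x_{r})$ is Gorenstein; combining this with the induction hypothesis (that $R$ is Gorenstein iff $T$ is) completes the equivalence.

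The only points requiring care are bookkeeping rather than genuine difficulty. First, one must check that the image of each $x_{i}$ in $T^{0}$ lies in the maximal ideal of $T$, so that Corollary~\ref{waru corollary} is applicable at each stage; this holds because the structure morphism $R \to T$ carries $\ulfrkm$ into the maximal ideal of $T$. Second, one must identify the DG-$(T\wslash x_{r}T)$-module $T\wslash x_{r}T$ produced by the corollary with the Koszul CDGA $K(x_{1}, \cdots, x_{r})$ equipped with its canonical self-module structure, which is immediate from the isomorphism $M\wslash xM \cong M\otimes_{R}(R\wslash xR)$ recorded earlier. No boundedness assumption beyond connectivity is needed, since by Lemma~\ref{201903152344} finiteness of injective dimension already forces the lower-boundedness $H^{\ll 0} = 0$ on either side.
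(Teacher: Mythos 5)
Your proof is correct and follows the same route as the paper: the paper derives this theorem directly from Corollary~\ref{waru corollary}, applied once per Koszul variable exactly as in your induction. The bookkeeping you supply (locality of each intermediate Koszul stage, the identification $T\wslash x_{r}T = K(x_{1},\cdots,x_{r})$, and the boundedness remark via Lemma~\ref{201903152344}) is a faithful expansion of what the paper leaves implicit.
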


We give a way to proved examples of Gorenstein DG-algebra $R$ 
such that $\tuH(R)$ is not Gorenstein. 

\begin{example}
Let $A$ be a local Gorenstein algebra, 
$x_{1}, \cdots, x_{r}$ elements.   
Then the Koszul complexe $R := K_{A}(x_{1}, \cdots, x_{r})$ is Gorenstein DG-algebra by Theorem \ref{FJ theorem}. 
If $H = \tuH(R)$ is Gorenstein, 
then it follows from the inequality 
 $\depth H \leq \depth H^{0}$ 
 that $H^{0}$ is Gorenstein. 
Since 
$H^{0} \cong A/(x_{1}, \cdots, x_{r})$, 
in the case where $A/(x_{1}, \cdots, x_{r})$ is not Gorenstein, 
$R$ is a Gorenstein DG-algebra such that $\tuH(R)$ is not Gorenstein. 
\end{example}

\appendix
\section{DG-projective and DG-injective resolutions}\label{CE resolution}

In this Section we review DG-projective and DG-injective resolutions of a DG-$R$-module $M$, 
which are also called  cofibrant and fibrant replacements, semi-projective and semi-injective resolutions.
For the details we refer to  \cite{Keller:ddc}, \cite{Positselski}. 

\subsection{DG-projective resolution}\label{Cartan-Eilenberg projective resolution}

\begin{definition}
Let $M$ be a DG-$R$-module. 
\begin{enumerate}
\item 
$M$ is called \textit{homotopically projective} 
if the complex $\Hom_{R}^{\bullet}(M,A)$ is acyclic for any acyclic DG-$R$-module $A$. 

\item 
$M$ is called \textit{$\#$-projective} 
if $M^{\#}$ is a graded projective $R^{\#}$-module. 

\item 
 $M$ is called \textit{DG-projective} 
if it is homotopically projective and $\#$-projective.
\end{enumerate}
\end{definition}

By $\sfK_{\DGproj}(R) \subset \sfK(R)$ we denote  the full subcategory of DG-projective DG-$R$-modules 
and 
by $\sfK_{\ac}(R) \subset \sfK(R)$ we denote  the full subcategory of acyclic DG-$R$-modules.   
We recall that the derived  category $\sfD(R)$ is defined as  the Verdier quotient $\sfD(R) := \sfK(R)/\sfK_{\ac}(R)$. 
Let $\mathsf{qt}: \sfK(R) \to \sfD(R)$ be the quotient functor.

\begin{theorem}[{\cite[3.1]{Keller:ddc},\cite[1.4]{Positselski}}]\label{DG-projective resolution theorem}
\begin{enumerate}[(1)]

\item 
The  category  
$\sfK(R)$ has the semi-orthogonal decomposition 
\[
\sfK(R) = \sfK_{\DGproj}(R) \perp \sfK_{\ac}(R)
\]

\item 
For $P \in \sfK_{\DGproj}(R)$ and $M \in \sfK(R)$, 
the induced morphism below is an isomorphism 
\[
\mathsf{qt}_{P,M} : \Hom_{\sfK(R)}(P,M) \xrightarrow{\cong}
 \Hom_{\sfD(R)}(P, M)
\]
where on the right hand side, $\mathsf{qt}$ is suppressed. 

\item 
The restriction of $\mathsf{qt}$ to $\sfK_{\DGproj}(R)$ induces an equivalence 
$\sfK_{\DGproj}(R) \to \sfD(R)$ of triangulated categories.  
\end{enumerate}
\end{theorem}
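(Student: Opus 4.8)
The plan is to reduce everything to part (1): once the semi-orthogonal decomposition is established, parts (2) and (3) are formal consequences of the general theory of semi-orthogonal decompositions in triangulated categories. The decomposition (1) itself amounts to two independent claims: the \emph{orthogonality} $\Hom_{\sfK(R)}(P,A)=0$ whenever $P\in\sfK_{\DGproj}(R)$ and $A\in\sfK_{\ac}(R)$, and the \emph{existence}, for every $M\in\sfK(R)$, of an exact triangle $P\to M\to A\to P[1]$ with $P\in\sfK_{\DGproj}(R)$ and $A\in\sfK_{\ac}(R)$; the latter is exactly the existence of a DG-projective resolution of $M$, and is where the real content lies. Note first that $\sfK_{\DGproj}(R)$ and $\sfK_{\ac}(R)$ are full triangulated subcategories stable under shifts, and that $\sfK_{\ac}(R)$ is thick, so that the Verdier quotient $\sfD(R)=\sfK(R)/\sfK_{\ac}(R)$ makes sense.

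The orthogonality is immediate from the definitions. If $P$ is DG-projective it is in particular homotopically projective, so for acyclic $A$ the complex $\Hom_R^\bullet(P,A)$ is acyclic; since $\Hom_{\sfK(R)}(P,A)=\tuH^0(\Hom_R^\bullet(P,A))$, this group vanishes, and the same holds for every shift $A[n]$, which is again acyclic. Hence $\sfK_{\DGproj}(R)\subset {}^{\perp}\sfK_{\ac}(R)$.

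For the existence I would construct $P$ as the union of an exhaustive increasing filtration $0=P(-1)\subset P(0)\subset P(1)\subset\cdots$ by $\#$-free sub-DG-modules, together with compatible structure maps $\pi(n)\colon P(n)\to M$, built by induction: at each stage one first adjoins free $R^\#$-summands, with differential and map to $M$ chosen appropriately, so as to hit the cohomology classes of $M$ not yet in the image, and then adjoins further free summands to kill the spurious cohomology created in $P(n-1)$; this is arranged so that the induced map $\pi\colon P=\colim_n P(n)\to M$ is a quasi-isomorphism. The module $P$ is $\#$-free, hence $\#$-projective, and $A:=\cone(\pi)$ is acyclic because $\pi$ is a quasi-isomorphism, so the triangle $P\to M\to A\to P[1]$ is of the required form provided $P$ is homotopically projective. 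Verifying this last point is the main obstacle. Here I would exploit the filtration: for acyclic $A$ the tower $\Hom_R^\bullet(P(n),A)$ has degreewise surjective transition maps (the graded inclusions $P(n-1)\subset P(n)$ split, the quotient being $\#$-free), with kernels $\Hom_R^\bullet(P(n)/P(n-1),A)$ that are acyclic since $P(n)/P(n-1)$ is $\#$-free with zero induced differential; an induction then shows each $\Hom_R^\bullet(P(n),A)$ is acyclic, and passing to the limit $\Hom_R^\bullet(P,A)=\lim_n\Hom_R^\bullet(P(n),A)$ via a Milnor $\lim^1$ (Mittag--Leffler) argument yields acyclicity of $\Hom_R^\bullet(P,A)$, as desired.

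Granting (1), the remaining parts are formal. For (2), a morphism $P\to M$ in $\sfD(R)$ is represented by a roof $P\xleftarrow{s}X\xrightarrow{f}M$ with $s$ a quasi-isomorphism, hence with acyclic cone; since $\Hom_{\sfK(R)}(P,\cone(s))=0$ by orthogonality, the triangle of $s$ forces $s$ to admit a homotopy section, so the roof collapses to an honest morphism of $\sfK(R)$, giving surjectivity of $\mathsf{qt}_{P,M}$, with injectivity handled by the symmetric argument. Part (3) is then immediate: the composite $\sfK_{\DGproj}(R)\hookrightarrow\sfK(R)\xrightarrow{\mathsf{qt}}\sfD(R)$ is fully faithful by (2) and essentially surjective because every $M$ is isomorphic in $\sfD(R)$ to the DG-projective term $P$ of its decomposition triangle, the complementary term $A$ being acyclic and hence zero in $\sfD(R)$.
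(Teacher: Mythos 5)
Your argument is correct, but for the substantive part --- the existence of the decomposition triangle in (1) --- you take a genuinely different route from the paper. You build $P$ by the classical semifree (cell-attachment) method: an exhaustive filtration $0=P(-1)\subset P(0)\subset\cdots$ whose subquotients are direct sums of shifts of $R$ (your phrase ``$\#$-free with zero induced differential'' should be read as saying that the adjoined generators become cocycles in the quotient, so that $P(n)/P(n-1)$ is a direct sum of shifted copies of the DG-module $R$, whence $\Hom_R^{\bullet}(P(n)/P(n-1),A)$ is a product of shifts of $A$ and is acyclic), and you verify homotopy projectivity by the tower argument with degreewise surjective transition maps and a Mittag--Leffler/$\lim^1$ vanishing. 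The paper instead constructs the resolution in Cartan--Eilenberg style: it chooses a graded projective resolution $Q_{\bullet}$ of $\tuH(M)$ in $\GrMod H$ over the cohomology algebra $H=\tuH(R)$, lifts each $Q_{i}$ to an object $P_{i}'\in\Add R$ with $\tuH(P_{i}')=Q_{i}$ together with a contractible $\#$-projective correction $P_{i}''$ (Lemma \ref{20171124145}), and sets $P=\tot P_{\bullet}$. Both constructions produce a DG-projective $P$ with acyclic cone, the orthogonality is immediate from the definitions in either case, and both deduce (2) and (3) by the same formal semi-orthogonal-decomposition argument you give. The trade-off: your construction is more elementary and self-contained, whereas the paper's version records exactly which graded projective $H$-module each building block has as cohomology; that extra control is precisely what the paper exploits afterwards for the boundedness statements (keeping $\sup Q_i\leq\sup M$ for way-out arguments, and Proposition \ref{CE bounded proposition} relating $\pd_{R}M$ to $\grpd_{H}\tuH(M)$), none of which would fall out of the bare semifree construction.
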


We can deduce 
(2) and (3) from (1)  by formal argument of triangulated categories (e.g. \cite[1.3]{Positselski}).

The main point of proof of (1) is the following  construction. 
Let $M$ be a DG-$R$-module. 
Then there exists    an exact sequence in $\sfC(R)$ 
\[
0 \to K \to P \xrightarrow{f} M \to 0
\]
such that $P$ is DG-projective and $K$ is acyclic. 
More precisely, we can construct a quasi-isomorphism $f: P \to M$ with $P$ DG-projective 
such that $f^{n} : P^{n} \to M^{n}$ is surjective for $n \in\ZZ$ 
and that $K = \Ker f$ is acyclic. 
We explain how to construct such a morphism.

First 
we point out that 
for $P \in \Add R \subset \sfD(R)$ and $M \in \sfD(R)$,  
we have a canonical surjection 
\[
\Hom(P, M) \twoheadrightarrow \Hom_{\GrMod H}(\tuH(P), \tuH(M)).
\]
In other words, 
any morphism $g: \tuH(P) \to \tuH(M)$ has a lift $\underline{g}: P \to M$, 
that is, a morphism such that $\tuH(\underline{g}) = g$.

Key is the following lemma. 
\begin{lemma}\label{20171124145} 
Let $M \in \sfD(R)$ 
and $g: Q \to \tuH(M)$ be a surjective morphism in $\GrMod H$ 
with $Q \in \GrProj H$. 
Let $P' \in \Add R$ be such that $\tuH(P') = Q$. 
Then there exists a $\#$-projective contractible DG-$R$-module $P'' \in \sfC(R)$ 
and a morphism $\underline{g} : P' \oplus P'' \to M$ 
in $\sfC(R)$ 
such that 
$\tuH(\underline{g}) =g$ and that 
each component  $\underline{g}^{n}: (P')^{n} \oplus (P'')^{n} \to M^{n}$ is surjective. 
\end{lemma}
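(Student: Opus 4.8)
The plan is to realize $g$ first by an honest cochain morphism out of $P'$, and then to pad $P'$ with a contractible $\#$-projective module assembled from ``disks'' on the elements of $M$, so as to gain degreewise surjectivity without disturbing cohomology.

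First I would produce a morphism $\underline{g}_{0}\colon P' \to M$ in $\sfC(R)$ with $\tuH(\underline{g}_{0}) = g$. Since $P' \in \Add R$ and each shift of $R$ is DG-projective, $P'$ is DG-projective; hence by Theorem \ref{DG-projective resolution theorem}(2) the quotient functor induces an isomorphism $\Hom_{\sfK(R)}(P',M) \xrightarrow{\cong} \Hom(P',M)$. Because the canonical surjection $\Hom(P',M)\twoheadrightarrow\Hom_{\GrMod H}(\tuH(P'),\tuH(M))$ recalled just before the lemma is onto, $g$ lifts to some $\alpha\in\Hom(P',M)$ in $\sfD(R)$. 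Transporting $\alpha$ back through the isomorphism above yields a homotopy class of cochain maps; choosing any representative $\underline{g}_{0}$ and using that homotopic maps agree on cohomology gives $\tuH(\underline{g}_{0}) = g$.

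Next I would build $P''$. For a homogeneous element $m \in M^{n}$, set $D_{m} := \cone(\id_{R[-n]})$, the free DG-$R$-module on a generator $x_{m}$ of degree $n$ together with $\partial x_{m}$; it is $\#$-free, hence $\#$-projective, and contractible as the cone of an identity. A cochain map $D_{m} \to M$ is determined freely by the image of $x_{m}$, so I send $x_{m} \mapsto m$. Putting $P'' := \bigoplus_{n \in \ZZ} \bigoplus_{m \in M^{n}} D_{m}$ and letting $\underline{g}''\colon P'' \to M$ be the induced morphism, $P''$ is $\#$-projective and contractible (direct sums preserve both properties), and in each degree $n$ the elements $x_{m}$ with $m \in M^{n}$ already map onto $M^{n}$, so every component $(\underline{g}'')^{n}$ is surjective.

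Finally I would assemble $\underline{g} := (\underline{g}_{0},\underline{g}'')\colon P' \oplus P'' \to M$. This is a cochain map that is degreewise surjective because $\underline{g}''$ already is, and since $P''$ is acyclic one has $\tuH(P' \oplus P'') = \tuH(P') = Q$ and $\tuH(\underline{g}) = \tuH(\underline{g}_{0}) = g$. The only genuine step is the first one: converting the purely cohomological datum $g$ into an actual morphism in $\sfC(R)$, which is exactly where DG-projectivity of $P'$ is used; the disk construction that follows is the routine padding that repairs surjectivity in each degree.
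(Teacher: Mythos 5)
Your proof is correct and follows the standard construction the paper has in mind (the paper itself leaves this lemma unproved, only recording beforehand the canonical surjection $\Hom(P',M)\twoheadrightarrow\Hom_{\GrMod H}(\tuH(P'),\tuH(M))$ that your first step uses, together with DG-projectivity of objects of $\Add R$ to pass from a homotopy class to an honest cochain map). The only cosmetic slip is an off-by-one in identifying the disk: the free DG-module on a generator of degree $n$ is $\cone(\id_{R[-n-1]})$ rather than $\cone(\id_{R[-n]})$, but the object you describe in words is the right one and the argument is unaffected.
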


Let $M$ be a DG-$R$-module. 
We take a projective resolution $Q_{\bullet} $ of $\tuH(M)$ inside $\GrMod H$. 
\begin{equation}\label{20171124130}
 \cdots \to Q_{i} \to Q_{i-1} \to \cdots \to Q_{1} \to Q_{0} \to \tuH(M) \to 0. 
\end{equation}
By the above remark, 
there exist $P'_{i} \in \Add R$ such that $\tuH(P'_{i}) = Q_{i}$.

Using Lemma \ref{20171124145}
we can inductively 
construct 
a contractible DG-$R$-module $P''_{i}$ 
which is $\#$-projective for $i \geq 0$ 
and 
an exact sequence inside $\sfC(R)$ 
\[
 P_{i} \to P_{i-1} \to \cdots \to P_{1} \to P_{0} \to M \to 0
\]
where $P_{i} :=P'_{i} \oplus P''_{i}$.

Repeating the process, we obtain a complex $P_{\bullet}$ of objects of $\sfC(R)$. 
\[
 \cdots \to P_{i} \to P_{i-1} \to \cdots \to P_{1} \to P_{0} 
\]

We set $P := \tot P_{\bullet} $ to be the totalization of the complex $P_{\bullet}$. 
Then the canonical morphism  $f: P \to M$ satisfies the desired properties. 

\begin{lemma}\label{20171124151}
Let $M_{i+1} := \Ker[P_{i} \to P_{i-1}]$ 
and $T_{i}$ denotes the totalizatin of the complex 
\[
 P_{i} \to P_{i-1} \to \cdots \to P_{1} \to P_{0}. 
\]
Then the canonical morphism $T_{i} \to M$ fits into the exact triangle 
$M_{i+1}[i] \to T_{i} \to M$ in $\sfD(R)$. 
\end{lemma}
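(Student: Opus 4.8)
The plan is to exhibit the desired triangle as the shadow of a single bounded exact sequence in $\sfC(R)$ under totalization. Since $P_{\bullet}\to M$ is a resolution, the sequence
\[
0 \to M_{i+1} \to P_{i} \to P_{i-1} \to \cdots \to P_{0} \to M \to 0
\]
is exact in the abelian category $\sfC(R)$, where $M_{i+1}=\Ker[P_{i}\to P_{i-1}]$ and the augmentation $P_{0}\to M$ is surjective. Viewing this as a bounded complex of DG-$R$-modules placed in horizontal degrees $-1$ (the term $M$) through $i+1$ (the term $M_{i+1}$), I would totalize it, and the first claim is that $\tot$ of this sequence is acyclic, i.e. isomorphic to $0$ in $\sfD(R)$. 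This is the assembly lemma for a finite exact double complex: the horizontal direction is bounded, so in each internal degree the totalization is a finite exact complex, hence acyclic.

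Next I would read off the triangle by peeling the two outer terms off this totalization. Splitting off the bottom term $M$ (horizontal degree $-1$) exhibits $\tot(P_{i}\to\cdots\to P_{0}\to M)$ as $\cone(\alpha)[-1]$ for the canonical augmentation $\alpha\colon T_{i}\to M$, where $T_{i}=\tot(P_{i}\to\cdots\to P_{0})$; this yields the triangle
\[
\tot(P_{i}\to\cdots\to P_{0}\to M) \to T_{i} \xrightarrow{\alpha} M \xrightarrow{+1}.
\]
Splitting off the top term $M_{i+1}$ (horizontal degree $i+1$) instead exhibits the totalization of the full exact sequence as $\cone\bigl(\psi\colon M_{i+1}[i]\to \tot(P_{i}\to\cdots\to P_{0}\to M)\bigr)$, with $\psi$ induced by the inclusion $M_{i+1}\hookrightarrow P_{i}$. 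Because the totalization of the full exact sequence is acyclic, $\cone(\psi)\cong 0$, so $\psi$ is an isomorphism in $\sfD(R)$ and $M_{i+1}[i]\cong \tot(P_{i}\to\cdots\to P_{0}\to M)$. Substituting this identification into the previous triangle gives exactly
\[
M_{i+1}[i] \to T_{i} \xrightarrow{\alpha} M \xrightarrow{+1},
\]
with $\alpha$ the canonical morphism, as required.

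The routine content is the two cone formulas; the only place demanding real care is the sign and shift bookkeeping when identifying a one-term extension of a totalization with a mapping cone (appending a term in horizontal degree $-1$ produces $\cone(-)[-1]$, whereas prepending a term in horizontal degree $i+1$ produces $\cone(M_{i+1}[i]\to -)$). I expect the main obstacle to be precisely pinning down these shifts against the totalization convention in force, after which acyclicity of the assembled complex does the rest. The base case $i=0$, where $T_{0}=P_{0}$ and the triangle degenerates to the defining short exact sequence $0\to M_{1}\to P_{0}\to M\to 0$, is immediate and serves as a sanity check on the conventions.
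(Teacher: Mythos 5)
Your proof is correct: totalizing the bounded exact sequence $0 \to M_{i+1} \to P_{i} \to \cdots \to P_{0} \to M \to 0$, observing that this totalization is acyclic, and then peeling off the two end terms as mapping cones (with the shifts you record) is exactly the standard argument, and your bookkeeping of the shifts $M[-1]$ versus $M_{i+1}[i+1]$ checks out. The paper states this lemma without proof, so your write-up supplies precisely the argument it leaves implicit; there is nothing to compare beyond noting that this is the expected route.
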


Until now, we don't need to assume  that $H^{>0} = 0$. 
From now in this section, 
we assume that $H^{> 0}= 0$ as in the main body of the paper. 

Let $M \in \sfD^{<\infty}(R)$. 
Then we may take a projective resolution (\ref{20171124130})  
to be such that $\sup Q_{i} \leq \sup M$ for $i\geq 0$. 
Moreover, by the construction 
there exists an exact sequence 
$0 \to \tuH(M_{i+1} ) \to \tuH(P_{i}) \to \tuH(M_{i}) \to 0$ 
for $i \geq 0$ 
where we set $M_{0} := M$. 
Thus inductively we conclude that $\sup M_{i+1} \leq \sup M$. 
Therefore in the above lemma, we have $\sup M_{i+1}[i] \leq \sup M -i$. 
This property is used for the argument of way-out functors in the main body.

Let $M$ be a graded $H$-module. 
We denote by $\grpd_{H} M $ the projective dimension as graded $H$-module. 
In other words, $\grinjdim_{H} M$ is the projective dimension as an object of the abelian category $\GrMod H$. 

For $M \in \sfD(R)$, $\pd_{R} M$ and $\grpd_{H} \tuH(M)$ relate in the following way.

\begin{proposition}\label{CE bounded proposition}
Assume that $H^{\ll 0} = 0$. 
Let $M \in \sfD^{\mrb}(R)$. 
If $\grpd_{H} \tuH(M)< \infty$, 
then $\pd_{R} M < \infty$. 
\end{proposition}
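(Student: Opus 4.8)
The plan is to feed a \emph{finite} graded projective resolution into the construction of DG-projective resolutions recalled in this appendix. Write $d := \grpd_{H}\tuH(M)$, which is finite by hypothesis. Since $M \in \sfD^{\mrb}(R)$ and $H^{\ll 0} = 0$, the graded $H$-module $\tuH(M)$ is bounded, so it admits a graded projective resolution
\[
0 \to Q_{d} \to Q_{d-1} \to \cdots \to Q_{0} \to \tuH(M) \to 0
\]
in $\GrMod H$ of length $d$ with each $Q_{i}$ bounded, i.e.\ concentrated in a finite range of degrees. I would take this particular resolution as the input (\ref{20171124130}) to the construction preceding Lemma \ref{20171124151}, obtaining DG-$R$-modules $P_{i} = P'_{i} \oplus P''_{i}$ for $0 \le i \le d$ with $P''_{i}$ contractible and $\tuH(P_{i}) = Q_{i}$.

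First I would check that the construction terminates at stage $d$. From the exact sequences $0 \to \tuH(M_{i+1}) \to \tuH(P_{i}) \to \tuH(M_{i}) \to 0$ recorded in the construction, the cohomology $\tuH(M_{i+1})$ is the $(i+1)$-st syzygy of $\tuH(M)$; in particular $\tuH(M_{d+1}) = 0$ because the chosen resolution has length exactly $d$. Hence $M_{d+1}$ is acyclic, that is, zero in $\sfD(R)$. Applying Lemma \ref{20171124151} with $i = d$, the exact triangle $M_{d+1}[d] \to T_{d} \to M$ collapses to an isomorphism $T_{d} \xrightarrow{\cong} M$ in $\sfD(R)$, where $T_{d} = \tot(P_{d} \to \cdots \to P_{0})$.

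It then remains to show $\pd_{R} T_{d} < \infty$. In $\sfD(R)$ the contractible summands $P''_{i}$ vanish, so each $P_{i}$ is isomorphic to $P'_{i}$, a direct sum of shifts $P[-n]$ of objects $P \in \cP$ with $n$ ranging over the finite set of degrees occurring in $Q_{i}$. Every object of $\cP$ has projective concentration $[0,0]$, since $\RHom_{R}(R,-) = \operatorname{id}$ forces $\pd_{R} R = 0$; a shift only translates the projective concentration interval, preserving its width, and a direct sum over a bounded range of shifts again has finite projective dimension (the relevant $\RHom$ lands in a bounded region of $\sfD(\kk)$). Since $T_{d}$ is a finite iterated cone of the shifted objects $P'_{i}$, and the projective concentration of a cone is contained in the union of those of its two other vertices, finiteness of projective dimension is preserved; I conclude $\pd_{R} T_{d} < \infty$, whence $\pd_{R} M < \infty$.

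The main obstacle, and the precise place where the hypotheses $M \in \sfD^{\mrb}(R)$ and $H^{\ll 0} = 0$ are genuinely used, is this last step: one must ensure that the $P'_{i}$ involve only a \emph{bounded} range of shifts of $R$, because a direct sum of shifts $R[-n]$ over an unbounded set of $n$ (as could arise from a graded projective with generators in arbitrarily negative degrees) would have infinite projective dimension. Boundedness of $\tuH(M)$ is exactly what guarantees a graded projective resolution whose terms $Q_{i}$ are bounded, and this controls the shift range and makes the finiteness of $\pd_{R} T_{d}$ go through.
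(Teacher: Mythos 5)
Your proposal is correct and follows essentially the same route as the paper: take a finite-length graded projective resolution of $\tuH(M)$ with terms in a bounded range of shifts (using $M \in \sfD^{\mrb}(R)$ and $H^{\ll 0}=0$), feed it into the DG-projective resolution construction of the appendix, and conclude that $M$ is a finite iterated cone of objects of $\Add\{R[c] \mid a \le c \le b\}$, hence lies in $\thick \cP = \sfD(R)_{\textup{fpd}}$. Your version merely makes explicit the termination step via Lemma \ref{20171124151} and the bound on projective concentration under shifts, sums and cones, which the paper leaves implicit.
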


\begin{proof}
By the assumptions, $\tuH(M)$ has a graded projective resolution $Q_{\bullet}$ of finite length 
such that 
there exist integers $a \leq b$ such that 
each term $Q_{i}$ belongs to $\Add \{ H[c] \mid a \leq c \leq b \}$. 
\[
 0\to Q_{e} \to Q_{e-1} \to \cdots \to Q_{1} \to Q_{0} \to \tuH(M) \to 0
\]
Using DG-projective  resolution, we see that 
$M$ is obtained from $e$ objects of $\Add \{ R[c] \mid a \leq c \leq b \}$
 by taking cones $e$-times. 
 This shows that $M$ belongs to $\thick \cP = \sfD(R)_{\textup{fpd}}$. 
\end{proof}

We remark that the converse of above proposition false.

\begin{example}\label{hanrei}
Let $K$ be a field, $R = K[X]/(X^{2})$ the algebra of dual numbers
and $M$ be a DG-$R$-module defined as below 
\[
M :  \cdots \to 0\to R \xrightarrow{X} R \to 0 \to \cdots 
\]
where the right $R$ is  in the $0$-th degree. 

Since $\tuH^{i}(M) =K \ ( i = 0,-1),  =0 \ (i \neq 0,1)$, 
we have $\pd_{H}\tuH(M) = \infty$. 
However, $\pd_{R} M = 1$. 
Actually we have a sppj resolution 
\[
R \xrightarrow{X} R \to M.
\] 
\end{example}

\subsection{DG-injective resolution}\label{CE injective resolution}

\begin{definition}\label{DG-injective resolution definition}
\begin{enumerate}
\item 
A DG-$R$-module $M$ is called \textit{homotopically injective} 
if the complex $\Hom_{R}^{\bullet}(A, M)$ is acyclic for any acyclic DG-$R$-module $A$. 

\item 
A DG-$R$-module $M$ is called \textit{$\#$-projective} 
if $M^{\#}$ is a graded injective $R^{\#}$-module.

\item 
A DG-$R$-module $M$ is called \textit{DG-injective} 
if it is homotopically injective and the underlying graded $R^{\#}$-module is injective.
\end{enumerate}
\end{definition}

The symbol  $\sfK_{\DGinj}(R) \subset \sfK(R)$  denotes  the full subcategory of DG-injective DG-$R$-modules.

\begin{theorem}[{\cite[3.2]{Keller:ddc},\cite[1.5]{Positselski}}]\label{DG-injective resolution theorem}
\begin{enumerate}[(1)]
\item 
The category  
$\sfK(R)$ has the semi-orthogonal decomposition 
\[
\sfK(R) = \sfK_{\ac}(R) \perp \sfK_{\DGinj}(R)
\]

\item 
For $I \in \sfK_{\DGinj}(R)$ and $M \in \sfK(R)$, 
the induced morphism below is an isomorphism 
\[
\mathsf{qt}_{M,I} : \Hom_{\sfK(R)}(M,I) \xrightarrow{\cong}
 \Hom_{\sfD(R)}(M,I)
\]
where on the right hand side, $\mathsf{qt}$ is suppressed. 

\item 
The restriction of $\mathsf{qt}$ to $\sfK_{\DGproj}(R)$ induces an equivalence 
$\sfK_{\DGinj}(R) \to \sfD(R)$ of triangulated categories.  
\end{enumerate}
\end{theorem}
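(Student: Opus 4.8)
The plan is to obtain this theorem as the precise dual of the DG-projective resolution theorem (Theorem \ref{DG-projective resolution theorem}), so that all the genuine work sits in part (1), while (2) and (3) follow from it by the same formal triangulated-category argument (\cite[1.3]{Positselski}) already used in the projective case. Accordingly, I would first reduce (1) to the existence of enough DG-injectives: for every DG-$R$-module $M$ one needs a quasi-isomorphism $M \to I$ in $\sfC(R)$ with $I$ DG-injective, degreewise injective, and with acyclic cokernel. Given such a resolution, the exact triangle $K \to M \to I$ in $\sfD(R)$ (with $K$ acyclic) together with the defining orthogonality $\Hom_{\sfK(R)}(A, I) = 0$ for acyclic $A$ and DG-injective $I$ --- which is exactly homotopical injectivity --- produces the semi-orthogonal decomposition $\sfK(R) = \sfK_{\ac}(R) \perp \sfK_{\DGinj}(R)$ asserted in (1).

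The core of the argument is then the dual of the explicit construction carried out after Theorem \ref{DG-projective resolution theorem}. I would dualize Lemma \ref{20171124145} to a lifting statement producing, from a graded injective module, a $\#$-injective DG-$R$-module realizing it on cohomology up to a contractible $\#$-injective summand; in the main-body setting $R^{>0} = 0$ the natural such lift is the coinduction $\Hom^{\bullet}_{R^{0}}(R, -)$ underlying the functor $G$ and the class $\cI$. Starting from an injective resolution $0 \to \tuH(M) \to J^{0} \to J^{1} \to \cdots$ in $\GrMod H$ (which exists since $\GrMod H$ has enough injectives), I would lift termwise, correct the differentials by contractible pieces so that they are degreewise injective, assemble an exact cosequence $0 \to M \to I^{0} \to I^{1} \to \cdots$ in $\sfC(R)$, and set $I := \tot I^{\bullet}$. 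The resulting $M \to I$ is the sought DG-injective resolution, with homotopical injectivity of $I$ deduced, as in the projective case, from acyclicity of the totalized cokernels.

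The step I expect to be the main obstacle is this dual construction itself: verifying that the chosen lift lands among $\#$-injective DG-modules, that degreewise injectivity of the connecting maps can be arranged by adjoining contractibles, and --- most delicately --- that the totalization of the possibly unbounded cosequence $I^{\bullet}$ remains homotopically injective, which requires a convergence argument for the kernels. Everything else is formal. Once (1) is available, (2) says that $\mathsf{qt}$ is fully faithful on $\sfK_{\DGinj}(R)$ and (3) that its restriction to $\sfK_{\DGinj}(R)$ is an equivalence onto $\sfD(R)$; both are standard consequences of the semi-orthogonal decomposition in (1), and neither uses any input specific to commutative DG-algebras.
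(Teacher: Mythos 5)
Your overall strategy coincides with the paper's. The paper likewise leans on the Keller--Positselski citations, derives (2) and (3) formally from the semi-orthogonal decomposition in (1), and reduces (1) to the construction, for every $M \in \sfC(R)$, of an exact sequence $0 \to M \to I \to C \to 0$ with $I$ DG-injective and $C$ acyclic, obtained by dualizing the DG-projective construction: resolve $\tuH(M)$ by suitable graded injectives in $\GrMod H$, realize the terms by $\#$-injective DG-modules using a lifting property, correct by contractible $\#$-injective summands, and totalize. The one substantive divergence is your choice of building blocks. For the general (unbounded) case the paper uses shifted \emph{products} of $R^{\ast} = \Hom^{\bullet}_{\ZZ}(R,\QQ/\ZZ)$, for which it records the lifting surjection \eqref{201903151738}; the coinduced modules $G(K) = \Hom^{\bullet}_{R^{0}}(R, E_{R^{0}}(K))$ and the class $\Inj^{0}R$ enter only in the refinement for bounded-below $M$ (Lemma \ref{201711302311} explicitly assumes $J^{i} = 0$ for $i < \inf M$), which is what is actually needed later for ifij resolutions.

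As written, your plan has one step that would fail: you propose to take an \emph{arbitrary} injective resolution $0 \to \tuH(M) \to J^{0} \to J^{1} \to \cdots$ in $\GrMod H$ and lift it termwise. An arbitrary graded injective $H$-module need not be the cohomology of a shifted product of coinduced modules (consider the graded injective envelope of $H/\frkq$ for a graded prime ideal $\frkq$ of $H$ not containing $H^{<0}$), so no termwise lift is available for a general choice of resolution. The repair is to choose the resolution with terms that are shifted products of the realizable cogenerators: either the paper's $\tuH(R^{\ast}) = \Hom^{\bullet}_{\ZZ}(H,\QQ/\ZZ)$, or, if you insist on coinduction from $R^{0}$, the modules $\Hom^{\bullet}_{H^{0}}(H,E)$ with $E$ an injective cogenerator of $\Mod H^{0}$; the adjunction $\Hom_{\GrMod H}(N, \Hom^{\bullet}_{H^{0}}(H,E)[m]) \cong \Hom_{H^{0}}(N^{-m}, E)$ shows that the shifted products of the latter do cogenerate $\GrMod H$, so your route can be made to work for unbounded $M$ as well. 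The remaining delicate point you correctly isolate --- homotopical injectivity of the product totalization of an unbounded cosequence --- is exactly the part the paper also leaves implicit by declaring the proof ``analogous'' to the projective case.
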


The poof is analogues to that of Theorem \ref{DG-projective resolution theorem}.
The main point is the construction that 
for $M \in \sfC(R)$, 
there exists an exact sequence in $\sfC(R)$ 
\[
0\to M \to I \to C \to 0
\] 
such that $I$ is DG-projective and $C$ is acyclic.

%Every DG-$R$-module $M$ is quasi-isomorphic to a DG-injective DG-$R$-module $I$. 
%More precisely, we can construct a quasi-isomorphism $f: M \to I$ 
%such that $f^{n} : M^{n} \to I^{n}$ is injective for $n \in\ZZ$ 
%and that $\Coker f$ is acyclic. 
We can construct such a morphism as in the same way of DG-projective resolution. 
In the construction, the DG-$R$-module $R^{\ast} =\Hom_{\ZZ}^{\bullet}(R,\QQ/\ZZ)$ 
plays a role of $R$ for the construction of DG-projective resolution. 
A  key  property of $R^{\ast}$ is  that the map associated to  the cohomology functor $\tuH$ induces 
is surjective 
\begin{equation}\label{201903151738}
\Hom_{\sfC(R)}(M, J) \twoheadrightarrow \Hom_{\GrMod H}(\tuH(M), \tuH(J)). 
\end{equation}
for any shifted  product $I = \prod_{\lambda \in \Lambda} R^{\ast}[n_{\lambda}]$ of $R^{\ast}$.
In other words, 
any morphism $g: \tuH(M) \to \tuH(J)$ has a lift $\underline{g}: M \to J$.

From now 
 we deal with the DG-algebra such that $R^{> 0}$ as in the main body of the paper.  
We denote by  $\Inj^{0} R \subset \sfC(R)$  
the full subcategories consisting of DG-$R$-modules $G(K)$
for  $K \in \Inj H^{0}$ 
where $G(K)$ is the DG-module defined in Section \ref{the class cI} 
% which is $\#$-injective and $I^{0}$ is an essential graded sub $R^{\#}$-module of $I^{\#}$. 
 
 The following two lemmas show that 
 as far as a DG-$R$-module $M$ such that $\inf M > -\infty$ concern, 
we may use objects of $\Inj^{0} R$ in place of shifted direct product of $R^{\ast}$ in a construction of DG-injective resolution of $M$.

\begin{lemma}
If $J$ is a shifted product of the objects of $\Inj^{0} R$, then 
the cohomology functor $\tuH$ induces 
a surjection 
\[
\Hom_{\sfC(R)}(M, J) \twoheadrightarrow  \Hom_{\GrMod H}(\tuH(M), \tuH(J)). 
\]
\end{lemma}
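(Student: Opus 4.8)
The plan is to reduce the assertion to the case of a single object $G(K)$ with $K\in\Inj H^{0}$, and then to realize the natural transformation induced by $\tuH$ as a restriction map of $\Hom$-modules that is surjective precisely because $E_{R^{0}}(K)$ is an injective $R^{0}$-module. First I would dispose of the passage from a single object to a shifted product. Write $J=\prod_{\lambda}G(K_{\lambda})[n_{\lambda}]$. Since products in a module category are exact, the cohomology functor commutes with the degreewise product, so $\tuH(J)=\prod_{\lambda}\tuH(G(K_{\lambda})[n_{\lambda}])$ as graded $H$-modules; on the other hand both $\Hom_{\sfC(R)}(M,-)$ and $\Hom_{\GrMod H}(\tuH(M),-)$ carry products to products. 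Hence the two sides of the asserted map are the corresponding products over $\lambda$, and the map itself is the product of the maps attached to the factors $G(K_{\lambda})[n_{\lambda}]$. As a product of surjections is surjective and as everything is compatible with the shifts $[n_{\lambda}]$, it suffices to treat $J=G(K)$ for a single $K\in\Inj H^{0}$.

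For this single case I set $E:=E_{R^{0}}(K)$ and $\frki:=\im(\partial_{R}\colon R^{-1}\to R^{0})$, so that $H^{0}=R^{0}/\frki$ and $K=(0:_{E}\frki)$; indeed $(0:_{E}\frki)=E_{H^{0}}(K)=K$, the last equality because $K$ is $H^{0}$-injective. Regarding $R^{0}$ as a DG-algebra concentrated in degree $0$ (its differential vanishes since $R^{>0}=0$) and $E$ as an $R^{0}$-module there, the coinduction–restriction adjunction gives
\[
\Hom_{\sfC(R)}(M,G(K))\;\cong\;\Hom_{\sfC(R^{0})}(M,E)\;\cong\;\Hom_{R^{0}}(\Cok(\partial_{M}^{-1}),E),
\]
the last identification holding because $E$ sits in degree $0$, so a cochain map $M\to E$ is exactly an $R^{0}$-map $M^{0}\to E$ annihilating $\im\partial_{M}^{-1}$. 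On the cohomology side, using $\tuH(G(K))\cong\Hom^{\bullet}_{H^{0}}(H,K)$ and the analogous adjunction for graded modules,
\[
\Hom_{\GrMod H}(\tuH(M),\tuH(G(K)))\;\cong\;\Hom_{H^{0}}(\tuH^{0}(M),K).
\]
The crux is to check that, under these two isomorphisms, the map induced by $\tuH$ becomes restriction along the inclusion $\tuH^{0}(M)=Z^{0}(M)/B^{0}(M)\hookrightarrow M^{0}/B^{0}(M)=\Cok(\partial_{M}^{-1})$; this rests on the evaluation-at-$1$ description of the adjunction counit together with the socle identification $\tuH^{0}(G(K))=(0:_{E}\frki)=K$ inside $E$.

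Finally I would read off surjectivity. The observation making the restriction map land in the right place is that $\tuH^{0}(M)\subseteq\Cok(\partial_{M}^{-1})$ is annihilated by $\frki$: for a cocycle $z\in Z^{0}(M)$ and $r\in R^{-1}$ one has $(\partial_{R}r)\,z=\partial_{M}(rz)\in B^{0}(M)$, so $\frki\cdot\tuH^{0}(M)=0$, whence any $R^{0}$-map out of $\Cok(\partial_{M}^{-1})$ restricts to an $\frki$-killed, i.e. $H^{0}$-linear, map into $(0:_{E}\frki)=K$. Now given $g\in\Hom_{H^{0}}(\tuH^{0}(M),K)$, regard it as a map $\tuH^{0}(M)\to E$; since $E$ is injective over $R^{0}$ it extends along $\tuH^{0}(M)\hookrightarrow\Cok(\partial_{M}^{-1})$ to some $\bar f\colon\Cok(\partial_{M}^{-1})\to E$, and by construction $\bar f$ restricts back to $g$. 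The cochain map $M\to G(K)$ corresponding to $\bar f$ is then a preimage of $g$ under $\tuH$. I expect the main obstacle to be the bookkeeping in the middle paragraph—verifying that the natural transformation genuinely corresponds to this restriction map—which forces one to track both adjunction isomorphisms simultaneously against the identification $\tuH^{0}(G(K))=(0:_{E}\frki)$.
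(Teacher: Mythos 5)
Your argument is correct, but it takes a genuinely different route from the paper. The paper's proof is essentially a citation: it notes that passing to homotopy classes gives a surjection $\Hom_{\sfC(R)}(M,J)\twoheadrightarrow\Hom_{\sfK(R)}(M,J)$, identifies $G(K)=\psi_R(E_{R^0}(K))$, and then quotes \cite{coppepan} (Lemma 3.11(3) and Corollary 3.12) for the stronger fact that $\tuH$ induces an \emph{isomorphism} $\Hom_{\sfK(R)}(M,J)\cong\Hom_{\GrMod H}(\tuH(M),\tuH(J))$. You instead work entirely at the cochain level: after the (unproblematic) reduction to a single factor $G(K)$, you transport both sides through the coinduction--restriction adjunction, identify the map induced by $\tuH$ with restriction along $\tuH^{0}(M)\hookrightarrow\Cok(\partial_{M}^{-1})$, and produce preimages by extending over this inclusion using injectivity of $E_{R^{0}}(K)$ as an $R^{0}$-module. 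The supporting facts you use all check out: $\frki\cdot\tuH^{0}(M)=0$, the identification $(0:_{E}\frki)=E_{H^{0}}(K)=K$ (so $\tuH^{0}(G(K))=K$, consistent with Lemma \ref{basic of cI lemma}), and $\tuH(G(K))\cong\Hom^{\bullet}_{H^{0}}(H,K)$ via exactness of $\Hom_{R^{0}}(-,E)$. What your approach buys is a self-contained proof that isolates exactly where injectivity of the hull enters, at the cost of the adjunction bookkeeping you flag (which is routine) and of not recovering the injectivity half of the cited isomorphism --- but that half is not needed for the surjectivity actually claimed.
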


\begin{proof}
Taking the homotopy class of a cochain map gives a surjection 
$\Hom_{\sfC(R)}(M, J) \twoheadrightarrow \Hom_{\sfK(R)}(M ,J)$. 
We notice that $G(K) = \psi_{R}(E_{R^{0}}(K))$ where $\psi_{R}$ is defined in \cite[Section 3.2]{coppepan}.  
Now it follows from \cite[Lemma 3.11 (3), Corollary 3.12]{coppepan}
that the cohomology functor induces an isomorphism 
$\Hom_{\sfK(R)}(M, J) \xrightarrow{\cong} \Hom_{\GrMod H}(\tuH(M), \tuH(J))$. 
\end{proof}

\begin{lemma}\label{201711302311}
Let $M \in \sfD^{> -\infty}(R)$ 
and $g: \tuH(M) \to J $ be a monomorphism in $\GrMod H$ 
with $J$ is injective such that $J^{i} = 0$ for $i < \inf M$. 
Then, 
\begin{enumerate}[(1)]
\item 
$J$ is a shifted direct sum of objects in $\Inj^{0} H$. 

\item 
There exists  a shifted direct sum $I'$ of objects in $\Inj^{0} R$ 
such that $\tuH(I') = J$. 

\item  
There exists a $\#$-injective contractible DG-$R$-module $I''$ 
and a monomorphsim $\underline{g}: M \to I' \oplus I''$ 
such that we have  $\tuH(\underline{g}) = g$ 
under the identification 
$\tuH(I'\oplus I'') = J$.
\end{enumerate}
\end{lemma}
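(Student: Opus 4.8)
The plan is to read this as the injective dual of Lemma~\ref{20171124145} and to prove the three assertions in turn. For~(1) I would apply the structure theory of injective objects in the locally Noetherian category $\GrMod H$ (the graded counterpart of \cite[Theorems~18.4, 18.5]{Matsumura}): as $H$ is piecewise Noetherian, $J$ decomposes into indecomposable injectives, each of which is, up to a shift, the graded injective hull $E_{H}(H/\frkq)$ of a homogeneous prime $\frkq$. The key step is to exploit the hypothesis $J^{i}=0$ for $i<\inf M$ to discard the homogeneous primes $\frkq$ with $\frkq \not\supseteq H^{<0}$: for such a $\frkq$ one picks a homogeneous $x \in H^{<0}\setminus \frkq$, notes that $x$ acts invertibly on $E_{H}(H/\frkq)$ (a module over the graded localization at $\frkq$), and concludes from $\deg x<0$ that $E_{H}(H/\frkq)$ is nonzero in arbitrarily negative degrees, contradicting bounded-belowness. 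Hence each summand satisfies $\frkq \supseteq H^{<0}$, so $\frkq = \frkp \oplus H^{<0}$ with $\frkp = \frkq \cap H^{0}\in \Spec H^{0}$ and $E_{H}(H/\frkq)=G_{H}(E_{H^{0}}(H^{0}/\frkp))\in \Inj^{0}H$, which is~(1).

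Part~(2) is then formal. Writing $J=\bigoplus_{\lambda}G_{H}(K_{\lambda})[m_{\lambda}]$ as in~(1), I would set $I':=\bigoplus_{\lambda}G(K_{\lambda})[m_{\lambda}]$ with $G$ the functor of Section~\ref{the class cI}. Since $\tuH(G(K))\cong \Hom^{\bullet}_{H^{0}}(H,K)=G_{H}(K)$ by \cite[Lemma~3.11]{coppepan}, and since $R$ is piecewise Noetherian so that the coproduct is computed inside $\cI$ and is preserved by $\tuH$, one gets $\tuH(I')\cong J$.

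For~(3) I would split the construction of $\underline g$ into a lift and a correction. In Step~A I lift $g$ to a cochain map $\underline g'\colon M\to I'$ with $\tuH(\underline g')=g$; this is the direct-sum analogue of the preceding Lemma, the cohomology isomorphism $\Hom_{\sfK(R)}(M,I')\xrightarrow{\cong}\Hom_{\GrMod H}(\tuH M,\tuH I')$ furnished by \cite[Lemma~3.11, Corollary~3.12]{coppepan} allowing $g$ to be lifted to a homotopy class and hence to a cochain representative. In Step~B I arrange degreewise injectivity: choosing a graded-injective $R^{\#}$-module $E$ with a graded monomorphism $M^{\#}\hookrightarrow E$, I let $I''$ be the associated contractible $\#$-injective DG-$R$-module (the injective analogue of $\cone(\id_{R})$) together with its canonical degreewise-injective cochain map $M\to I''$. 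As $I''$ is contractible we have $\tuH(I'')=0$, so $\underline g:=(\underline g',\,M\to I'')\colon M\to I'\oplus I''$ still satisfies $\tuH(\underline g)=g$, and it is a monomorphism in $\sfC(R)$ because its second component already is.

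The step I expect to be the main obstacle is the sum-versus-product reconciliation in Step~A. The preceding Lemma is stated for shifted \emph{products} of objects of $\Inj^{0}R$, whereas the target $I'$ here is a shifted direct \emph{sum}; a naive componentwise lift of $g$ need not land in $I'$, since a component $\underline g'_{\lambda}$ can be a nonzero coboundary even when the matching component of $g$ vanishes. What must be verified is that the bound $J^{i}=0$ for $i<\inf M$ forces the shifts $m_{\lambda}$ occurring in $I'$ to be bounded above, so that in each cohomological degree only finitely many shift values arise; combined with the direct-sum form of \cite[Lemma~3.11]{coppepan} this ensures that $\tuH$ induces an isomorphism on $\Hom_{\sfK(R)}(M,I')$ and that the lift genuinely takes values in $I'$ rather than in the ambient product.
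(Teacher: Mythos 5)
Your write-up does real work that the paper itself does not: the paper's proof of this lemma consists of three citations (part (1) is \cite[Lemma 2.7]{adasore}, part (2) is deferred to \cite[Section 3.2]{coppepan}, and part (3) is declared to be ``the same way as the usual DG-injective resolution''), so there is no in-text argument to compare against step by step. Your parts (2) and (3) are sound and match the intended construction: the identification $\tuH(G(K)) \cong \Hom^{\bullet}_{H^{0}}(H,K)$, the contractible $\#$-injective ``disk'' on a graded-injective envelope of $M^{\#}$ supplying degreewise injectivity without changing cohomology, and your resolution of the sum-versus-product issue is exactly the right one: since $G(K)$ is concentrated in degrees $\geq 0$ and every shift satisfies $m_{\lambda} \leq -\inf M$, in each fixed degree only finitely many shift values contribute, so after grouping summands with equal shift (using that $\cI$ is closed under direct sums, by Bass--Papp for piecewise Noetherian $R$) the shifted direct sum coincides with the corresponding shifted direct product and the unnumbered surjectivity lemma for shifted products applies verbatim.

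The gap is in part (1). You invoke the Matlis decomposition ``in the locally Noetherian category $\GrMod H$,'' but piecewise Noetherian does not make $\GrMod H$ locally Noetherian: that would force $H$ to be graded Noetherian (a cyclic object that is a quotient of a coproduct of Noetherian generators is Noetherian), and when $H^{\ll 0} \neq 0$ the graded ideal $H^{<0}$ need not be finitely generated (take $H = k[x_{1},x_{2},\dots]/(x_{i}x_{j})$ with $\deg x_{i} = -i$). Since this lemma is used in the main body precisely without the hypothesis $H^{\ll 0}=0$, neither the existence of a decomposition of $J$ into indecomposable injectives nor their classification by homogeneous primes is available to you. (Your degree argument showing that a bounded-below indecomposable $E_{H}(H/\frkq)$ forces $\frkq \supseteq H^{<0}$ is fine once such a summand exists.) The statement is still true, but the proof must exploit the bound $J^{i}=0$ for $i<\inf M$ directly rather than through Matlis theory: one shows the bottom graded piece $J^{b}$ is an injective $H^{0}$-module (graded maps from modules concentrated in degree $b$ into $J$ land in $J^{b}$, because $H^{<0}$ sends $J^{b}$ into $J^{<b}=0$), splits off the coinduced injective $\Hom^{\bullet}_{H^{0}}(H,J^{b})[-b]$, and iterates on the bottom degree; this is the content of the cited \cite[Lemma 2.7]{adasore}. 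Your argument for (1) becomes correct as written if one adds the hypothesis $H^{\ll 0}=0$, under which $H$ is a Noetherian ring and graded Matlis theory does apply.
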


\begin{proof}
(1) is \cite[Lemma 2.7]{adasore}. 
(2) is proved in \cite[Section 3.2]{coppepan}
(3) can be proved by the same way of usual DG-injective resolution. 
\end{proof}

Let $M \in \sfD^{> -\infty}(R)$. 
We take an injective resolution $J_{\bullet}$ 
of $\tuH(M)$ inside $\GrMod H$ with $J_{-i}$ a shifted direct sum of objects in $\Inj^{0} H$. 
\[
0\to\tuH(M) \to J_{0} \to J_{-1} \to \cdots \to J_{-i+1} \to J_{-i} \to \cdots 
\]
By Lemma \ref{201711302311}.2, 
there exists $I'_{-i}$ which is a shifted direct sum of objects in $\Inj^{0} R$ 
such that $\tuH(I'_{-i}) =J_{-i}$.  

Using Lemma \ref{201711302311}.3, 
we can inductively construct a contractible DG-$R$-module 
$I''_{-i}$ which is $\#$-injective 
for $i \geq 0$ and an exact sequence inside $\sfC(R)$ 
\[
 0\to M \to I_{0} \to I_{-1} \to \cdots \to I_{ -i+1} \to I_{ -i}
\]
where $I_{-i} := I'_{-i} \oplus I''_{-i}$.

The following is an injective version of Proposition \ref{CE bounded proposition}. 
Let $M $ be a graded $H$-module. 
We denote by $\grinjdim_{H} M $ the injective dimension as graded $H$-module. 
In other words, $\grinjdim_{H} M$ is the injective dimension as an object of the abelian category $\GrMod H$.

\begin{proposition}\label{201711232212}.
Assume that $H^{\ll 0} = 0$. 
Let $M\in \sfD^{\mrb}(R)$. 
Then, if $\grinjdim_{H} \tuH(M) < \infty$, 
then $\injdim_{R} M < \infty$. 
\end{proposition}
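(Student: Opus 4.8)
The plan is to run the argument of Proposition \ref{CE bounded proposition} in its injective incarnation, replacing graded projective resolutions by graded injective ones and the class $\cP$ by $\cI$. First I would invoke the hypothesis $\grinjdim_{H} \tuH(M) < \infty$ to fix a finite-length graded injective resolution
\[
0 \to \tuH(M) \to J_{0} \to J_{-1} \to \cdots \to J_{-e} \to 0
\]
inside $\GrMod H$. The crucial data to read off is a single band of internal degrees: I would show that there are integers $a \le b$, independent of $i$, such that each $J_{-i}$ is a direct sum of shifts $I[c]$ with $I \in \Inj^{0} H$ and $a \le c \le b$.

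Granting this, I would feed the resolution into the DG-injective construction recalled in Appendix \ref{CE injective resolution}. By Lemma \ref{201711302311} each $J_{-i}$ lifts to $I_{-i} = I'_{-i} \oplus I''_{-i} \in \sfC(R)$, where $I'_{-i}$ is a shifted direct sum of objects of $\Inj^{0} R$ with $\tuH(I'_{-i}) = J_{-i}$ and $I''_{-i}$ is contractible and $\#$-injective, and these assemble into a finite exact sequence $0 \to M \to I_{0} \to \cdots \to I_{-e} \to 0$ in $\sfC(R)$. Since $I''_{-i}$ is zero in $\sfD(R)$, each $I_{-i}$ is isomorphic there to $I'_{-i}$; grouping the summands of $I'_{-i}$ according to the finitely many shifts $c \in [a,b]$ and using that $\cI$ is closed under direct sums exhibits $I'_{-i}$ as a finite direct sum of shifts of objects of $\cI$, so $I_{-i} \in \thick \cI$. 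As the exact sequence realizes $M$ from the finitely many objects $I_{0}, \dots, I_{-e}$ by iterated cones, I would conclude $M \in \thick \cI = \sfD(R)_{\textup{fid}}$, that is $\injdim_{R} M < \infty$.

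The hard part will be the uniform degree bound of the first step, and this is precisely where the hypothesis $H^{\ll 0} = 0$ is indispensable. Together with $H^{>0} = 0$ it forces $H$ to be bounded, whence every indecomposable graded injective $E_{H}(H/\frkp')$ is concentrated in the finite degree range $[0, -\inf H]$. Because $M \in \sfD^{\mrb}(R)$, the module $\tuH(M)$ is concentrated in finitely many degrees, so its graded socle meets only finitely many degrees and only finitely many shifts $c$ occur in $J_{0}$; boundedness then propagates through the finitely many cokernels of the resolution, producing the common band $[a,b]$. This estimate is exactly what is needed: without it the $I'_{-i}$ would be infinite-shift direct sums and would not be guaranteed to lie in $\thick \cI$.
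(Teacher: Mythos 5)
Your argument is exactly the paper's intended proof: the paper gives no explicit argument for this proposition, simply calling it "an injective version of Proposition \ref{CE bounded proposition}", and your proposal is precisely that dualization — finite graded injective resolution with a uniform shift band $[a,b]$, lifted via Lemma \ref{201711302311} to a finite exact sequence in $\sfC(R)$, giving $M \in \thick\cI = \sfD(R)_{\textup{fid}}$. Your justification of the uniform degree band (boundedness of $H$ forcing objects of $\Inj^{0}H$ into a finite degree range, plus essentiality/boundedness propagating through the cokernels) is in fact more detailed than what the paper records even in the projective case.
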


We note that the converse of the above statement is not true. 
Indeed, Example \ref{hanrei}  also gives    such  an  example.

%
%{Department of Mathematics and Information Sciences,
%Faculty of Science / Graduate School of Science,
%Osaka Prefecture University}
%
%{minamoto@mi.s.osakafu-u.ac.jp} 
%

\end{document}